\newtheorem{thm}{Theorem}[section]
\newtheorem{lem}{Lemma}[section]
\newtheorem{prop}{Proposition}[section]
\theoremstyle{definition}
\newtheorem{defn}{Definition}[section]
\theoremstyle{remark}
\newtheorem{rem}{Remark}[section]
\numberwithin{equation}{section}
\newcommand{\R}{\mathbb{R}}
\newcommand{\C}{\mathbb{C}}
\newcommand{\bmf}[1]{{\mathbf{#1}}}
\def\bsi{{\mathrm{i}}}
\def\Oh{{\mathcal  O}}
\newcommand{\rmd}{\mathrm{d}}
\newcommand{\tabincell}[2]{\begin{tabular}{@{}#1@{}}#2\end{tabular}}
\title[Generalized Holmgren's principle to Lam\'e's operator and applications]{Further results on generalized Holmgren's principle to the Lam\'e operator and applications}
\author{Huaian Diao}
\address{School of Mathematics and Statistics, Northeast Normal University,
Changchun, Jilin 130024, China.}
\email{hadiao@nenu.edu.cn}
\author{Hongyu Liu}
\address{Department of Mathematics, City University of Hong Kong, Kowloon, Hong Kong SAR, China.}
\email{hongyu.liuip@gmail.com; hongyliu@cityu.edu.hk}
\author{Li Wang}
\address{School of Mathematics and Statistics, Northeast Normal University,
Changchun, Jilin 130024, China.}
\email{1322771004@qq.com}
\date{} 
\begin{document}
\maketitle

\begin{abstract}

In our earlier paper \cite{DLW}, it is proved that a homogeneous rigid, traction or impedance condition on one or two intersecting line segments together with a certain zero point-value condition implies that the solution to the Lam\'e system must be identically zero, which is referred to as the generalized Holmgren principle (GHP). The GHP enables us to solve a longstanding inverse scattering problem of determining a polygonal elastic obstacle of general impedance type by at most a few far-field measurements. In this paper, we include all the possible physical boundary conditions from linear elasticity into the GHP study with additionally the soft-clamped, simply-supported as well as the associated impedance-type conditions. We derive a comprehensive and complete characterisation of the GHP associated with all of the aforementioned physical conditions. As significant applications, we establish novel unique identifiability results by at most a few scattering measurements not only for the inverse elastic obstacle problem but also for the inverse elastic diffraction grating problem within polygonal geometry in the most general physical scenario. We follow the general strategy from \cite{DLW} in establishing the results. However, we develop technically new ingredients to tackle the more general and challenging physical and mathematical setups. It is particularly worth noting that in \cite{DLW}, the impedance parameters were assumed to be constant whereas in this work they can be variable functions.

\medskip

\medskip

\noindent{\bf Keywords:}~~Lam\'e system; unique continuation; generalized Holmgren's principle; inverse elastic problems; unique identifiability; polygonal obstacle; polygonal grating; single measurement 

\noindent{\bf 2010 Mathematics Subject Classification:}~~35Q74, 35B34, 35J57, 35R30, 78J20, 74J25

\end{abstract}

\section{Introduction}

This paper is continuation of our earlier paper \cite{DLW} on a novel unique continuation principle for the Lam\'e operator and its applications to several challenging inverse elastic problems. We begin by briefly introducing the background and motivation of our study and referring to \cite{DLW} for more related discussions. 

Let $\Omega\subset\mathbb{R}^2$ be a bounded open set. $\bmf u=(u_\ell)_{\ell=1}^2\in L^2(\Omega)^2$ is said to be a (generalised) Lam\'e eigenfunction if
\begin{equation}\label{eq:lame}
-\mathcal{L} (\bmf{u})=\kappa \bmf{u}\ \text { in }\ \Omega,\ \ \kappa \in\mathbb{R}_+, 
\end{equation}
where
\begin{equation}\label{eq:pdo1}
\mathcal{L}(\bmf u) :=\mu \Delta \bmf u+(\lambda+\mu) \nabla(\nabla \cdot \bmf u )
\end{equation}
and $\mathcal{L}$ is known as the Lam\'e operator that arises in the theory of linear elasticity. Here $\lambda,\mu$ are the Lam\'e constants satisfying the following strong convexity condition
\begin{equation}\label{eq:convex}
\mu>0 \text { and } \lambda+\mu>0.
\end{equation}
In \eqref{eq:lame}, it is noted that we do not assume any boundary condition on $\partial\Omega$ for $\bmf{u}$ since we shall be mainly concerned with a local property of $\mathbf{u}$. \eqref{eq:lame} arises in the study of the time-harmonic elastic wave scattering with $\mathbf{u}$ and $\omega:=\sqrt{\kappa}$ respectively signifying the elastic displacement field and the angular frequency. We next present six trace operators associated with $\bmf{u}$ in \eqref{eq:lame} that arise in different physical scenarios in the theory of linear elasticity. To that end, we let $\Gamma_h\Subset\Omega$ be a closed connected line segment, where $h\in\mathbb{R}_+$ signifies the length of the line segment. Let
\begin{equation}\label{eq:nutau}
{\nu }=(\nu_1,\nu_2)^\top \mbox{ and }
 \boldsymbol{\tau}=(-\nu_2,\nu_1)^\top
\end{equation}
respectively, signify the unit normal and tangential vectors to $\Gamma_h$. The traction $T_\nu\bmf{u}$ on $\Gamma_h$ is defined by
\begin{equation}\label{eq:Tu}
 T_{\bmf{\nu }}\bmf{u}=2\mu\partial_{\bmf{\nu }}\bmf{u}+\lambda\bmf{\nu} \left(\nabla \cdot \bmf{u}\right)+\mu\boldsymbol{\tau}(\partial_{2}u_{1}-\partial_{1}u_{2}), \end{equation}
where
$$
\nabla\bmf u:=\begin{bmatrix}
	\partial_1 u_1 & \partial_2 u_1 \cr
	\partial_1 u_2 & \partial_2 u_2
\end{bmatrix},\quad \partial_\nu\bmf u:=\nabla\bmf u \cdot  \nu, \quad \partial_j u_i:=\partial u_i/\partial x_j.
$$
Set
\begin{align}
&\mathcal{B}_1(\mathbf{u})=T_\nu\mathbf{u}\big |_{\Gamma_h}, \hspace*{1.9cm} \mathcal{B}_2(\mathbf{u})=\mathbf{u}\big |_{\Gamma_h}, \label{eq:b12}\\
& \mathcal{B}_3(\mathbf{u})\big |_{\Gamma_h }=\left(
\begin{array}{c}
{\nu} \cdot  \mathbf{u} \\
\boldsymbol{\tau} \cdot T_{\mathbf{\nu}} \bmf{u} \\
\end{array}
\right), \quad
\mathcal{B}_4(\mathbf{u}) \big |_{\Gamma_h }=\left(
\begin{array}{c}
\boldsymbol{\tau} \cdot \bmf{u} \\
{\nu} \cdot T_{\mathbf{\nu}} \mathbf{u} \\
\end{array}
\right),\label{eq:tf}\\
&\mathcal{B}_5(\mathbf{u})=\mathcal{B}_1(\mathbf{u})+\boldsymbol{\eta}\mathcal{B}_2(\mathbf{u}),\quad \mathcal{B}_6(\mathbf{u})=\mathcal{B}_3(\mathbf{u})+\boldsymbol{\eta}\mathcal{B}_4(\mathbf{u})\label{eq:b56}
\end{align}
where $\boldsymbol{\eta}\in L^\infty(\Gamma_h)$ and $ \boldsymbol \eta\equiv\hspace*{-4mm}\backslash\, 0$. Physically, the condition $\mathcal{B}_j(\mathbf{u})=0$, $j=1,\ldots, 6$, corresponds, respectively, to the case that $\Gamma_h$ is a traction-free, rigid, soft-clamped, simply-supported, impedance or generalized-impedance line segment. 

It is noted that $\mathbf{u}$ is real-analytic in $\Omega$ since $\mathcal{L}$ is an elliptic PDO with constant coefficients. The classical Holmgren's uniqueness principle (HUP) states that if any two different conditions from $\mathcal{B}_j(\mathbf{u})=\mathbf{0}$, $1\leq j\leq 4$, are satisfied on $\Gamma_h$, then $\mathbf{u}\equiv \mathbf{0}$ in $\Omega$; see \cite{DLW} for more relevant background discussion on this aspect. In \cite{DLW}, we show that under ultra weaker conditions in two scenarios, the Holmgren principle still holds, which is referred to as the generalized Holmgren principle (GHP). In the first scenario, it is proved that if only one homogeneous condition is fulfilled on $\Gamma_h$ plus a certain zero point-value condition, say $\boldsymbol{\tau}^\top \nabla \bmf{u} |_{\bmf{x}={\bmf{x}}_0 }  {\nu}=\bmf{0}$ for any fixed $\bmf{x}_0\in\Gamma_h$ in the case that $\Gamma_h$ is rigid, then one must have $\mathbf{u}\equiv \mathbf{0}$ in $\Omega$. That is, one of the two homogeneous conditions on $\Gamma_h$ in the Holmgren principle can be replaced by a point-value condition. In the other scenario, two intersecting line segments $\Gamma_h^+$ and $\Gamma_h^-$ are involved. It is proved that if two homogeneous conditions are respectively fulfilled on $\Gamma_h^\pm$ (the two homogeneous conditions can be the same to or different from each other), then one generically has $\mathbf{u}\equiv \mathbf{0}$. It is noted that in the degenerate case where the two line segments are collinear and the two homogeneous conditions are different, the GHP is reduced to the standard Holmgren principle. However, in \cite{DLW}, only $\mathcal{B}_1, \mathcal{B}_2$ and $\mathcal{B}_5$ were considered. In this paper, we shall include all the boundary traces from \eqref{eq:b12}--\eqref{eq:b56} into the GHP study, and derive a comprehensive and complete characterisation of the GHP associated with all the possible physical situations. In addition, there are two more extensions that are worthy of noting: first, in \cite{DLW}, the impedance parameter $\boldsymbol{\eta}$ is always assumed to be constant, and in this paper, it can be a variable function; second, in \cite{DLW}, a certain geometric restriction was required on the intersecting angle in the case of two intersecting line segments, and in this paper, we completely relax such a restriction. In principle, we follow the general strategy from \cite{DLW} in establishing the new GHP results. However, we develop technically new ingredients to tackle the more general and challenging physical and mathematical setups.

It is remarked that the HUP holds in a more general geometric setup where the line segment $\Gamma_h$ can be relaxed to be an analytic curve. Indeed, it is a special case of the more general Unique Continuation Principle (UCP) for elliptic PDOs. But the HUP requires the analyticity of the solution, which is also the case in our study. This is the main reason that we require $\Gamma_h$ located inside the domain $\Omega$, and refer to the new uniqueness principle as the GHP. We believe the GHP generically does not hold if $\Gamma_h$ is curved. On the other hand, our study should be able to be extended to the higher dimensional case. But as shall be seen that even for the two-dimensional case associated with flat lines, the study involves highly technical and lengthy arguments. Hence, we defer the aforementioned extensions and generalisations to our future study. 

The GHP is strongly motivated by our study of a longstanding geometrical problem in the inverse scattering theory. It is concerned with the unique determination of the geometrical shape of an obstacle by minimal/optimal or at least formally-determined scattering measurements, which constitutes an open problem in the literature. In recent years, global uniqueness results with formally-determined scattering data have been achieved in determining polygonal/polyhedral obstacles of the soft-clamped or simply-supported type \cite{ElschnerYama2010,LiuXiao}. The mathematical machinery therein is mainly based on certain reflection and path arguments that are of a global nature. The GHP enables us to develop a completely local argument in resolving the unique determination problem for polygonal obstacles of the traction-free, rigid or impedance type \cite{DLW}. In this paper, using the newly established results for the GHP, we can prove the global uniqueness in determining the shape of a general polygonal obstacle as well as its boundary impedance (if there is any) by formally-determined data (indeed at most a few scattering measurements) in the most general physical scenario. We are aware that in the non-polygonal/polyhedral setting with formally-determined scattering data, only local uniqueness/stability results were derived in the literature for rigid or traction-free obstacles, where one needs to a-priori know that the possible obstacles cannot deviate too much \cite{GM,RSS}. In addition to the inverse elastic obstacle problem, we also consider the inverse elastic diffraction grating problem of determining unbounded polygonal structures and establish several novel unique identifiability results by at most a few scattering measurements. 

The rest of the paper is organized as follows. In Section \ref{sect:2}, we present an overview and summary of the GHP results for the convenience of the readers. Sections \ref{sect:3} and \ref{sec:4} are respectively devoted to the GHP in the presence of a single homogeneous line segment and two homogeneous line segments. In Section \ref{sect:5}, we present the unique identifiability results for the inverse elastic obstacle problem and the inverse diffraction grating problem.

\section{Overview and summary of the GHP results}\label{sect:2}

In order to have a complete and comprehensive study, the derivation of the GHP results in this paper is lengthy and technically involved. In order ease the reading of the audience, we provide a brief summary of the GHP results in this section. We first fix several notations as well as the geometric setup in our study.

%
%
%
%

\begin{defn}\label{def:class1p}
    Suppose that $\psi(r)$ is a complex-valued function for $r \in \Sigma:=[0, r_0]$, where $r_0\in\mathbb{R}_+$. $\psi$ is said to belong to the class $\mathcal{A}(\Sigma)$ if it allows an absolutely convergent series representation as follows
    \begin{equation}\label{eq:series1}
    \psi(r)=a_0+\sum_{j=1}^\infty a_j r^j,
    \end{equation}
    where $a_0\in\mathbb{C}\backslash\{0\}$ and $a_j \in \mathbb C$. 
    \end{defn}
    
  \begin{defn}\label{def:class1}
  Let $\Gamma_h\Subset\Omega$ be a closed line segment and $\boldsymbol{\eta}(\mathbf{x})$, $\mathbf{x}\in\Gamma_h$, be a complex-valued function. For a given $\mathbf{x}_0\in\Gamma_h$, we treat $\boldsymbol{\eta}(\mathbf{x})$ as a function in terms of the distance away from $\mathbf{x}_0$ along the line, namely $r:=|\mathbf{x}-\mathbf{x}_0|$. $\boldsymbol{\eta}$ is said to belong to the class $\mathcal{A}(\Gamma_h)$ if for any $\mathbf{x}\in\Gamma_h$, there exists a small neighborhood of $\mathbf{x}_0$ on the line segment, say $\Sigma_{\epsilon(\mathbf{x}_0)}=[0, \epsilon(\mathbf{x}_0)]$ such that $\boldsymbol{\eta}\in\mathcal{A}(\Sigma_{\epsilon(\mathbf{x}_0)})$. 
\end{defn}

Throughout the rest of the paper, we always assume that the impedance parameters in \eqref{eq:b56} belong to the class $\boldsymbol{\eta}\in\mathcal{A}(\Gamma_h)$. In fact, one can easily verify that if $\boldsymbol{\eta}$ is analytic on $\Gamma_h$ and nonvanishing at any point, then it belongs to $\mathcal{A}(\Gamma_h)$. Nevertheless, we give the explicit definition for clarity and in particular, we shall frequently make use of the series form \eqref{eq:series1} in our subsequent arguments. The GHP that we shall establish is a local property, and we shall mainly prove that $\mathbf{u}$ is identically vanishing in a sufficiently small neighbourhood around a point on $\Gamma_h$. Hence, without loss of generality, we can simply assume that the series expansion \eqref{eq:series1} holds on the whole line segment if $\boldsymbol{\eta}\in \mathcal{A}(\Gamma_h)$. Moreover, in order to ease the exposition, we shall not distinguish between a line and a line segment in what follows, which should be clear from the context. 
    
\begin{defn}\label{def:1}
Recall the rigid, traction-free, soft-clamped, simply-supported, impedance and generalized-impedance lines introduced after \eqref{eq:b12}--\eqref{eq:b56}.
 Set ${\mathcal R}_\Omega^{\kappa}$, ${\mathcal T}_\Omega^{\kappa}  $,  ${\mathcal I}_\Omega^{\kappa} $, ${\mathcal G}_\Omega^{\kappa}$, ${\mathcal F}_\Omega^{\kappa} $ and ${\mathcal H}_\Omega^{\kappa}$ to respectively denote the sets of rigid, traction-free, impedance, soft-clamped, simply-supported and generalized-impedance lines in $\Omega$ of $\bmf{u}$.
 \end{defn}

\begin{defn}\label{def:2}
Recall that the unit normal vector $\nu $ and the tangential vector $\boldsymbol{\tau}$ to $\Gamma_h$ are defined in \eqref{eq:nutau}. Define
\begin{subequations}\notag
\begin{align}
	{\mathcal S}\left(   {\mathcal R}_\Omega^{\kappa} \right):=& \{\Gamma_h \in  {\mathcal R}_\Omega^{\kappa}  ~|~ \exists \bmf{x}_0 \in \Gamma_h \mbox{ such that }
	\boldsymbol{\tau}^\top \nabla \bmf{u} |_{\bmf{x}={\bmf{x}}_0 }  {\nu}=\bmf{0} \},\label{eq:defa} \\
	{\mathcal S}\left(   {\mathcal T}_\Omega^{\kappa} \right):=& \{\Gamma_h \in  {\mathcal T}_\Omega^{\kappa}  ~|~ \exists \bmf{x}_0 \in \Gamma_h \mbox{ such that }  \bmf{u}(\bmf{x}_0)=\bmf{0},\, 
	\boldsymbol{\tau}^\top \nabla \bmf{u} |_{\bmf{x}={\bmf{x}}_0 }  {\nu}=\bmf{0}
	 \},\label{eq:defb}\\
	{\mathcal S}\left(   {\mathcal I}_\Omega^{\kappa} \right):=& \{\Gamma_h \in  {\mathcal I}_\Omega^{\kappa}  ~|~ \exists \bmf{x}_0 \in \Gamma_h \mbox{ such that }  \bmf{u}(\bmf{x}_0)=\bmf{0},\,  
	 \boldsymbol{\tau}^\top \nabla \bmf{u} |_{\bmf{x}={\bmf{x}}_0 }  {\nu}=\bmf{0} \},\label{eq:defc}\\
    {\mathcal S}\left(   {\mathcal G}_\Omega^{\kappa} \right):=& \{\Gamma_h \in  {\mathcal G}_\Omega^{\kappa}  ~|~ \exists \bmf{x}_0 \in \Gamma_h \mbox{ such that } {\nu}^\top \nabla \bmf{u} |_{\bmf{x}={\bmf{x}}_0 }  {\nu} =0  \},\label{eq:def1} \\
    {\mathcal S}\left(   {\mathcal F}_\Omega^{\kappa} \right):=& \{\Gamma_h \in  {\mathcal F}_\Omega^{\kappa}  ~|~ \exists \bmf{x}_0 \in \Gamma_h \mbox{ such that }  \boldsymbol{\tau}^\top \nabla \bmf{u} |_{\bmf{x}={\bmf{x}}_0 }  {\nu} =0 .\label{eq:def2}
\end{align}
\end{subequations}
They are referred to as the singular sets of ${\mathcal R}_\Omega^{\kappa}$, ${\mathcal T}_\Omega^{\kappa}  $,  ${\mathcal I}_\Omega^{\kappa} $, ${\mathcal G}_\Omega^{\kappa}$, ${\mathcal F}_\Omega^{\kappa} $, respectively. 
\end{defn}

\begin{rem}
Taking $\mathcal{S}\left(   {\mathcal R}_\Omega^{\kappa} \right)$ as an example, the presence of a singular rigid line $\Gamma_h\in \mathcal{S}\left(   {\mathcal R}_\Omega^{\kappa} \right)$ means that the homogeneous condition $\mathbf{u}|_{\Gamma_h}=0$ and the point-value condition $\boldsymbol{\tau}^\top \nabla \bmf{u} |_{\bmf{x}={\bmf{x}}_0 }  {\nu}=\bmf{0}$ are fulfilled for the line $\Gamma_h$ and a given point $\mathbf{x}_0\in\Gamma_h$. It is proved in \cite{DLW} that the presence of a singular rigid line implies that $\mathbf{u}\equiv\mathbf{0}$ in $\Omega$. In what follows, we shall show that the presence of any other singular lines also ensures $\mathbf{u}\equiv\mathbf{0}$ in $\Omega$. It is pointed out that we shall also introduce the singular set of the generalised-impedance lines in what follows, namely $\mathcal{S}(\mathcal{H}_\Omega^\kappa)$. But its definition involves the Fourier coefficients of $\mathbf{u}$, which we would need to introduce first.  
\end{rem}


\begin{figure}
	\centering
	\includegraphics[width=0.3\textwidth]{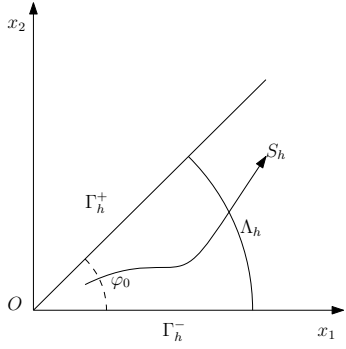}
	\caption{Schematic of the geometry of two intersecting lines with an angle $\varphi_0$ with $0< \varphi_0\leq \pi $.}
	\label{fig1}
\end{figure}

We next introduce the geometric setup of our study. Let two line segments respectively be defined by (see Fig.~\ref{fig1} for a schematic illustration):
\begin{equation}\label{eq:gamma_pm}
	\begin{split}
		\Gamma_h^+&=\{\bmf{x} \in \mathbb R^2~|~\bmf{x}=r\cdot (\cos \varphi_0, \sin \varphi_0 )^\top ,\quad 0\leq r\leq h,\quad 0<\varphi_0\leq 2\pi   \}, \\
		\Gamma_h^-&=\{\bmf{x} \in \mathbb R^2~|~\bmf{x}=r\cdot (1, 0 )^\top,\quad 0\leq r\leq h   \},\ \ h\in\mathbb{R}_+.
	\end{split}
	\end{equation}
Clearly, the intersecting angle between $\Gamma_h^+$ and $\Gamma_h^-$ is
\begin{equation}\label{eq:angle1}
\angle(\Gamma_h^+,\Gamma_h^{-})=\varphi_0, \quad 0< \varphi_0 \leq 2\pi.
\end{equation}
We should emphasize that if the intersection of $\Gamma_h^+$ and $\Gamma_h^-$  degenerates, i.e., $\varphi_0=\pi$ or $\varphi_0=2\pi$, then  $\Gamma_h^+$ and $\Gamma_h^-$ are actually lying on a same line. In this degenerate case, we let $\Gamma_h$ denote the line, which corresponds to the case with a single line in our subsequent study. Moreover, $\Gamma_h^\pm$ shall be the homogeneous lines in Definition~\ref{def:1} or the singular lines in Definition~\ref{def:2}. Since the PDO $\mathcal{L}$ defined in \eqref{eq:lame} is invariant under rigid motions, for any two of such lines that are intersecting in $\Omega$ (or one line in the degenerate case), we can always  have two lines as introduced in \eqref{eq:gamma_pm}  after a straightforward coordinate transformation such that the homogeneous conditions in Definitions~\ref{def:1} and \ref{def:2} are still satisfied on $\Gamma_h^\pm$. Furthermore, we assume that $h\in\mathbb{R}_+$ is sufficiently small such that $\Gamma_h^\pm$ are contained entirely in $\Omega$. If $\Gamma_h^\pm$ are impedance or generalized-impedance  lines, we assume that the impedance parameters on $\Gamma_h^\pm$ are respectively  $\boldsymbol{ \eta}_1$ and $\boldsymbol{\eta}_2$, where $\boldsymbol{ \eta}_1 \in \mathcal A(\Gamma_h^+ )$ and $\boldsymbol{\eta}_2\in \mathcal A(\Gamma_h^- )$.  As also noted before that $\bmf{u}$ is analytic in $\Omega$, it is sufficient for us to consider the case that $0<\varphi_0\leq \pi$. In fact, if $\pi  < \varphi_0 \leq 2\pi$, we see that $\Gamma_h^+$ belongs to the half-plane of $x_2<0$ (see Fig.~\ref{fig1}). Let $\widetilde{\Gamma}_h^+$ be the extended line segment of length $h$ in the half-plane of $x_2>0$. By the analytic continuation, we know that $\widetilde{\Gamma}_h^+$ is of the same type of $\Gamma_h^+$, namely $\bmf{u}$ satisfies the same homogeneous condition on $\widetilde{\Gamma}_h^+$ as that on $\Gamma_h^+$. Hence, instead of studying the intersection of $\Gamma_h^+$ and $\Gamma_h^-$, one can study the intersection of $\widetilde{\Gamma}_h^+$ and $\Gamma_h^-$. Clearly, $\angle(\widetilde{\Gamma}_h^+, \Gamma_h^-)\in (0,\pi]$.

With the above geometric setup, our argument shall be mainly confined in a neighborhood of the origin, namely $\mathbf{x}_0=\mathbf{0}$. We shall heavily rely on the Fourier expansion in terms of the radial wave functions of the solution $\bmf{u}$ to \eqref{eq:lame} around the origin, which we also fix in the following to facilitate our subsequent use. Most of the results can be conveniently found in \cite{DLW} that are summarized from \cite{DR95,SP}. To that end, throughout the rest of the paper, we set
\begin{equation}\label{eq:kpks}
	k_{{p}}=\sqrt{ \frac{\kappa }{\lambda+2 \mu} } \text { and } k_{{s}}=\sqrt{ \frac{\kappa }{\mu}},
\end{equation}
which are known as the compressional and shear wave numbers, respectively.

\begin{lem}\cite[Lemma 2.3]{DLW} \label{lem:u2 exp}
Let $J_m(t)$ be the first-kind Bessel function of the order $m\in \mathbb{N} \cup \{0\} $ and $\bmf{x}=r(\cos \varphi, \sin \varphi )^\top \in \mathbb R^2$.  Let $\bmf{u}(\bmf{x})$ be a Lam\'e eigenfunction of \eqref{eq:lame}.  The radial wave expression of $\mathbf{u}(\bmf{x})$ to \eqref{eq:lame} at the origin can be written as
\begin{equation}\label{eq:u}
 \begin{aligned}
 \mathbf{u}(\mathbf{x})=  \sum_{m=0}^{\infty} & \left\{ \frac{k_p}{2} a_{m} \mathrm{e}^{\bsi m \varphi} \left\{J_{m-1}\left(k_{p} r\right)\mathrm{e}^{-\bsi \varphi}\mathbf{e}_1
 -J_{m+1}\left(k_{p}r\right)\mathrm{e}^{\bsi \varphi}\mathbf{e}_2 \right\}\right.\\
 & + \frac{\bsi k_s}{2} b_{m} \mathrm{e}^{\bsi m \varphi} \left\{J_{m-1}\left(k_{s} r\right)\mathrm{e}^{-\bsi \varphi}\mathbf{e}_1
  +J_{m+1}\left(k_{s} r\right)\mathrm{e}^{\bsi \varphi}
  \mathbf{e}_2
   \right\} \bigg\} .
 \end{aligned}
\end{equation}
where and also throughout the rest of the paper, $\mathbf{e}_1:=(1,\bsi)^\top \mbox{ and }\mathbf{e}_2:=(1,-\bsi)^\top$.
\end{lem}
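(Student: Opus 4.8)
The plan is to derive \eqref{eq:u} by the classical Helmholtz (Weyl) decomposition of a Lam\'e field combined with separation of variables for the two resulting scalar Helmholtz equations. Fix a disc $B_\rho\Subset\Omega$ centred at the origin. Applying $\nabla\cdot$ to \eqref{eq:lame} and using $\nabla\cdot\nabla(\nabla\cdot\mathbf{u})=\Delta(\nabla\cdot\mathbf{u})$ gives $(\Delta+k_p^2)(\nabla\cdot\mathbf{u})=0$, while applying the scalar curl $\partial_1(\cdot)_2-\partial_2(\cdot)_1$ and using that curl annihilates gradients gives $(\Delta+k_s^2)(\partial_1 u_2-\partial_2 u_1)=0$, with $k_p,k_s$ as in \eqref{eq:kpks} (these are real and positive thanks to \eqref{eq:convex} and $\kappa\in\mathbb{R}_+$). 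Set $\mathbf{u}_p:=-k_p^{-2}\nabla(\nabla\cdot\mathbf{u})$ and $\mathbf{u}_s:=\mathbf{u}-\mathbf{u}_p$. A direct computation using \eqref{eq:lame}, \eqref{eq:kpks} and the two identities above shows that $\mathbf{u}_p$ is curl-free with $(\Delta+k_p^2)\mathbf{u}_p=\mathbf 0$, while $\mathbf{u}_s$ is divergence-free with $(\Delta+k_s^2)\mathbf{u}_s=\mathbf 0$.

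Since $B_\rho$ is simply connected, the Poincar\'e lemma provides real-analytic scalar potentials $p,q$ on $B_\rho$ with $\mathbf{u}_p=\nabla p$ and $\mathbf{u}_s=(\partial_2 q,-\partial_1 q)^\top$. From $(\Delta+k_p^2)\nabla p=\mathbf 0$ one gets $\nabla[(\Delta+k_p^2)p]=\mathbf 0$, so $(\Delta+k_p^2)p$ is a constant, which can be absorbed by subtracting a suitable multiple of $1/k_p^2$; hence $p$ may be taken to solve $(\Delta+k_p^2)p=0$, and likewise $(\Delta+k_s^2)q=0$. Writing the Fourier series in the angular variable, $p=\sum_{m\in\mathbb{Z}}p_m(r)\mathrm{e}^{\bsi m\varphi}$, and inserting it into the polar form of the Helmholtz equation forces each $p_m$ to satisfy Bessel's equation of order $m$ in the variable $k_p r$; regularity at the origin rules out the singular solution $Y_m$, so $p_m(r)=\alpha_m J_m(k_p r)$, and the resulting series converges absolutely and uniformly on compact subsets of $B_\rho$ by standard estimates. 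The same reasoning gives $q=\sum_{m\in\mathbb{Z}}\beta_m J_m(k_s r)\mathrm{e}^{\bsi m\varphi}$.

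It remains to insert these expansions into $\mathbf{u}=\nabla p+(\partial_2 q,-\partial_1 q)^\top$ and rewrite in the $\{\mathbf e_1,\mathbf e_2\}$ frame. Using $\nabla=\mathbf e_r\partial_r+r^{-1}\mathbf e_\varphi\partial_\varphi$ and the analogous formula for the rotated gradient, together with $\mathbf e_r=\tfrac12(\mathrm e^{-\bsi\varphi}\mathbf e_1+\mathrm e^{\bsi\varphi}\mathbf e_2)$, $\mathbf e_\varphi=\tfrac{\bsi}{2}(-\mathrm e^{-\bsi\varphi}\mathbf e_1+\mathrm e^{\bsi\varphi}\mathbf e_2)$, and the Bessel recurrences $J_m'(t)=\tfrac12(J_{m-1}(t)-J_{m+1}(t))$ and $\tfrac{2m}{t}J_m(t)=J_{m-1}(t)+J_{m+1}(t)$, each mode of $\nabla p$ collapses to $\tfrac{k_p}{2}\alpha_m\mathrm e^{\bsi m\varphi}\{J_{m-1}(k_p r)\mathrm e^{-\bsi\varphi}\mathbf e_1-J_{m+1}(k_p r)\mathrm e^{\bsi\varphi}\mathbf e_2\}$ and each mode of the $\mathbf{u}_s$-part to $\tfrac{\bsi k_s}{2}\beta_m\mathrm e^{\bsi m\varphi}\{J_{m-1}(k_s r)\mathrm e^{-\bsi\varphi}\mathbf e_1+J_{m+1}(k_s r)\mathrm e^{\bsi\varphi}\mathbf e_2\}$. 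Summing over $m$, using $J_{-m}(t)=(-1)^mJ_m(t)$ to reorganise the doubly-infinite sums, and renaming the coefficients as $a_m,b_m$ yields exactly \eqref{eq:u}. The only points requiring genuine care are the justification of the decomposition --- in particular that the potentials solve the \emph{homogeneous} scalar Helmholtz equations rather than inhomogeneous ones with constant right-hand side --- and the Bessel-function bookkeeping in the final step; neither is a serious obstacle, since at bottom this is a separation-of-variables computation.
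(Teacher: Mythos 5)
Your overall strategy is the right one and is essentially the derivation underlying the sources from which this lemma is quoted (the present paper gives no proof; it cites \cite{DLW}, which in turn refers to the elastic Herglotz-function literature). The Helmholtz decomposition $\mathbf{u}=\mathbf{u}_p+\mathbf{u}_s$ with $\mathbf{u}_p$ curl-free solving $(\Delta+k_p^2)\mathbf{u}_p=\mathbf{0}$ and $\mathbf{u}_s$ divergence-free solving $(\Delta+k_s^2)\mathbf{u}_s=\mathbf{0}$ is correct, your normalisation of the potentials so that they solve the \emph{homogeneous} scalar Helmholtz equations is handled properly, and the mode-by-mode computation is exact: for every integer $m$ one indeed has $\nabla\big(J_m(k_pr)\mathrm{e}^{\bsi m\varphi}\big)=\tfrac{k_p}{2}\mathrm{e}^{\bsi m\varphi}\{J_{m-1}(k_pr)\mathrm{e}^{-\bsi\varphi}\mathbf{e}_1-J_{m+1}(k_pr)\mathrm{e}^{\bsi\varphi}\mathbf{e}_2\}$, and similarly for the stream-function part.

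The gap is in your final sentence, where the doubly infinite sums over $m\in\mathbb{Z}$ are ``reorganised'' into the single sum over $m\ge 0$ in \eqref{eq:u} via $J_{-m}=(-1)^mJ_m$. That step fails: the reflection formula changes the order of the Bessel factor but not the angular frequency. Concretely, the mode $m=-n$ ($n\ge1$) of the compressional part equals $(-1)^{n+1}\tfrac{k_p}{2}\alpha_{-n}\{J_{n+1}(k_pr)\mathrm{e}^{-\bsi(n+1)\varphi}\mathbf{e}_1+J_{n-1}(k_pr)\mathrm{e}^{-\bsi(n-1)\varphi}\mathbf{e}_2\}$, whose $\mathbf{e}_1$-component carries angular frequency $-(n+1)\le-2$; but in \eqref{eq:u1}--\eqref{eq:u2} the combination $u_1-\bsi u_2$ (which isolates the $\mathbf{e}_1$-coefficients) only ever contains the frequencies $\mathrm{e}^{\bsi(m-1)\varphi}$ with $m\ge0$, i.e.\ frequencies $\ge-1$. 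Since the family $\{J_k(k_pr)\mathrm{e}^{\bsi k\varphi}\}_{k\in\mathbb{Z}}$ is linearly independent, the negative Fourier modes of the potentials are genuinely new directions that cannot be absorbed by renaming $a_m,b_m$; a crude dimension count (about $2(2N+1)$ solution parameters up to angular degree $N$ versus $2(N+1)$ coefficients in \eqref{eq:u}) says the same. To complete the argument you must either let the expansion run over $m\in\mathbb{Z}$ (equivalently, adjoin the complex-conjugate family of modes) or supply a reason why the negative modes of $p$ and $q$ vanish for the eigenfunctions under consideration; the identity $J_{-m}=(-1)^mJ_m$ alone does not bridge this.
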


\begin{rem}
	In view of \eqref{eq:u}, we have
\begin{align}
u_1 \left(\bmf{x}\right)=  \sum_{m=0} ^\infty & \Big [ \frac{k_p}{2} a_m \left(\mathrm{e}^{\bsi \left(m-1\right) \varphi} J_{m-1} \left(k_p r\right) - \mathrm{e}^{\bsi \left(m+1\right) \varphi} J_{m+1} \left(k_p r\right) \right)\nonumber \\
& + \frac{\bsi k_s}{2} b_m \left(\mathrm{e}^{\bsi \left(m-1\right) \varphi} J_{m-1} \left(k_s r\right) + \mathrm{e}^{\bsi \left(m+1\right) \varphi} J_{m+1} \left(k_s r\right) \right)  \Big ],\label{eq:u1}\\
u_2 \left(\bmf{x}\right)=  \sum_{m=0} ^\infty & \Big [ \frac{\bsi k_p}{2} a_m \left( \mathrm{e}^{\bsi \left(m-1\right) \varphi} J_{m-1} \left(k_p r\right) + \mathrm{e}^{\bsi \left(m+1\right) \varphi} J_{m+1} \left(k_p r\right) \right)\nonumber  \\
& + \frac{ k_s}{2} b_m \left(-\mathrm{e}^{\bsi \left(m-1\right) \varphi} J_{m-1} \left(k_s r\right) + \mathrm{e}^{\bsi \left(m+1\right) \varphi} J_{m+1} \left(k_s r\right)  \right) \Big ].\label{eq:u2}
\end{align}
\end{rem}

By the analyticity of $\bmf{u}$ in the interior domain of $\Omega$ and  strong continuation principle, we have the following proposition.
\begin{prop}\label{prop:1}	
Suppose $\mathbf{0}\in \Omega$ and $\bmf{u}$ has the expansion \eqref{eq:u} around the origin such  that $a_m=b_m=0$ for $\forall m \in \mathbb{N}\cup \{0\}$. Then
$$
\bmf{u} \equiv \bmf{0} \mbox{ in } \Omega .
$$
\end{prop}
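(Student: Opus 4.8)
The plan is to deduce that $\bmf{u}$ vanishes identically on a neighborhood of the origin and then invoke real-analyticity together with the standard unique continuation principle for the elliptic operator $\mathcal{L}$ (or the strong continuation principle referenced just before the statement). The starting point is the radial wave expansion \eqref{eq:u}: since every coefficient $a_m=b_m=0$, each term in the series is identically zero, so the series sums to $\bmf{0}$ in the disk of convergence, i.e.\ $\bmf{u}\equiv\bmf{0}$ in some ball $B_\rho(\mathbf{0})\subset\Omega$. One small point worth addressing is that the expansion \eqref{eq:u} is an expansion valid in the largest disk centered at the origin contained in $\Omega$ (or at least in some ball around the origin where the series representation from \cite[Lemma 2.3]{DLW} holds); this is enough to get $\bmf{u}\equiv\mathbf{0}$ on an open subset of $\Omega$.

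Next I would pass from vanishing on an open ball to vanishing on all of $\Omega$. Since $\mathcal{L}$ is a second-order elliptic partial differential operator with constant coefficients, $\bmf{u}$ solving $-\mathcal{L}(\bmf{u})=\kappa\bmf{u}$ is real-analytic in $\Omega$, as already noted in the excerpt. A real-analytic function on a connected open set that vanishes on a nonempty open subset vanishes everywhere; here $\Omega$ is an open set, and I would assume (as is standard and implicit in ``strong continuation principle'') that one works on each connected component, or simply that $\Omega$ is connected. Thus $\bmf{u}\equiv\mathbf{0}$ in $\Omega$.

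Concretely, the steps in order are: (i) substitute $a_m=b_m=0$ into \eqref{eq:u} to conclude $\bmf{u}\equiv\mathbf{0}$ on a ball $B_\rho(\mathbf{0})$ around the origin on which the radial wave expansion is valid; (ii) recall from the ellipticity of $\mathcal{L}$ that $\bmf{u}$ is real-analytic in $\Omega$; (iii) apply the identity theorem for real-analytic functions (equivalently the weak-to-strong unique continuation for $\mathcal{L}$): vanishing on the open set $B_\rho(\mathbf{0})$ forces $\bmf{u}\equiv\mathbf{0}$ throughout the connected open set $\Omega$. None of these steps is genuinely hard; the mild obstacle — really just a point requiring a sentence of care — is justifying the radius of validity of the expansion \eqref{eq:u}, i.e.\ that the radial wave series from \cite[Lemma 2.3]{DLW} converges and represents $\bmf{u}$ on a genuine ball about the origin rather than only formally, so that one legitimately obtains vanishing on an \emph{open} set before invoking analytic continuation. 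This is guaranteed by the construction of the expansion via Bessel functions, which are entire, so the series converges locally uniformly near the origin.
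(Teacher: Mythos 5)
Your proposal is correct and matches the paper's own (essentially one-line) justification: the paper states this proposition immediately after remarking that it follows from the analyticity of $\bmf{u}$ and the strong continuation principle, which is exactly your argument of vanishing on a ball from the zero coefficients followed by analytic continuation. No issues.
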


The corresponding Fourier representation of  the boundary traction operator $T_{\bmf{\nu }}\mathbf u \Big |_{\Gamma^\pm_h }$ defined in \eqref{eq:Tu} can be found in the following lemma.


\begin{lem}\cite[Lemma 2.4]{DLW}\label{lem:Tu1 exp}
Let $\bmf{u}(\bmf{x})$ be a Lam\'e eigenfunction to \eqref{eq:lame} with the Fourier expansion \eqref{eq:u} and $\Gamma^\pm_h$ is defined in \eqref{eq:gamma_pm}. Then $ T_\nu \mathbf{u}\Big |_{\Gamma^+_h}$ possesses the following radial wave expansion at the origin
\begin{equation}\label{eq:Tu1}
 \begin{aligned}
 T_\nu \mathbf{u}\Big |_{\Gamma^+_h} & =  \sum_{m=0}^{\infty}  \left\{\frac{\bsi k_p^2}{2} a_{m} \Big[ \mathrm{e}^{\bsi \left(m-1\right) \varphi_0}  \mu  J_{m-2}(k_p r) \mathbf{e}_1 +  \mathrm{e}^{\bsi (m -1) \varphi_0}  (\lambda+\mu) J_m(k_p r) \mathbf{e}_1\right.\\
 &  - \mathrm{e}^{\bsi (m+1) \varphi_0}  \mu J_{m+2}\left(k_{p} r\right) \mathbf{e}_2  - \mathrm{e}^{\bsi( m +1)\varphi_0}  \left(\lambda+\mu\right) J_m\left(k_{p} r\right) \mathbf{e}_2\Big] \\
 &- \frac{k_s^2}{2} b_{m}\Big[  \mathrm{e}^{\bsi \left(m-1\right) \varphi_0}  \mu J_{m-2}\left(k_{s} r\right) \mathbf{e}_1 -  \mathrm{e}^{\bsi \left(m+1\right) \varphi_0}   \mu J_{m+2}\left(k_{s} r\right) \mathbf{e}_2\Big]
 \bigg\}.
 \end{aligned}
\end{equation}
Similarly, the radial wave expansion of $ T_\nu \mathbf{u}\Big |_{\Gamma^-_h}$ at the origin is given by
\begin{equation}\label{eq:Tu2}
 \begin{split}
  T_\nu \mathbf{u}\Big |_{\Gamma^-_h}&=  \sum_{m=0}^{\infty}  \left\{-\frac{\bsi k_p^2}{2} a_{m} \mu  J_{m-2}(k_p r) \mathbf{e}_1 - \frac{\bsi k_p^2}{2} a_{m}   (\lambda+\mu) J_m(k_p r) \mathbf{e}_1\right.\\
 & + \frac{k_s^2}{2} b_{m}  \mu J_{m-2}\left(k_{s} r\right) \mathbf{e}_1 + \frac{\bsi k_p^2}{2} a_{m} \left(\lambda+\mu\right) J_m\left(k_{p} r\right) \mathbf{e}_2 \\
 &+ \frac{\bsi k_p^2}{2} a_{m}   \mu J_{m+2}\left(k_{p} r\right) \mathbf{e}_2 + \frac{k_s^2}{2} b_{m}  \mu J_{m+2}\left(k_{s} r\right) \mathbf{e}_2
 \bigg\}.
 \end{split}
\end{equation}
\end{lem}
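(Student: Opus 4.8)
The plan is to derive \eqref{eq:Tu1}--\eqref{eq:Tu2} by inserting the radial wave expansion \eqref{eq:u} into the definition \eqref{eq:Tu} of the traction operator and simplifying term by term with the Bessel recurrences. The first step is to make explicit the Helmholtz decomposition already encoded in \eqref{eq:u}: setting
\[
p(\bmf x)=\sum_{m=0}^\infty a_m\,\mathrm e^{\bsi m\varphi}J_m(k_pr),\qquad s(\bmf x)=\sum_{m=0}^\infty b_m\,\mathrm e^{\bsi m\varphi}J_m(k_sr),
\]
one checks directly that $\bmf u=\nabla p+\curl s$ with $\curl s:=(\partial_2 s,-\partial_1 s)^\top$, and that $p,s$ solve the scalar Helmholtz equations $(\Delta+k_p^2)p=0$ and $(\Delta+k_s^2)s=0$. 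Hence $\nabla\cdot\bmf u=\Delta p=-k_p^2p$ and $\partial_2u_1-\partial_1u_2=-k_s^2s$, so that on $\Gamma_h^\pm$
\[
T_\nu\bmf u=2\mu\,\partial_\nu\bmf u-\lambda k_p^2\,\nu\,p-\mu k_s^2\,\boldsymbol\tau\,s ,
\]
and the only term still to be expanded is $\partial_\nu\bmf u$, which involves only second derivatives of $p$ and $s$.

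To carry out those differentiations efficiently I would use the Wirtinger operators $\partial_z=\tfrac12(\partial_1-\bsi\partial_2)$ and $\partial_{\bar z}=\tfrac12(\partial_1+\bsi\partial_2)$, for which the radial wave functions $w_m(k):=\mathrm e^{\bsi m\varphi}J_m(kr)$ satisfy the raising/lowering identities
\[
\partial_z w_m(k)=\tfrac k2\,w_{m-1}(k),\qquad \partial_{\bar z}w_m(k)=-\tfrac k2\,w_{m+1}(k),
\]
which follow at once from $\frac{\rmd}{\rmd r}J_m(kr)=\tfrac k2\big(J_{m-1}(kr)-J_{m+1}(kr)\big)$ together with $\tfrac{2m}{kr}J_m(kr)=J_{m-1}(kr)+J_{m+1}(kr)$, and which automatically absorb the $1/r$ factors that otherwise clutter polar-coordinate derivatives. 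Iterating, every second derivative of $w_m(k)$ lands in the span of $w_{m-2}(k),w_m(k),w_{m+2}(k)$ (explicitly $\partial_z^2w_m=\tfrac{k^2}{4}w_{m-2}$, $\partial_z\partial_{\bar z}w_m=-\tfrac{k^2}{4}w_m$, $\partial_{\bar z}^2w_m=\tfrac{k^2}{4}w_{m+2}$), which is exactly the mechanism producing the indices $m-2,m,m+2$ and the angular exponents $\mathrm e^{\bsi(m-1)\varphi_0}$, $\mathrm e^{\bsi(m+1)\varphi_0}$ in \eqref{eq:Tu1}--\eqref{eq:Tu2}. The same computation shows that the $J_m$-part of $2\mu\,\partial_\nu(\curl s)$ coincides with $\mu k_s^2\boldsymbol\tau s$, which is why the shear contribution in \eqref{eq:Tu1} carries no $J_m$ term.

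With these tools the remainder is pure bookkeeping. For $\Gamma_h^-$ (the positive $x_1$-axis) the normal and tangent from \eqref{eq:nutau} are constant vectors; I would set $\varphi=0$ in the expansions of the second derivatives of $p$ and $s$ and of $p,s$ themselves, contract against these constant vectors, group the $\mathbf e_1$- and $\mathbf e_2$-components, and read off \eqref{eq:Tu2}. For $\Gamma_h^+$ one repeats the same steps at $\varphi=\varphi_0$ with $\nu=(\sin\varphi_0,-\cos\varphi_0)^\top$ and $\boldsymbol\tau=(\cos\varphi_0,\sin\varphi_0)^\top$; the factors $\mathrm e^{\pm\bsi\varphi_0}$ coming from $\partial_\nu$ combine with the exponents already present, and after shifting the summation index and using $J_{-1}=-J_1$, $J_{-2}=J_2$ to absorb the finitely many low-order terms, the terms regroup precisely into \eqref{eq:Tu1}.

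The main obstacle is organizational rather than conceptual: three differential expressions must be expanded, re-indexed and recombined, with the simultaneous requirements that every $1/r$ factor cancels and that the low-order Bessel functions are handled consistently once the sums are consolidated. The Wirtinger packaging is what keeps this under control, since it removes the explicit $1/r$ at the outset and reduces the computation to iterated index shifts followed by contracting the resulting second-derivative matrices against the fixed vectors $\nu$ and $\boldsymbol\tau$.
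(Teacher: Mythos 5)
Your route is correct in its key identities and is genuinely different from, and cleaner than, the computation behind \cite[Lemma 2.4]{DLW} (compare the analogous brute-force polar-coordinate differentiation in the Appendix proof of Lemma~\ref{lem:condition} here, where $\partial_r$ and $r^{-1}\partial_\varphi$ of \eqref{eq:u1}--\eqref{eq:u2} are expanded explicitly and the Bessel recurrences are invoked afterwards to absorb the $1/r$ factors). The Helmholtz splitting $\bmf u=\nabla p+\curl\, s$ with $p=\sum_m a_m\mathrm e^{\bsi m\varphi}J_m(k_pr)$ and $s=\sum_m b_m\mathrm e^{\bsi m\varphi}J_m(k_sr)$ does reproduce \eqref{eq:u} exactly (the $\mathbf e_1/\mathbf e_2$ structure of \eqref{eq:u} is precisely $\nabla p=(\partial_zp)\mathbf e_1+(\partial_{\bar z}p)\mathbf e_2$ and $\curl\, s=\bsi(\partial_zs)\mathbf e_1-\bsi(\partial_{\bar z}s)\mathbf e_2$); the reduction $T_\nu\bmf u=2\mu\partial_\nu\bmf u-\lambda k_p^2\,\nu\,p-\mu k_s^2\,\boldsymbol\tau\,s$ is right; the raising/lowering identities and the cancellation of the $J_m$ shear contribution all check out. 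What each approach buys: yours replaces the paper's page-long $\partial_r,\partial_\varphi$ expansions by three one-line index shifts, at the cost of verifying the potential representation once at the outset.

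Two sign issues must be settled before you can claim the terms ``regroup precisely into \eqref{eq:Tu1}''. First, \eqref{eq:Tu1} and \eqref{eq:Tu2} do not come from one consistent orientation: \eqref{eq:Tu2} corresponds to $\nu=(0,-1)^\top$ on $\Gamma_h^-$ (your convention), whereas \eqref{eq:Tu1} corresponds to $\nu=(-\sin\varphi_0,\cos\varphi_0)^\top$ on $\Gamma_h^+$, the choice fixed at the start of Section~\ref{sect:3}; these are the two exterior normals of the sector $\mathcal K$. With your stated $\nu=(\sin\varphi_0,-\cos\varphi_0)^\top$ on $\Gamma_h^+$ you will land on the negative of \eqref{eq:Tu1}. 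Second, even with the correct normal your computation will not reproduce \eqref{eq:Tu1} verbatim: the last term of the $b_m$-bracket should carry the opposite sign, i.e.\ it should read $+\,\mathrm e^{\bsi(m+1)\varphi_0}\mu J_{m+2}(k_sr)\mathbf e_2$. You can confirm this without redoing the whole calculation: letting $\varphi_0\to0$ in \eqref{eq:Tu1} must give the negative of \eqref{eq:Tu2}, and contracting \eqref{eq:Tu1} with $\boldsymbol\tau$ must reproduce the $J_{m-2}(k_sr)+J_{m+2}(k_sr)$ combination appearing in \eqref{eq:third2}; both checks fail only for that single term. So the printed \eqref{eq:Tu1} contains a sign typo there, and your method, carried out carefully, yields the corrected formula rather than the one displayed.
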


The explicit  Fourier expansions of $\mathcal B_3(\mathbf u)$ and  $\mathcal B_4(\mathbf u)$  defined in \eqref{eq:tf} are given in the following lemma.
\begin{lem}
	Let the unit normal vector $\nu $ and the tangential vector $\boldsymbol{\tau}$ to $\Gamma_h$ are defined in \eqref{eq:nutau}. Suppose that $\bmf{u}(\bmf{x})$ is  a Lam\'e eigenfunction of \eqref{eq:lame}, $\mathcal B_3(\mathbf u)$ and $\mathcal B_4(\mathbf u)$
 are defined in \eqref{eq:tf}. We have the Fourier expansions of $\mathcal B_3(\mathbf u)$  as
 \begin{equation}\label{eq:third3}
 \begin{aligned}
\mathcal B_3(\mathbf u)|_{\Gamma_h^-} &=  \sum_{m=0}^{\infty}  \bigg\{ \left[- \frac{\bsi k_p}{2} a_{m} \big (J_{m-1}(k_p r) + J_{m+1}(k_p r)\big )\right.\\
   & +  \frac{k_s}{2} b_m \big (J_{m-1}(k_s r) - J_{m+1}(k_s r) \big )\bigg]\hat{\mathbf{e}}_1 + \left[ \frac{\bsi k_p^2}{2} a_{m} \big (-J_{m-2}(k_p r)\right.\\
    &  + J_{m+2}(k_p r) \big  )
  + \frac{k_s^2}{2} b_m \big (J_{m-2}(k_s r) + J_{m+2}(k_s r) \big )\bigg] \mu \hat{\mathbf{e}}_2 \bigg\},
 \end{aligned}
\end{equation}
\vspace*{-2mm}
 \begin{equation}\label{eq:third2}
 \begin{aligned}
\mathcal B_3(\mathbf u)|_{\Gamma_h^+} &=  \sum_{m=0}^{\infty} \mathrm{ e}^{\bsi m \varphi_0} \bigg\{ \left[ \frac{\bsi k_p}{2} a_{m} \big (J_{m-1}(k_p r) + J_{m+1}(k_p r)\big )\right.\\
   & +  \frac{k_s}{2} b_m \big (-J_{m-1}(k_s r) + J_{m+1}(k_s r) \big )\bigg]\hat{\mathbf{e}}_1 + \left[ \frac{\bsi k_p^2}{2} a_{m} \big (-J_{m-2}(k_p r)\right.\\
    &  + J_{m+2}(k_p r) \big  )
  + \frac{k_s^2}{2} b_m \big (J_{m-2}(k_s r) + J_{m+2}(k_s r) \big )\bigg] \mu \hat{\mathbf{e}}_2 \bigg\},
 \end{aligned}
\end{equation}
where  and also throughout the rest of the paper,   $\hat{\mathbf{e}}_1:=(0,1)^\top $ and $\hat{\mathbf{e}}_2:=(1,0)^\top $. Furthermore, it holds that
\begin{equation}\label{eq:forth3}
 \begin{aligned}
\mathcal B_4(\mathbf u)\big |_{\Gamma_h^-} = & \sum_{m=0}^{\infty}  \bigg\{ \left[ \frac{ k_p}{2} a_{m} (  J_{m-1}(k_p r) - J_{m+1}(k_p r) )\right.\\
   & +  \frac{\bsi k_s}{2} b_m (J_{m-1}(k_s r) + J_{m+1}(k_s r) )\bigg]\hat{\mathbf{e}}_1 \\
   & + \left[- \frac{ k_p^2}{2} a_{m} (J_{m-2}(k_p r)\mu + 2(\lambda+\mu) J_m(k_p r)+ J_{m+2}(k_p r) \mu)\right.\\
   & - \frac{\bsi k_s^2}{2} b_m (J_{m-2}(k_s r) - J_{m+2}(k_s r) )\mu \bigg]  \hat{\mathbf{e}}_2 \bigg\}.
 \end{aligned}
 \vspace*{-1mm}
\end{equation}
\begin{equation}\label{eq:forth2}
 \begin{aligned}
\mathcal B_4(\mathbf u)\big |_{\Gamma_h^+} = & \sum_{m=0}^{\infty} \mathrm{ e}^{\bsi m \varphi_0} \bigg\{ \left[ \frac{ k_p}{2} a_{m} ( - J_{m-1}(k_p r) + J_{m+1}(k_p r) )\right.\\
   & -  \frac{\bsi k_s}{2} b_m (J_{m-1}(k_s r) + J_{m+1}(k_s r) )\bigg]\hat{\mathbf{e}}_1 \\
   & + \left[- \frac{ k_p^2}{2} a_{m} (J_{m-2}(k_p r)\mu + 2(\lambda+\mu) J_m(k_p r)+ J_{m+2}(k_p r) \mu)\right.\\
   & - \frac{\bsi k_s^2}{2} b_m (J_{m-2}(k_s r) - J_{m+2}(k_s r) )\mu \bigg]  \hat{\mathbf{e}}_2 \bigg\}.
 \end{aligned}
\end{equation}
\end{lem}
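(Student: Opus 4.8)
The plan is to establish the four expansions by direct substitution of the radial-wave series of $\mathbf{u}$ and of $T_\nu\mathbf{u}$ into the definitions $\mathcal{B}_3(\mathbf{u})=(\nu\cdot\mathbf{u},\,\boldsymbol{\tau}\cdot T_\nu\mathbf{u})^\top$ and $\mathcal{B}_4(\mathbf{u})=(\boldsymbol{\tau}\cdot\mathbf{u},\,\nu\cdot T_\nu\mathbf{u})^\top$ from \eqref{eq:tf}, followed by simplification of the scalar products. First I would fix, on each of $\Gamma_h^\pm$ in \eqref{eq:gamma_pm}, the unit normal $\nu$ and tangent $\boldsymbol{\tau}$ through \eqref{eq:nutau}, taking the same orientation convention already used to derive \eqref{eq:Tu1}--\eqref{eq:Tu2} in Lemma~\ref{lem:Tu1 exp}; since $\Gamma_h^-$ is the ray $\varphi=0$ and $\Gamma_h^+$ the ray $\varphi=\varphi_0$, both $\nu$ and $\boldsymbol{\tau}$ are constant unit vectors of the form $(\cos\theta,\sin\theta)^\top$ with $\theta$ depending only on $\varphi_0$. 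The one device that makes everything collapse is the elementary identity $\mathbf{d}\cdot\mathbf{e}_1=\mathrm{e}^{\bsi\theta}$ and $\mathbf{d}\cdot\mathbf{e}_2=\mathrm{e}^{-\bsi\theta}$ for $\mathbf{d}=(\cos\theta,\sin\theta)^\top$, recalling $\mathbf{e}_1=(1,\bsi)^\top$ and $\mathbf{e}_2=(1,-\bsi)^\top$; it converts every scalar product of $\nu$ or $\boldsymbol{\tau}$ against the vector factors $\mathbf{e}_1,\mathbf{e}_2$ appearing in \eqref{eq:u}, \eqref{eq:Tu1} and \eqref{eq:Tu2} into a pure exponential in $\varphi_0$.

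For the displacement components $\nu\cdot\mathbf{u}$ and $\boldsymbol{\tau}\cdot\mathbf{u}$ I would substitute the series \eqref{eq:u} (Lemma~\ref{lem:u2 exp}), set $\varphi=0$ or $\varphi=\varphi_0$, and contract each summand with $\nu$ or $\boldsymbol{\tau}$ using the device above; on $\Gamma_h^+$ the exponential $\mathrm{e}^{\pm\bsi\varphi_0}$ produced by the contraction combines with the prefactor $\mathrm{e}^{\bsi(m\mp1)\varphi_0}$ in \eqref{eq:u} to leave a single overall factor $\mathrm{e}^{\bsi m\varphi_0}$, while on $\Gamma_h^-$ no exponential survives, and collecting the $J_{m-1},J_{m+1}$ terms yields the coefficient of $\hat{\mathbf{e}}_1$ in \eqref{eq:third3}, \eqref{eq:third2}, \eqref{eq:forth3} and \eqref{eq:forth2}. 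Equivalently these first components can be read directly from \eqref{eq:u1}--\eqref{eq:u2}. For the traction components $\boldsymbol{\tau}\cdot T_\nu\mathbf{u}$ and $\nu\cdot T_\nu\mathbf{u}$ I would instead insert \eqref{eq:Tu1} on $\Gamma_h^+$ and \eqref{eq:Tu2} on $\Gamma_h^-$ and again contract with $\boldsymbol{\tau}$ or $\nu$. The only point here that is not purely mechanical is the fate of the two $(\lambda+\mu)J_m(k_p r)$ terms carried by $\mathbf{e}_1$ and by $\mathbf{e}_2$ in $T_\nu\mathbf{u}$: after contracting with $\boldsymbol{\tau}$ they acquire opposite signs and cancel, so $\boldsymbol{\tau}\cdot T_\nu\mathbf{u}$ retains only the $\mu J_{m\mp2}$ contributions, whereas after contracting with $\nu$ they acquire the same sign and add up to the term $2(\lambda+\mu)J_m(k_p r)$ appearing in \eqref{eq:forth3}--\eqref{eq:forth2}; carrying along the accompanying factors of $\bsi$ then gives the stated $\mu$-weighted coefficient of $\hat{\mathbf{e}}_2$.

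The main obstacle is therefore the length and sign-bookkeeping of the computation rather than any conceptual difficulty. One has to keep the orientation of $\nu$ and $\boldsymbol{\tau}$ on $\Gamma_h^\pm$ consistent with the one already fixed in Lemma~\ref{lem:Tu1 exp} so that \eqref{eq:Tu1}--\eqref{eq:Tu2} may be quoted verbatim, propagate correctly the factors $\bsi$ originating from $\mathbf{e}_1,\mathbf{e}_2$ through each termwise contraction, and verify the sign pattern that forces the $(\lambda+\mu)J_m(k_p r)$ terms to cancel in the tangential traction but to double in the normal traction. Once these are in place, matching the resulting series with \eqref{eq:third3}--\eqref{eq:forth2} term by term completes the proof.
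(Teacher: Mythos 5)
Your proposal is correct and follows essentially the same route as the paper: the paper also fixes $\nu=(-\sin\varphi_0,\cos\varphi_0)^\top$, $\boldsymbol{\tau}=(-\cos\varphi_0,-\sin\varphi_0)^\top$, records the contractions $\nu\cdot\mathbf{e}_1=\bsi\mathrm{e}^{\bsi\varphi_0}$, $\nu\cdot\mathbf{e}_2=-\bsi\mathrm{e}^{-\bsi\varphi_0}$, $\boldsymbol{\tau}\cdot\mathbf{e}_1=-\mathrm{e}^{\bsi\varphi_0}$, $\boldsymbol{\tau}\cdot\mathbf{e}_2=-\mathrm{e}^{-\bsi\varphi_0}$, and substitutes \eqref{eq:u}, \eqref{eq:Tu1}--\eqref{eq:Tu2} into \eqref{eq:tf} by direct calculation. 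Your observations about the cancellation versus doubling of the $(\lambda+\mu)J_m(k_pr)$ terms and the emergence of the single prefactor $\mathrm{e}^{\bsi m\varphi_0}$ are exactly the right bookkeeping and check out.
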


\begin{proof}
On $\Gamma_h^+$, we have ${\nu}=(-\sin \varphi_0,\cos \varphi_0)^\top$ and $\boldsymbol{\tau}=(-\cos \varphi_0,-\sin \varphi_0)^\top$. By virtue of  \eqref{eq:tf}, \eqref{eq:u} and \eqref{eq:Tu1}, as well as noting
$$
\nu \cdot \mathbf e_1=\bsi {\mathrm e}^{\bsi \varphi_0},\quad \nu \cdot \mathbf e_2=-\bsi {\mathrm e}^{-\bsi \varphi_0},\quad \boldsymbol{\tau} \cdot \mathbf e_1=- {\mathrm e}^{\bsi \varphi_0},\quad \boldsymbol{\tau} \cdot \mathbf e_2=- {\mathrm e}^{-\bsi \varphi_0},
$$
and by direct calculations, we can obtain \eqref{eq:third2} and \eqref{eq:forth2}. Similarly, one can derive \eqref{eq:third3} and \eqref{eq:forth3}.
\end{proof}

\begin{lem}{\cite{CDLZ}}\label{lem:co exp}
Suppose that for $0<h \ll 1$ and $t \in (0,h)$,\[\sum_{n=0}^{\infty}\alpha_{n}J_{n}(t)=0,\]
where $J_{n}\left(t\right)$ is the n-th Bessel function of the first kind. Then \[\alpha_{n}=0,\quad n=0,1,2,\ldots \]
\end{lem}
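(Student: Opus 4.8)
The plan is to exploit the fact that the first-kind Bessel function $J_n$ has a zero of order exactly $n$ at the origin, with nonvanishing leading coefficient. Concretely, from the standard power series
\begin{equation*}
J_n(t)=\sum_{k=0}^\infty \frac{(-1)^k}{k!\,(n+k)!}\Big(\frac{t}{2}\Big)^{n+2k},
\end{equation*}
one has $J_n(t)=\frac{t^n}{2^n n!}\big(1+O(t^2)\big)$ near $t=0$, so that $J_n$ contributes only to the monomials $t^n,t^{n+2},t^{n+4},\dots$ in a Taylor expansion. The idea is to regard $f(t):=\sum_{n=0}^\infty \alpha_n J_n(t)$ as a power series in $t$ on $(0,h)$, note that it vanishes identically there, and then peel off its coefficients one monomial at a time by strong induction on the index.

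First I would record that, under the standing convergence hypothesis on $\sum_n \alpha_n J_n(t)$, the function $f$ is real-analytic near $0$ and may be expanded as an absolutely convergent double series in $n$ and $k$; in the situations where this lemma is invoked, the series arises from a real-analytic function and converges absolutely and uniformly on compact subsets, so the rearrangement into a single power series $f(t)=\sum_{m=0}^\infty \beta_m t^m$ with
\begin{equation*}
\beta_m=\sum_{\substack{n+2k=m\\ n,k\ge 0}}\frac{(-1)^k\,\alpha_n}{2^{\,n+2k}\,k!\,(n+k)!}
\end{equation*}
is legitimate. Since $f\equiv 0$ on the interval $(0,h)$, the identity theorem for power series gives $\beta_m=0$ for every $m\ge 0$.

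Next I would run the induction. For $m=0$ we have $\beta_0=\alpha_0=0$. Assuming $\alpha_0=\alpha_1=\cdots=\alpha_{m-1}=0$, in the formula for $\beta_m$ the term with $k=0$ equals $\alpha_m/(2^m m!)$, while every term with $k\ge 1$ carries a factor $\alpha_n$ with $n=m-2k<m$, hence vanishes by the inductive hypothesis. Therefore $\beta_m=\alpha_m/(2^m m!)=0$, i.e.\ $\alpha_m=0$, which completes the induction and the proof.

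The only genuinely non-formal point — and thus the \emph{main obstacle} — is the justification of the term-by-term rearrangement, i.e.\ having enough control on the growth of the $\alpha_n$ (equivalently, on the mode of convergence of the Neumann series $\sum_n\alpha_nJ_n(t)$) to treat $f$ as a bona fide power series; once that is secured the argument is pure bookkeeping with Bessel coefficients. An essentially equivalent route, avoiding any talk of rearrangement, is to differentiate $\sum_n\alpha_nJ_n(t)=0$ repeatedly in $t$ (again using uniform convergence of the differentiated series on compact subsets) and evaluate at $t=0$: since $J_n^{(m)}(0)=0$ whenever $n>m$ and $J_m^{(m)}(0)=2^{-m}\ne 0$, the $m$-th derivative relation collapses, under the inductive hypothesis $\alpha_0=\cdots=\alpha_{m-1}=0$, to $2^{-m}\alpha_m=0$. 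I would present whichever of these two versions best matches the precise convergence assumptions available from the reference \cite{CDLZ}.
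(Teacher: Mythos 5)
The paper does not prove this lemma itself but imports it from \cite{CDLZ}, where the argument is precisely the coefficient-comparison induction you describe (expand each $J_n$ in its power series, note the leading term $t^n/(2^n n!)$, and peel off $\alpha_0,\alpha_1,\dots$ in order); your proposal is therefore correct and takes essentially the same route. The rearrangement issue you flag is also resolvable exactly as you suggest: pointwise convergence at a single small $t_0\in(0,h)$ already forces $|\alpha_n|\leq C\,n!\,(2/t_0)^n$ because $J_n(t_0)\geq \tfrac{1}{2}(t_0/2)^n/n!$ for $t_0$ small, which gives absolute convergence of the double series for $t<t_0$ and legitimizes the term-by-term comparison.
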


We are now in a position to present the summary of the main GHP results. Henceforth, if $\Gamma_h^+ $ is an impedance line or a generalized-impedance line associated with the parameter $\boldsymbol{\eta}_1$,  according to Definition~\ref{def:class1}, we assume that $\boldsymbol{ \eta}_1$ is given by the following absolutely convergent series at $\mathbf 0\in \Gamma_h^+$:
\begin{equation}\label{eq:eta1 ex}
		\boldsymbol{\eta}_1=\eta_1+\sum_{j=1}^\infty \eta_{1,j} r^j,  
\end{equation}
    where $\eta_{1}\in\mathbb{C}\backslash\{0\}$, $\eta_{1,j}\in \mathbb C$ and $r\in [0,h]$. Similarly, when $\Gamma_h^-$ is an impedance line or a generalized-impedance line associated with the parameter $\boldsymbol{\eta}_2$, we suppose that $\boldsymbol{\eta}_2$ is given by the following absolutely convergent series at $\mathbf 0\in \Gamma_h^+$:
\begin{equation}\label{eq:eta2 ex}
		\boldsymbol{\eta}_2=\eta_2+\sum_{j=1}^\infty \eta_{2,j} r^j,  
\end{equation}
    where $\eta_{2}\in\mathbb{C}\backslash\{0\}$, $\eta_{2,j}\in \mathbb C$ and $r\in [0,h]$. 
 
We first present four theorems on the GHP where only $\mathcal{B}_1, \mathcal{B}_2$ and $\mathcal{B}_5$ are involved. They are actually extended from our paper \cite{DLW}, where the boundary impedance parameters were always assumed to be constant, but in this paper, they can be variable functions as introduced in Definition~\ref{def:class1}. 
    
\begin{thm}\label{thm:impedance line}
Let $\mathbf{u}\in L^2(\Omega)^2$ be a solution to \eqref{eq:lame}. If there exits a singular impedance line $\Gamma_h\subset\Omega$ of $\bmf{u}$ with an impedance parameter $\boldsymbol{ \eta}  \in \mathcal{A}(\Gamma_h ) $, then $\bmf{u}\equiv \bmf{ 0}$.
\end{thm}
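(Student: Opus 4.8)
\emph{Proof strategy.} The plan is to follow the radial-wave-expansion strategy of \cite{DLW}, reducing the claim to the assertion that all Fourier coefficients $a_m,b_m$ in the expansion \eqref{eq:u} of $\mathbf u$ around the singular point $\mathbf x_0\in\Gamma_h$ vanish; Proposition~\ref{prop:1} then yields $\mathbf u\equiv\mathbf 0$ in $\Omega$. Since $\mathcal L$ is invariant under rigid motions, I would first normalise so that $\mathbf x_0$ is the origin and the sub-segment of $\Gamma_h$ issuing from $\mathbf x_0$ is $\Gamma_h^-=\{r(1,0)^\top:0\le r\le h\}$ (taking $0<h\ll1$ suffices since $\Gamma_h\Subset\Omega$ and $\mathbf u$ is analytic); then $\nu=(0,1)^\top$, $\boldsymbol\tau=(-1,0)^\top$, so the defining conditions of a singular impedance line read $\mathbf u(\mathbf 0)=\mathbf 0$ and $\partial_2 u_1(\mathbf 0)=0$, and, because $\mathbf u(\mathbf 0)=\mathbf 0$, evaluating the impedance relation $T_\nu\mathbf u+\boldsymbol\eta\mathbf u=\mathbf 0$ at the origin additionally forces $T_\nu\mathbf u(\mathbf 0)=\mathbf 0$.

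I would then substitute the expansions \eqref{eq:u1}--\eqref{eq:u2} and \eqref{eq:Tu2}, together with the series \eqref{eq:eta1 ex} of $\boldsymbol\eta$, into $T_\nu\mathbf u+\boldsymbol\eta\mathbf u=\mathbf 0$ on $\Gamma_h^-$, expand every Bessel function $J_n(k_pr),J_n(k_sr)$ as a power series in $r$, and compare coefficients of $r^\ell$ separately in the frame directions $\mathbf e_1,\mathbf e_2$. For the base case, $T_\nu\mathbf u(\mathbf 0)=\mathbf 0$ gives the $\mathbf e_2$-relation $a_0=0$ and the $\mathbf e_1$-relation $b_2=(\mathrm i k_p^2/k_s^2)a_2$; feeding the latter into $\partial_2 u_1(\mathbf 0)=0$ yields $b_0=0$; and $\mathbf u(\mathbf 0)=\mathbf 0$ gives $k_pa_1+\mathrm i k_s b_1=0$. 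For the inductive step, once $a_m=b_m=0$ for $m\le N$ the $\mathbf e_1$- and $\mathbf e_2$-components of $\mathbf u|_{\Gamma_h^-}$ are $O(r^{N})$ and $O(r^{N+2})$ respectively, so the $\boldsymbol\eta\mathbf u$ term does not reach the $\mathbf e_2$-coefficient equation at order $r^{N+1}$ nor the $\mathbf e_1$-coefficient equation at order $r^{N-1}$; the former reduces to a nonzero multiple of $(\lambda+\mu)a_{N+1}$, forcing $a_{N+1}=0$, and the latter to a nonzero multiple of $\mu b_{N+1}$, forcing $b_{N+1}=0$ (for $N=0$ one instead combines $a_1=0$ with $k_pa_1+\mathrm i k_s b_1=0$). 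All the non-degeneracies invoked here ($k_p\neq k_s$, $\mu>0$, $\lambda+\mu>0$, $\lambda+2\mu>0$) are guaranteed by the strong convexity \eqref{eq:convex}. An induction on $N$ then kills all $a_m,b_m$, and Proposition~\ref{prop:1} concludes the proof.

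Relative to the constant-impedance version in \cite{DLW}, the only genuinely new ingredient is the bookkeeping forced by a variable $\boldsymbol\eta$: through \eqref{eq:eta1 ex}, the products $r^j J_n(k r)$ must be re-expanded, so the coefficient identity at a fixed power of $r$ becomes a convolution-type sum in the $\eta_{1,j}$ and the lower-order $a_m,b_m$. What keeps this under control -- and makes the argument run essentially as for a traction-free line, whose local data at $\mathbf x_0$ is identical -- is that $\boldsymbol\eta\mathbf u$ always vanishes at $\mathbf x_0$ to strictly higher order than the leading terms of $T_\nu\mathbf u$ entering the two coefficient equations used at each stage, so these convolution sums only ever involve coefficients already known to be zero; I therefore expect the variable-$\boldsymbol\eta$ extension to add no essential difficulty for this theorem. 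The point I expect to require the most care is instead the base case: verifying that the finitely many lowest-order relations from $\mathbf u(\mathbf 0)=\mathbf 0$, $T_\nu\mathbf u(\mathbf 0)=\mathbf 0$ and $\partial_2 u_1(\mathbf 0)=0$ genuinely pin down $a_0=b_0=0$ (rather than leaving an undetermined combination), with careful tracking of how the two distinct wave numbers $k_p\neq k_s$ enter, together with handling the (harmless) case where $\mathbf x_0$ is an interior point of $\Gamma_h$ rather than an endpoint.
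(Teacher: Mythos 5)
Your proof is correct and follows the route the paper itself intends: the paper omits the proof of this theorem, deferring to \cite{DLW} and to the single-impedance-line machinery of Lemma \ref{lem:impedance line}, and your base case ($a_0=0$ and $-\bsi k_p^2a_2+k_s^2b_2=0$ from $T_\nu\bmf{u}(\bmf{0})=\bmf{0}$, then $b_0=0$ from the gradient point-condition, $b_1=0$ via $k_pa_1+\bsi k_sb_1=0$) together with the two-step recursion in $(a_{N+1},b_{N+1})$ reproduces exactly that content combined with the two conditions defining $\mathcal{S}(\mathcal{I}_\Omega^\kappa)$. One detail worth keeping exactly as you have it: you establish $b_0=0$ \emph{before} reading off $a_1=0$ from the $r^{1}$ coefficient in the $\mathbf{e}_2$ direction, and this ordering matters, since at that order the $\boldsymbol\eta\bmf{u}$ term contributes a nonzero multiple of $\eta b_0$ (so the bare assertion ``$a_1=0$'' in \eqref{eq:6a} should not be invoked until $b_0$ has been killed).
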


\begin{thm}\label{thm:54}
Let $\mathbf{u}\in L^2(\Omega)^2$ be a solution to \eqref{eq:lame}. If there exist two intersecting lines $\Gamma_h^\pm$ of $\bmf{u}$ such that $\Gamma_h^-$ is a rigid line and $\Gamma_h^+$ is an impedance line with the intersecting angle $\angle(\Gamma_h^+,\Gamma_h^-)=\varphi_0\neq \pi $, where the associated impedance parameter satisfies  \eqref{eq:eta1 ex}, then $\bmf{u}\equiv \bmf{0}$.

\end{thm}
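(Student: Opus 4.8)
The plan is to work entirely in a small neighbourhood of the intersection point of $\Gamma_h^+$ and $\Gamma_h^-$. After a rigid motion, as in \eqref{eq:gamma_pm}, I place that point at the origin, so that $\Gamma_h^-$ lies along the positive $x_1$-axis and $\Gamma_h^+$ makes angle $\varphi_0$ with it; by the analytic-continuation reduction recorded after \eqref{eq:gamma_pm} one may assume $0<\varphi_0\le\pi$, and since the hypothesis excludes $\varphi_0=\pi$ we face a genuine corner with $0<\varphi_0<\pi$. I would then aim to show that every Fourier coefficient $a_m,b_m$ in the radial wave expansion \eqref{eq:u} of $\mathbf{u}$ about the origin vanishes; Proposition~\ref{prop:1} then forces $\mathbf{u}\equiv\mathbf{0}$ in $\Omega$. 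Throughout I use $k_p\neq k_s$, which holds since $\lambda+\mu>0$.

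First I would feed the rigid condition $\mathbf{u}|_{\Gamma_h^-}=\mathbf{0}$ into \eqref{eq:u1}--\eqref{eq:u2} evaluated at $\varphi=0$, expand every Bessel function $J_{m\pm1}(k_p r)$ and $J_{m\pm1}(k_s r)$ in its convergent power series in $r$, and collect the coefficient of each power $r^n$. Because the compressional and shear parts carry distinct scalings in $k_p$ and $k_s$, a routine disentangling — exactly as in \cite{DLW}, resting ultimately on Lemma~\ref{lem:co exp} — converts the vanishing of $\mathbf{u}|_{\Gamma_h^-}$ into an explicit family of scalar identities, one pair per order $n$, each expressing a leading combination of the order-$(n+1)$ coefficients in terms of strictly lower-index ones. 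This is essentially the rigid-line computation already carried out in \cite{DLW} and can be quoted.

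Next I would process the impedance condition $\mathcal{B}_5(\mathbf{u})|_{\Gamma_h^+}=T_\nu\mathbf{u}|_{\Gamma_h^+}+\boldsymbol{\eta}_1\mathbf{u}|_{\Gamma_h^+}=\mathbf{0}$. On $\Gamma_h^+$ one has $\nu=(-\sin\varphi_0,\cos\varphi_0)^\top$ and $\boldsymbol{\tau}=(-\cos\varphi_0,-\sin\varphi_0)^\top$; inserting the traction expansion \eqref{eq:Tu1} and the expansion \eqref{eq:u} for $\mathbf{u}$, multiplying the latter by the series \eqref{eq:eta1 ex} for $\boldsymbol{\eta}_1$ (which produces a Cauchy product in $r$), and once more expanding all Bessel functions in powers of $r$, I would match coefficients of each $r^n$. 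Here the mode-$m$ term carries the phase $\mathrm{e}^{\bsi m\varphi_0}$, and it is precisely the assumption $\varphi_0\neq\pi$ that rules out the collapse $\mathrm{e}^{\bsi m\varphi_0}=(-1)^m$ which would degenerate these relations to the collinear (ordinary Holmgren) situation; for $\varphi_0\in(0,\pi)$ the phases keep the modes genuinely separated. The outcome is a second family of scalar identities, again one pair per order, whose leading unknowns are the order-$(n+2)$ coefficients, and in which the tail $\{\eta_{1,j}\}_{j\ge1}$ of the impedance series enters only through lower-order coefficients.

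Finally I would run a single strong induction on the order. The relations from the lowest powers of $r$ (and the leading Bessel terms $J_{-2},J_{-1},J_0,J_1,J_2$) directly kill $a_m,b_m$ for the first few $m$; assuming $a_m=b_m=0$ for all $m<N$, the rigid and impedance relations at the relevant orders collapse — once the inductive hypothesis annihilates every lower-order term and every Cauchy-product tail — to a small homogeneous linear system for the order-$N$ unknowns whose coefficient matrix is an explicit function of $\varphi_0,k_p,k_s,\lambda,\mu$; showing it nonsingular for all $\varphi_0\in(0,\pi)$ and all admissible Lam\'e parameters gives $a_N=b_N=0$ and closes the induction. I expect the crux to lie here, and this is where the argument genuinely exceeds \cite{DLW}: since $\boldsymbol{\eta}_1$ is now a variable function, each coefficient equation drags along a convolution tail $\sum_{j\ge1}\eta_{1,j}(\cdots)$ coupling it to all lower orders, so the induction must be arranged so that these tails are always absorbed into the hypothesis and never spoil the invertibility of the leading block; a subsidiary, more mechanical difficulty is verifying that invertibility uniformly in $\varphi_0$, i.e.\ isolating and dispatching by hand whatever finitely many exceptional angles the relevant determinants might vanish at.
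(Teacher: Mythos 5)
Your overall architecture---reduce to the corner at the origin, expand $\mathbf{u}$ as in \eqref{eq:u}, convert both boundary conditions into coefficient identities via Lemma~\ref{lem:co exp}, kill all $a_m,b_m$, and invoke Proposition~\ref{prop:1}---is the right one; the paper itself omits the proof of this theorem (deferring to \cite{DLW} plus the new ingredients of Sections~\ref{sect:3}--\ref{sec:4}), but this is exactly the strategy it runs for every analogous two-line result in Section~\ref{sec:4}. The genuine gap is in your inductive step. You propose to close the induction at each order $N$ by inverting a joint linear system assembled from \emph{both} the rigid and the impedance relations, whose coefficient matrix depends on $\varphi_0$, and you defer the verification to ``dispatching by hand whatever finitely many exceptional angles'' the determinants vanish at. That is not justified as stated: the determinant is a different function of $\varphi_0$ for each $N$, so the union of exceptional angles over all $N$ need not be finite, and you have neither exhibited these determinants nor shown their zero sets are controllable. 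This is precisely the kind of angle restriction (cf.\ \eqref{eq:Thm41 cond}) that the present paper works hard to remove; note that where the paper does need an $m$-dependent determinant (Lemmas~\ref{lem:determinant} and~\ref{lem:49}) it arranges matters so that $\mathrm{e}^{2\bsi\varphi_0}$ factors out and the nondegeneracy condition falls on $\eta$ alone, not on $\varphi_0$.

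The fix is structural: the propagation step should use the rigid line \emph{alone}. The ``Furthermore'' part of Lemma~\ref{lem:rigid} states that once $a_\ell=b_\ell=0$ for $\ell=0,1$, the rigid condition on $\Gamma_h^-$ (which lies on the $x_1$-axis, so no phase $\mathrm{e}^{\bsi m\varphi_0}$ ever appears) forces $a_\ell=b_\ell=0$ for all $\ell$; no $\varphi_0$-dependent determinant and no impedance data enter the recursion. The two-line coupling is then needed only at the lowest orders: \eqref{eq:1a} already gives $a_1=b_1=0$ from the rigid line by itself; the $r^0$ identity of $T_\nu\mathbf{u}+\boldsymbol{\eta}_1\mathbf{u}=\mathbf{0}$ on $\Gamma_h^+$ has $\mathbf{e}_2$-component $-\frac{\bsi}{2}(\lambda+\mu)k_p^2\mathrm{e}^{\bsi\varphi_0}a_0$ (the terms $J_{m+1},J_{m+2}$ cannot contribute at order $r^0$), which forces $a_0=0$ by \eqref{eq:convex}; and the second equation of \eqref{eq:2a} then gives $b_0=0$. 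In particular the variable tail $\sum_{j\geq 1}\eta_{1,j}r^j$ of \eqref{eq:eta1 ex}, which you correctly single out as the new feature relative to \cite{DLW}, never actually intervenes here: at order $r^0$ the Cauchy product sees only the constant part $\eta_1$, and all higher orders are handled by the rigid line. Once the induction is anchored on Lemma~\ref{lem:rigid}, the proof closes with no condition on $\varphi_0$ beyond $\varphi_0\neq\pi$ and no uniform-invertibility issue to resolve.
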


\begin{thm}\label{thm:55}
Let $\mathbf{u}\in L^2(\Omega)^2$ be a solution to \eqref{eq:lame}. Suppose there exit two intersecting lines $\Gamma_h^\pm$ of $\bmf{u}$ such that $\Gamma_h^-$ is a traction-free line and $\Gamma_h^+$ is an impedance line associated with an impedance parameter $\boldsymbol{\eta}_2$ satisfying \eqref{eq:eta1 ex}, 
 with the property that $\angle(\Gamma_h^+,\Gamma_h^-)=\varphi_0\neq \pi $ and $\bmf{u}$ vanishes at the intersecting point, namely $\bmf{u}(\bmf{0})=\bmf{0}$, then $\bmf{u}\equiv \bmf{0}$. 
\end{thm}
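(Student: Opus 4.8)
The plan is to follow the scheme that (according to the excerpt) underlies Theorems~\ref{thm:impedance line}--\ref{thm:54}, adapting it to the traction-free/impedance pairing. After a rigid motion we place the intersection point at the origin and assume, as in \eqref{eq:gamma_pm}, that $\Gamma_h^-$ lies along the positive $x_1$-axis while $\Gamma_h^+$ makes angle $\varphi_0\in(0,\pi)\cup(\pi,2\pi]$; as explained before Lemma~\ref{lem:u2 exp} it suffices to treat $\varphi_0\in(0,\pi)$. Expand $\bmf u$ in radial wave functions at the origin as in \eqref{eq:u}, with unknown Fourier coefficients $a_m,b_m$. The goal is to show $a_m=b_m=0$ for all $m$, whence $\bmf u\equiv\bmf 0$ by Proposition~\ref{prop:1}.

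First I would encode the three hypotheses as constraints on $\{a_m,b_m\}$. The traction-free condition $T_\nu\bmf u|_{\Gamma_h^-}=\bmf0$ gives, via the expansion \eqref{eq:Tu2} together with Lemma~\ref{lem:co exp}, two families of linear relations among the $a_m$'s and $b_m$'s (separating the $\hat{\mathbf e}_1$ and $\hat{\mathbf e}_2$ components, or rather the $\bmf e_1,\bmf e_2$ components, and matching coefficients of $J_{m\pm2}(k_pr)$, $J_m(k_pr)$, $J_{m\pm2}(k_sr)$). The impedance condition $\mathcal B_1(\bmf u)+\boldsymbol\eta_1\mathcal B_2(\bmf u)=\bmf0$ on $\Gamma_h^+$ is handled by substituting \eqref{eq:u} and \eqref{eq:Tu1}, multiplying the series \eqref{eq:eta1 ex} for $\boldsymbol\eta_1$ into the expansion of $\bmf u|_{\Gamma_h^+}$ (a Cauchy product of absolutely convergent series, legitimate because $\boldsymbol\eta_1\in\mathcal A(\Gamma_h^+)$), and then using the product formulas / recurrence relations for Bessel functions to reduce everything to a single Bessel series in $r$ that must vanish; Lemma~\ref{lem:co exp} then yields an infinite triangular system. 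The point-value hypothesis $\bmf u(\bmf0)=\bmf0$ kills the lowest-order terms: concretely it forces the relations coming from $a_0,b_0,a_2,b_2$ (the only coefficients producing $J_0$ at $r=0$) to hold, giving the base case of an induction.

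The core of the argument is then an induction on $m$: assuming $a_j=b_j=0$ for $j<m$, I would feed this into the three sets of relations above to conclude $a_m=b_m=0$. The traction-free relations on $\Gamma_h^-$ are "diagonal" in the sense that after the induction hypothesis they couple only $a_m,b_m,a_{m+2},b_{m+2}$ (or similar shifts), so they express some coefficients linearly in terms of others; the impedance relations on $\Gamma_h^+$ bring in the angular factors $\mathrm e^{\bsi m\varphi_0}$ and the impedance Taylor coefficients $\eta_1,\eta_{1,j}$. Combining the two, using $\varphi_0\neq\pi$ (so that $\mathrm e^{\bsi m\varphi_0}\neq\pm1$ in the relevant places and certain $2\times2$ determinants built from $\sin,\cos$ of multiples of $\varphi_0$ do not degenerate), one obtains a homogeneous linear system for $(a_m,b_m)$ whose coefficient determinant is nonzero — this is where $\varphi_0\neq\pi$ and the convexity condition \eqref{eq:convex} (ensuring $k_p\neq k_s$ and $k_p,k_s>0$) are both used. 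Hence $a_m=b_m=0$, completing the induction, and then Proposition~\ref{prop:1} finishes the proof.

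The main obstacle I anticipate is bookkeeping the Cauchy product of the variable impedance series \eqref{eq:eta1 ex} with the Bessel expansion of $\bmf u|_{\Gamma_h^+}$ and re-expanding the product $r^j J_n(k r)$ back into a pure Bessel series so that Lemma~\ref{lem:co exp} applies cleanly; with a constant impedance (the case of \cite{DLW}) this step is trivial, but here it is precisely the "technically new ingredient" the introduction alludes to, and one must track how the lower-order $\eta_{1,j}$ contributions interact with the induction hypothesis so that they do not spoil the nondegeneracy of the $(a_m,b_m)$-system at step $m$. A secondary difficulty is verifying, once and for all, that the relevant determinant does not vanish for \emph{any} admissible $\varphi_0\in(0,\pi)$ and \emph{any} $m$ — this typically requires splitting into a few cases (small $m$ versus general $m$, and possibly isolated "exceptional" angles that must be excluded or handled separately), exactly as in the analogous arguments of \cite{DLW}.
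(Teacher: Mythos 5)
The paper does not actually prove Theorem~\ref{thm:55}: it is one of the four results stated to be ``extended and generalised from \cite{DLW}'' whose proofs are explicitly skipped, so there is no line-by-line comparison to make. Judged against the strategy the paper uses for the analogous results it does prove (Lemmas~\ref{lem:tranction free}, \ref{lem:impedance line} and the theorems of Section~\ref{sec:4}), your skeleton --- Fourier expansion at the origin, linear relations via Lemma~\ref{lem:co exp}, then Proposition~\ref{prop:1} --- is the right one, but the proposal has a concrete error and leaves the actual crux untouched. The error: you assert that $\bmf{u}(\bmf{0})=\bmf{0}$ ``forces the relations coming from $a_0,b_0,a_2,b_2$ (the only coefficients producing $J_0$ at $r=0$)''. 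That is false for $\bmf{u}$ itself: in \eqref{eq:u} the only Bessel factor that is nonzero at $r=0$ is $J_{m-1}$ with $m=1$, so $\bmf{u}(\bmf{0})=\tfrac12\left(k_pa_1+\bsi k_sb_1\right)\mathbf{e}_1$ and the point condition reads $k_pa_1+\bsi k_sb_1=0$, constraining only $a_1,b_1$. (The coefficients $a_0,a_2,b_2$ are what appear at order $r^0$ in the \emph{traction} $T_\nu\bmf{u}$, not in $\bmf{u}$.)

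The missing idea is how the base case closes. Since $\Gamma_h^-$ is traction-free, Lemma~\ref{lem:tranction free} already gives $a_0=a_1=0$ and, crucially, reduces the whole theorem to proving $b_0=b_1=0$ (its last assertion then propagates $a_\ell=b_\ell=0$ for all $\ell$ without any further induction over $m$ or any $\varphi_0$-dependent determinants). Now $b_1=0$ is immediate from $a_1=0$ and $k_pa_1+\bsi k_sb_1=0$. The only genuinely delicate step is $b_0=0$: $b_0$ does not appear in the low-order relations of the traction-free line (it first enters $T_\nu\bmf{u}$ at order $r^2$, which is why the two-traction-free case, Theorem~\ref{thm:Tu thm}, needs a CGO argument), so it must be extracted from the order-$r^1$ relation of the impedance condition on $\Gamma_h^+$, where the term $\boldsymbol{\eta}\,\bmf{u}$ contributes $\eta\, b_0 J_1(k_sr)$; combining that relation with $\bsi k_p^2a_2-k_s^2b_2=0$ from \eqref{eq:3a} and using $\eta\neq 0$ (guaranteed by $\boldsymbol{\eta}\in\mathcal{A}$) is what yields $b_0=0$. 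Your proposal never isolates this step, and instead describes an induction at every order $m$ with ``$2\times2$ determinants built from $\sin,\cos$ of multiples of $\varphi_0$'' whose nonvanishing is asserted but never exhibited; as written, the argument therefore does not go through.
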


\begin{thm}\label{eq:impedance line exp}
Let $\mathbf{u}\in L^2(\Omega)^2$ be a solution to \eqref{eq:lame}. Suppose there exist two impedance lines  $\Gamma_h^\pm$ of $\bmf{u}$ such that $\angle(\Gamma_h^+,\Gamma_h^-)=\varphi_0\neq \pi$ satisfying that \begin{equation}\label{eq:lem53 cond}
	 \eta_1 \mathrm{e}^{-\bsi \varphi_0}+ \eta_2 \neq 0
\end{equation} is fulfilled and $\bmf{u}$ vanishes at the intersecting point, i.e. $\bmf{u}(\bmf{0})=\bmf{0}$, then $\bmf{u}\equiv \bmf{0}$. Here, $T_\nu \bmf{u}+\boldsymbol{ \eta}_1\bmf{u}=\mathbf{0}$ on $\Gamma_h^+$, $T_\nu \bmf{u}+\boldsymbol{ \eta}_2\bmf{u}=\mathbf{0}$ on $\Gamma_h^-$  with $\boldsymbol{ \eta}_1 \in \mathcal A(\Gamma_h^+ )$
 and  $\boldsymbol{ \eta}_2 \in \mathcal A(\Gamma_h^- )$.

\end{thm}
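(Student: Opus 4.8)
The plan is to work in a small neighbourhood of the intersection point, normalised to $\mathbf{0}$, to show that all the Fourier coefficients $a_m,b_m$ in the radial wave expansion \eqref{eq:u} of $\mathbf{u}$ about $\mathbf{0}$ vanish, and then to invoke Proposition \ref{prop:1} to conclude $\mathbf{u}\equiv\mathbf{0}$ in $\Omega$. First I would normalise the geometry: by hypothesis $\varphi_0\neq\pi$, the value $\varphi_0=2\pi$ is the degenerate single-line case, and for $\pi<\varphi_0<2\pi$ the analytic-continuation argument recorded after \eqref{eq:angle1} replaces $\Gamma_h^+$ by its extension $\widetilde{\Gamma}_h^+$ (still an impedance line, with the impedance series continuing analytically and staying in $\mathcal{A}$), so without loss of generality $0<\varphi_0<\pi$; a rigid motion then puts $\Gamma_h^\pm$ in the standard position \eqref{eq:gamma_pm} with intersection point $\mathbf{0}$.

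Next I would convert the two impedance conditions into power-series identities in $r$. On $\Gamma_h^+$ I substitute the expansions \eqref{eq:u1}, \eqref{eq:u2} of $\mathbf{u}$, the expansion \eqref{eq:Tu1} of $T_\nu\mathbf{u}$, and the series \eqref{eq:eta1 ex} of $\boldsymbol{\eta}_1$ into $T_\nu\mathbf{u}+\boldsymbol{\eta}_1\mathbf{u}=\mathbf{0}$; similarly on $\Gamma_h^-$ using \eqref{eq:Tu2} and \eqref{eq:eta2 ex}. Expanding every Bessel function by its small-argument series and forming the Cauchy product with the absolutely convergent $\boldsymbol{\eta}_j$-series, I obtain two $\mathbb{C}^2$-valued identities $\sum_{n\ge 0}\mathbf{P}_n^{\pm}r^n=\mathbf{0}$ valid on $(0,h)$, where each $\mathbf{P}_n^{\pm}$ is an explicit finite linear combination of $\{a_m,b_m\}_{m\le n+2}$ with coefficients built from $k_p,k_s,\lambda,\mu,\varphi_0$ and the coefficients of the $\eta$-series. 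By Lemma \ref{lem:co exp} (equivalently, the identity theorem for power series) every $\mathbf{P}_n^{\pm}$ vanishes, which produces a coupled recursion for $(a_m,b_m)$.

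Then I would run the induction. Evaluating \eqref{eq:u1}, \eqref{eq:u2} at $r=0$ shows that the hypothesis $\mathbf{u}(\mathbf{0})=\mathbf{0}$ is exactly $k_p a_1+\bsi k_s b_1=0$; in particular $\boldsymbol{\eta}_j\mathbf{u}$ contributes nothing at order $r^0$, so $\mathbf{P}_0^{\pm}=\mathbf{0}$ is the purely tractional leading relation, whose $\mathbf{e}_2$-component already forces $a_0=0$ and whose $\mathbf{e}_1$-component gives $\bsi k_p^2 a_2=k_s^2 b_2$. The orders $r^1$ (and, if needed, $r^2$) are where $\boldsymbol{\eta}_1,\boldsymbol{\eta}_2$ first enter: combining $\mathbf{P}_1^{+}=\mathbf{P}_1^{-}=\mathbf{0}$ with the relations already obtained leaves a small linear system for the remaining low-order unknowns (among $b_0,a_1,b_1,a_2,b_2$) whose determinant is a nonzero multiple of $(\eta_1\mathrm{e}^{-\bsi\varphi_0}+\eta_2)\sin\varphi_0$; by \eqref{eq:lem53 cond} and $\sin\varphi_0\neq 0$ this is nonzero, whence $a_0=b_0=a_1=b_1=a_2=b_2=0$. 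For the uniform inductive step, assuming $a_j=b_j=0$ for all $j<m$, the vanishing of the relevant $\mathbf{P}_n^{\pm}$ (with $n$ the order at which $a_m,b_m$ first appear) reduces, after deleting the already-zero terms, to $M(a_m,b_m)^{\top}=\mathbf{0}$ with $\det M$ again a nonzero multiple of $(\eta_1\mathrm{e}^{-\bsi\varphi_0}+\eta_2)$ times a trigonometric factor in $\varphi_0$; hence $a_m=b_m=0$, closing the induction, and Proposition \ref{prop:1} finishes the proof.

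The main obstacle is the bookkeeping of the coupled recursion in the variable-impedance setting. Unlike the constant-$\boldsymbol{\eta}$ case of \cite{DLW}, the Cauchy products entangle all the tail coefficients $\eta_{1,j},\eta_{2,j}$ into the polynomials $\mathbf{P}_n^{\pm}$, so the crux is to verify that these tail coefficients do not perturb the determinant governing solvability at each order, i.e. to isolate cleanly the leading contribution $\eta_1\mathrm{e}^{-\bsi\varphi_0}+\eta_2$ from the leading Bessel coefficients on the two rays. Establishing this non-degeneracy — together with handling by hand the few low orders where $\mathbf{u}(\mathbf{0})=\mathbf{0}$ and the leading traction terms interact and the recursion has not yet stabilised into its uniform form — is the technical heart of the argument, and is precisely where both \eqref{eq:lem53 cond} and $\varphi_0\neq\pi$ are indispensable.
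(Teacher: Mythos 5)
You should first be aware that the paper does not actually prove this theorem: it is one of the four results (Theorems \ref{thm:impedance line}--\ref{eq:impedance line exp}) whose proofs the authors explicitly skip, deferring to the arguments of \cite{DLW} combined with the new techniques of this paper. Judging from the machinery the paper does exhibit (the CGO solution $\bmf{v}$ in \eqref{eq:v}, the integral identity \eqref{eq:CGO2}, and the proof of Theorem \ref{thm:Tu thm}), the intended route for intersecting-line configurations where $b_0$ survives the local algebra is to pair $\bmf{u}$ with $\bmf{v}$ and read off $b_0=0$ from the $s\to\infty$ asymptotics of $I_1^++I_1^-$; for two impedance lines the leading coefficient of $b_0$ in that asymptotic expansion is precisely a multiple of $\eta_1\mathrm{e}^{-\bsi\varphi_0}+\eta_2$, which is where \eqref{eq:lem53 cond} enters in \cite{DLW}. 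Your proposal avoids the CGO machinery entirely and extracts the same condition by direct Fourier comparison, and this does work: at order $r^0$ the $\mathbf{e}_2$-components give $a_0=0$ on either line, the hypothesis $\bmf{u}(\bmf{0})=\bmf{0}$ reads $k_pa_1+\bsi k_s b_1=0$ and kills the $r^0$-contribution of $\boldsymbol{\eta}_j\bmf{u}$, and the order-$r^1$ $\mathbf{e}_2$-components of the two impedance conditions are $k_p^3(\lambda+\mu)a_1+\eta_2 k_s^2 b_0=0$ and $-k_p^3(\lambda+\mu)a_1+\eta_1\mathrm{e}^{-\bsi\varphi_0}k_s^2 b_0=0$, whose sum is $(\eta_1\mathrm{e}^{-\bsi\varphi_0}+\eta_2)k_s^2b_0=0$; the associated $2\times 2$ determinant is $k_p^3(\lambda+\mu)k_s^2(\eta_1\mathrm{e}^{-\bsi\varphi_0}+\eta_2)$ (with no $\sin\varphi_0$ factor, contrary to your guess, though that is harmless). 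Your route is more elementary and arguably cleaner than the CGO argument, at the price of the bookkeeping you correctly identify; the CGO route has the advantage of generalizing to situations (such as two traction-free lines, Theorem \ref{thm:Tu thm}) where the local algebra alone genuinely fails to reach $b_0$.

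One correction to the way you describe the induction: you should not run a two-line determinant argument "at every order $m$", since you have not verified that the resulting determinants are controlled by the single hypothesis \eqref{eq:lem53 cond} uniformly in $m$ (and a priori they need not be). The condition $\eta_1\mathrm{e}^{-\bsi\varphi_0}+\eta_2\neq 0$ is only needed once, at the order where $b_0$ first appears. After you have $a_0=b_0=a_1=b_1=0$, the closing induction should instead invoke the single-line statement of Lemma \ref{lem:impedance line} (the analogue of \cite[Lemma 3.4]{DLW} for variable $\boldsymbol{\eta}_2\in\mathcal{A}(\Gamma_h^-)$), which uses only $\Gamma_h^-$ and no compatibility between $\eta_1$ and $\eta_2$; the Cauchy-product tails $\eta_{2,j}$ drop out there exactly as you anticipate, because under the induction hypothesis they only multiply already-vanishing lower-order coefficients of $\bmf{u}$. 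With that substitution your argument is complete.
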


In addition to those from Theorems~\ref{thm:impedance line}--\ref{eq:impedance line exp}, there are two more results from \cite{DLW} stating that the existence of a singular rigid 
or traction-free line also ensures $\mathbf{u}\equiv\mathbf{0}$. Two remarks are in order. First, as mentioned earlier, those theorems are extended and generalised from our earlier paper \cite{DLW}. The proofs of these theorems in such as a generalised situation can be done by following the new strategies developed in this paper for the more general cases together with the arguments from \cite{DLW}. In order to save the space of the paper, we shall skip the corresponding proofs. Second, as can be seen from Theorems~\ref{thm:impedance line}--\ref{eq:impedance line exp}, if there are two line segments involved, we always assume that they are not collinear. Indeed, as discussed earlier, in the degenerate case where the two lines are collinear, the corresponding study is either reduced to the single line case or the standard HUP. This remark also applies to our subsequent study for the other cases. 

By including the additional boundary trace conditions $\mathcal{B}_3, \mathcal{B}_4$ and $\mathcal{B}_6$ into the study, we can establish the GHP results in a similar flavour to Theorems~\ref{thm:impedance line}--\ref{eq:impedance line exp}. We summarize the corresponding findings in Table \ref{table:main results}, where the first column lists the presence of one or two homogeneous line segments, the second column lists the required additional conditions and the third column is the GHP conclusions.

\begin{table} [!htbp]
	\centering
	{	
		\begin{tabular}{|c|c|c|}
			\hline
		 	&\tabincell{c}{ Assumptions} & Conclusions\\
			\hline
	$\Gamma_h \in  {\mathcal S} (\mathcal R^\kappa_\Omega )$ & / & $\mathbf u\equiv \mathbf 0$  \\
			\hline
			$\Gamma_h \in  {\mathcal S} (\mathcal T^\kappa_\Omega )$ &/ & $\mathbf u\equiv \mathbf 0$ \\
			\hline
			$\Gamma_h \in  {\mathcal S} (\mathcal I^\kappa_\Omega )$&/& $\mathbf u\equiv \mathbf 0$  \\
			\hline
			$\Gamma_h \in  {\mathcal S} (\mathcal G^\kappa_\Omega )$&/ & $\mathbf u\equiv \mathbf 0$ \\
			\hline
			$\Gamma_h \in  {\mathcal S} (\mathcal F^\kappa_\Omega )$&/ & $\mathbf u\equiv \mathbf 0$ \\
			\hline
	$ \Gamma_h^+ \in   \mathcal H^\kappa_\Omega $& \tabincell{c}{
	${\nu}^\top \nabla \bmf{u} {\nu} |_{\bmf{x}={\bmf{0}} }= \boldsymbol{\tau}^\top \nabla \bmf{u} {\nu} |_{\bmf{x}={\bmf{0}} }=0$; \\ $b_0=b_1=b_2=0$;\\ $\eta_1 \neq \pm \bsi \mbox{ and } \frac{\pm \sqrt{(\lambda + 3\mu)(\lambda + \mu)} - \mu \bsi}{\lambda + 2 \mu}$ }& $\mathbf u\equiv \mathbf 0$\\
			\hline
			$\Gamma_h^+,\Gamma_h^- \in   \mathcal R^\kappa_\Omega $& $\angle(\Gamma_h^+,\Gamma_h^-)\neq \pi $& $\mathbf u\equiv \mathbf 0$\\
			\hline
			$\Gamma_h^+,\Gamma_h^- \in   \mathcal T^\kappa_\Omega $& $\angle(\Gamma_h^+,\Gamma_h^-)\neq \pi $ and $\mathbf u(\mathbf 0)=\mathbf 0$& $\mathbf u\equiv \mathbf 0$\\
			\hline
			$\Gamma_h^-\in   \mathcal R^\kappa_\Omega,\Gamma_h^+ \in   \mathcal T^\kappa_\Omega $& $\angle(\Gamma_h^+,\Gamma_h^-)\neq \pi $ & $\mathbf u\equiv \mathbf 0$\\
			\hline
			$\Gamma_h^-\in   \mathcal R^\kappa_\Omega,\Gamma_h^+ \in   \mathcal I^\kappa_\Omega $& $\angle(\Gamma_h^+,\Gamma_h^-)\neq \pi $ & $\mathbf u\equiv \mathbf 0$\\
			\hline
			$\Gamma_h^-\in   \mathcal I^\kappa_\Omega,\Gamma_h^+ \in   \mathcal I^\kappa_\Omega $& \tabincell{c}{$\angle(\Gamma_h^+,\Gamma_h^-)=\varphi_0\neq \pi $, $\mathbf u(\mathbf 0)=\mathbf 0$; \\ $\eta_1 \mathrm{e}^{-\bsi \varphi_0}+ \eta_2 \neq 0$ } & $\mathbf u\equiv \mathbf 0$\\
			\hline
			$\Gamma_h^-\in   \mathcal 
			R^\kappa_\Omega,\Gamma_h^+ \in   \mathcal G^\kappa_\Omega $& $\angle(\Gamma_h^+,\Gamma_h^-)\neq \pi $ & $\mathbf u\equiv \mathbf 0$\\
			\hline
			$\Gamma_h^-\in   \mathcal 
			T^\kappa_\Omega,\Gamma_h^+ \in   \mathcal G^\kappa_\Omega $& $\angle(\Gamma_h^+,\Gamma_h^-)\neq \pi $ & $\mathbf u\equiv \mathbf 0$\\
			\hline
			$\Gamma_h^-\in   \mathcal 
			I^\kappa_\Omega,\Gamma_h^+ \in   \mathcal G^\kappa_\Omega $& $\angle(\Gamma_h^+,\Gamma_h^-)\neq \pi $ & $\mathbf u\equiv \mathbf 0$\\
			\hline
			$\Gamma_h^-\in   \mathcal 
			R^\kappa_\Omega,\Gamma_h^+ \in   \mathcal F^\kappa_\Omega $& $\angle(\Gamma_h^+,\Gamma_h^-)\neq \pi $ & $\mathbf u\equiv \mathbf 0$\\
			\hline
			$\Gamma_h^-\in   \mathcal 
			I^\kappa_\Omega,\Gamma_h^+ \in   \mathcal F^\kappa_\Omega $& $\angle(\Gamma_h^+,\Gamma_h^-)\neq \pi $ & $\mathbf u\equiv \mathbf 0$\\
			\hline
			$\Gamma_h^-\in   \mathcal 
			I^\kappa_\Omega,\Gamma_h^+ \in   \mathcal F^\kappa_\Omega $& $\angle(\Gamma_h^+,\Gamma_h^-)\neq \pi $ & $\mathbf u\equiv \mathbf 0$\\
			\hline
			$\Gamma_h^-\in   \mathcal 
			R^\kappa_\Omega,\Gamma_h^+ \in   \mathcal H^\kappa_\Omega $&\tabincell{c}{$\angle(\Gamma_h^+,\Gamma_h^-)\neq \pi $; either $\eta_1\neq -\frac{\bsi \mu \mathrm{e}^{2 \bsi \varphi_0}}{\lambda+\mu(1+\mathrm{e}^{2 \bsi \varphi_0})}$ \\ or $a_0= 0, \quad \eta_1\neq \mathrm i$} & $\mathbf u\equiv \mathbf 0$\\
			\hline
			$\Gamma_h^-\in   \mathcal 
			T^\kappa_\Omega,\Gamma_h^+ \in   \mathcal H^\kappa_\Omega $& $\angle(\Gamma_h^+,\Gamma_h^-)\neq \pi $ and $\eta_1 \neq \mathrm i$ & $\mathbf u\equiv \mathbf 0$\\
			\hline
			$\Gamma_h^-\in   \mathcal 
			I^\kappa_\Omega,\Gamma_h^+ \in   \mathcal H^\kappa_\Omega $& $\angle(\Gamma_h^+,\Gamma_h^-)\neq \pi $ and $\eta_2 \neq \mathrm i$ & $\mathbf u\equiv \mathbf 0$\\
			\hline
			$\Gamma_h^-\in   \mathcal 
			G^\kappa_\Omega,\Gamma_h^+ \in   \mathcal F^\kappa_\Omega $& $\angle(\Gamma_h^+,\Gamma_h^-)\neq \pi $ & $\mathbf u\equiv \mathbf 0$\\
			\hline
			$\Gamma_h^-\in   \mathcal 
			G^\kappa_\Omega,\Gamma_h^+ \in   \mathcal H^\kappa_\Omega $& $\angle(\Gamma_h^+,\Gamma_h^-)\neq \pi $ & $\mathbf u\equiv \mathbf 0$\\
			\hline
			$\Gamma_h^-\in   \mathcal 
			H^\kappa_\Omega,\Gamma_h^+ \in   \mathcal H^\kappa_\Omega $& $\angle(\Gamma_h^+,\Gamma_h^-)\neq \pi $, $\eta_1=\eta_2=\eta\neq \pm \bsi,-\frac{\bsi m}{m+2}$ for $\forall m\in \mathbb N$ & $\mathbf u\equiv \mathbf 0$\\
			\hline
			$\Gamma_h^-\in   \mathcal 
			H^\kappa_\Omega,\Gamma_h^+ \in   \mathcal F^\kappa_\Omega $& $\angle(\Gamma_h^+,\Gamma_h^-)\neq \pi $ & $\mathbf u\equiv \mathbf 0$\\
			\hline
		\end{tabular}
	}
	\medskip
	\caption{}
	\label{table:main results}
\end{table}

\section{GHP with the presence of a single singular line}\label{sect:3}

In this section, we prove that if $\Omega$ contains  a singular soft-clamped, simply-supported or generalized-impedance  line $\Gamma_h$ as introduced in Definition~\ref{def:2} of a generic Lam\'e eigenfunction $\bmf{u}$ to \eqref{eq:lame},  then $\bmf{u}$ is identically zero. According to our discussion made at the beginning of Section~\ref{sect:2}, we can assume that the point $\bmf{x}_0$ involved in Definition~\ref{def:2} is the origin, namely $\bmf{x}_0=\bmf{0}$. It is clear that the unit normal vectors $ \nu $  to $\Gamma_h^+$ as defined in \eqref{eq:gamma_pm} is $\pm (-\sin  \varphi_0,\cos  \varphi_0)^\top$. In this paper, we choose $\nu=(-\sin  \varphi_0,\cos  \varphi_0)^\top $. In such a case, the following conditions involved in Definition~\ref{def:2}
\begin{equation}\notag
  {\nu}^\top \nabla \bmf{u} {\nu} |_{\bmf{x}={\bmf{x}}_0 }=0 ,\,  \boldsymbol{\tau}^\top \nabla \bmf{u} {\nu} |_{\bmf{x}={\bmf{x}}_0 }=0
\end{equation}
turn out to be
\begin{equation}\label{eq:32 cond}
{\nu}^\top \nabla \bmf{u} \boldsymbol{\nu}|_{ \mathbf x=\bmf{0} }=0 , \, \boldsymbol{\tau}^\top \nabla \bmf{u} {\nu} |_{ \mathbf x=\bmf{0} }=0 ,
\end{equation}
where $\boldsymbol{\nu}=(-\sin {\varphi_0},\cos{\varphi_0})^\top$ and  $\boldsymbol{\tau}=(-\cos {\varphi_0},-\sin{\varphi_0})^\top$.

Similar to Definition \ref{def:2}, we introduce the {\it singular generalized-impedance line} of $\mathbf u$ in the following definition.
\begin{defn}\label{def:3}
Let $\mathbf{0}\in  \Gamma_h$ be a generalized-impedance line of $\mathbf u$ defined in Definition \ref{def:1}, where $\mathbf u$ has the Fourier expansion \eqref{eq:u}  at the origin $\mathbf 0$ with the coefficients $a_\ell$ and $b_\ell$, $\ell \in \mathbb N \cup \{0\}$. If  the Fourier coefficients satisfy
\begin{equation}\label{eq:def3}
	b_0=b_1=b_2=0,
\end{equation}
and
\begin{equation}\label{eq:sg cond}
{\nu}^\top \nabla \bmf{u} {\nu}|_{ \mathbf x=\bmf{0} }=\boldsymbol{\tau}^\top \nabla \bmf{u} {\nu} |_{ \mathbf x=\bmf{0} }=0 ,
\end{equation}
 we call $\Gamma_h$ a {\it singular generalized-impedance line} of $\mathbf u$. The set of the singular generalized-impedance line is denoted by $	{\mathcal S} \left({\mathcal H}_\Omega^{\kappa}\right).$
\end{defn}

In the following lemma, we characterize the algebraic relationship between the Fourier coefficients of \eqref{eq:u} from \eqref{eq:32 cond}, whose proof is postponed to the  Appendix.

\begin{lem}\label{lem:condition}
Let $\mathbf{u}$ be a Lam\'e eigenfunction of \eqref{eq:lame}, where   $\mathbf{u}$ has the expansion \eqref{eq:u} around the origin.  Consider the condition \eqref{eq:32 cond}  on $\Gamma_h^+$ with ${\nu}=(-\sin {\varphi_0},\cos{\varphi_0})^\top$ and  $\boldsymbol{\tau}=(-\cos {\varphi_0},-\sin{\varphi_0})^\top$. Then
\begin{subequations}
	\begin{align}
		{\nu}^\top \nabla \bmf{u} {\nu} |_{ \mathbf x=\bmf{0} }=0\ \ &\mbox{ implies }\ \ 2 k_p^2 a_0 + \mathrm e^{ 2 \bsi \varphi_0}(k_p^2 a_2 + \bsi k_s^2 b_2)=0,\label{eq:gradient1} \\
		\boldsymbol{\tau}^\top \nabla \bmf{u} {\nu}|_{ \mathbf x=\bmf{0} }=0\ \ &\mbox{ implies }\ \
2 \bsi k_s^2 b_0 + \mathrm e^{ 2 \bsi \varphi_0}(k_p^2 a_2 + \bsi k_s^2 b_2)=0.  \label{eq:gradient2}
	\end{align}
\end{subequations}


\end{lem}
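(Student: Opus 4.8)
The plan is to compute the gradient $\nabla\bmf u$ at the origin directly from the radial wave expansion and then evaluate the two scalar quantities ${\nu}^\top\nabla\bmf u\,{\nu}$ and $\boldsymbol\tau^\top\nabla\bmf u\,{\nu}$ at $\bmf x=\bmf 0$. The key observation is that the Bessel functions $J_{m\pm 1}(kr)$ behave near $r=0$ like $r^{|m\pm1|}$, so when we differentiate $\bmf u$ once and set $r=0$, only a finite number of Fourier modes survive — concretely $m=0,1,2$ for the $a_m$ and $b_m$ contributions. First I would use the small-argument asymptotics $J_n(t)=\frac{(t/2)^n}{n!}(1+O(t^2))$ for $n\ge 0$ (and $J_{-1}=-J_1$) to extract, from \eqref{eq:u1}--\eqref{eq:u2}, the Taylor expansions of $u_1$ and $u_2$ up to first order in $x_1,x_2$; equivalently one can differentiate the series termwise and note that $\partial_j J_{m\pm1}(kr)|_{\bmf x=\bmf 0}$ vanishes unless $|m\pm1|=1$.

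Next I would assemble the four partial derivatives $\partial_1 u_1,\partial_2 u_1,\partial_1 u_2,\partial_2 u_2$ at the origin as explicit linear combinations of $a_0,a_1,a_2,b_0,b_1,b_2$. It is cleanest to work in the complex basis $\mathbf e_1=(1,\bsi)^\top$, $\mathbf e_2=(1,-\bsi)^\top$ that already appears in \eqref{eq:u}, together with the contractions $\nu\cdot\mathbf e_1=\bsi\mathrm e^{\bsi\varphi_0}$, $\nu\cdot\mathbf e_2=-\bsi\mathrm e^{-\bsi\varphi_0}$, $\boldsymbol\tau\cdot\mathbf e_1=-\mathrm e^{\bsi\varphi_0}$, $\boldsymbol\tau\cdot\mathbf e_2=-\mathrm e^{-\bsi\varphi_0}$ (recorded already in the proof of the lemma on $\mathcal B_3,\mathcal B_4$). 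Writing $\nabla\bmf u\cdot\nu=\partial_\nu\bmf u$ and pairing once more against $\nu$ or $\boldsymbol\tau$, the mode $m=1$ contributions should drop out by symmetry (they contribute only to the trace/divergence part, not to the particular quadratic forms here), and the surviving terms should organize into the stated combinations $2k_p^2 a_0+\mathrm e^{2\bsi\varphi_0}(k_p^2 a_2+\bsi k_s^2 b_2)$ and $2\bsi k_s^2 b_0+\mathrm e^{2\bsi\varphi_0}(k_p^2 a_2+\bsi k_s^2 b_2)$, up to a nonzero overall constant that can be divided out.

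The main obstacle I anticipate is purely bookkeeping: there are many terms with various powers of $\mathrm e^{\pm\bsi\varphi_0}$, and one must carefully track which Bessel indices actually contribute a nonzero first derivative at $r=0$ — in particular the appearance of $J_{m-1}$ with $m=0$ (i.e.\ $J_{-1}=-J_1$, which \emph{does} contribute) versus $J_{m-2}$ with $m=0,1$ (i.e.\ $J_{-2}=J_2$ and $J_{-1}=-J_1$). A clean way to control this is to substitute the series $J_n(kr)=\sum_{\ell\ge0}\frac{(-1)^\ell}{\ell!\,(\ell+n)!}(kr/2)^{2\ell+n}$ into \eqref{eq:u1}--\eqref{eq:u2}, collect the coefficient of $r^1$ (i.e.\ of $x_1\cos\varphi+x_2\sin\varphi$ and the $\sin/\cos$ combinations coming from $\mathrm e^{\bsi\varphi}$), and read off $\nabla\bmf u|_{\bmf 0}$ as a $2\times2$ matrix with entries linear in $a_0,a_1,a_2,b_0,b_1,b_2$; then \eqref{eq:gradient1} and \eqref{eq:gradient2} follow by the indicated matrix-vector contractions. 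Since the lemma is stated with its proof deferred to the Appendix, it suffices here to lay out this computation and verify the two identities by matching coefficients.
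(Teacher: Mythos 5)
Your proposal is correct and follows essentially the same route as the paper's Appendix proof: substitute the Bessel series into the Fourier expansion \eqref{eq:u}, extract the lowest-order coefficient of the two quadratic forms along $\Gamma_h^+$ (the paper does this by computing $\partial_r u_i$, $\partial_\varphi u_i$, converting via the chain rule, restricting to $\varphi=\varphi_0$ and comparing the $r^0$ coefficients; you propose reading off the degree-one Taylor coefficients of $\mathbf u$ directly — the same computation in different coordinates), and then contract with $\nu$ and $\boldsymbol\tau$ to obtain \eqref{eq:gradient1}--\eqref{eq:gradient2}. One small correction to your bookkeeping: the $m=1$ modes contribute to $\mathbf u(\mathbf 0)$ (via $J_0$) and to the second derivatives (via $J_2$) but not at all to $\nabla\mathbf u(\mathbf 0)$ — only $|m\pm 1|=1$, i.e.\ $m=0$ and $m=2$, produce degree-one terms — so there is no $m=1$ cancellation ``by symmetry'' to arrange; those terms are simply absent from the gradient.
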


\subsection{The case with a singular soft-clamped line}
\begin{lem}\label{lem:third}
Let $\mathbf{u}$ be a Lam\'e eigenfunction of \eqref{eq:lame}, where   $\mathbf{u}$ has the expansion \eqref{eq:u} around the origin.  Suppose that $\Gamma_h^+$ defined in \eqref{eq:gamma_pm} such that $\Gamma_h^+\in {\mathcal G}_\Omega^{\kappa}$.  Then we have
\begin{equation}\label{eq:1}
    \bsi k_p a_1 -  k_s b_1=0,\quad
 \bsi k_p^2 a_2 - k_s^2 b_2 = 0, 
\end{equation}
and
\begin{equation}\label{eq:2}
  2 k_s ^2 b_0+ \mathrm{e}^{2 \bsi \varphi_0} (\bsi k_p^2 a_2- k_s^2  b_2)=0,\quad
   \bsi k_p ^3 a_1 - k_s^3 b_1 - \mathrm{e}^{2 \bsi \varphi_0}(\bsi k_p ^3 a_3 - k_s^3 b_3)=0.
\end{equation}
Moreover, it holds that
\begin{equation}\label{eq:a1b1}
	b_0=a_1=b_1=0.
\end{equation}
Furthermore, suppose that
\begin{equation}\label{eq:aLbL}
	a_\ell=b_\ell=0
\end{equation}
where $\ell=0,\ldots,m-1$ and $m\in {\mathbb N}$ with $m\geq 3 $, then
\begin{align}
 \label{eq:lem31}
	a_\ell=b_\ell=0, \quad \forall \ell \in \mathbb N \cup \{0\}.
\end{align}
\end{lem}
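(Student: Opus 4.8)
The statement of Lemma~\ref{lem:third} concerns a soft-clamped (class $\mathcal{G}$) line $\Gamma_h^+$, on which the condition $\mathcal{B}_3(\mathbf{u})=\mathbf{0}$ holds, i.e.\ $\nu\cdot\mathbf{u}=0$ and $\boldsymbol{\tau}\cdot T_\nu\mathbf{u}=0$ along $\Gamma_h^+$. The strategy is the standard one of this paper: substitute the radial-wave expansions into both scalar conditions, expand the Bessel functions $J_n(k_p r)$, $J_n(k_s r)$ as power series in $r$, collect like powers of $r$, and read off a hierarchy of linear relations among the Fourier coefficients $a_m$, $b_m$. For this we use the Fourier expansion of $\mathcal{B}_3(\mathbf{u})|_{\Gamma_h^+}$ from \eqref{eq:third2} together with Lemma~\ref{lem:co exp} (the Bessel-coefficient vanishing lemma) applied after grouping terms by Bessel order, or equivalently by directly expanding in $r$ and invoking linear independence of the powers $r^k$.

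\textbf{Order of steps.} First I would record the two scalar equations from $\mathcal{B}_3(\mathbf{u})|_{\Gamma_h^+}=\mathbf{0}$ using \eqref{eq:third2}: the $\hat{\mathbf{e}}_1$-component gives $\frac{\bsi k_p}{2}a_m(J_{m-1}+J_{m+1})(k_pr)+\frac{k_s}{2}b_m(-J_{m-1}+J_{m+1})(k_sr)=0$ summed over $m$ with the $\mathrm{e}^{\bsi m\varphi_0}$ weights, and the $\hat{\mathbf{e}}_2$-component gives the corresponding relation with $J_{m\mp2}$. Using $J_{m-1}(t)+J_{m+1}(t)=\tfrac{2m}{t}J_m(t)$ simplifies the first component. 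Then, comparing the lowest-order coefficients in $r$ (orders $r^0$, $r^1$, $r^2$), I would extract \eqref{eq:1}: the $O(r^0)$ and $O(r^1)$ balances yield $\bsi k_p a_1-k_sb_1=0$, and the $\hat{\mathbf{e}}_2$-component at low order yields $\bsi k_p^2a_2-k_s^2b_2=0$. Next, bringing in the point-value conditions \eqref{eq:32 cond} via Lemma~\ref{lem:condition} — specifically \eqref{eq:gradient2}, which says $2\bsi k_s^2b_0+\mathrm{e}^{2\bsi\varphi_0}(k_p^2a_2+\bsi k_s^2b_2)=0$, combined with $\bsi k_p^2a_2=k_s^2b_2$ just obtained — gives the first relation in \eqref{eq:2}; a higher-order coefficient comparison (order $r^2$ or $r^3$) in the $\hat{\mathbf{e}}_1$-component produces the second relation in \eqref{eq:2}. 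From the first relation in \eqref{eq:2} together with \eqref{eq:1}, since $k_p\neq k_s$ (as $\lambda+\mu>0$), one deduces $b_0=0$, hence $a_2=b_2=0$, and then \eqref{eq:1} forces $a_1=b_1=0$ after using one more low-order relation — giving \eqref{eq:a1b1}.

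\textbf{The induction.} For the final claim, assuming $a_\ell=b_\ell=0$ for $\ell=0,\dots,m-1$ with $m\geq3$, I would look at the coefficient of $r^{m-1}$ (or the appropriate shifted power) in the two scalar equations. The key algebraic point is that at this order only the coefficients $a_{m-2},a_{m-1},a_m$ and $b_{m-2},b_{m-1},b_m$ (and similarly one-step neighbours) appear, and by the inductive hypothesis all but $a_m,b_m$ drop out, leaving a $2\times2$ homogeneous linear system for $(a_m,b_m)$ whose determinant is a nonzero multiple of a power of $k_p,k_s$ times $(k_p^2-k_s^2)$ or similar — hence $a_m=b_m=0$. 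One must check the determinant is genuinely nonvanishing; this uses only $\mu>0$, $\lambda+\mu>0$, which guarantee $k_p\neq k_s$ and both are positive. Iterating over $m$, combined with \eqref{eq:a1b1} as the base case, yields $a_\ell=b_\ell=0$ for all $\ell$, which is \eqref{eq:lem31}; by Proposition~\ref{prop:1} this would then force $\mathbf{u}\equiv\mathbf{0}$, though that conclusion is drawn in the theorem rather than the lemma.

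\textbf{Main obstacle.} The routine part is the bookkeeping of Bessel power-series expansions; the genuinely delicate part is twofold. First, one must correctly track which coefficients of $\mathbf{u}$ survive at each order of $r$ in \emph{both} scalar conditions simultaneously and verify the resulting small linear systems are nondegenerate — in particular, showing that no extra structural relation (from the $\mathrm{e}^{\bsi m\varphi_0}$ weights coinciding for different $m$) causes a rank drop. Second, the interplay between the \emph{line} condition $\mathcal{B}_3(\mathbf{u})=\mathbf{0}$ and the \emph{point} conditions \eqref{eq:32 cond} is essential: the line condition alone is not enough to kill $b_0,a_1,b_1$, and it is precisely Lemma~\ref{lem:condition}'s relations \eqref{eq:gradient1}--\eqref{eq:gradient2} that close the low-order system. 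Getting the base case \eqref{eq:a1b1} right — i.e.\ correctly combining \eqref{eq:1}, \eqref{eq:2} and \eqref{eq:gradient2} — is where the real care is needed; the subsequent induction is comparatively mechanical once the nondegeneracy of the step-$m$ system is established.
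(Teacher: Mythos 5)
Your proposal has a genuine structural error: you invoke the point-value conditions \eqref{eq:32 cond} via Lemma~\ref{lem:condition} (in particular \eqref{eq:gradient2}) to obtain the first relation of \eqref{eq:2} and to "close the low-order system", and you explicitly assert that "the line condition alone is not enough to kill $b_0,a_1,b_1$". But Lemma~\ref{lem:third} only assumes $\Gamma_h^+\in\mathcal{G}_\Omega^\kappa$ — a soft-clamped line, not a \emph{singular} soft-clamped line — so the conditions \eqref{eq:32 cond} are not available, and a proof that uses them establishes a strictly weaker statement. In fact the paper derives everything in the lemma from the line condition alone: the first relation of \eqref{eq:2}, namely $2k_s^2b_0+\mathrm{e}^{2\bsi\varphi_0}(\bsi k_p^2a_2-k_s^2b_2)=0$, is the $\hat{\mathbf{e}}_1$-component of the coefficient of $r^1$ in \eqref{eq:third}, not a consequence of \eqref{eq:gradient2} (note also that \eqref{eq:gradient2} is equivalent to $2k_s^2b_0-\mathrm{e}^{2\bsi\varphi_0}(\bsi k_p^2a_2-k_s^2b_2)=0$, which differs from the first relation of \eqref{eq:2} by a sign, so your identification is algebraically wrong as well). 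Then $b_0=0$ follows immediately by combining this with the second relation of \eqref{eq:1}, and $a_1=b_1=0$ follows by pairing the first relation of \eqref{eq:1} with the extra equation $\bsi k_p^3a_1+k_s^3b_1=0$ extracted from the $r^2$-coefficient combined with the second relation of \eqref{eq:2}; the relevant $2\times2$ determinant is $\bsi k_pk_s(k_p^2\mp k_s^2)\neq0$. The point conditions are used only later, in Theorem~\ref{thm:third}, to additionally kill $a_0,a_2,b_2$ — which, incidentally, your proposal wrongly claims are already concluded in the lemma ($a_2=b_2=0$ is not part of \eqref{eq:a1b1}).

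A secondary inaccuracy concerns the induction step. You describe it as a single-order coefficient comparison in which only $a_{m-2},a_{m-1},a_m$ (and the $b$'s) appear and all but $a_m,b_m$ drop out by the inductive hypothesis. In reality the coefficients of $r^{m}$ and $r^{m+1}$ in \eqref{eq:third1} involve the \emph{higher} unknowns $a_{m+2},b_{m+2}$ (and $a_{m+3},b_{m+3}$), which do not vanish by hypothesis; they must be eliminated by combining the equations from three consecutive orders $r^{m-1},r^m,r^{m+1}$ (as in \eqref{eq:4}--\eqref{eq:6}), after which $b_m=0$ falls out directly ($2k_s^{m+2}b_m=0$) and then $a_m=0$ from $\bsi k_p^ma_m-k_s^mb_m=0$ — no determinant involving $k_p^2-k_s^2$ is needed there. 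The overall strategy (Fourier expansion of $\mathcal{B}_3(\mathbf{u})|_{\Gamma_h^+}$, power-series comparison, induction) is the right one, but as written the proof both assumes hypotheses the lemma does not grant and misidentifies where the key low-order relations come from.
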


\begin{proof}
Since $\Gamma_h^+$ is a soft-clamped line of $\bmf{u}$, by virtue of  \eqref{eq:third2}, we have
\begin{equation}\label{eq:third}
 \begin{aligned}
\bmf{0} = & \sum_{m=0}^{\infty} \mathrm{ e}^{\bsi m \varphi_0} \bigg\{ \left[ \frac{\bsi k_p}{2} a_{m} (J_{m-1}(k_p r) + J_{m+1}(k_p r) )\right.\\
   & +  \frac{k_s}{2} b_m (-J_{m-1}(k_s r) + J_{m+1}(k_s r) )\bigg]\hat{\mathbf{e}}_1 + \left[ \frac{\bsi k_p^2}{2} a_{m} (-J_{m-2}(k_p r) + J_{m+2}(k_p r) )\right.\\
   & + \frac{k_s^2}{2} b_m (J_{m-2}(k_s r) + J_{m+2}(k_s r) )\bigg] \mu \hat{\mathbf{e}}_2 \bigg\},\quad 0 \leqslant r\leqslant h,
 \end{aligned}
\end{equation}
where  and also throughout the rest of the paper,  $\hat{\mathbf{e}}_1=(0,1)^\top $, $\hat{\mathbf{e}}_2=(1,0)^\top $. Noting $J_{-m}(t)=(-1)^m J_{m}(t)$  (cf. \cite{Abr}), we obtain that
\begin{equation}\label{eq:Jmfact}
	J_{-m}(k_pr)=-J_{m}(k_p r), \quad J_{-m }(k_s r)=-J_{m}(k_s r),\quad  m =1,2.
\end{equation}
 Using  Lemma \ref{lem:co exp} and \eqref{eq:Jmfact}, comparing the coefficient of the term $r^0$ in both sides of \eqref{eq:third}, it holds that
$$
(\bsi k_p a_1 - k_s b_1)  \hat{\mathbf{e}}_1 + ( - \bsi k_p^2 a_2 + k_s^2 b_2) \mu \mathrm{e}^{ \bsi \varphi_0} \hat{\mathbf{e}}_2= \bmf{0}.
$$
Since $\hat{\mathbf{e}}_1$ and $\hat{\mathbf{e}}_2$ are linear independent, we obtain \eqref{eq:1}. Similarly, from Lemma \ref{lem:co exp}, we compare the coefficient of the term $r^1$ in both sides of \eqref{eq:third}, then it holds that
$$
\big[2 k_s^2 b_0 + (\bsi k_p^2 a_2 - k_s^2 b_2) \mathrm{e}^{2 \bsi \varphi_0} \big] \hat{\mathbf{e}}_1 + \big[ ( \bsi k_p^3 a_1 - k_s^3 b_1)-( \bsi k_p^3 a_1 - k_s^3 b_1) \mu \mathrm{e}^{ 2 \bsi \varphi_0}\big] \mu  \mathrm{e}^{ \bsi \varphi_0} \hat{\mathbf{e}}_2 = \bmf{0}.
$$
Since $\hat{\mathbf{e}}_1$ and $\hat{\mathbf{e}}_2$ are linear independent, we readily obtain \eqref{eq:2}.

It can be seen that $b_0=0$ by utilizing  the second equation of \eqref{eq:1} and  the first equation of \eqref{eq:2}.

Next we compare the coefficient of the term $r^2$ in both sides of \eqref{eq:third} and can conclue that
\begin{equation}\nonumber
\begin{aligned}
& \big[( \bsi k_p^3 a_1 + 3 k_s^3 b_1) + ( \bsi k_p^3 a_3 - k_s^3 b_3) \mathrm{e}^{ 2 \bsi \varphi_0}\big]   \hat{\mathbf{e}}_1+ \big[( 2 \bsi k_p^4 a_2 - 2 k_s^4 b_2)\\
&\quad - ( \bsi k_p^4 a_4  - k_s^4 b_4) \mathrm{e}^{ 2 \bsi \varphi_0}\big]  \mu \mathrm{e}^{  \bsi \varphi_0} \hat{\mathbf{e}}_2= \bmf{0}.
\end{aligned}
\end{equation}
Since $\hat{\mathbf{e}}_1$ and $\hat{\mathbf{e}}_2$ are linear independent, we obtain
\begin{equation}\label{eq:3g}
\bigg\{
    \begin{array}{l}
   \bsi k_p^3 a_1 + 3 k_s^3 b_1 + ( \bsi k_p^3 a_3 - k_s^3 b_3) \mathrm{e}^{ 2 \bsi \varphi_0}=0,\\
    2 \bsi k_p^4 a_2 - 2 k_s^4 b_2 - ( \bsi k_p^4 a_4 - k_s^4 b_4) \mathrm{e}^{ 2 \bsi \varphi_0}=0.
 \end{array}
\end{equation}
Substituting  the second equation of \eqref{eq:2} into the first equation of \eqref{eq:3g}, one has
 \begin{equation}\label{eq:4g}
 \bsi k_p^3 a_1 +  k_s^3 b_1 = 0.
 \end{equation}
By the first equation of \eqref{eq:1} and \eqref{eq:4g}, it yields that
\begin{equation}\label{eq:3}
 \bsi k_p a_1 - k_s b_1=0,\quad
 \bsi k_p^3 a_1- k_s^3 b_1=0.
\end{equation}
Since the determinant of the coefficient matrix of \eqref{eq:3} is
$$
\left| \begin{matrix}
	\bsi k_p & - k_s\\
	\bsi k_p^3 & - k_s^3
\end{matrix} \right|=\bsi k_p k_s (k_p^2 - k_s^2) \neq 0,
$$
we can readily prove \eqref{eq:a1b1}.

Now we suppose that \eqref{eq:aLbL}  holds. Substituting \eqref{eq:aLbL} into \eqref{eq:third}, it yields that
\begin{equation}\label{eq:third1}
 \begin{aligned}
 \bmf{0} = & \sum_{m=\ell}^{\infty} \mathrm{ e}^{\bsi m \varphi_0} \bigg\{ \left[ \frac{\bsi k_p}{2} a_{m} (J_{m-1}(k_p r) + J_{m+1}(k_p r) )\right.\\
   & +  \frac{k_s}{2} b_m (-J_{m-1}(k_s r) + J_{m+1}(k_s r) )\bigg]\hat{\mathbf{e}}_1 + \left[ \frac{\bsi k_p^2}{2} a_{m} (-J_{m-2}(k_p r) + J_{m+2}(k_p r) )\right.\\
   & + \frac{k_s^2}{2} b_m (J_{m-2}(k_s r) + J_{m+2}(k_s r) )\bigg] \mu \hat{\mathbf{e}}_2 \bigg\},\quad 0 \leqslant r\leqslant h.
 \end{aligned}
\end{equation}
The lowest order of the power with respect to $r$ in the right hand of \eqref{eq:third1} is $m-1$. Comparing the coefficients of the terms $r^{m-1}$ in both sides of \eqref{eq:third1}, it holds that
$$
  ( \bsi k_p^m a_m - k_s^m b_m )  \hat{\mathbf{e}}_1 + ( - \bsi k_p^{m+1} a_{m+1} + k_s^{m+1} b_{m+1}) \mathrm{e}^{ \bsi \varphi_0} \hat{\mathbf{e}}_2= \bmf{0}.
$$
Since $\hat{\mathbf{e}}_1$ and $\hat{\mathbf{e}}_2$ are linearly independent, we obtain that
\begin{equation}\label{eq:4}
 \bsi k_p^m a_m - k_s^m b_m=0,\quad
  \bsi k_p^{m+1} a_{m+1} - k_s^{m+1} b_{m+1}=0.
\end{equation}
Comparing the coefficients of the terms $r^m$ in both sides of \eqref{eq:third1},  it holds that
\begin{equation}\nonumber
\begin{aligned}
& (\bsi k_p^{m+1} a_{m+1} - k_s^{m+1} b_{m+1}) \mathrm{e}^{\bsi \varphi_0} \hat{\mathbf{e}}_1\\
&\quad +\left(\bsi m k_p^{m+2} a_m - m k_s^{m+2} b_m - (\bsi  k_p^{m+2} a_{m+2} -  k_s^{m+2} b_{m+2})  \mathrm{e}^{2 \bsi \varphi_0}\right)\mu \hat{\mathbf{e}}_2= \bmf{0},
\end{aligned}
\end{equation}
which further gives that
\begin{equation}\label{eq:5}
 \bigg\{  \begin{array}{l} \bsi k_p^{m+1} a_{m+1} - k_s^{m+1} b_{m+1}=0,\\
 \bsi m k_p^{m+2} a_m - m k_s^{m+2} b_m - ( \bsi k_p^{m+2} a_{m+2} - k_s^{m+2} b_{m+2})\mathrm{e}^{2 \bsi \varphi_0}=0.
  \end{array}
\end{equation}
Again  comparing the coefficient of the term $r^{m+1}$ in both sides of \eqref{eq:third1}, then it holds that
\begin{equation}\nonumber
\begin{aligned}
& \left( -\bsi m k_p^{m+2} a_m + (m+2) k_s^{m+2} b_m + (\bsi k_p^{m+2} a_{m+2} - k_s^{m+2} b_{m+2}) \mathrm{e}^{ 2 \bsi \varphi_0}\right) \hat{\mathbf{e}}_1\\
& + \left( \bsi (m+1) k_p^{m+3} a_{m+1} -(m+1) m k_s^{m+3} b_{m+1} -  ( \bsi k_p^{m+3} a_{m+3} - k_s^{m+3} b_{m+3})  \mathrm{e}^{2 \bsi \varphi_0} \right) \mu \mathrm{e}^{ \bsi \varphi_0} \hat{\mathbf{e}}_2 \\
& = \bmf{0}.
\end{aligned}
\end{equation}
Therefore one has
\begin{equation}\label{eq:6}
 \bigg\{  \begin{array}{l} -\bsi m k_p^{m+2} a_m + (m+2) k_s^{m+2} b_m + (\bsi k_p^{m+2} a_{m+2} - k_s^{m+2} b_{m+2})\mathrm{e}^{2 \bsi \varphi_0}=0,\\
 \bsi (m+1) k_p^{m+3} a_{m+1} -(m+1)  k_s^{m+3} b_{m+1} - ( \bsi k_p^{m+3} a_{m+3} - k_s^{m+3} b_{m+3})\mathrm{e}^{2 \bsi \varphi_0}=0.
  \end{array}
\end{equation}
Combining the second equation of \eqref{eq:5} with the first equation of \eqref{eq:6}, it can be derived that
\begin{equation}\label{eq:7}
b_m=0.
\end{equation}
Substituting  \eqref{eq:7} into the first equation of \eqref{eq:4}, we can obtain
\begin{equation}\label{eq:8}
a_m=0.
\end{equation}
Repeating   the above procedure for \eqref{eq:7} and  \eqref{eq:8} step by step,   we can prove that $a_\ell=b_\ell=0$ for $\ell=m+1,\ldots$.
\end{proof}

\begin{thm}\label{thm:third}
Let $\mathbf{u}\in L^2(\Omega)^2$ be a solution to \eqref{eq:lame}. If there exits a singular soft-clamped line $\Gamma_h \subset \Omega$ corresponding to $\mathbf u$, then $\bmf{u}\equiv \bmf{ 0}$.
\end{thm}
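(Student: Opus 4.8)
The plan is to work throughout with the radial-wave expansion \eqref{eq:u} of $\bmf{u}$ at the point $\bmf{x}_0$ supplied by the singular soft-clamped line; after a rigid motion we may take $\bmf{x}_0=\bmf{0}$ and the line to be $\Gamma_h^+$ of \eqref{eq:gamma_pm} with some direction $\varphi_0$. By Proposition~\ref{prop:1} it suffices to prove that every Fourier coefficient $a_\ell,b_\ell$ vanishes. Since $\Gamma_h^+$ is soft-clamped, $\mathcal B_3(\bmf{u})|_{\Gamma_h^+}=\bmf{0}$, so Lemma~\ref{lem:third} applies directly and already gives $b_0=a_1=b_1=0$ together with the algebraic relations \eqref{eq:1}, \eqref{eq:2} and \eqref{eq:3g}. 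Moreover its closing induction \eqref{eq:aLbL}--\eqref{eq:lem31} (case $m=3$) reduces the whole theorem to the three remaining base-case identities $a_0=a_2=b_2=0$.

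To secure $a_0=0$ I would invoke the defining singular condition itself: $\nu^\top\nabla\bmf{u}\,\nu|_{\bmf{x}=\bmf{0}}=0$ together with Lemma~\ref{lem:condition} yields \eqref{eq:gradient1}, and inserting the second identity of \eqref{eq:1}, namely $\bsi k_p^2a_2=k_s^2b_2$, into its bracket $k_p^2a_2+\bsi k_s^2b_2$ makes that bracket vanish, leaving $2k_p^2a_0=0$. To secure $a_2=b_2=0$ I would push the identity $\mathcal B_3(\bmf{u})|_{\Gamma_h^+}=\bmf{0}$ one order further than Lemma~\ref{lem:third} does: with $a_0=b_0=a_1=b_1=0$ now available, I extract the coefficient of $r^3$ in the $\hat{\mathbf{e}}_1$-component of \eqref{eq:third}. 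Only $m=2$ and $m=4$ contribute at that order (through $J_1,J_3$ for $m=2$ and $J_3$ for $m=4$, since $J_n$ carries only the powers $r^n,r^{n+2},\dots$), so Lemma~\ref{lem:co exp} forces an identity of the form $-2\bsi k_p^4a_2+4k_s^4b_2+\mathrm{e}^{2\bsi\varphi_0}(\bsi k_p^4a_4-k_s^4b_4)=0$. Comparing this with the second equation of \eqref{eq:3g}, i.e.\ $\mathrm{e}^{2\bsi\varphi_0}(\bsi k_p^4a_4-k_s^4b_4)=2(\bsi k_p^4a_2-k_s^4b_2)$, the fourth-order coefficients cancel and $k_s^4b_2=0$ survives, whence $b_2=0$ and then $a_2=0$ from \eqref{eq:1}.

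With $a_\ell=b_\ell=0$ for $\ell=0,1,2$ in hand, the induction in Lemma~\ref{lem:third} with $m=3$ gives $a_\ell=b_\ell=0$ for all $\ell\in\mathbb{N}\cup\{0\}$, and Proposition~\ref{prop:1} then yields $\bmf{u}\equiv\bmf{0}$ in $\Omega$. The step I expect to demand the most care is the order-$r^3$ computation producing $a_2=b_2=0$: one must verify that no coefficient beyond $a_2,b_2,a_4,b_4$ enters at that order and that the combination $\mathrm{e}^{2\bsi\varphi_0}(\bsi k_p^4a_4-k_s^4b_4)$ appearing there is precisely the one already fixed by the second equation of \eqref{eq:3g}, so that the elimination is clean and needs no restriction on $\varphi_0$; the rest is a straightforward assembly of the lemmas already proved.
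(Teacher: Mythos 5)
Your proposal is correct and follows essentially the same route as the paper: $a_0=0$ from the singular condition \eqref{eq:gradient1} combined with $\bsi k_p^2a_2-k_s^2b_2=0$, then $b_2=0$ (hence $a_2=0$) by eliminating $\bsi k_p^4a_4-k_s^4b_4$ between the $r^3$ coefficient of the $\hat{\mathbf{e}}_1$-component and the $r^2$ coefficient of the $\hat{\mathbf{e}}_2$-component of \eqref{eq:third}, followed by the induction of Lemma~\ref{lem:third} and Proposition~\ref{prop:1}. The only cosmetic difference is that you quote the second equation of \eqref{eq:3g} directly where the paper re-derives the identical identity as \eqref{eq:12} after the lower-order coefficients are known to vanish.
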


\begin{proof}
Suppose that there exits a singular soft-clamped line $\Gamma_h$ of $\mathbf u$ as described at the beginning of this section. Therefore we have  ${\nu}^\top \nabla \bmf{u} {\nu}|_{ \mathbf x=\bmf{0} } =0$, which implies that \eqref{eq:gradient1} holds.

 Since $\Gamma_h$ is a soft-clamped line, from Lemma \ref{lem:third} we know that
	\begin{equation}\label{eq:9}
			a_1=b_1=0,
	\end{equation}
and \eqref{eq:1} and \eqref{eq:2} hold.
In view of the second equation of \eqref{eq:1} and \eqref{eq:gradient1}, one has
\begin{equation}\label{eq:9z}
 \bsi k_p^2 a_2 - k_s^2 b_2=0,\quad
  2 \bsi k_s^2 a_0 + k_p^2 a_2 + \bsi k_s^2 b_2=0,
\end{equation}
which implies
\begin{equation}\label{eq:10}
	a_0=0.
\end{equation}
Substituting \eqref{eq:10} into the first equation of \eqref{eq:2}, it yields that $\bsi k_p^2 a_2 - k_s^2 b_2=0$, which can be furthered substituted into  the second equation of \eqref{eq:1} for deriving $b_0=0$.


Since we have proved that $a_\ell=b_\ell=0$ for $\ell=0,1$, the lowest order with respect to the power of $r$ in \eqref{eq:third} is 2. Using Lemma \ref{lem:co exp}, comparing the coefficients of the terms $r^2$ in both sides of \eqref{eq:third}, one can show that
$$
(\bsi k_p ^3 a_3 - k_s^3 b_3) \mathrm{e}^{\bsi \varphi_0} \hat{\mathbf{e}}_1+\left(2 \bsi k_p^4 a_2- 2 k_s^4 b_2+(-\bsi k_p^4 a_4+k_s^4 b_4)\mathrm{e}^{2 \bsi \varphi_0} \right) \mu \hat{\mathbf{e}}_2= \bmf{0}. 
$$
Since $\hat{\mathbf{e}}_1$ and $\hat{\mathbf{e}}_2$ are linearly independent, we obtain
\begin{equation}\label{eq:12}
\bsi k_p ^3 a_3 - k_s^3 b_3=0,\quad
  2 \bsi k_p^4 a_2- 2 k_s^4 b_2 - (\bsi k_p^4 a_4 - k_s^4 b_4)\mathrm{e}^{2 \bsi \varphi_0}=0.
\end{equation}
Similarly, from Lemma \ref{lem:co exp}, we compare the coefficients of the terms $r^3$ in both sides of \eqref{eq:third}, and can conclude that
\begin{equation}\nonumber
\begin{aligned}
 & \left( -2\bsi k_p^4 a_2 + 4 k_s^4 b_2 + (\bsi k_p^4 a_4 - k_s^4 b_4) \mathrm{e}^{ 2 \bsi \varphi_0}\right) \hat{\mathbf{e}}_1\\
 & + \left( 3 \bsi k_p^5 a_3 -3 k_s^5 b_3 -  ( \bsi k_p^5 a_5 - k_s^5 b_5)  \mathrm{e}^{2 \bsi \varphi_0} \right) \mu \mathrm{e}^{ \bsi \varphi_0} \hat{\mathbf{e}}_2
 = \bmf{0}.
 \end{aligned}
\end{equation}
Since $\hat{\mathbf{e}}_1$ and $\hat{\mathbf{e}}_2$ are linearly independent, we obtain that
\begin{equation}\label{eq:13}
 \bigg\{  \begin{array}{l} -2\bsi k_p^4 a_2 + 4 k_s^4 b_2 + (\bsi k_p^4 a_4 - k_s^4 b_4) \mathrm{e}^{ 2 \bsi \varphi_0}=0,\\
 3 \bsi k_p^5 a_3 -3 k_s^5 b_3 -  ( \bsi k_p^5 a_5 - k_s^5 b_5)  \mathrm{e}^{2 \bsi \varphi_0}=0.
  \end{array}
\end{equation}
Substituting  the second equation of \eqref{eq:12} into the first equation of \eqref{eq:13}, one can show that
\begin{equation}\label{eq:14}
b_2=0.
\end{equation}
Substituting \eqref{eq:14} into the second equation of \eqref{eq:1}, we can obtain $
a_2=0.
$

By now we have proved $a_\ell=b_\ell=0$ for $\ell=0,1,2$. According to Lemma \ref{lem:third}, we know that \eqref{eq:lem31} holds. Therefore, from Proposition \ref{prop:1}, we readily have that $\bmf{u}\equiv \bmf{0}$ in $\Omega$.

The proof is complete.
\end{proof}

\subsection{The case with a singular simply-supported line}
\begin{lem}\label{lem:forth}
Let $\mathbf{u}$ be a Lam\'e eigenfunction of \eqref{eq:lame}, where   $\mathbf{u}$ has the expansion \eqref{eq:u} around the origin.  Suppose that  $\Gamma_h^+$ is defined in \eqref{eq:gamma_pm} such that $\Gamma_h^+\in {\mathcal F}_\Omega^{\kappa}$. Then we have
\begin{equation}\label{eq:16}
 \bigg\{  \begin{array}{l} 2 (\lambda+\mu) k_p^2 a_0 +(k_p^2 a_2+ \bsi k_s^2 b_2) \mu  \mathrm{e}^{2 \bsi \varphi_0}=0,\\
  k_p a_1 + \bsi k_s b_1 = 0,
  \end{array}
\end{equation}
and
\begin{equation}\label{eq:17}
\bigg\{
    \begin{array}{l}
  2 k_p^2 a_0 - (k_p^2 a_2+ \bsi k_s^2 b_2) \mathrm{e}^{2 \bsi \varphi_0}=0\\
   -(2 \lambda +\mu) k_p ^3 a_1 + \bsi k_s^3 b_1 \mu -(k_p^3 a_3+ \bsi k_s^3 b_3) \mu  \mathrm{e}^{2 \bsi \varphi_0}=0.
 \end{array}
\end{equation}
Moreover, it holds that
\begin{equation}\label{eq:a0 forth}
	a_0=0.
\end{equation}
Furthermore, suppose that
\begin{equation}\label{eq:aLbL4}
	a_\ell=b_\ell=0,
\end{equation}
where $\ell=0,\ldots,m-1$ and $m\in {\mathbb N}$ with $m\geq 3 $, then
$$
a_\ell=b_\ell=0, \quad \forall \ell \in \mathbb N \cup \{0\}.
$$
\end{lem}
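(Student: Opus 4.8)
The plan is to mimic the structure of the proof of Lemma~\ref{lem:third}, working directly with the Fourier expansion \eqref{eq:forth2} of $\mathcal B_4(\mathbf u)|_{\Gamma_h^+}$, which vanishes identically on $\Gamma_h^+$ since $\Gamma_h^+\in\mathcal F_\Omega^\kappa$. First I would substitute the expansion into the identity $\mathcal B_4(\mathbf u)|_{\Gamma_h^+}=\mathbf 0$ for $0\le r\le h$, use $J_{-m}(t)=(-1)^mJ_m(t)$ for $m=1,2$ to fold the negative-index Bessel terms, and then apply Lemma~\ref{lem:co exp} to conclude that every coefficient of $r^j$ in the resulting $\hat{\mathbf e}_1,\hat{\mathbf e}_2$-components must vanish separately (using linear independence of $\hat{\mathbf e}_1$ and $\hat{\mathbf e}_2$). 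Comparing the $r^0$ terms should yield \eqref{eq:16}, and comparing the $r^1$ terms should yield \eqref{eq:17}; here one must be careful that the $\hat{\mathbf e}_2$-component of \eqref{eq:forth2} carries the extra $2(\lambda+\mu)J_m(k_pr)$ contribution (absent from $\mathcal B_3$), which is exactly what produces the $(2\lambda+\mu)$ and $(\lambda+\mu)$ weights in \eqref{eq:16}--\eqref{eq:17}.

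Next I would extract $a_0=0$. The first equation of \eqref{eq:16} and the first equation of \eqref{eq:17} together read
\begin{equation}\notag
2(\lambda+\mu)k_p^2a_0+(k_p^2a_2+\bsi k_s^2b_2)\mu\,\mathrm e^{2\bsi\varphi_0}=0,\qquad 2k_p^2a_0-(k_p^2a_2+\bsi k_s^2b_2)\mathrm e^{2\bsi\varphi_0}=0,
\end{equation}
so eliminating the common factor $(k_p^2a_2+\bsi k_s^2b_2)\mathrm e^{2\bsi\varphi_0}$ gives $2(\lambda+\mu)k_p^2a_0+2\mu k_p^2a_0=0$, i.e. $2(\lambda+2\mu)k_p^2a_0=0$; since $\lambda+2\mu>0$ (from \eqref{eq:convex}) and $k_p\ne0$, we get \eqref{eq:a0 forth}. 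This is the same mechanism by which $a_0=0$ was obtained in the soft-clamped case via \eqref{eq:9z}.

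For the induction step I would assume \eqref{eq:aLbL4}, i.e. $a_\ell=b_\ell=0$ for $\ell=0,\dots,m-1$ with $m\ge3$, substitute into the vanishing expansion so that the sum starts at index $m$, and then compare the coefficients of $r^{m-1}$, $r^m$ and $r^{m+1}$ successively. As in Lemma~\ref{lem:third}, comparing $r^{m-1}$ should give two relations, one of which (the $\hat{\mathbf e}_1$-component) is of the form $k_p^ma_m+\bsi k_s^mb_m=0$ up to constants, and comparing $r^m$ and $r^{m+1}$ should give relations linking $a_m,b_m$ to $a_{m+2},b_{m+2}$ and $a_{m+1},b_{m+1}$ to $a_{m+3},b_{m+3}$, together with a purely $m$-indexed relation whose coefficient matrix (against $a_m,b_m$) has nonzero determinant thanks to $k_p\ne k_s$ and the $\lambda+\mu$, $\lambda+2\mu$ factors. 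Solving that $2\times2$ system forces $a_m=b_m=0$, and then the relations pushing the index up by one propagate the vanishing to all $\ell\ge m$, completing the induction. The main obstacle I expect is bookkeeping: the $\hat{\mathbf e}_2$-component of $\mathcal B_4$ has three Bessel terms at orders $m-2$, $m$, $m+2$ (with distinct $\mu$ and $\lambda+\mu$ weights), so the coefficient comparisons are messier than in the $\mathcal B_3$ case, and one must verify that the resulting determinants genuinely do not vanish under \eqref{eq:convex} — in particular that no accidental cancellation occurs for small $m$ (which is presumably why the hypothesis requires $m\ge3$, so that indices $m-2,m-1$ already lie in the vanishing range).
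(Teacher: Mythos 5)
Your proposal follows essentially the same route as the paper: substitute the expansion \eqref{eq:forth2} into $\mathcal B_4(\mathbf u)|_{\Gamma_h^+}=\mathbf 0$, fold negative Bessel indices via $J_{-m}=(-1)^mJ_m$, apply Lemma~\ref{lem:co exp} to compare the $r^0$ and $r^1$ coefficients for \eqref{eq:16}--\eqref{eq:17}, obtain $a_0=0$ by the $\mu$-weighted combination giving $2(\lambda+2\mu)k_p^2a_0=0$, and run the induction by comparing $r^{m-1}$, $r^m$, $r^{m+1}$ and eliminating the $(m+2)$-indexed terms. The only cosmetic difference is that in the paper this elimination already yields $2(\lambda+2\mu)a_m=0$ outright (so the determinant argument needs only $\lambda+2\mu>0$, not $k_p\neq k_s$), after which $b_m=0$ follows from $k_p^ma_m+\bsi k_s^mb_m=0$; your $2\times 2$ formulation is equivalent.
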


\begin{proof}
Since $\Gamma_h^+$ is a simply-supported line of $\bmf{u}$, we have from \eqref{eq:forth2} that
\begin{equation}\label{eq:forth}
 \begin{aligned}
 \bmf{0} = & \sum_{m=0}^{\infty} \mathrm{ e}^{\bsi m \varphi_0} \bigg\{ \left[ \frac{ k_p}{2} a_{m} ( - J_{m-1}(k_p r) + J_{m+1}(k_p r) )\right.\\
   & -  \frac{\bsi k_s}{2} b_m (J_{m-1}(k_s r) + J_{m+1}(k_s r) )\bigg]\hat{\mathbf{e}}_1 \\
   & + \left[- \frac{ k_p^2}{2} a_{m} (J_{m-2}(k_p r)\mu + 2(\lambda+\mu) J_m(k_p r)+ J_{m+2}(k_p r) \mu)\right.\\
   & - \frac{\bsi k_s^2}{2} b_m (J_{m-2}(k_s r) - J_{m+2}(k_s r) )\mu \bigg]  \hat{\mathbf{e}}_2 \bigg\},\quad 0 \leqslant r\leqslant h,
 \end{aligned}
\end{equation}
where $\hat{\mathbf{e}}_1=(0,1)^\top $, $\hat{\mathbf{e}}_2=(1,0)^\top $.  Using  Lemma \ref{lem:co exp} and \eqref{eq:Jmfact}, and comparing the coefficients of the terms $r^0$ in both sides of \eqref{eq:forth}, we have that
$$
( k_p a_1 + \bsi k_s b_1) \mathrm{e}^{\bsi \varphi_0} \hat{\mathbf{e}}_1 + \left[2  (\lambda+\mu) k_p^2 a_0 + ( k_p^2 a_2 + \bsi k_s^2 b_2) \mu \mathrm{e}^{2 \bsi \varphi_0} \right] \hat{\mathbf{e}}_2= \bmf{0},
$$
Since $\hat{\mathbf{e}}_1$ and $\hat{\mathbf{e}}_2$ are linearly independent, we obtain \eqref{eq:16}. Similarly, from Lemma \ref{lem:co exp}, we compare the coefficients of the terms $r^1$ in both sides of \eqref{eq:forth}, and derive that
\begin{equation}\nonumber
\begin{aligned}
  &\left[ 2 k_p^2 a_0 - (k_p^2 a_2 + \bsi k_s^2 b_2) \mathrm{e}^{2 \bsi \varphi_0}\right] \hat{\mathbf{e}}_1   \\
  & + \left[-(2\lambda+\mu) k_p ^3 a_1 + \bsi k_s^3 \mu b_1 - (k_p^3 a_3 + \bsi k_s^3 b_3)\mu \mathrm{e}^{2 \bsi \varphi_0} \right] \mathrm{e}^{ \bsi \varphi_0} \hat{\mathbf{e}}_2= \bmf{0}.
\end{aligned}
\end{equation}
Since $\hat{\mathbf{e}}_1$ and $\hat{\mathbf{e}}_2$ are linearly independent, one obtains \eqref{eq:17}.

Multiplying $\mu$ on both sides of \eqref{eq:17}, then adding it to \eqref{eq:16}, by \eqref{eq:convex}, one can directly obtain \eqref{eq:a0 forth}.

Next we assume that \eqref{eq:aLbL4}  is valid.
Substituting \eqref{eq:aLbL4} into \eqref{eq:forth}, it yields that
\begin{equation}\label{eq:forth1}
 \begin{aligned}
 \bmf{0} = & \sum_{m=\ell}^{\infty} \mathrm{ e}^{\bsi m \varphi_0} \bigg\{ \left[ \frac{ k_p}{2} a_{m} ( - J_{m-1}(k_p r) + J_{m+1}(k_p r) )\right.\\
   & -  \frac{\bsi k_s}{2} b_m (J_{m-1}(k_s r) + J_{m+1}(k_s r) )\bigg]\hat{\mathbf{e}}_1 \\
   & + \left[- \frac{ k_p^2}{2} a_{m} (J_{m-2}(k_p r)\mu + 2(\lambda+\mu) J_m(k_p r)+ J_{m+2}(k_p r) \mu)\right.\\
   & - \frac{\bsi k_s^2}{2} b_m (J_{m-2}(k_s r) - J_{m+2}(k_s r) )\mu \bigg]  \hat{\mathbf{e}}_2 \bigg\},\quad 0 \leqslant r\leqslant h.
 \end{aligned}
\end{equation}
The lowest order of the power with respect to $r$ in the right hand of \eqref{eq:forth1} is $m-1$.  Comparing the coefficients of the terms $r^{m-1}$ in both sides of \eqref{eq:forth1}, it holds that
$$
  -(  k_p^m a_m + \bsi k_s^m b_m )  \hat{\mathbf{e}}_1 - (  k_p^{m+1} a_{m+1} + \bsi k_s^{m+1} b_{m+1}) \mu \mathrm{e}^{ \bsi \varphi_0} \hat{\mathbf{e}}_2= \bmf{0},
$$
which readily gives that
\begin{equation}\label{eq:18}
 k_p^m a_m + \bsi k_s^m b_m=0,\quad
  k_p^{m+1} a_{m+1} + \bsi k_s^{m+1} b_{m+1}=0.
\end{equation}
Comparing the coefficients of the terms $r^m$ in both sides of \eqref{eq:forth1},  it holds that
\begin{equation}\nonumber
\begin{aligned}
& - (k_p^{m+1} a_{m+1} + \bsi  k_s^{m+1} b_{m+1}) \mathrm{e}^{\bsi \varphi_0} \hat{\mathbf{e}}_1+  \big(((m-2)\mu-2\lambda)  k_p^{m+2} a_m \\
 & + \bsi m \mu k_s^{m+2} b_m - (  k_p^{m+2} a_{m+2} + \bsi k_s^{m+2} b_{m+2}) \mu \mathrm{e}^{2 \bsi \varphi_0}\big) \hat{\mathbf{e}}_2= \bmf{0}.
\end{aligned}
\end{equation}
Therefore we have
\begin{equation}\label{eq:19}
 \bigg\{  \begin{array}{l} k_p^{m+1} a_{m+1} + \bsi  k_s^{m+1} b_{m+1}=0,\\
 ((m-2)\mu-2\lambda)  k_p^{m+2} a_m + \bsi m \mu k_s^{m+2} b_m - (  k_p^{m+2} a_{m+2} + \bsi k_s^{m+2} b_{m+2}) \mu \mathrm{e}^{2 \bsi \varphi_0}=0,
  \end{array}
\end{equation}
by using the fact that $\hat{\mathbf{e}}_1$ and $\hat{\mathbf{e}}_2$ are linearly independent.
Finally comparing the coefficients of the terms $r^{m+1}$ in both sides of \eqref{eq:forth1},  it holds that
\begin{equation}\nonumber
\begin{aligned}
& \left( (m+2)  k_p^{m+2} a_m + \bsi m k_s^{m+2} b_m - ( k_p^{m+2} a_{m+2} + \bsi k_s^{m+2} b_{m+2}) \mathrm{e}^{ 2 \bsi \varphi_0}\right) \hat{\mathbf{e}}_1\\
& + \big( (((m-1)\mu-2\lambda) k_p^{m+3} a_{m+1} + \bsi (m+1)\mu k_s^{m+3} b_{m+1}) \mathrm{e}^{ \bsi \varphi_0}\\
&  -  (k_p^{m+3} a_{m+3} + \bsi k_s^{m+3} b_{m+3})  \mathrm{e}^{3 \bsi \varphi_0}  \mu \big)\hat{\mathbf{e}}_2 = \bmf{0},
\end{aligned}
\end{equation}
which further yields that
\begin{equation}\label{eq:20}
 \Bigg \{  \begin{array}{l}(m+2)  k_p^{m+2} a_m + \bsi m k_s^{m+2} b_m - ( k_p^{m+2} a_{m+2} + \bsi k_s^{m+2} b_{m+2}) \mathrm{e}^{ 2 \bsi \varphi_0}=0,\\
 \begin{aligned}
 &((m-1)\mu-2\lambda) k_p^{m+3} a_{m+1} + \bsi (m+1) \mu k_s^{m+3} b_{m+1}\\
 & -  (k_p^{m+3} a_{m+3} + \bsi k_s^{m+3} b_{m+3})  \mathrm{e}^{2 \bsi \varphi_0}  \mu=0.
 \end{aligned}
  \end{array}
\end{equation}

We multiply the first equation of \eqref{eq:20} by $\mu$ and subtract it from the second equation of \eqref{eq:19}, which readily gives that
\begin{equation}
\notag
2(\lambda+2\mu  )a_m=0.
\end{equation}
Therefore one has
\begin{equation}\label{eq:am21}
	a_m=0
\end{equation}
 by noting \eqref{eq:convex}.
Substituting \eqref{eq:am21} into the first equation of \eqref{eq:18}, we can obtain
\begin{equation}\label{eq:22}
b_m=0.
\end{equation}
Repeating the above procedure for deriving \eqref{eq:am21} and  \eqref{eq:22}, we can prove that $a_\ell=b_\ell=0$ for $\ell=m+1,\ldots$.

The proof is complete. 
\end{proof}

\begin{thm}\label{thm:fourth}
	Let $\mathbf{u}\in L^2(\Omega)^2$ be a solution to \eqref{eq:lame}. If there exits a  singular simply-supported line $\Gamma_h \subset  \Omega$ corresponding to $\mathbf u$, then $\bmf{u}\equiv \bmf{ 0}$.
\end{thm}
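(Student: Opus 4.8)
The plan is to follow exactly the same pattern that was just carried out for Theorem~\ref{thm:third} (the soft-clamped case), only now invoking the algebraic machinery of Lemma~\ref{lem:forth} in place of Lemma~\ref{lem:third}. We are given a singular simply-supported line $\Gamma_h\subset\Omega$, which (after the rigid motion normalisation of Section~\ref{sect:3}) means $\Gamma_h=\Gamma_h^+$ passes through the origin, the condition $\mathcal{B}_4(\mathbf{u})=0$ holds on it, and the point-value condition from \eqref{eq:def2}, namely $\boldsymbol{\tau}^\top\nabla\mathbf{u}\,\nu|_{\mathbf{x}=\mathbf{0}}=0$, is satisfied. The goal is to show all Fourier coefficients $a_\ell,b_\ell$ in the expansion \eqref{eq:u} vanish, and then conclude $\mathbf{u}\equiv\mathbf{0}$ via Proposition~\ref{prop:1}.

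First I would feed the point-value condition $\boldsymbol{\tau}^\top\nabla\mathbf{u}\,\nu|_{\mathbf{x}=\mathbf{0}}=0$ into Lemma~\ref{lem:condition}, which yields the relation \eqref{eq:gradient2}: $2\bsi k_s^2 b_0+\mathrm{e}^{2\bsi\varphi_0}(k_p^2 a_2+\bsi k_s^2 b_2)=0$. Next, since $\Gamma_h^+\in\mathcal{F}_\Omega^\kappa$, Lemma~\ref{lem:forth} already gives me the relations \eqref{eq:16}, \eqref{eq:17} and in particular $a_0=0$. The first equation of \eqref{eq:17} with $a_0=0$ then forces $k_p^2 a_2+\bsi k_s^2 b_2=0$; combined with \eqref{eq:gradient2} this gives $b_0=0$. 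From $a_0=0$ and the first equation of \eqref{eq:16} I should also be able to extract that $k_p^2 a_2+\bsi k_s^2 b_2=0$ consistently, and the second equation of \eqref{eq:16} gives $k_p a_1+\bsi k_s b_1=0$. The remaining task at the bottom of the ladder is to pin down $a_1,b_1$ and then $a_2,b_2$ individually. For this I would mimic the soft-clamped proof: with $a_0=b_0=0$ the lowest power of $r$ in the series \eqref{eq:forth} is now $r^1$ rather than $r^0$, so comparing coefficients of $r^1, r^2, r^3$ (using Lemma~\ref{lem:co exp}) produces enough linear relations among $a_1,b_1,a_2,b_2,a_3,b_3$ that, together with \eqref{eq:16}–\eqref{eq:17}, the relevant $2\times 2$ (or $3\times 3$) coefficient determinants — which will again be Vandermonde-type expressions in $k_p,k_s$, nonvanishing because $k_p\neq k_s$ under \eqref{eq:convex} — force $a_1=b_1=0$ and then $a_2=b_2=0$.

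Once $a_\ell=b_\ell=0$ is established for $\ell=0,1,2$, I can invoke the induction step already proved in Lemma~\ref{lem:forth}: the hypothesis \eqref{eq:aLbL4} is met with $m=3$, so the lemma delivers $a_\ell=b_\ell=0$ for all $\ell\in\mathbb{N}\cup\{0\}$. Proposition~\ref{prop:1} then immediately gives $\mathbf{u}\equiv\mathbf{0}$ in $\Omega$, completing the proof.

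The main obstacle I anticipate is the bookkeeping at the bottom of the ladder: unlike the soft-clamped case, the simply-supported trace $\mathcal{B}_4$ mixes the Lamé constants $\lambda,\mu$ nontrivially into the coefficients (see the $2(\lambda+\mu)$ and $(2\lambda+\mu)$ factors in \eqref{eq:16}–\eqref{eq:17}), so one has to be careful that the linear systems coupling $a_1,b_1$ with $a_2,b_2$ (and the next level $a_3,b_3$) really do decouple after using the strong convexity \eqref{eq:convex}, exactly as the trick "multiply by $\mu$ and add/subtract" was used inside Lemma~\ref{lem:forth} to extract $a_0=0$ and $a_m=0$. Verifying that the relevant determinants do not vanish — equivalently that no accidental cancellation occurs for special ratios $k_p/k_s$ — is the delicate point, but it is handled exactly as in the cited lemma, since $k_p^2\neq k_s^2$ always holds. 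Everything else is a routine repetition of the coefficient-matching computation already displayed for Theorem~\ref{thm:third}.
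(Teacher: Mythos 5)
Your proposal is correct and follows essentially the same route as the paper's proof: the point-value condition gives \eqref{eq:gradient2}, Lemma~\ref{lem:forth} gives $a_0=0$, then $b_0=0$ follows from \eqref{eq:gradient2}, the $r^2$ and $r^3$ coefficient comparisons combined with the multiply-by-$\mu$-and-subtract trick yield $a_1=b_1=0$ and $a_2=b_2=0$, and the induction of Lemma~\ref{lem:forth} together with Proposition~\ref{prop:1} finishes the argument. One small correction to your closing remark: the nonvanishing quantity that extracts $a_1$ and then $a_2$ at the bottom of the ladder is $2(\lambda+2\mu)\neq 0$ from the strong convexity condition \eqref{eq:convex} (the $b$-terms cancel identically in the subtraction), not a Vandermonde-type factor $k_p^2-k_s^2$; no condition on the ratio $k_p/k_s$ is needed at that step.
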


\begin{proof}

Suppose that there exits a singular soft-clamped line $\Gamma_h$ of $\mathbf u$ as described at the beginning of this section. Therefore we have  $\boldsymbol{\tau}^\top \nabla \bmf{u} {\nu}|_{\mathbf x=\bmf{0} }=0$, which implies that \eqref{eq:gradient2} holds.

According to Lemma \ref{lem:forth}, \eqref{eq:16} and \eqref{eq:17} hold since $\Gamma_h$ is a  soft-clamped line of $\mathbf u$.
Substituting  the first equation of \eqref{eq:16} into the first equation of \eqref{eq:17},  we have
\begin{equation}\label{eq:23}
	a_0=0.
\end{equation}
Substituting \eqref{eq:23} into \eqref{eq:17} yields that
\begin{equation}\label{eq:a_2337}
	k_p^2 a_2+\bsi k_s^2 b_2=0.
\end{equation}
Substituting \eqref{eq:a_2337} into   \eqref{eq:gradient2},  we have
\begin{equation}\label{eq:24}
	b_0=0.
\end{equation}
Using  Lemma \ref{lem:co exp}, \eqref{eq:23} and  \eqref{eq:24}, and comparing the coefficients of the terms $r^2$ in both sides of \eqref{eq:forth},  it holds that
\begin{equation}\nonumber
\begin{aligned}
& \big(3 k_p ^3 a_1 + \bsi k_s^3 b_1 -  (k_p^3 a_3 + \bsi k_s^3 b_3)\mathrm{e}^{ 2 \bsi \varphi_0}\big)\hat{\mathbf{e}}_1\\
& +\big((-2 \lambda k_p^4 a_2+ 2 \bsi \mu k_s^4 b_2)  - ( k_p^4 a_4+ \bsi k_s^4 b_4) \mu \mathrm{e}^{2 \bsi \varphi_0} \big) \mathrm{e}^{ \bsi \varphi_0}  \hat{\mathbf{e}}_2= \bmf{0} .
\end{aligned}
\end{equation}
Since $\hat{\mathbf{e}}_1$ and $\hat{\mathbf{e}}_2$ are linearly independent, we obtain
\begin{equation}\label{eq:25}
 \bigg\{  \begin{array}{l} 3 k_p ^3 a_1 + \bsi k_s^3 b_1  -  (k_p^3 a_3 + \bsi k_s^3 b_3)\mathrm{e}^{ 2 \bsi \varphi_0}=0,\\
 -2 \lambda k_p^4 a_2+ 2 \bsi \mu k_s^4 b_2 - ( k_p^4 a_4+ \bsi k_s^4 b_4) \mu \mathrm{e}^{2 \bsi \varphi_0}=0.
  \end{array}
\end{equation}
By multiplying the first equation of \eqref{eq:25} with $\mu$ and subtracting it from the second equation of \eqref{eq:17}, one can show that
\begin{equation}\label{eq:27z}
a_1=0.
\end{equation}
Substituting  \eqref{eq:27z} into the second equation of \eqref{eq:16}, we can obtain
\begin{equation}\label{eq:28z}
b_1=0.
\end{equation}

By now we have proved $a_\ell=b_\ell=0$ for $\ell=0,1$.  Similarly, from Lemma \ref{lem:co exp}, we compare the coefficients of the terms $r^3$ in both sides of \eqref{eq:forth}, and can conclude that
\begin{equation}\nonumber
\begin{aligned}
& \big(4 k_p ^4 a_2 + 2 \bsi k_s^4 b_2 -  (k_p^4 a_4 + \bsi k_s^4 b_4)\mathrm{e}^{ 2 \bsi \varphi_0}\big)\hat{\mathbf{e}}_1\\
& +\big(((\mu-2 \lambda) k_p^5 a_3 + 3 \bsi \mu k_s^5 b_3)  - ( k_p^5 a_5 + \bsi k_s^5 b_5) \mu \mathrm{e}^{2 \bsi \varphi_0} \big) \mathrm{e}^{ \bsi \varphi_0}  \hat{\mathbf{e}}_2= \bmf{0}. 
\end{aligned}
\end{equation}
Since $\hat{\mathbf{e}}_1$ and $\hat{\mathbf{e}}_2$ are linearly independent, we obtain that
\begin{equation}\label{eq:26}
 \bigg\{  \begin{array}{l} 4 k_p ^4 a_2 + 2 \bsi k_s^4 b_2 -  (k_p^4 a_4 + \bsi k_s^4 b_4)\mathrm{e}^{ 2 \bsi \varphi_0}=0,\\
 (\mu-2 \lambda) k_p^5 a_3 + 3 \bsi \mu k_s^5 b_3 - ( k_p^5 a_5 + \bsi k_s^5 b_5) \mu \mathrm{e}^{2 \bsi \varphi_0}=0.
  \end{array}
\end{equation}
Multiplying the first equation of \eqref{eq:26} by $\mu$ and subtracting it from the second equation of \eqref{eq:25}, it yields that
\begin{equation}\label{eq:27}
a_2=0.
\end{equation}
Substituting  \eqref{eq:27}, \eqref{eq:23} into the first equation of \eqref{eq:17}, we can obtain
\begin{equation}\label{eq:28}
b_2=0.
\end{equation}


 Since $a_\ell=b_\ell=0$ for $\ell=0,1,2$, according to Lemma \ref{lem:third}, we have that $a_\ell=b_\ell=0, \forall \ell \in \mathbb N \cup \{0\}$. Therefore, from Proposition \ref{prop:1}, we know that $\bmf{u}\equiv \bmf{0}$ in $\Omega$.

The proof is complete.
\end{proof}

\subsection{The case with a singular generalized-impedance line }

In this subsection, we shall establish the GHP for the presence of a singular generalize impedance line. Before that, we need several auxiliary lemmas. Using \eqref{eq:gradient1} and \eqref{eq:gradient2}, we can directly obtain that


\begin{lem}\label{lem:34}
Suppose that \eqref{eq:sg cond} holds, then we have
\begin{equation}
\bsi k_p^2 a_0 + k_s^2 b_0=0. \label{eq:gradient3}
\end{equation} \end{lem}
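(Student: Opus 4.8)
The plan is to obtain the identity directly from Lemma~\ref{lem:condition} by a one-line elimination. First I would note that the hypothesis \eqref{eq:sg cond} is precisely the conjunction of the two pointwise conditions ${\nu}^\top\nabla\bmf u\,{\nu}|_{\mathbf x=\bmf 0}=0$ and $\boldsymbol{\tau}^\top\nabla\bmf u\,{\nu}|_{\mathbf x=\bmf 0}=0$ that are considered in Lemma~\ref{lem:condition} (compare also \eqref{eq:32 cond}). Consequently both conclusions \eqref{eq:gradient1} and \eqref{eq:gradient2} of that lemma are available, namely
\begin{equation}\notag
2k_p^2 a_0 + \mathrm e^{2\bsi\varphi_0}\bigl(k_p^2 a_2 + \bsi k_s^2 b_2\bigr)=0
\quad\text{and}\quad
2\bsi k_s^2 b_0 + \mathrm e^{2\bsi\varphi_0}\bigl(k_p^2 a_2 + \bsi k_s^2 b_2\bigr)=0 .
\end{equation}

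Next I would subtract the second relation from the first. The term $\mathrm e^{2\bsi\varphi_0}(k_p^2 a_2+\bsi k_s^2 b_2)$ is common to both and cancels, leaving $2k_p^2 a_0 - 2\bsi k_s^2 b_0 = 0$, i.e. $k_p^2 a_0 = \bsi k_s^2 b_0$. Multiplying through by $\bsi$ and using $\bsi^2=-1$ then gives $\bsi k_p^2 a_0 + k_s^2 b_0 = 0$, which is exactly \eqref{eq:gradient3}.

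There is essentially no obstacle in the lemma itself: all of the substantive work — expanding $\nabla\bmf u$ at the origin through the radial-wave expansion \eqref{eq:u} and extracting the coefficient relations \eqref{eq:gradient1}--\eqref{eq:gradient2} — has already been carried out in (the Appendix proof of) Lemma~\ref{lem:condition}, so here only the linear elimination above remains. I note in passing that $k_p,k_s>0$ by \eqref{eq:convex} and \eqref{eq:kpks}, but this positivity is not needed for the stated identity; it will become relevant only when \eqref{eq:gradient3} is later combined with the additional constraints (such as $b_0=0$ from Definition~\ref{def:3}) to force the vanishing of the low-order Fourier coefficients.
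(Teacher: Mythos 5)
Your proposal is correct and coincides with the paper's own (one-line) argument: the paper states that Lemma~\ref{lem:34} follows directly from \eqref{eq:gradient1} and \eqref{eq:gradient2}, which is exactly the subtraction and multiplication by $\bsi$ that you carry out. Nothing further is needed.
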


\begin{lem}\label{lem:g imp}
Let $\mathbf{u}$ be a Lam\'e eigenfunction of \eqref{eq:lame}, where   $\mathbf{u}$ has the expansion \eqref{eq:u} around the origin.  Suppose that  $\Gamma_h^+$ is defined in \eqref{eq:gamma_pm} such that $\Gamma_h^+\in {\mathcal H}_\Omega^{\kappa}$ with the generalized-impedance parameter $\boldsymbol{ \eta}_1 $ satisfying \eqref{eq:eta1 ex}.
Then we have
\begin{equation}\label{eq:1b}
\bigg\{
    \begin{array}{l}
  (1+ \bsi \eta_1)( \bsi k_p a_1 -  k_s b_1) =0,\\
  2 \eta_1 (\lambda+\mu) k_p^2 a_0 + (1 - \bsi \eta_1) ( \bsi k_p^2 a_2 -  k_s^2 b_2) \mu \mathrm{e}^{2 \bsi \varphi_0}=0,
 \end{array}
\end{equation}
and 
\begin{equation}\label{eq:g2}
\bigg\{
    \begin{array}{l}
  2 k_s^2 b_0 + \mathrm{e}^{2 \bsi \varphi_0} (1+\bsi \eta_1) (\bsi k_p^2 a_2 - k_s^2 b_2) =0,\\
  (\bsi \mu - 2 \lambda \eta_1 - \mu \eta_1) k_p^3 a_1 + \mu (\bsi \eta_1 - 1) k_s^3 b_1  - \mu \mathrm{e}^{2 \bsi \varphi_0} ( 1 - \bsi \eta_1) (\bsi k_p^3 a^3 - k_s^3 b_3) =0.
 \end{array}
\end{equation}
\end{lem}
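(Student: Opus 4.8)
The plan is to argue exactly as in the soft-clamped and simply-supported cases (Lemmas~\ref{lem:third} and \ref{lem:forth}), working from the explicit Fourier series of the generalized-impedance trace $\mathcal{B}_6(\mathbf{u})|_{\Gamma_h^+} = \mathcal{B}_1(\mathbf{u})|_{\Gamma_h^+} + \boldsymbol{\eta}_1 \mathcal{B}_2(\mathbf{u})|_{\Gamma_h^+} = T_\nu\mathbf{u}|_{\Gamma_h^+} + \boldsymbol{\eta}_1 \mathbf{u}|_{\Gamma_h^+}$. First I would assemble the radial-wave expansion of this quantity by adding the expansion \eqref{eq:Tu1} of $T_\nu\mathbf{u}|_{\Gamma_h^+}$ to the product of the series \eqref{eq:eta1 ex} for $\boldsymbol{\eta}_1$ with the expansion \eqref{eq:u} of $\mathbf{u}|_{\Gamma_h^+}$; since \eqref{eq:eta1 ex} is absolutely convergent and $\mathbf{u}$ is analytic, the Cauchy product is legitimate. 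Setting this combined series equal to $\mathbf{0}$ on $(0,h)$ and invoking Lemma~\ref{lem:co exp} (after using $J_{-m}(t)=(-1)^m J_m(t)$ as in \eqref{eq:Jmfact}), I extract the identities obtained by matching the coefficients of $r^0$ and $r^1$. It is important that at orders $r^0$ and $r^1$ only the leading impedance coefficient $\eta_1$ enters (the tail $\eta_{1,j}r^j$ with $j\geq 1$ contributes at order $r^{j-1}$ or higher after multiplying by $\mathbf{u}$, which already starts at order $r^0$; but since $\mathbf{u}$ itself contributes order $r^0$, the term $\eta_{1,1}r$ first affects the $r^1$ equation—so I will need to check carefully whether $\eta_{1,1}$ appears in the second equations, and if it does the statement as printed would need that term, suggesting the $r^1$-matching is taken modulo the lower relations or that $\mathbf{u}$'s order-$r^0$ part has already been constrained).

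Concretely, the coefficient of $r^0$: from $T_\nu\mathbf{u}$ the $\hat{\mathbf e}_1$-free-vector contributions combine, via the decomposition $\mathbf{e}_1,\mathbf{e}_2$ and the relations $\nu\cdot\mathbf{e}_1 = \bsi\mathrm{e}^{\bsi\varphi_0}$ etc., into terms proportional to $\bsi k_p a_1 - k_s b_1$ and $\bsi k_p^2 a_2 - k_s^2 b_2$, while the $\boldsymbol{\eta}_1\mathbf{u}$ part at order $r^0$ contributes $\eta_1$ times the order-$r^0$ part of $\mathbf{u}|_{\Gamma_h^+}$, which — using $J_{m\pm1}(t)\sim$ lowest order — produces a term again proportional to $\bsi k_p a_1 - k_s b_1$ together with a term from $a_0$. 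Collecting the two vector components $\hat{\mathbf e}_1,\hat{\mathbf e}_2$ (linearly independent) gives the two scalar equations in \eqref{eq:1b}. The same bookkeeping at order $r^1$, substituting the relation $2k_s^2 b_0 + \mathrm{e}^{2\bsi\varphi_0}(1+\bsi\eta_1)(\bsi k_p^2 a_2 - k_s^2 b_2)=0$ where needed, yields \eqref{eq:g2}; here I would use Lemma~\ref{lem:34}, i.e. $\bsi k_p^2 a_0 + k_s^2 b_0 = 0$, which follows from the point-value hypothesis \eqref{eq:sg cond} and equations \eqref{eq:gradient1}--\eqref{eq:gradient2}, to reconcile the $b_0$-term with the $a_0$-term appearing in \eqref{eq:1b}.

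The main obstacle I anticipate is purely one of careful computation rather than of new ideas: one must correctly expand the products $\boldsymbol{\eta}_1 u_1$ and $\boldsymbol{\eta}_1 u_2$ in \eqref{eq:u1}--\eqref{eq:u2}, track the Bessel-function small-argument behaviour $J_n(t) = (t/2)^n/n! + O(t^{n+2})$ to get the contribution to each power $r^k$, and then project onto $\hat{\mathbf e}_1,\hat{\mathbf e}_2$ using the four inner-product formulas $\nu\cdot\mathbf{e}_j$, $\boldsymbol{\tau}\cdot\mathbf{e}_j$. A secondary subtlety is making sure the coefficients $\eta_{1,j}$ of the nonconstant part of $\boldsymbol{\eta}_1$ genuinely do not pollute the two displayed equations: since $\mathbf{u}|_{\Gamma_h^+}$ starts at order $r^0$, the term $\eta_{1,1}r\cdot(\text{order }r^0\text{ of }\mathbf{u})$ would enter the $r^1$ equation; I expect that this contribution is in fact absorbed because the order-$r^0$ part of $\mathbf{u}|_{\Gamma_h^+}$ along $\Gamma_h^+$ is itself governed by the first equation of \eqref{eq:1b} (namely $\bsi k_p a_1 - k_s b_1=0$), and once one knows $a_1=b_1=0$ in the subsequent argument this term vanishes identically; I would note this explicitly so the cleaner statement \eqref{eq:g2} is justified. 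Once \eqref{eq:1b} and \eqref{eq:g2} are in hand, they feed directly into the inductive vanishing argument for $a_\ell,b_\ell$ in the theorem that follows, exactly mirroring the roles of \eqref{eq:1}--\eqref{eq:2} and \eqref{eq:16}--\eqref{eq:17}.
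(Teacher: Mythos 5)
Your proposal starts from the wrong boundary condition, and this is fatal to the computation. The set $\mathcal{H}_\Omega^{\kappa}$ of generalized-impedance lines is defined by $\mathcal{B}_6(\mathbf{u})=\mathcal{B}_3(\mathbf{u})+\boldsymbol{\eta}\,\mathcal{B}_4(\mathbf{u})=\mathbf{0}$, i.e.\ by the mixed normal/tangential traces of \eqref{eq:tf}, whereas you expand $\mathcal{B}_1(\mathbf{u})+\boldsymbol{\eta}_1\mathcal{B}_2(\mathbf{u})=T_\nu\mathbf{u}+\boldsymbol{\eta}_1\mathbf{u}$, which is the ordinary impedance condition $\mathcal{B}_5$ defining $\mathcal{I}_\Omega^{\kappa}$. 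The paper's proof assembles \eqref{eq:GI} from \eqref{eq:third2} and \eqref{eq:forth2} (the expansions of $\mathcal{B}_3$ and $\mathcal{B}_4$ on $\Gamma_h^+$), not from \eqref{eq:Tu1} and \eqref{eq:u}. Matching $r^0$ and $r^1$ in your series would reproduce the $\Gamma_h^+$-analogue of the impedance-line relations of Lemma~\ref{lem:impedance line} (e.g.\ forcing $a_0=0$ outright), not \eqref{eq:1b}--\eqref{eq:g2}. You can see the mismatch directly in the target identities: the term $2\eta_1(\lambda+\mu)k_p^2a_0$ in the second equation of \eqref{eq:1b} carries the factor $(\lambda+\mu)$ attached to $\eta_1$, and the only source of a $2(\lambda+\mu)J_m(k_pr)$ term multiplied by the impedance parameter is the $\hat{\mathbf{e}}_2$-component of $\boldsymbol{\eta}_1\mathcal{B}_4(\mathbf{u})$ in \eqref{eq:forth2}; in $T_\nu\mathbf{u}+\boldsymbol{\eta}_1\mathbf{u}$ the $(\lambda+\mu)$ factors sit in the $\boldsymbol{\eta}$-free part. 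Likewise the factor $(1+\bsi\eta_1)$ multiplying $\bsi k_pa_1-k_sb_1$ is exactly the combination of the $\hat{\mathbf{e}}_1$-components of \eqref{eq:third2} and \eqref{eq:forth2}.

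Two secondary points. First, you invoke Lemma~\ref{lem:34} (hence the point-value conditions \eqref{eq:sg cond}) to derive \eqref{eq:g2}; but the lemma as stated assumes only $\Gamma_h^+\in\mathcal{H}_\Omega^{\kappa}$ and no singularity conditions, and the paper's proof obtains \eqref{eq:g2} purely by matching the $r^1$ coefficient of \eqref{eq:GI} — importing \eqref{eq:sg cond} would prove a weaker, conditional statement. Second, your concern about whether $\eta_{1,1}$ pollutes the $r^1$ equation is legitimate (the $r^0$ coefficient of $\mathcal{B}_4(\mathbf{u})|_{\Gamma_h^+}$ does not vanish on its own, only the combination with $\eta_1\,$ and $\mathcal{B}_3$ does), but your proposed fix — appealing to the first equation of \eqref{eq:1b} or to the later conclusion $a_1=b_1=0$ — does not close this within the proof of the lemma itself, since \eqref{eq:1b} only annihilates a different linear combination. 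This issue is worth raising, but it is subordinate to the main error: until the correct trace $\mathcal{B}_3+\boldsymbol{\eta}_1\mathcal{B}_4$ is used, the bookkeeping cannot land on \eqref{eq:1b} and \eqref{eq:g2}.
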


\begin{proof}
Since $\Gamma_h^+$ is a generalized-impedance line of $\bmf{u}$, using  \eqref{eq:third2} and \eqref{eq:forth2}, we have
\begin{equation}\label{eq:GI}
 \begin{aligned}
\bmf{0} = & \sum_{m=0}^{\infty} \mathrm{ e}^{\bsi m \varphi_0} \bigg\{ \left[ \frac{\bsi k_p}{2} a_{m} (J_{m-1}(k_p r) + J_{m+1}(k_p r) )\right.\\
   & +  \frac{k_s}{2} b_m (-J_{m-1}(k_s r) + J_{m+1}(k_s r) )\bigg]\hat{\mathbf{e}}_1 + \left[ \frac{\bsi k_p^2}{2} a_{m} (-J_{m-2}(k_p r) + J_{m+2}(k_p r) )\right.\\
   & + \frac{k_s^2}{2} b_m (J_{m-2}(k_s r) + J_{m+2}(k_s r) )\bigg] \mu \hat{\mathbf{e}}_2 \bigg\}\\
   &+\boldsymbol{ \eta }_1 \bigg\{ \left[ \frac{ k_p}{2} a_{m} ( - J_{m-1}(k_p r) + J_{m+1}(k_p r) )\right.
    -  \frac{\bsi k_s}{2} b_m (J_{m-1}(k_s r) + J_{m+1}(k_s r) )\bigg]\hat{\mathbf{e}}_1 \\
   & + \left[- \frac{ k_p^2}{2} a_{m} (J_{m-2}(k_p r)\mu + 2(\lambda+\mu) J_m(k_p r)+ J_{m+2}(k_p r) \mu)\right.\\
   & - \frac{\bsi k_s^2}{2} b_m (J_{m-2}(k_s r) - J_{m+2}(k_s r) )\mu \bigg]  \hat{\mathbf{e}}_2 \bigg\},\quad 0 \leqslant r\leqslant h,
 \end{aligned}
\end{equation}
where $\hat{\mathbf{e}}_1=(0,1)^\top $, $\hat{\mathbf{e}}_2=(1,0)^\top $.   Using  Lemma \ref{lem:co exp}, \eqref{eq:Jmfact} and \eqref{eq:eta1 ex}, and comparing the coefficient of the term $r^0$ in both sides of \eqref{eq:GI}, we see that
\begin{equation}\nonumber
\begin{aligned}
&\left[(\bsi - \eta_1) k_p a_1 - (1+ \bsi \eta_1) k_s b_1\right] {\mathrm e}^{\bsi \varphi_0} \hat{\mathbf{e}}_1\\
&- \left[ 2 \eta (\lambda+\mu) k_p^2 a_0 +\left((\bsi + \eta_1) k_p^2 a_2 - (1 - \bsi \eta_1) k_s^2 b_2\right) \mu {\mathrm e}^{2 \bsi \varphi_0}\right] \hat{\mathbf{e}}_2=\bmf{0},
\end{aligned}
\end{equation}
which readily gives \eqref{eq:1b}. 

Using Lemma \ref{lem:co exp} and \eqref{eq:Jmfact}, and comparing the coefficient of the term $r^1$ in both sides of \eqref{eq:GI}, together with \eqref{eq:eta1 ex}, we can show that
\begin{equation}\nonumber
\begin{aligned}
& \left[2 k_s^2 b_0 + \mathrm{e}^{2 \bsi \varphi_0} (1+\bsi \eta) (\bsi k_p^2 a_2 - k_s^2 b_2)\right] \hat{\mathbf{e}}_1 + \\
& \left[\left((\bsi \mu - 2 \lambda \eta - \mu \eta) k_p^3 a_1 + \mu (\bsi \eta - 1) k_s^3 b_1\right) \mathrm{e}^{ \bsi \varphi_0}  - \mu \mathrm{e}^{3 \bsi \varphi_0} ( 1 - \bsi \eta) (\bsi k_p^3 a^3 - k_s^3 b_3)\right] \hat{\mathbf{e}}_2 \\
& = \bmf{0}.
\end{aligned}
\end{equation}
Since $\hat{\mathbf{e}}_1$ and $\hat{\mathbf{e}}_2$ are linearly independent, we readily obtain \eqref{eq:g2}. 

The proof is complete. 
\end{proof}

\begin{lem}\label{lem:determinant}
Let $\lambda,\mu$ be the Lam\'e constants satisfying the strong convexity condition \eqref{eq:convex}. Suppose that $k_{p}$ and $ k_{{s}} $ are  the compressional and shear wave numbers defined in \eqref{eq:kpks} respectively. If
$$
\eta_1 \neq  \frac{\pm \sqrt{(\lambda + 3\mu)(\lambda + \mu)} - \mu \bsi}{\lambda + 2 \mu},
$$
then for $\forall m \in \mathbb{N} \cup \{0\}$ and $\forall \varphi_0 \in (0,\pi)$
\begin{equation}
\notag
D_m=\left|\begin{array}{ccc}
\bsi k_p^m                                                                & -  k_s^m                                  &            0\\
(\bsi m \mu + m \mu \eta_1 - 2 \lambda \eta_1 - 2 \mu  \eta_1)  k_p^{m+2}        & m \mu ( \bsi \eta_1  - 1) k_s^{m+2}  & \mu( \bsi \eta_1 - 1) \mathrm{ e}^{2 \bsi
\varphi_0}\\
(m \eta_1 + 2 \eta_1 - \bsi m )  k_p^{m+2}                              &   ( m + 2 + \bsi m \eta_1 ) k_s^{m+2}              &   (1 + \bsi \eta_1)\mathrm{ e}^{2 \bsi\varphi_0}\\
\end{array}\right| \neq {0}.
\end{equation}
\end{lem}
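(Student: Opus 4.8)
The plan is to evaluate $D_m$ in closed form and show that it factors as a manifestly nonzero quantity times an expression that turns out to depend on neither $m$ nor $\varphi_0$. First I would expand the $3\times3$ determinant along the first row, whose third entry is $0$. Writing $M_{11}$ and $M_{12}$ for the two $2\times2$ subdeterminants obtained by deleting the first row together with, respectively, the first and the second column, the expansion gives $D_m=\bsi k_p^m M_{11}+k_s^m M_{12}$ (the cofactor sign in the $(1,2)$-position being cancelled by the minus sign in the entry $-k_s^m$).

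Next I would evaluate the two minors. For $M_{11}$, expanding and pulling out the common factor $\mu(\bsi\eta_1-1)k_s^{m+2}\mathrm{e}^{2\bsi\varphi_0}$ leaves the bracket $m(1+\bsi\eta_1)-(m+2+\bsi m\eta_1)$, which telescopes to the constant $-2$; hence $M_{11}=-2\mu(\bsi\eta_1-1)k_s^{m+2}\mathrm{e}^{2\bsi\varphi_0}$. For $M_{12}$, I would pull out $k_p^{m+2}\mathrm{e}^{2\bsi\varphi_0}$ and expand the remaining bracket $(\bsi m\mu+m\mu\eta_1-2\lambda\eta_1-2\mu\eta_1)(1+\bsi\eta_1)-\mu(\bsi\eta_1-1)\big((m+2)\eta_1-\bsi m\big)$ in powers of $\eta_1$. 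The terms linear in $m$ cancel pairwise, and the bracket collapses to $-2\lambda\eta_1-2\bsi(\lambda+2\mu)\eta_1^2$. Thus, modulo the harmless powers of $k_p,k_s$, both minors are free of $m$.

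Assembling these, I would factor out $k_p^m k_s^m \mathrm{e}^{2\bsi\varphi_0}$, which never vanishes because $k_p,k_s>0$ under \eqref{eq:convex}, substitute $k_p^2=\kappa/(\lambda+2\mu)$ and $k_s^2=\kappa/\mu$ from \eqref{eq:kpks}, and use $-2\bsi\mu(\bsi\eta_1-1)=2\mu(\eta_1+\bsi)$. A short simplification yields
\[
D_m=\frac{2\kappa}{\lambda+2\mu}\,k_p^m k_s^m \mathrm{e}^{2\bsi\varphi_0}\Bigl(2\mu\eta_1+\bsi(\lambda+2\mu)(1-\eta_1^2)\Bigr),
\]
which indeed depends on neither $m$ nor $\varphi_0$. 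Finally I would observe that the last factor vanishes exactly when $\bsi(\lambda+2\mu)\eta_1^2-2\mu\eta_1-\bsi(\lambda+2\mu)=0$; this quadratic has discriminant $4\mu^2-4(\lambda+2\mu)^2=-4(\lambda+\mu)(\lambda+3\mu)<0$ by \eqref{eq:convex}, and its two roots are precisely $\eta_1=\bigl(\pm\sqrt{(\lambda+3\mu)(\lambda+\mu)}-\mu\bsi\bigr)/(\lambda+2\mu)$, i.e. the values excluded in the hypothesis. Hence $D_m\neq0$ for every $m\in\mathbb N\cup\{0\}$ and every $\varphi_0\in(0,\pi)$.

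There is no conceptual obstacle here; the entire proof is algebraic bookkeeping. The one thing to be careful about is not to treat $m$ and $\varphi_0$ as genuine parameters of the final answer — the content of the lemma is exactly that all such dependence cancels, so the two telescoping simplifications (the $-2$ appearing in $M_{11}$ and the vanishing of the $m$-dependent part of the $M_{12}$ bracket) must be carried through consistently, and one must keep precise track of the powers of $\bsi$ when dividing through by $\bsi(\lambda+2\mu)$ to recover the stated exclusion set.
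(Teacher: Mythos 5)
Your computation is correct and follows the same route as the paper: expand the determinant, observe that all dependence on $m$ and $\varphi_0$ factors out harmlessly, and identify the roots of the remaining quadratic $\bsi(\lambda+2\mu)\eta_1^2-2\mu\eta_1-\bsi(\lambda+2\mu)$ with the excluded values $\bigl(\pm\sqrt{(\lambda+3\mu)(\lambda+\mu)}-\mu\bsi\bigr)/(\lambda+2\mu)$. Your closed form agrees with the paper's \eqref{eq:determinant1}, and in fact your overall factor $\kappa$ (rather than the $\kappa^2$ appearing in \eqref{eq:determinant2}) is the correct power; you merely supply the cofactor-expansion details that the paper compresses into ``by direct calculations.''
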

\begin{proof}
By directly calculations, it can be verified that
\begin{equation}\label{eq:determinant1}
D_m= -2 \mathrm{ e}^{2 \bsi\varphi_0} k_p^m k_s^m \left[ \bsi (\lambda + 2 \mu ) k_p^2 \eta_1 ^2 + (\lambda k_p^2 - \mu k_s^2) \eta_1 -\bsi \mu k_s^2\right].
\end{equation}
Substituting  \eqref{eq:kpks} into  \eqref{eq:determinant1}, we obtain that
\begin{equation}\label{eq:determinant2}
D_m=-2   \mathrm{ e}^{2 \bsi\varphi_0} k_p^m k_s^m  \kappa ^2 \Big [\bsi \eta_1 ^2 + \left(\frac{\lambda}{\lambda + 2 \mu} - 1 \right) \eta_1 - \bsi \Big ].
\end{equation}

It can be prove that
$$
\eta_{\sf root}=\frac{\pm \sqrt{(\lambda + 3\mu)(\lambda + \mu)} - \mu \bsi}{\lambda + 2 \mu}
$$
are roots of $\bsi \eta ^2 + \left(\frac{\lambda}{\lambda + 2 \mu} - 1 \right) \eta - \bsi$.

The proof is complete.
%
\end{proof}

\begin{thm}\label{thm:g im}
Let $\mathbf{u}\in L^2(\Omega)^2$ be a solution to \eqref{eq:lame}. If there exits a singular generalized-impedance line $\Gamma_h\subset\Omega$ of $\bmf{u}$ associated with the parameter $\boldsymbol{ \eta}_1$ satisfying
\begin{equation}\label{eq:thm33 cond}
	\eta_1 \neq \pm \bsi \mbox{ and } \frac{\pm \sqrt{(\lambda + 3\mu)(\lambda + \mu)} - \mu \bsi}{\lambda + 2 \mu},
\end{equation}
 where $\eta_1$ is the constant part of  $\boldsymbol{\eta}_1$ defined in \eqref{eq:eta1 ex},  then $\bmf{u}\equiv \bmf{ 0}$.
\end{thm}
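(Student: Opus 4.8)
The plan is to mimic the structure of the proofs of Theorems~\ref{thm:third} and \ref{thm:fourth}: reduce everything to showing that the Fourier coefficients $a_\ell$ and $b_\ell$ in the expansion \eqref{eq:u} all vanish, and then invoke Proposition~\ref{prop:1}. The work splits into two parts: first establishing that $a_0=b_0=a_1=b_1=a_2=b_2=0$ (the ``base case''), and then running the inductive step supplied by Lemma~\ref{lem:g imp} together with the nonvanishing of the determinant $D_m$ from Lemma~\ref{lem:determinant}.

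First I would unpack the singular generalized-impedance hypothesis: by Definition~\ref{def:3} we have $b_0=b_1=b_2=0$ built in, together with the point conditions \eqref{eq:sg cond}, which through Lemma~\ref{lem:condition} give \eqref{eq:gradient1}--\eqref{eq:gradient2}, and through Lemma~\ref{lem:34} give $\bsi k_p^2 a_0 + k_s^2 b_0 = 0$. Since $b_0=0$ and $k_p\neq 0$, this immediately yields $a_0=0$. So the only base-case coefficients left to kill are $a_1$ and $a_2$ (recall $b_1=b_2=0$ are already given). For these I would feed $a_0=b_0=b_1=b_2=0$ into the relations \eqref{eq:1b}--\eqref{eq:g2} of Lemma~\ref{lem:g imp}: the first equation of \eqref{eq:1b} becomes $(1+\bsi\eta_1)\bsi k_p a_1 = 0$, and since the hypothesis \eqref{eq:thm33 cond} forces $\eta_1\neq\pm\bsi$ (in particular $1+\bsi\eta_1\neq 0$), we get $a_1=0$. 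The second equation of \eqref{eq:1b} with $a_0=b_2=0$ gives $(1-\bsi\eta_1)\bsi k_p^2 a_2\,\mathrm{e}^{2\bsi\varphi_0}=0$; again $\eta_1\neq\pm\bsi$ gives $a_2=0$. At this point $a_\ell=b_\ell=0$ for $\ell=0,1,2$.

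Next I would run the induction. Suppose $a_\ell=b_\ell=0$ for $\ell=0,\ldots,m-1$ with $m\geq 3$. Exactly as in the proofs of Lemmas~\ref{lem:third} and \ref{lem:forth}, substituting these vanishing coefficients into \eqref{eq:GI} and comparing the coefficients of $r^{m-1}$, $r^m$ and $r^{m+1}$ (using Lemma~\ref{lem:co exp} to equate Bessel-series coefficients to zero) produces a homogeneous linear system in the unknowns $a_m, b_m$ and the combination $\bsi k_p^{m+2}a_{m+2} - k_s^{m+2}b_{m+2}$ (or more precisely in the three quantities appearing as columns of $D_m$). The coefficient matrix of this system is precisely $D_m$ from Lemma~\ref{lem:determinant}; since \eqref{eq:thm33 cond} excludes the roots $\eta_{\sf root}=\frac{\pm\sqrt{(\lambda+3\mu)(\lambda+\mu)}-\mu\bsi}{\lambda+2\mu}$, Lemma~\ref{lem:determinant} gives $D_m\neq 0$, hence $a_m=b_m=0$ and also $a_{m+2}=b_{m+2}=0$ along the way (or one simply concludes $a_m=b_m=0$ and bootstraps). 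Iterating the step establishes $a_\ell=b_\ell=0$ for all $\ell\in\mathbb{N}\cup\{0\}$, and Proposition~\ref{prop:1} finishes the argument.

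The main obstacle I anticipate is not conceptual but bookkeeping: correctly assembling the three scalar equations from the $r^{m-1}, r^m, r^{m+1}$ comparisons so that their coefficient matrix matches the $D_m$ written in Lemma~\ref{lem:determinant} — in particular tracking the $\boldsymbol{\eta}_1$-expansion \eqref{eq:eta1 ex} so that the variable (non-constant) part $\sum_j \eta_{1,j}r^j$ contributes only to strictly higher-order terms and therefore does not pollute the leading three equations. One must also double-check the degenerate-index Bessel identities \eqref{eq:Jmfact} are applied correctly for small $m$ near the base case, and verify that the exclusion of $\pm\bsi$ plus the two square-root roots in \eqref{eq:thm33 cond} is exactly what is needed at both the base step (where $1\pm\bsi\eta_1\neq 0$ is used) and the inductive step (where $D_m\neq 0$ is used); the statement has been arranged so these coincide, but the proof should make that transparent.
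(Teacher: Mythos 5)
Your proposal is correct and follows essentially the same route as the paper's own proof: $a_0=0$ from Lemma~\ref{lem:34} with $b_0=0$, then $a_1=a_2=0$ from \eqref{eq:1b} using $b_1=b_2=0$ and $\eta_1\neq\pm\bsi$, and finally the induction via the $r^{m-1},r^m,r^{m+1}$ comparisons in \eqref{eq:GI} whose coefficient matrix is exactly the $D_m$ of Lemma~\ref{lem:determinant}. Your caveat that the third unknown in that system is only the combination $\bsi k_p^{m+2}a_{m+2}-k_s^{m+2}b_{m+2}$ (so one concludes $a_m=b_m=0$ and bootstraps rather than getting $a_{m+2}=b_{m+2}=0$ directly) matches how the paper actually closes the induction.
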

\begin{proof}
Suppose that there exits a singular generalized-impedance  line $\Gamma_h=\Gamma_h^+$ of $\mathbf u$ as described at the beginning of this section.  From Lemma \ref{lem:g imp}, we know \eqref{eq:1b} holds.

Since $\Gamma_h^+ \in \mathcal S( \mathcal H_\Omega^\kappa) $, we know that \eqref{eq:def3} and \eqref{eq:sg cond} hold. Therefore \eqref{eq:gradient3} holds according to Lemma \ref{lem:34}. By virtue of \eqref{eq:def3}, substituting  $b_0=0$ into \eqref{eq:gradient3}, we obtain
  \begin{equation}\label{eq:2b}
  a_0=0.
  \end{equation}
Substituting $b_1=b_2=0$ and \eqref{eq:2b} into \eqref{eq:1b}, under the assumption $\eta_1 \neq \pm \bsi $, we obtain
  \begin{equation}\label{eq:3b}\notag
  a_1=
  a_2=0.
  \end{equation}
  By now we have proved that $a_\ell=0$ for $\ell=0,1,2$ under the assumptions $\Gamma_h^+ \in \mathcal S( \mathcal H_\Omega^\kappa) $ and $\eta_1 \neq \pm \bsi$. In the following we prove $a_\ell=b_\ell=0$ for $\forall m \in \mathbb N\backslash\{1,2\}$ by induction.


  Suppose that
  \begin{align}\label{eq:352 al}
  	a_{\ell}=b_{\ell}=0,\quad  \ell=0,\ldots, m-1,
  \end{align}
  where $m\geq 3$ is a natural number.  Substituting \eqref{eq:352 al} into \eqref{eq:GI}, one can show that
  \begin{equation}\label{eq:GI1}
 \begin{aligned}
\bmf{0} = & \sum_{m=\ell}^{\infty} \mathrm{ e}^{\bsi m \varphi_0} \bigg\{ \left[ \frac{\bsi k_p}{2} a_{m} (J_{m-1}(k_p r) + J_{m+1}(k_p r) )\right.\\
   & +  \frac{k_s}{2} b_m (-J_{m-1}(k_s r) + J_{m+1}(k_s r) )\bigg]\hat{\mathbf{e}}_1 + \left[ \frac{\bsi k_p^2}{2} a_{m} (-J_{m-2}(k_p r) + J_{m+2}(k_p r) )\right.\\
   & + \frac{k_s^2}{2} b_m (J_{m-2}(k_s r) + J_{m+2}(k_s r) )\bigg] \mu \hat{\mathbf{e}}_2 \bigg\}\\
   &+\boldsymbol{ \eta}_1 \bigg\{ \left[ \frac{ k_p}{2} a_{m} ( - J_{m-1}(k_p r) + J_{m+1}(k_p r) )\right.
    -  \frac{\bsi k_s}{2} b_m (J_{m-1}(k_s r) + J_{m+1}(k_s r) )\bigg]\hat{\mathbf{e}}_1 \\
   & + \left[- \frac{ k_p^2}{2} a_{m} (J_{m-2}(k_p r)\mu + 2(\lambda+\mu) J_m(k_p r)+ J_{m+2}(k_p r) \mu)\right.\\
   & - \frac{\bsi k_s^2}{2} b_m (J_{m-2}(k_s r) - J_{m+2}(k_s r) )\mu \bigg]  \hat{\mathbf{e}}_2 \bigg\},\quad 0 \leqslant r\leqslant h.
 \end{aligned}
\end{equation}
The lowest order of the power with respect to $r$ in the right hand side of \eqref{eq:GI1} is $m-1$. Comparing the coefficients of the terms $r^{m-1}$ in both sides of \eqref{eq:GI1}, together with the use of \eqref{eq:eta1 ex}, one can directly verify that
\begin{equation}\label{eq:5b}
  ( \bsi \eta_1 + 1)( \bsi k_p^m a_m - k_s^m b_m) = 0,\quad
  (\bsi \eta_1 - 1)(\bsi k_p^{m+1} a_{m+1} - k_s^{m+1} b_{m+1})=0.
\end{equation}
Similarly,  comparing the coefficients of the terms $r^m$ in both sides of \eqref{eq:GI1} and in view of \eqref{eq:eta1 ex},  we obtain that
\begin{equation}\label{eq:6b}
 \left\{  \begin{array}{l} (\bsi \eta_1 +1)(\bsi k_p^{m+1} a_{m+1} -  k_s^{m+1} b_{m+1})=0,\\
 \begin{aligned}
 &(\bsi m \mu + m \mu \eta - 2 \lambda \eta_1 - 2 \mu \eta_1)  k_p^{m+2} a_m +  m \mu( \bsi \eta_1  - 1) k_s^{m+2} b_m\\
  &+ \mu ( \bsi \eta_1 -1 ) ( \bsi   k_p^{m+2} a_{m+2} -  k_s^{m+2} b_{m+2}) \mathrm{e}^{2 \bsi \varphi_0}=0.
 \end{aligned}
  \end{array}
  \right.
\end{equation}
Finally comparing the coefficients of the terms $r^{m+1}$ in both sides of \eqref{eq:GI1}, together with the use of \eqref{eq:eta1 ex}, 
 we obtain that
\begin{equation}\label{eq:7b}
 \left\{  \begin{array}{l}
 \begin{aligned}
 &  (m \eta_1 + 2 \eta_1 - \bsi m )  k_p^{m+2} a_m + ( m + 2 + \bsi m \eta_1 ) k_s^{m+2} b_m \\
 & +   ( 1 + \bsi \eta_1 ) \left[\bsi k_p^{m+2} a_{m+2} - k_s^{m+2} b_{m+2}\right] \mathrm{e}^{ 2 \bsi \varphi_0}=0,\\
 & [(m \mu -\mu -2 \lambda) \eta_1 + \bsi (m+1) \mu] k_p^{m+3} a_{m+1} + (m+1) \mu ( \bsi  \eta_1 - 1 ) k_s^{m+3} b_{m+1}\\
 & + \mu ( \bsi \eta_1-1 ) ( \bsi k_p^{m+3} a_{m+3} -  k_s^{m+3} b_{m+3})  \mathrm{e}^{2 \bsi \varphi_0} =0.
 \end{aligned}
  \end{array}
  \right.
\end{equation}
From \eqref{eq:5b}, \eqref{eq:6b}, \eqref{eq:7b}, we have
\begin{align*}
	&\left[\begin{array}{ccc}
\bsi k_p^m                                                                & -  k_s^m                                  &            0\\
(\bsi m \mu + m \mu \eta_1 - 2 \lambda \eta_1 - 2 \mu  \eta_1)  k_p^{m+2}        & m \mu ( \bsi \eta_1  - 1) k_s^{m+2}  & \mu( \bsi \eta_1- 1) \mathrm{ e}^{2 \bsi
\varphi_0}\\
(m \eta_1+ 2 \eta_1 - \bsi m )  k_p^{m+2}                              &   ( m + 2 + \bsi m \eta_1 ) k_s^{m+2}              &   (1 + \bsi \eta_1)\mathrm{ e}^{2 \bsi\varphi_0}
\end{array}\right] \\
&\quad \times
 \left[\begin{array}{c}
 a_m \\ b_m \\ c_{m+2}
 \end{array}\right]=\bmf{0},
\end{align*}
where $c_{m+2}= \bsi k_p^{m+2} a_{m+2} - k_s^{m+2} b_{m+2} $. By Lemma \ref{lem:determinant}, along with the use of \eqref{eq:thm33 cond}, one has $a_m=b_m=0$. Using the above induction procedure, we can prove that $a_m=b_m=0$ for $m \in \mathbb{N} \cup {0}$.

The proof is complete.
\end{proof}


\section{GHP with the non-degenerate intersection of two homogeneous lines}\label{sec:4}

In this section, microlocal singularities of the Lam\' e  eigenfunction $\mathbf u$ can be established when two homogeneous lines introduced in Definition~\ref{def:1} intersect with each other under the generic non-degenerate case. Microlocal singularities prevent the occurrence of such intersections unless the Lam\'e eigenfunction $\bmf{u}$ is identically vanishing.  As discussed in the beginning of Section~\ref{sect:2}, we assume throughout this section that the aforementioned two homogeneous lines are given by $\Gamma_h^\pm$ in \eqref{eq:gamma_pm} with the intersecting angle $\varphi_0\in (0, \pi)$.

In the following three lemmas, we review some related results from \cite{DLW} for the rigid, traction-free and impedance lines of $\mathbf u$, which shall be needed in our subsequent analysis.

\begin{lem}\cite[Lemma 3.1]{DLW}\label{lem:rigid}
Let $\mathbf{u}$ be a generalized Lam\'e eigenfunction to \eqref{eq:lame} with the radial wave  expansion given in \eqref{eq:u} around the origin. Suppose there exists $\Gamma_h^-\in {\mathcal R}_\Omega^{\kappa}  $. Then one has
\begin{equation}\label{eq:1a}
 k_p a_1 + \bsi k_s b_1=0,\quad
 k_p^3a_1-\bsi k_s^3 b_1=0,
\end{equation}
and
\begin{equation}\label{eq:2a}
  k_p ^2 a_0+ \bsi k_s^2 b_0- k_p^2  a_2- \bsi k_s ^2 b_2=0,\quad
  k_p ^2 a_0- \bsi k_s^2 b_0=0.
\end{equation}
Furthermore, suppose that
\begin{equation}\label{eq:lem cond}
	a_\ell=b_\ell=0,
\end{equation}
where $\ell=0,\ldots,m-1$ and $m\in {\mathbb N}$ with $m\geq 2 $, then
$$
a_\ell=b_\ell=0, \quad \forall \ell \in \mathbb N \cup \{0\}.
$$

\end{lem}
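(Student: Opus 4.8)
The plan is to follow the same recursive strategy used for the soft-clamped and simply-supported cases (Lemmas~\ref{lem:third} and \ref{lem:forth}), but now working from the Fourier expansion of the rigid condition $\mathbf u|_{\Gamma_h^-}=\mathbf 0$. Since $\Gamma_h^-$ lies on the positive $x_1$-axis, on it we have $\varphi=0$, so the expansions \eqref{eq:u1}--\eqref{eq:u2} collapse to pure Bessel series in $r$ with no angular factors. Setting $u_1|_{\Gamma_h^-}=0$ and $u_2|_{\Gamma_h^-}=0$ gives two identities of the form $\sum_m(\cdots)=0$ to which Lemma~\ref{lem:co exp} applies after regrouping into a single Bessel series in the index; the identities $J_{-m}=(-1)^mJ_m$ from \eqref{eq:Jmfact} are used to fold the negative-order terms. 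First I would extract the coefficient of $r^0$: this involves $a_1,b_1$ (through $J_0$ and the folded $J_{-1}$ terms) and yields the first pair \eqref{eq:1a}, namely $k_pa_1+\bsi k_sb_1=0$ together with a second independent relation which, combined, force the determinant condition $k_pk_s(k_p^2-k_s^2)\neq0$ to give $a_1=b_1=0$ eventually; the coefficient of $r^1$ produces \eqref{eq:2a} relating $a_0,b_0,a_2,b_2$, and in particular $k_p^2a_0-\bsi k_s^2b_0=0$.

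The core of the statement is the induction step: assuming $a_\ell=b_\ell=0$ for $\ell=0,\dots,m-1$ with $m\geq2$, show all higher coefficients vanish. After substituting the vanishing into the rigid-condition series, the lowest power of $r$ present is $r^{m-1}$. Comparing coefficients of $r^{m-1}$, $r^m$, and $r^{m+1}$ in the two scalar equations $u_1|_{\Gamma_h^-}=u_2|_{\Gamma_h^-}=0$ yields a linear system in the unknowns $a_m,b_m$ and the combination $c_{m+2}:=k_p^{m+2}a_{m+2}-\bsi k_s^{m+2}b_{m+2}$ (or similar), exactly paralleling the $3\times3$ systems built in the proofs of Lemmas~\ref{lem:g imp}/\ref{lem:determinant}. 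The coefficient matrix here is simpler because there is no impedance parameter: one checks its determinant is a nonzero multiple of $k_p^{\text{(power)}}k_s^{\text{(power)}}(k_p^2-k_s^2)$ or a similar manifestly nonzero expression using \eqref{eq:convex} and \eqref{eq:kpks}, so that $a_m=b_m=0$. Feeding $a_m=b_m=0$ back into the $r^{m-1}$ relations (or the $r^{m+1}$ relations) then also kills $a_{m+1},b_{m+1}$, and one iterates. Finally, Proposition~\ref{prop:1} converts the vanishing of all Fourier coefficients into $\mathbf u\equiv\mathbf 0$ in $\Omega$; but note the lemma as stated only asserts the coefficient vanishing, so strictly one just assembles the recursion.

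I would organize the write-up as: (i) specialize \eqref{eq:u1}--\eqref{eq:u2} to $\varphi=0$ and apply \eqref{eq:Jmfact}; (ii) read off $r^0$ and $r^1$ coefficients to get \eqref{eq:1a}, \eqref{eq:2a}, and deduce $a_1=b_1=0$ from the Vandermonde-type nonsingularity; (iii) set up the induction, substitute \eqref{eq:lem cond}, compare $r^{m-1},r^m,r^{m+1}$ coefficients, solve the resulting system for $a_m=b_m=0$ using a short determinant computation that is nonzero by \eqref{eq:convex} and $k_p\neq k_s$; (iv) conclude by induction that $a_\ell=b_\ell=0$ for all $\ell$.

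The main obstacle I anticipate is purely bookkeeping: correctly collecting, for each power $r^j$, the contributions coming from the four distinct Bessel terms $J_{m-1}(k_pr),J_{m+1}(k_pr),J_{m-1}(k_sr),J_{m+1}(k_sr)$ across neighbouring indices $m$, and handling the low-index folding $J_{-1},J_{-2}$ via \eqref{eq:Jmfact} without sign errors. There is no conceptual difficulty — the ellipticity/convexity condition \eqref{eq:convex} guarantees $k_p\neq k_s$, which is exactly what makes every relevant $2\times2$ or $3\times3$ determinant nonvanishing — but the algebra must be done carefully, and one must be attentive that in the base cases $m=0,1$ the folded negative-order Bessel functions genuinely enter, whereas for $m\geq2$ in the induction step they do not.
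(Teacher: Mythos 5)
Your proposal is correct and is essentially the approach of the cited proof (and of the analogous Lemmas \ref{lem:third}, \ref{lem:forth} and \ref{lem:third1} in this paper): specialize \eqref{eq:u1}--\eqref{eq:u2} to $\varphi=0$, fold the negative orders via \eqref{eq:Jmfact}, match coefficients of $r^{0},r^{1},r^{2}$ (resp.\ $r^{m-1},r^{m},r^{m+1}$ in the induction step), and eliminate the higher-index combination to get a nonsingular $2\times2$ system in $a_m,b_m$. Two small bookkeeping corrections: the coefficient of $r^0$ yields only the single relation $k_pa_1+\bsi k_sb_1=0$ (the two vector components are proportional there), with the second relation $k_p^3a_1-\bsi k_s^3b_1=0$ coming from the $r^2$ level after cancelling the $a_3,b_3$ terms between the two components; and the relevant determinants are nonzero multiples of $k_p^2+k_s^2$ rather than $k_p^2-k_s^2$, which is even more immediately nonvanishing.
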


\begin{lem}\cite[Lemma 3.3]{DLW}\label{lem:tranction free}
Let $\mathbf{u}$ be a Lam\'e eigenfunction to \eqref{eq:lame} with the radial wave  expansion \eqref{eq:u} around the origin.  Suppose that  $\Gamma_h^- \in {\mathcal T}_\Omega^{\kappa}  $. Then we have
\begin{equation}\label{eq:3a}
\bsi k_p^2 a_2- k_s^2  b_2=0,\quad
a_0 =0,
\end{equation}
and
\begin{equation}\label{eq:4a}
k_s^3  b_1+\bsi k_p^3  a_3-k_s^3   b_3=0,\quad
 a_1=0.
\end{equation}
Furthermore, suppose that $a_\ell=b_\ell=0$ for $\ell=0,1$, then
\begin{equation}\label{eq:lem33}
	a_\ell=b_\ell=0,\quad \forall \ell \in \mathbb N\cup \{0\}.
\end{equation}
\end{lem}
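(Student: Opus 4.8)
The plan is to mimic the structure of the proof of Lemma~\ref{lem:third} (the soft-clamped case), since the traction-free trace $\mathcal{B}_1(\mathbf u)=T_\nu\mathbf u|_{\Gamma_h^-}$ has a radial wave expansion, namely \eqref{eq:Tu2}, that is entirely analogous in form to \eqref{eq:third2}. First I would write down $T_\nu\mathbf u|_{\Gamma_h^-}=\mathbf 0$ for $0\le r\le h$ using \eqref{eq:Tu2}, collect the two independent vector components $\mathbf e_1,\mathbf e_2$ (note $T_\nu\mathbf u|_{\Gamma_h^-}$ is already expressed in the $\{\mathbf e_1,\mathbf e_2\}$ frame rather than $\{\hat{\mathbf e}_1,\hat{\mathbf e}_2\}$, but $\mathbf e_1,\mathbf e_2$ are still linearly independent), and invoke Lemma~\ref{lem:co exp} together with the parity identities $J_{-m}=(-1)^mJ_m$ from \eqref{eq:Jmfact}. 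Matching the coefficient of $r^0$ should yield a relation among $a_1,b_1$ (namely the first equation of \eqref{eq:3a} or its analogue, via $J_{m-2}$ for $m=1$ folding onto $J_1$) and one among $a_0,a_2,b_2$; matching $r^1$ gives further relations; iterating just a couple of steps as in Lemma~\ref{lem:third} should pin down $a_0=a_1=0$, which are the ``extra'' outputs \eqref{eq:3a}--\eqref{eq:4a} beyond the recursion hypotheses.

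Next I would carry out the inductive step. Assuming $a_\ell=b_\ell=0$ for $\ell=0,1$, the series in \eqref{eq:Tu2} starts at $m=2$ (the $J_{m-2}$ terms), so the lowest power of $r$ appearing is $r^0$ coming from $J_0(k_pr),J_0(k_sr)$ at $m=2$; more generally, if $a_\ell=b_\ell=0$ for $\ell\le m-1$ with $m\ge 2$, the lowest order is $r^{m-2}$. Comparing the coefficients of $r^{m-2}$, $r^{m-1}$, and $r^m$ in turn, each comparison splitting into two scalar equations by the independence of $\mathbf e_1,\mathbf e_2$, should produce a small linear system in the unknowns $a_m,b_m$ (and auxiliary combinations like $\bsi k_p^{m+?}a_{m+?}-\cdots$ which appear homogeneously and can be carried along as in \eqref{eq:5b}--\eqref{eq:7b}). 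One then checks that the relevant determinant is nonzero — here it should reduce to something proportional to $k_pk_s(k_p^2-k_s^2)$ or a similar factor, which is nonzero because $k_p\neq k_s$ by \eqref{eq:convex} and \eqref{eq:kpks}. Hence $a_m=b_m=0$, and repeating the bookkeeping propagates the vanishing to all higher indices, giving \eqref{eq:lem33}.

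The main obstacle I anticipate is purely organizational rather than conceptual: because the traction-free expansion \eqref{eq:Tu2} involves $J_{m-2}$, $J_m$, and $J_{m+2}$ simultaneously (three ``shells'' of Bessel orders rather than two), re-indexing the double sum so as to extract the coefficient of a fixed power $r^k$ correctly — keeping track of which $m$ contributes to each $k$ via the leading behavior $J_n(t)\sim t^n$ and the full Taylor expansion of each Bessel function — is where sign and index errors creep in. In particular the low-order cases $m=0,1,2$ where negative-order Bessel functions fold back via \eqref{eq:Jmfact} need separate careful handling, exactly as the ``base of the induction'' was treated separately in Lemmas~\ref{lem:third} and \ref{lem:forth}. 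Once those are dispatched, the generic inductive step is a routine $2\times 2$ (or $2\times 3$ homogeneous) linear-algebra argument with a manifestly nonvanishing determinant, and Proposition~\ref{prop:1} then finishes the passage from ``all Fourier coefficients vanish'' to ``$\mathbf u\equiv\mathbf 0$ in $\Omega$'' whenever this lemma is later combined with a point-value or second-line condition.
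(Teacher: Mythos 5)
The paper does not reprove this lemma---it is imported verbatim from \cite[Lemma 3.3]{DLW}---and your plan is exactly the argument used there and in the analogous Lemmas \ref{lem:third} and \ref{lem:forth} of this paper: set \eqref{eq:Tu2} to zero, split along the independent vectors $\mathbf{e}_1,\mathbf{e}_2$, expand the Bessel functions and match powers of $r$, then induct. One small bookkeeping correction: the coefficient of $r^0$ involves only $a_0,a_2,b_2$ (yielding $a_0=0$ from the $\mathbf{e}_2$ component since $\lambda+\mu>0$, and then the first equation of \eqref{eq:3a} from the $\mathbf{e}_1$ component), while $a_1,b_1$ first enter at order $r^1$; moreover, in the inductive step the $\mathbf{e}_2$ component at order $r^m$ gives $a_m=0$ directly through the factor $(\lambda+\mu)k_p^{m+2}a_m$, so no $2\times 2$ determinant is actually needed---the $\mathbf{e}_1$ component at order $r^{m-2}$ then forces $b_m=0$.
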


Assume that $\Gamma_h^-$ is an impedance line of $\mathbf u$ associated with the impedance parameter $\boldsymbol{\eta}_2$ belonging to the class $\mathcal A(\Gamma_h^- )$. By virtue of \eqref{eq:eta2 ex}, the following lemma can be directly obtained by modifying the corresponding proof of \cite[Lemma 3.4]{DLW}. The proof of this lemma is omitted.


\begin{lem}
\label{lem:impedance line}
Let $\mathbf{u}$ be a solution to \eqref{eq:lame} with the radial wave expansion \eqref{eq:u} around the origin.  Suppose that there is an impedance  line $\Gamma_h^-$ of $\bmf{u}$ with an  impedance parameter $ \boldsymbol{ \eta}_2 $ satisfying \eqref{eq:eta2 ex}. Then we have
\begin{equation}\label{eq:5a}
 \eta_2 k_p a_1+\bsi \eta_2 k_s b_1 -\bsi k_p^2 \mu a_2+k_s^2 \mu b_2=0,\quad
 a_0=0,
\end{equation}
and
\begin{equation}\label{eq:6a}
 - k_s^3 \mu b_1 + \eta_2 k_p^2 a_2+\bsi \eta_2 k_s^2 b_2 +\bsi k_p^3 \mu a_3-k_s^3 \mu b_3=0,\quad
 a_1=0.
\end{equation}
Furthermore, if $a_\ell=b_\ell=0$ for $\ell=0,1$, then
\begin{equation}\label{eq:a1bl=0}
	a_\ell=b_\ell=0, \quad \forall \ell \in \mathbb N \cup \{0\}.
\end{equation}
\end{lem}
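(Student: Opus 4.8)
The plan is to adapt the coefficient-matching scheme used in the proofs of Lemmas~\ref{lem:third} and \ref{lem:forth} to the impedance trace $\mathcal B_5(\mathbf u)=T_\nu\mathbf u+\boldsymbol\eta_2\mathbf u$ on $\Gamma_h^-$, carrying along the extra terms produced by the non-constant part of $\boldsymbol\eta_2$; this is precisely the modification of \cite[Lemma~3.4]{DLW} alluded to in the text. First I would substitute into the homogeneous condition $T_\nu\mathbf u|_{\Gamma_h^-}+\boldsymbol\eta_2\mathbf u|_{\Gamma_h^-}=\mathbf 0$ (valid for $0<r<h$) the radial-wave expansion \eqref{eq:Tu2} of $T_\nu\mathbf u|_{\Gamma_h^-}$, the expansions \eqref{eq:u1}--\eqref{eq:u2} of $\mathbf u|_{\Gamma_h^-}$ rewritten in the basis $\hat{\mathbf{e}}_1,\hat{\mathbf{e}}_2$, and the absolutely convergent series \eqref{eq:eta2 ex} for $\boldsymbol\eta_2$. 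Using $J_{-n}(t)=(-1)^nJ_n(t)$ to treat the negative-index Bessel functions and forming the Cauchy product in the $\boldsymbol\eta_2\mathbf u$ term, Lemma~\ref{lem:co exp} reduces the identity to the vanishing of the coefficient of each power $r^n$, $n=0,1,2,\dots$; since $\hat{\mathbf{e}}_1$ and $\hat{\mathbf{e}}_2$ are linearly independent, each such coefficient splits into two scalar equations.

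Equating the $r^0$-coefficients yields two scalar equations; one is the first identity in \eqref{eq:5a}, and a suitable linear combination of the two (pairing the $(\lambda+\mu)$-contribution of $T_\nu\mathbf u$ with its $\mu$-contribution) leaves $a_0$ with a coefficient proportional to $\lambda+2\mu$, so that $a_0=0$ by the strong convexity \eqref{eq:convex}. Equating the $r^1$-coefficients and repeating the manoeuvre — here only the constant term $\eta_2$ and the first correction $\eta_{2,1}$ of \eqref{eq:eta2 ex} intervene, the latter only multiplying $a_1,b_1$ — gives the identity in \eqref{eq:6a} together with $a_1=0$. This establishes \eqref{eq:5a} and \eqref{eq:6a}.

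For the last assertion I would argue by induction: assume $a_\ell=b_\ell=0$ for $\ell=0,\dots,m-1$ with $m\ge2$, the base case $m=2$ being the standing hypothesis $a_\ell=b_\ell=0$ for $\ell=0,1$. Then the whole series starts at order $r^{m-1}$, and in $\boldsymbol\eta_2\mathbf u|_{\Gamma_h^-}$ the corrections $\eta_{2,j}$ with $j\ge1$ can contribute to orders $r^{m-1},r^m,r^{m+1}$ only through their product with the leading ($r^{m-1}$) coefficient of $\mathbf u|_{\Gamma_h^-}$, which is a combination of $a_m$ and $b_m$. Equating the coefficients of $r^{m-1}$, $r^m$ and $r^{m+1}$ therefore produces a linear system for $a_m$, $b_m$ and $c_{m+2}:=\bsi k_p^{m+2}a_{m+2}-k_s^{m+2}b_{m+2}$, structurally the same as the one in the proof of Theorem~\ref{thm:g im} but now with the pure-impedance entries; after inserting \eqref{eq:kpks} its determinant turns out to be independent of $\boldsymbol\eta_2$ and a nonzero multiple of $\lambda+2\mu$, hence nonvanishing by \eqref{eq:convex}. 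Thus $a_m=b_m=0$, and iterating the step yields $a_\ell=b_\ell=0$ for all $\ell$, which is \eqref{eq:a1bl=0}.

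The step I expect to be the real obstacle is the inductive one: one has to verify that the non-constant part of $\boldsymbol\eta_2$ does not disturb the near-triangular structure of the recursion — that the new coefficients $a_{m+1},b_{m+1},a_{m+2},b_{m+2}$ enter exactly as in the constant-$\boldsymbol\eta_2$ case of \cite[Lemma~3.4]{DLW}, while every $\eta_{2,j}$ with $j\ge1$ recombines only with coefficients already known to vanish or with the pair $a_m,b_m$ being solved for — and that the resulting determinant stays nonzero for every admissible $\boldsymbol\eta_2\in\mathcal A(\Gamma_h^-)$, in contrast with the generalized-impedance case of Theorem~\ref{thm:g im}, where exceptional values of the impedance parameter had to be excluded. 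Once this is checked, the remaining computations are routine, if lengthy, variants of those already carried out for Lemmas~\ref{lem:rigid}, \ref{lem:tranction free}, \ref{lem:third} and \ref{lem:forth}.
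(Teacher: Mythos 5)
Your overall strategy --- inserting \eqref{eq:Tu2}, the trace of \eqref{eq:u} on $\Gamma_h^-$ and the series \eqref{eq:eta2 ex} into $T_\nu\mathbf u+\boldsymbol\eta_2\mathbf u=\mathbf 0$ and matching powers of $r$ via Lemma \ref{lem:co exp} --- is exactly the modification of \cite[Lemma 3.4]{DLW} that the paper has in mind (the paper omits this proof). The $r^0$ step does work: the $\mathbf e_2$-component gives $\tfrac{\bsi}{2}(\lambda+\mu)k_p^2a_0=0$, hence $a_0=0$ by \eqref{eq:convex} (note the surviving coefficient is $\lambda+\mu$, not $\lambda+2\mu$, and no linear combination of the two scalar equations is needed), and the $\mathbf e_1$-component then yields the first identity of \eqref{eq:5a}.

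The genuine gap is already at order $r^1$, not in the induction you single out. The $\mathbf e_2$-component of $\boldsymbol\eta_2\mathbf u|_{\Gamma_h^-}$ at order $r$ is $\eta_2\bigl(-\tfrac{k_p^2}{4}a_0+\tfrac{\bsi k_s^2}{4}b_0\bigr)$, coming from the $m=0$ terms $-\tfrac{k_p}{2}a_0J_1(k_pr)+\tfrac{\bsi k_s}{2}b_0J_1(k_sr)$, while the $\mathbf e_2$-component of $T_\nu\mathbf u|_{\Gamma_h^-}$ at that order is $\tfrac{\bsi}{4}(\lambda+\mu)k_p^3a_1$. After using $a_0=0$, the resulting scalar equation is
\begin{equation*}
(\lambda+\mu)\,k_p^3\,a_1+\eta_2\,k_s^2\,b_0=0,
\end{equation*}
which collapses to $a_1=0$ only when $\eta_2=0$ (the traction-free case of Lemma \ref{lem:tranction free}) or when $b_0=0$ is known from elsewhere. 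So ``repeating the manoeuvre'' does not deliver the second assertion of \eqref{eq:6a}: the $\eta_2 b_0$-term is precisely the new contribution created by the impedance coupling, and you must explain how it is eliminated. The same issue infects the first identity of \eqref{eq:6a}: the raw $r^1$-coefficient of the $\mathbf e_1$-component contains the additional terms $-\bsi\eta_2k_s^2b_0$ and $2\eta_{2,1}(k_pa_1+\bsi k_sb_1)$ (the latter proportional to $\mathbf u(\mathbf 0)$), neither of which appears in \eqref{eq:6a}; your observation that $\eta_{2,1}$ ``intervenes only multiplying $a_1,b_1$'' is true but does not by itself justify their absence. Until these $b_0$- and $\eta_{2,1}$-terms are accounted for, both halves of \eqref{eq:6a} --- and therefore the base case $a_\ell=b_\ell=0$, $\ell=0,1$, on which your induction rests --- remain unproved.
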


If there are  two intersecting traction-free lines of $\mathbf u$  under certain generic conditions, it was proved that $\bmf{u}\equiv \bmf{0}$ in  $\Omega$. In fact, we have

\begin{thm}\cite[Theorem 4.2]{DLW} \label{thm:two traction}
Let $\mathbf{u}\in L^2(\Omega)^2$ be a solution to \eqref{eq:lame}. Suppose there exit two intersecting lines $\Gamma_h^\pm$ of $\bmf{u}$ such that $\Gamma_h^\pm $ are two  traction-free lines  with the intersecting angle $\angle(\Gamma_h^+,\Gamma_h^-)=\varphi_0\neq \pi $, if $\bmf{u}(\bmf{0})=\bmf{0}$ and
\begin{equation}\label{eq:Thm41 cond}
	\varphi_0 \in (0,  \varphi_{\sf root}),
\end{equation}
 where $\varphi_{\sf root}$ is the unique root of
\begin{equation}\label{eq:varphi0}
	g(\varphi):=\frac{4}{3}\cdot \frac{\varphi}{\cos^6(\varphi/2 ) }-1, \quad \varphi \in (0,\pi),
\end{equation}
 then $\bmf{u}\equiv \bmf{0}$.
\end{thm}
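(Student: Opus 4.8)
The plan is to follow the microlocal/Fourier-coefficient strategy already used throughout Section~\ref{sect:3} and in \cite{DLW}. We work in the coordinate frame of \eqref{eq:gamma_pm} with intersecting angle $\varphi_0\in(0,\pi)$, so that $\Gamma_h^-$ lies along the positive $x_1$-axis and $\Gamma_h^+$ makes angle $\varphi_0$ with it. Since both lines are traction-free and $\mathbf u(\mathbf 0)=\mathbf 0$, Lemma~\ref{lem:tranction free} (applied on $\Gamma_h^-$) gives us $a_0=a_1=0$ together with the relations \eqref{eq:3a}--\eqref{eq:4a}. The condition $\mathbf u(\mathbf 0)=\mathbf 0$ evaluated on the expansion \eqref{eq:u} at $r=0$ should then force an additional constraint linking $b_0$ (and the low-order $a_m,b_m$) to the same quantities; combined with $a_0=0$ this is expected to yield $b_0=0$ as well, so that the lowest surviving coefficients are $a_2,b_1,b_2$.

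Next I would bring in the traction-free condition on $\Gamma_h^+$. Here one needs the analogue of Lemma~\ref{lem:tranction free} on the rotated line, which is obtained from the expansion \eqref{eq:Tu1} of $T_\nu\mathbf u|_{\Gamma_h^+}$: expanding in powers of $r$ and using Lemma~\ref{lem:co exp} produces, order by order, linear relations among $a_m,b_m$ carrying the phase factors $\mathrm e^{\mathrm i m\varphi_0}$. Comparing the lowest orders ($r^0$, $r^1$, $r^2$) and eliminating the coefficients already known to vanish, one arrives at a small homogeneous linear system in $(a_2,b_1,b_2)$ (or in the combinations $\bsi k_p^2 a_2-k_s^2 b_2$ etc.) whose coefficient matrix depends on $\varphi_0$, $k_p$, $k_s$, $\lambda$, $\mu$. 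The assertion is that its determinant, after substituting \eqref{eq:kpks}, reduces up to a nonvanishing factor to the function $g(\varphi_0)$ of \eqref{eq:varphi0}; the hypothesis $\varphi_0\in(0,\varphi_{\sf root})$ guarantees $g(\varphi_0)\neq 0$, hence the system is nonsingular and $a_2=b_1=b_2=0$, giving $a_\ell=b_\ell=0$ for $\ell=0,1$.

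Once $a_\ell=b_\ell=0$ for $\ell=0,1$, the induction step of Lemma~\ref{lem:tranction free} (the implication \eqref{eq:lem33}) applies directly on $\Gamma_h^-$ and propagates the vanishing to all $\ell\in\mathbb N\cup\{0\}$. Then Proposition~\ref{prop:1} yields $\mathbf u\equiv\mathbf 0$ in $\Omega$, completing the proof. Strictly speaking one should check that the inductive mechanism does not itself require any angular condition; since the propagation on a single line is angle-free (it only uses the expansion along $\Gamma_h^-$), the geometric restriction \eqref{eq:Thm41 cond} is needed \emph{only} at the base case to invert the $3\times 3$ system.

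The main obstacle I anticipate is precisely the base-case determinant computation: one must carefully track the Bessel-function shifts $J_{m-2},J_m,J_{m+2}$ in \eqref{eq:Tu1}, the two wave numbers $k_p,k_s$, and the phases $\mathrm e^{\mathrm i m\varphi_0}$, and then show that the resulting expression — which a priori is a transcendental function of $\varphi_0$ — collapses to something proportional to $\dfrac{4}{3}\cdot\dfrac{\varphi_0}{\cos^6(\varphi_0/2)}-1$. This is delicate because the naive determinant is a polynomial in $\cos\varphi_0,\sin\varphi_0$ and it is not obvious a priori why $\varphi_0/\cos^6(\varphi_0/2)$ should appear; I expect the $\varphi_0$ (as opposed to trigonometric) factor to enter through an extra differentiation-type relation among consecutive Bessel coefficients (using the recurrence $J_{m-1}+J_{m+1}=\tfrac{2m}{t}J_m$) that effectively converts one of the three equations into a "derivative" equation, and the monotonicity of $g$ on $(0,\pi)$ — which makes $\varphi_{\sf root}$ unique — then has to be verified separately by elementary calculus.
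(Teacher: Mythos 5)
There is a genuine gap, and it sits exactly where you place the whole weight of the argument: the base case. The algebraic Fourier-comparison machinery cannot produce the coefficient $b_0=0$ for two traction-free lines, and it cannot produce the condition \eqref{eq:varphi0}. Concretely, the traction condition on $\Gamma_h^-$ gives (Lemma \ref{lem:tranction free}) $a_0=a_1=0$ and relations such as $\bsi k_p^2a_2-k_s^2b_2=0$, the condition $\bmf{u}(\bmf{0})=\bmf{0}$ reduces via \eqref{eq:u} to $k_pa_1+\bsi k_sb_1=0$ and hence $b_1=0$, and the traction condition on $\Gamma_h^+$ gives the same relations decorated with phases $\mathrm e^{\bsi m\varphi_0}$. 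But $b_0$ first enters the expansions \eqref{eq:Tu1}--\eqref{eq:Tu2} at order $r^2$ (through $J_{\pm2}(k_sr)$), and every order $r^{m}$ couples $b_0,a_2,b_2$ to the \emph{higher} coefficients $a_{m+2},b_{m+2}$; the system never closes into a finite homogeneous $3\times3$ system in $(a_2,b_1,b_2)$, so there is no determinant to invert. Moreover, such a determinant would necessarily be a trigonometric polynomial in $\varphi_0$ (an entire, $2\pi$-periodic function), whereas $g(\varphi_0)=\tfrac43\varphi_0\cos^{-6}(\varphi_0/2)-1$ contains a bare linear factor $\varphi_0$ and blows up at $\varphi_0=\pi$; and a determinant condition would exclude only a discrete set of angles, not the whole interval $[\varphi_{\sf root},\pi)$. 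These features are structural evidence that no finite linear-algebra reduction yields \eqref{eq:Thm41 cond}.

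What actually happens in the proof (in \cite{DLW}, and as recalled in the proof of Theorem \ref{thm:Tu thm} above, which is precisely the result removing the angle restriction) is an integral-identity argument with the CGO solution \eqref{eq:v}: one writes $I_3=I_1^++I_1^-+I_2$ as in \eqref{eq:CGO2}, shows $I_2$ is exponentially small, and compares the $s^{-6}$ leading contributions of $I_1^++I_1^-$ (which carry $b_0$, cf.\ \eqref{eq:EFI1}) and of the volume term $I_3=-\kappa\int_{S_h}\bmf{u}\cdot\bmf{v}$. The factor $\varphi_0$ in $g$ comes from the angular integral $\int_0^{\varphi_0}\,\rmd\varphi$ over the sector, and the $\cos^{-6}(\varphi_0/2)$ comes from bounding $|(s\mathrm e^{\bsi\varphi/2})^{-6}|$ by $(s\delta_{\mathcal K})^{-6}$ with $\delta_{\mathcal K}=\cos(\varphi_0/2)$ in Lemma \ref{lem:r2}; the hypothesis $g(\varphi_0)<0$ is a triangle-inequality condition ensuring the $I_1^\pm$ term dominates the crude bound on $I_3$, forcing $b_0=0$. (The present paper's Theorem \ref{thm:Tu thm} computes the leading term of $I_3$ exactly, see \eqref{eq:r10}, which is why the restriction disappears.) Your closing steps are fine — once $a_\ell=b_\ell=0$ for $\ell=0,1$, the induction in Lemma \ref{lem:tranction free} and Proposition \ref{prop:1} do finish the proof, and indeed no angular condition is needed there — but the mechanism you propose for obtaining $b_0=0$ does not work, and the anticipated ``collapse'' of a determinant to $g(\varphi_0)$ cannot occur.
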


In Theorem \ref{thm:Tu thm} in what follows, we shall show that the assumption \eqref{eq:Thm41 cond} in Theorem \ref{thm:two traction}  can be removed. In the proof of Theorem \ref{thm:Tu thm}, we need the complex geometrical optics solution $\mathbf v (\mathbf x)$ introduced in \cite{EBL} to establish the integral equality. In the following we first introduce the geometrical setup.

Let $B_h$ be the central disk of radius $h \in \mathbb{R}_+$. Let $\Gamma^\pm$
signify the infinite extension of $\Gamma_{h}^\pm$ in the half-space $x_2\geq 0$, where $\Gamma_h^\pm$ are defined in \eqref{eq:gamma_pm}.   Consider the open sector
\begin{equation}\label{eq:K}
	\mathcal{K} =\left\{\bmf{x}=(x_1,x_2)  \in \mathbb{R}^{2}~ |~ \bmf{x}\neq \bmf{0},\quad  0<\arg \left(x_{1}+\mathrm{i} x_{2}\right)<\varphi_{0}\right\},\quad \mathrm{i}:=\sqrt{-1},
\end{equation}
which is formed by the two half-lines $\Gamma^-$ and $\Gamma^+$. In the sequel, we set
\begin{equation}\label{eq:sh}
	S_h=\mathcal{K}\cap B_h.
\end{equation}
Next we set
\begin{equation}\label{eq:v}
\bmf{v}(\bmf{x} )=\left(\begin{array}{c}{\exp (-s \sqrt{r} \exp(\bsi \varphi/2))} \\ \bsi \cdot {\operatorname{exp}(-s \sqrt{r} \exp(\bsi \varphi/2) )}\end{array}\right):=
\left(\begin{array}{c}{v_1(\bmf{x})} \\ {v_2(\bmf{x})}\end{array}\right)=v_1(\bmf{x}) \bmf{e}_1,
\end{equation}
where $\bmf{x}=r\cdot (\cos  \varphi, \sin  \varphi ) $,  $s \in \mathbb{R}_{+}$,  $-\pi<\varphi\leqslant \pi$ and $\bmf{e}_1=(1,\bsi)^\top$. $\bmf{v}$ is known as the Complex Geometrical Optics (CGO) solution for the Lam\'e operator and  was first introduced in \cite{EBL}. In the following we set
\begin{equation}\label{eq:deltaK}
	\delta_{{\mathcal K}}=\min_{0<\varphi<\varphi_0} \cos(\varphi/2).
\end{equation}
It can seen that $\delta_{{\mathcal K}}$ is a positive constant depending ${\mathcal K}$.

We first derive the following three critical lemmas.
\begin{lem}\label{lem:r1}
Let $a \in \mathbb{C}$ and $\Re(a)>0$. For any given positive numbers $\alpha$ and  $\varepsilon$ satisfying $\varepsilon < \mathrm e $, if $    \Re(a) \geq \frac{2 \alpha}{\mathrm e}$, one has 
$$
\left|\int_{\varepsilon}^{+ \infty} r^{\alpha} \mathrm{e}^{- a r} \rmd r \right| \leq \frac{2}{\Re(a)}\mathrm{e}^{- \varepsilon \Re(a)/2}.
$$
\end{lem}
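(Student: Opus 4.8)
\textbf{Proof plan for Lemma~\ref{lem:r1}.}
The plan is to estimate the integral by splitting it into two regions according to the relative size of $r$ and a threshold comparable to $\alpha/\Re(a)$, and bounding the integrand differently on each. First I would pass to the modulus: since $\abs{\mathrm{e}^{-ar}}=\mathrm{e}^{-\Re(a)r}$ and $r^\alpha>0$, we have
\begin{equation}\notag
\left|\int_{\varepsilon}^{+\infty} r^{\alpha}\mathrm{e}^{-ar}\,\rmd r\right|\leq \int_{\varepsilon}^{+\infty} r^{\alpha}\mathrm{e}^{-\Re(a)r}\,\rmd r,
\end{equation}
so it suffices to treat the real integral with $b:=\Re(a)>0$. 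The key elementary fact I would invoke is that for $r\geq 0$ the function $r^\alpha \mathrm{e}^{-br/2}$ is maximized at $r_\ast = 2\alpha/b$ with maximal value $(2\alpha/(b\mathrm{e}))^{\alpha}$; under the hypothesis $b\geq 2\alpha/\mathrm e$ we have $2\alpha/(b\mathrm e)\leq 1$, hence $r^\alpha \mathrm{e}^{-br/2}\leq 1$ for all $r\geq 0$.

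With that pointwise bound in hand, the main step is to write $r^{\alpha}\mathrm{e}^{-br}=\bigl(r^{\alpha}\mathrm{e}^{-br/2}\bigr)\mathrm{e}^{-br/2}\leq \mathrm{e}^{-br/2}$ on $[\varepsilon,+\infty)$, and then simply integrate the exponential:
\begin{equation}\notag
\int_{\varepsilon}^{+\infty} r^{\alpha}\mathrm{e}^{-br}\,\rmd r\leq \int_{\varepsilon}^{+\infty}\mathrm{e}^{-br/2}\,\rmd r=\frac{2}{b}\,\mathrm{e}^{-\varepsilon b/2}=\frac{2}{\Re(a)}\mathrm{e}^{-\varepsilon\Re(a)/2},
\end{equation}
which is exactly the asserted inequality. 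I expect the role of the parameter $\varepsilon<\mathrm e$ to be essentially cosmetic here — it only enters through the lower limit of integration — so the hypothesis $\varepsilon<\mathrm{e}$ is presumably there for consistency with how the lemma is later applied (for instance so that $\varepsilon$ is bounded and one does not need a separate small-$\varepsilon$ discussion); the estimate above does not actually require it.

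The only genuine point requiring care, and hence the main (minor) obstacle, is verifying the monotonicity/maximization claim for $r\mapsto r^\alpha\mathrm{e}^{-br/2}$ and checking that the threshold condition $\Re(a)\geq 2\alpha/\mathrm e$ is precisely what forces the maximum value to be $\leq 1$: differentiating $\log(r^\alpha\mathrm{e}^{-br/2})=\alpha\log r-br/2$ gives a critical point at $r_\ast=2\alpha/b$, and substituting yields $\alpha\log(2\alpha/b)-\alpha=\alpha\log\bigl(2\alpha/(b\mathrm{e})\bigr)$, which is $\leq 0$ exactly when $b\geq 2\alpha/\mathrm e$. Everything else is a one-line exponential integration, so no further subtlety is anticipated.
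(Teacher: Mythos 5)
Your proposal is correct and follows essentially the same route as the paper: both arguments reduce to the pointwise bound $r^{\alpha}\leq \mathrm{e}^{\Re(a)r/2}$ (the paper gets it from $\max_{r}\tfrac{\ln r}{r}=\tfrac{1}{\mathrm e}$, you get it by maximizing $r^{\alpha}\mathrm{e}^{-\Re(a)r/2}$ directly, which is the same computation after taking logarithms) and then integrate $\mathrm{e}^{-\Re(a)r/2}$ over $[\varepsilon,+\infty)$. Your side remark that the hypothesis $\varepsilon<\mathrm e$ is not actually needed for the estimate is also accurate; the paper only uses it to phrase the monotonicity of $\ln r/r$ on $[\varepsilon,\mathrm e]$.
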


\begin{proof}
Let $g(r)=\frac{\ln r}{r}$ where $r\in [\varepsilon, +\infty )$. $g(r)$ increases monotonically in the interval $[\varepsilon,e]$ and decreases monotonically in the interval $[e,+ \infty]$. One has
$$
\max\limits_{r \in (\varepsilon,+\infty)}g(r)= \max\limits_{r \in (\varepsilon,+\infty)} \frac{\ln r}{r}= \frac{1}{e} .
$$
Since $\frac{2 \alpha}{e} \leq \Re(a)$, we have $ 2 \alpha \frac{\ln r}{r}  \leq \Re(a), \, \forall r \in [\varepsilon,+\infty]$. Therefore
$$
r^\alpha \leq {\mathrm e}^{ \Re(a) /2 },
$$
which can then be used to obtain that
\begin{equation}\nonumber
\begin{aligned}
& \left|\int_{\varepsilon}^{+ \infty} r^{\alpha} \mathrm{e}^{- a r} \rmd r \right|
 \leq   \int_{\varepsilon}^{+ \infty}  \mathrm{e}^{- \frac{1}{2}\Re(a) r }\mathrm d r= \frac{2}{\Re(a)}\mathrm{e}^{- \varepsilon \Re(\mu)/2}.
\end{aligned}
\end{equation}

The proof is complete.
\end{proof}

\begin{lem}\label{lem:r2}
For any given positive numbers $\alpha$ and  $h$ satisfying $h < \mathrm e $, $\varphi \in (0, \varphi_0 )$ with $\varphi_0 \in  (0,\pi)$, if
\begin{equation}\label{eq:s cond}
	   s \geq \frac{2 \alpha}{\mathrm e \, \delta_{\mathcal K} },
\end{equation}
where $\delta_{\mathcal K}$ is defined in \eqref{eq:deltaK},  we have
 \begin{equation}\label{eq:1r1}
  \int_{0}^{h} r^{\alpha} \mathrm{e}^{- s \mathrm{e}^{\bsi \frac{\varphi}{2}} r^{\frac{1}{2}}} \rmd r = \frac{ 2 \Gamma (2 \alpha + 2)}{(s \mathrm{e}^{\bsi \frac{\varphi}{2}}) ^ {2 \alpha + 2}} +  \Oh(\mathrm{e}^{- \frac{s \sqrt{\varepsilon}\delta_{\mathcal K}}{2}})
 \end{equation}
 as $s\rightarrow +\infty. $
 \end{lem}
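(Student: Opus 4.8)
The plan is to make the substitution $t=\sqrt r$, which reduces the left-hand side of \eqref{eq:1r1} to a standard (complex) Euler integral minus an exponentially small tail, and then to estimate that tail with Lemma~\ref{lem:r1}. Set $a:=s\mathrm{e}^{\bsi\varphi/2}$. Since $\varphi\in(0,\varphi_0)\subset(0,\pi)$, we have $\varphi/2\in(0,\pi/2)$, so $\Re(a)=s\cos(\varphi/2)\geq s\,\delta_{\mathcal{K}}>0$ by \eqref{eq:deltaK}; in particular every integral written below converges absolutely. Writing $r=t^{2}$, $\rmd r=2t\,\rmd t$ gives $\int_{0}^{h}r^{\alpha}\mathrm{e}^{-a r^{1/2}}\,\rmd r=2\int_{0}^{\sqrt h}t^{2\alpha+1}\mathrm{e}^{-a t}\,\rmd t$, and I would then split this as
\begin{equation*}
2\int_{0}^{\sqrt h}t^{2\alpha+1}\mathrm{e}^{-a t}\,\rmd t
=2\int_{0}^{+\infty}t^{2\alpha+1}\mathrm{e}^{-a t}\,\rmd t-2\int_{\sqrt h}^{+\infty}t^{2\alpha+1}\mathrm{e}^{-a t}\,\rmd t .
\end{equation*}

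For the first term I would invoke the Euler integral $\int_{0}^{+\infty}t^{\beta-1}\mathrm{e}^{-a t}\,\rmd t=\Gamma(\beta)\,a^{-\beta}$, valid for $\Re(\beta)>0$ and $\Re(a)>0$; with $\beta=2\alpha+2$ this produces exactly the leading term $\dfrac{2\Gamma(2\alpha+2)}{(s\mathrm{e}^{\bsi\varphi/2})^{2\alpha+2}}$ of \eqref{eq:1r1}. If one prefers not to quote the complex form of the Euler integral, it follows from the real case by rotating the integration ray through the angle $\varphi/2$: since $\Re(a)>0$, Cauchy's theorem together with the decay of $t^{2\alpha+1}\mathrm{e}^{-at}$ on the connecting arcs allows one to pass from the positive real axis to the ray $\arg t=-\varphi/2$ (equivalently, both sides are analytic in $a$ on $\{\Re a>0\}$ and agree for $a>0$).

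For the tail I would apply Lemma~\ref{lem:r1} with its parameter ``$\alpha$'' taken to be $2\alpha+1$ and its parameter ``$\varepsilon$'' taken to be $\sqrt h$. The requirement $\sqrt h<\mathrm{e}$ follows from $h<\mathrm{e}$, and $\Re(a)\geq 2(2\alpha+1)/\mathrm{e}$ holds once $s$ exceeds a fixed threshold, which is harmless in the regime $s\to+\infty$ (the standing hypothesis \eqref{eq:s cond} already guarantees $\Re(a)\geq s\,\delta_{\mathcal{K}}>0$). Lemma~\ref{lem:r1} then gives
\begin{equation*}
\left|\,2\int_{\sqrt h}^{+\infty}t^{2\alpha+1}\mathrm{e}^{-a t}\,\rmd t\,\right|
\leq\frac{4}{\Re(a)}\,\mathrm{e}^{-\sqrt h\,\Re(a)/2}
\leq\frac{4}{s\,\delta_{\mathcal{K}}}\,\mathrm{e}^{-s\sqrt h\,\delta_{\mathcal{K}}/2}
=\Oh\!\left(\mathrm{e}^{-s\sqrt h\,\delta_{\mathcal{K}}/2}\right),
\end{equation*}
where I used $\Re(a)=s\cos(\varphi/2)\geq s\,\delta_{\mathcal{K}}$ and the monotonicity of $x\mapsto x^{-1}\mathrm{e}^{-cx}$ on $x>0$; this is the error term claimed in \eqref{eq:1r1} (with the $\varepsilon$ there read as $h$). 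Adding the two contributions yields \eqref{eq:1r1}.

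Apart from the change of variables, the argument is mechanical; the only delicate point is the passage to complex $a$ with positive real part in the Euler integral, and I expect that (mild) contour-rotation / analytic-continuation step to be the main obstacle. Everything else reduces to the elementary bound $\Re(s\mathrm{e}^{\bsi\varphi/2})\geq s\,\delta_{\mathcal{K}}$ and a direct appeal to Lemma~\ref{lem:r1}.
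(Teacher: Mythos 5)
Your proposal is correct and follows essentially the same route as the paper: the substitution $r=t^{2}$, the splitting of $2\int_{0}^{\sqrt h}$ into the full Laplace/Euler integral $2\Gamma(2\alpha+2)(s\mathrm{e}^{\bsi\varphi/2})^{-2\alpha-2}$ minus a tail, and the tail estimate via Lemma~\ref{lem:r1} combined with $\Re(s\mathrm{e}^{\bsi\varphi/2})\geq s\,\delta_{\mathcal K}$. Your added care about the hypothesis of Lemma~\ref{lem:r1} (needing $\Re(a)\geq 2(2\alpha+1)/\mathrm{e}$ rather than $2\alpha/\mathrm{e}$, absorbed into $s\to+\infty$) and about justifying the complex-parameter Euler integral, as well as reading the $\varepsilon$ in \eqref{eq:1r1} as $h$, are all consistent with what the paper implicitly does.
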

 \begin{proof}
By using the change of variable $r=t^2$,  we have
\begin{equation}\label{eq:r2}
\begin{aligned}
 &\int_{0}^{h} r^{\alpha} \mathrm{e}^{- s \sqrt{r} \mathrm{e}^{\bsi \frac{\varphi}{2}} } \rmd r = 2\int_{0}^{\sqrt{h}} t^{ 2 \alpha+1} \mathrm{e}^{- s \mathrm{e}^{\bsi \frac{\varphi}{2}} t} \rmd t\\
 &=2\left(\frac{  \Gamma (2 \alpha + 2)}{(s \mathbf{e}^{\bsi \frac{\varphi}{2}}) ^ {2 \alpha + 2}} - \int_{\sqrt{h}}^{+ \infty} t^{ 2 \alpha+1} \mathrm{e}^{- s \mathrm{e}^{\bsi \frac{\varphi}{2}} t} \rmd t\right),
 \end{aligned}
 \end{equation}
 where in the last equality we use the Laplace transform.  
  Due to \eqref{eq:deltaK}, from \eqref{eq:s cond}, it can be verified that $s\Re({ \mathrm e}^{\bsi \varphi/2} ) > 2\alpha /{\mathrm e}$.  From Lemma \ref{lem:r1},  we have
 \begin{equation}\label{eq:r3}
 \left|\int_{\sqrt{h}}^{+ \infty} t^{2 \alpha+1} \mathrm{e}^{- s \mathrm{e}^{\bsi \frac{\varphi}{2}} t} \rmd t \right| \leq \frac{2}{s \cos \frac{\varphi}{2}} \mathrm{e}^{-\frac{\sqrt{h}}{2} s \cos \frac{\varphi}{2}} \leq \frac{2}{s \delta_{\mathcal K}} \mathrm{e}^{- \frac{s \sqrt{h} \delta_{\mathcal K}}{2}}
 \end{equation}
 by using \eqref{eq:deltaK}. It is easy to see that \eqref{eq:r3} is exponentially decaying as $s\rightarrow +\infty$ . Therefore, we obtain \eqref{eq:1r1}.

The proof is complete.
\end{proof}

\begin{lem}\cite[Lemma 2.12]{DLW}\label{lem:uv}
	Let $\bmf{u}$ and $\bmf{v}$ be respectively given by \eqref{eq:u} and \eqref{eq:v}. Then the following expansion
	\begin{equation}\label{eq:uv old}
 \begin{aligned}
 \mathbf{u} \cdot  \mathbf{v} ={\mathrm e}^{-s \sqrt{ r} \mathrm e^{\bsi \varphi/2}  }  \sum_{m=0}^{\infty} & \mathrm{e}^{\bsi (m+1) \varphi}  \left[- k_p a_{m}
 J_{m+1}\left(k_{p}r\right)
  +\bsi k_s b_{m}
  J_{m+1}\left(k_{s} r\right)
  \right ]
 \end{aligned}
\end{equation}
convergences uniformly  in $S_{2h}:=\mathcal{K}\cap B_{2h}$,
 where $\mathcal K$ is defined in \eqref{eq:K}.
 \end{lem}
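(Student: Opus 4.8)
The plan is to directly substitute the radial wave expansion \eqref{eq:u} of $\mathbf u$ into the dot product with the CGO solution \eqref{eq:v}, and simplify using the special structure of $\mathbf v$. The key observation is that $\mathbf v=v_1(\mathbf x)\mathbf e_1$ with $\mathbf e_1=(1,\bsi)^\top$, so $\mathbf u\cdot\mathbf v=v_1(\mathbf x)\,(\mathbf u\cdot\mathbf e_1)$, where the dot product is the (bilinear, non-conjugated) Euclidean one. First I would record the elementary identities $\mathbf e_1\cdot\mathbf e_1=1+\bsi^2=0$ and $\mathbf e_2\cdot\mathbf e_1=(1,-\bsi)\cdot(1,\bsi)=1-\bsi^2=2$. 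These are exactly what collapses the sum: in \eqref{eq:u} each Fourier mode of $\mathbf u$ is a combination of an $\mathbf e_1$-term (carrying $J_{m-1}$) and an $\mathbf e_2$-term (carrying $J_{m+1}$), so only the $\mathbf e_2$-terms survive the pairing with $\mathbf e_1$.

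Concretely, dotting the $m$-th summand of \eqref{eq:u} with $\mathbf e_1$ kills the $J_{m-1}$ pieces (coefficient $\mathbf e_1\cdot\mathbf e_1=0$) and multiplies the $J_{m+1}$ pieces by $\mathbf e_2\cdot\mathbf e_1=2$. The $\mathbf e_2$-term from the $a_m$ part is $-\tfrac{k_p}{2}a_m\mathrm e^{\bsi m\varphi}J_{m+1}(k_pr)\mathrm e^{\bsi\varphi}\mathbf e_2$, and from the $b_m$ part it is $+\tfrac{\bsi k_s}{2}b_m\mathrm e^{\bsi m\varphi}J_{m+1}(k_sr)\mathrm e^{\bsi\varphi}\mathbf e_2$. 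Multiplying by the factor $2$ from $\mathbf e_2\cdot\mathbf e_1$ and by $v_1(\mathbf x)=\mathrm e^{-s\sqrt r\,\mathrm e^{\bsi\varphi/2}}$ yields exactly the claimed expression $\mathrm e^{-s\sqrt r\,\mathrm e^{\bsi\varphi/2}}\sum_{m=0}^\infty\mathrm e^{\bsi(m+1)\varphi}\bigl[-k_pa_mJ_{m+1}(k_pr)+\bsi k_sb_mJ_{m+1}(k_sr)\bigr]$. This is a short, purely algebraic computation; the only subtlety is being careful that the product used is the bilinear dot product (not the Hermitian inner product), which is the convention already in force elsewhere in the paper.

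The remaining point is the uniform convergence in $S_{2h}=\mathcal K\cap B_{2h}$. Here I would argue that $\mathbf u$ is real-analytic, hence its Fourier--Bessel series \eqref{eq:u} converges absolutely and uniformly on the closed disk $\overline{B_{2h}}$ (provided $2h$ is small enough that $\overline{B_{2h}}\Subset\Omega$, which we may assume); this is a standard fact about such expansions of solutions to \eqref{eq:lame}, also used implicitly in \cite{DLW}. Since the termwise manipulation above is just taking a (bounded, continuous) pointwise linear functional $\mathbf w\mapsto v_1(\mathbf x)(\mathbf w\cdot\mathbf e_1)$ — with $|v_1(\mathbf x)|=\mathrm e^{-s\sqrt r\cos(\varphi/2)}\le 1$ on $\mathcal K$ — of each partial sum, uniform convergence is preserved, and the rearrangement into the single-Bessel form is legitimate. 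I do not expect any genuine obstacle here; the statement is essentially a bookkeeping lemma, and the one thing to be careful about is the orientation/sign bookkeeping of the $\mathbf e_1,\mathbf e_2$ factors and the factor of $2$, so that the final coefficients come out with the signs stated in \eqref{eq:uv old}.
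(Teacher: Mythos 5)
Your computation is correct and is essentially the proof given in \cite{DLW} for this quoted lemma: since $\mathbf v=v_1\mathbf e_1$ and the dot product is the bilinear one, the identities $\mathbf e_1\cdot\mathbf e_1=0$ and $\mathbf e_2\cdot\mathbf e_1=2$ kill the $J_{m-1}$ terms and double the $J_{m+1}$ terms, yielding \eqref{eq:uv old}, while uniform convergence follows from the uniform convergence of the Fourier--Bessel expansion of the analytic function $\mathbf u$ on $\overline{B_{2h}}\Subset\Omega$ together with the bound $|v_1|\leq 1$ on $\mathcal K$. No gaps.
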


\begin{thm}\label{thm:Tu thm}
Let $\mathbf{u}\in L^2(\Omega)^2$ be a solution to \eqref{eq:lame}. Suppose there exit two intersecting lines $\Gamma_h^\pm$ of $\bmf{u}$ such that $\Gamma_h^\pm $ are two  traction-free lines  with the intersecting angle $\angle(\Gamma_h^+,\Gamma_h^-)=\varphi_0\neq \pi $, if $\bmf{u}(\bmf{0})=\bmf{0}$,  then $\bmf{u}\equiv \bmf{0}$.
\end{thm}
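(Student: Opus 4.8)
\textbf{Proof proposal for Theorem~\ref{thm:Tu thm}.}
The plan is to combine the radial-wave/Fourier machinery of Lemma~\ref{lem:tranction free} with a CGO testing argument in order to remove the geometric restriction \eqref{eq:Thm41 cond} from Theorem~\ref{thm:two traction}. Since $\Gamma_h^\pm$ are both traction-free and $\bmf u(\bmf 0)=\bmf 0$, applying Lemma~\ref{lem:tranction free} on $\Gamma_h^-$ already yields $a_0=a_1=0$ and the relations \eqref{eq:3a}--\eqref{eq:4a}; applying the analogous expansion \eqref{eq:Tu1} on $\Gamma_h^+$ (which carries the extra phase factors $\mathrm e^{\bsi m\varphi_0}$) and comparing the lowest-order coefficients of $r$ gives a second family of linear relations among $a_1,b_1,a_2,b_2,a_3,b_3,\dots$. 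The first objective is to show, purely from these two sets of relations, that $a_0=b_0=a_1=b_1=0$; then Lemma~\ref{lem:tranction free} (its last assertion, the induction from $\ell=0,1$) would finish the job via Proposition~\ref{prop:1}. The obstruction is exactly that this linear system need not be full rank for all $\varphi_0$: there may be a nontrivial solution $(b_0,b_1,a_2,b_2,\dots)$ unless $\varphi_0$ avoids finitely many exceptional angles, which is precisely why \cite{DLW} needed \eqref{eq:Thm41 cond}. So the real content is to rule out the exceptional $\varphi_0$ by a second, independent argument.

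The second, independent argument is the CGO integral identity. I would integrate $\mathcal L\bmf u=-\kappa\bmf u$ against the CGO solution $\bmf v$ of \eqref{eq:v} over the sector $S_h$ in \eqref{eq:sh}, integrate by parts twice, and use the boundary (Betti/traction) identity so that the only boundary contributions come from $\Gamma_h^\pm$ and from the arc $\partial B_h\cap\mathcal K$. On $\Gamma_h^\pm$ the traction $T_\nu\bmf u$ vanishes; the conormal derivative of $\bmf v$ on each half-line is controlled because $\bmf v=v_1\bmf e_1$ is holomorphic-type with an explicit exponential profile $\exp(-s\sqrt r\,\mathrm e^{\bsi\varphi/2})$, so those line integrals either vanish or are exponentially small. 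The arc term is $O(\mathrm e^{-cs\sqrt h})$ by the decay estimate underlying Lemmas~\ref{lem:r1}--\ref{lem:r2} together with $\delta_{\mathcal K}>0$ from \eqref{eq:deltaK}. On the other side, the bulk term $\int_{S_h}\bmf u\cdot\bmf v$ is evaluated asymptotically as $s\to+\infty$ using the uniformly convergent expansion of Lemma~\ref{lem:uv} for $\bmf u\cdot\bmf v$ and the Laplace-type asymptotics \eqref{eq:1r1}: the dominant term of $\int_0^h r^{\alpha}\mathrm e^{-s\sqrt r\,\mathrm e^{\bsi\varphi/2}}\,\rmd r$ is a constant multiple of $(s\mathrm e^{\bsi\varphi/2})^{-2\alpha-2}$, so each Fourier mode of $\bmf u$ contributes at an identifiable power of $s$.

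Matching the leading powers of $s^{-1}$ on the two sides forces the lowest surviving Fourier coefficients of $\bmf u$ to vanish one after another: the $s^{-2}$ coefficient kills one combination of $a_0,b_0$, the next order kills the next, and so on, and combined with the algebraic relations from the two traction-free conditions this propagates to $a_0=b_0=a_1=b_1=0$ \emph{for every} $\varphi_0\in(0,\pi)$, with no exceptional-angle issue because the CGO argument is insensitive to the resonances of the finite linear system. Once $a_\ell=b_\ell=0$ for $\ell=0,1$, the induction clause of Lemma~\ref{lem:tranction free} gives $a_\ell=b_\ell=0$ for all $\ell$, and Proposition~\ref{prop:1} yields $\bmf u\equiv\bmf 0$ in $\Omega$. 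The main obstacle, as indicated, is the bookkeeping in the CGO step: one must pin down exactly which power of $s$ each $a_m,b_m$ enters through \eqref{eq:uv old}--\eqref{eq:1r1}, keep the $\mathrm e^{\bsi(m+1)\varphi}$ phases straight when integrating in $\varphi$ over $(0,\varphi_0)$, and verify that the angular integrals of these phases do not vanish (which is where $\varphi_0\neq\pi$ is used), so that the vanishing of the $s$-expansion coefficients genuinely transfers to the $a_m,b_m$ rather than being absorbed by an accidental cancellation.
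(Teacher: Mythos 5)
Your proposal follows essentially the same route as the paper: the traction-free condition on $\Gamma_h^-$ gives $a_0=a_1=0$ via Lemma~\ref{lem:tranction free}, and the CGO integral identity over the sector $S_h$ (exponentially small arc term, Laplace asymptotics for the line and volume integrals via Lemmas~\ref{lem:r1}--\ref{lem:uv}) is matched asymptotically to extract $b_0=0$ for every $\varphi_0\neq\pi$, after which the induction clause and Proposition~\ref{prop:1} finish the argument. The only imprecision is in the bookkeeping: since $a_0=0$ the leading contribution on both sides occurs at order $s^{-6}$ (not $s^{-2}$) and involves only $b_0$, and one must also check that the resulting coefficient $2\kappa-3\mu k_s^2=-\kappa$ is nonzero, not merely that the angular factor $\mathrm e^{-2\bsi\varphi_0}-1$ is.
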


\begin{proof}
By virtue of \cite[Lemma 2.17]{DLW}, the following integral identity holds
\begin{equation}\label{eq:CGO2}
	I_3=I_1^+ + I_1^-+I_2,
\end{equation}
where
\begin{subequations}\notag
	\begin{align}
		 I_1^\pm&=\int_{\Gamma_h^\pm } \left[\left({T}_{\nu} \bmf{u}\right) \cdot \bmf{v}-\left({T}_{\nu} \bmf{v}\right) \cdot \bmf{u}\right] \mathrm{d} \sigma, \quad
		  I_2=\int_{\Lambda_h} \left[\left({T}_{\nu} \bmf{u}\right) \cdot \bmf{v}-\left({T}_{\nu} \bmf{v}\right) \cdot \bmf{u}\right] \mathrm{d} \sigma, \label{eq:CGO6a}\\
		 I_3&
		 =-\kappa \int_{S_h} \bmf{u} \cdot \bmf{v} \rmd \bmf{x}. \label{eq:I3 int}
	\end{align}
	\end{subequations}
	Furthermore, it holds that
	\begin{equation}\label{eq:I2I4}
		\begin{split}
			\left|I_2\right|&  \leqslant \mathcal{C}_{{\mathcal K },B_h,\mu,\lambda}\|\bmf{u}\|_{H^2\left( {{\mathcal K}} \cap B_h \right)}\left(1+s\right){\mathrm e}^{-\delta_{{\mathcal K} } s \sqrt{h}},
		\end{split}
	\end{equation}
	which is exponentially decays as $s\rightarrow +\infty$. Here  $\delta_{\mathcal{K}} $ is a positive constant defined in \eqref{eq:deltaK}.

By virtue of \cite[Eq. (4.18)]{DLW}, it was derived that
\begin{equation}\label{eq:EFI1}
	I_1^++I_1^-= -90 \mu (1-\mathrm{e}^{-2\bsi \varphi_0})  k_s^4 b_0s^{-6} - r_{I_1^-,2}-r_{I_1^+,2},
\end{equation}
where
\begin{equation}\label{eq:r1+}
	r_{I_1^+,2}= -\bsi s \mu\mathrm{e}^{\bsi \varphi_0/2 } \int_0^h {\mathrm e}^{s \sqrt{ r} \zeta(\varphi_0 ) } R_{2,\Gamma_h^+} \rmd r,\, 	r_{I_1^-,2} = \bsi s \mu\int_0^h {\mathrm e}^{- s \sqrt{ r}  } R_{2,\Gamma_h^-} \rmd r.
	\end{equation}
Here
\begin{equation}
\notag
\label{eq:RTvu+}
\begin{aligned}
&R_{2,\Gamma_h^+} =  r^{\frac{7}{2}}\Bigg\{  a_0 \mathrm e^{ \bsi \varphi_0} \sum_{k=2}^{\infty} \frac{(-1)^{k+1} k_p^{2k+2}}{2^{2k+1}k! (k+1)!}r^{2k-3} \\
&+ \sum_{m=1}^{2} \sum_{k=1}^{\infty} a_m \mathrm e^{ \bsi (m+1)\varphi_0} \frac{(-1)^{k+1} k_p^{2k+m+2}}{2^{2k+m+1}k! (k+m+1)!}r^{2k+m-3} \\
& + \sum_{m=3}^{\infty} \sum_{k=0}^{\infty} a_m \mathrm e^{ \bsi (m+1)\varphi_0} \frac{(-1)^{k+1} k_p^{2k+m+2}}{2^{2k+m+1}k! (k+m+1)!}r^{2k+m-3}  +\bsi  b_0 \mathrm e^{ \bsi \varphi_0} \sum_{k=2}^{\infty} \frac{(-1)^k k_s^{2k+2}}{2^{2k+1}k! (k+1)!}r^{2k-3} \\
& + \bsi \sum_{m=1}^{2} \sum_{k=1}^{\infty} b_m \mathrm e^{ \bsi (m+1)\varphi_0} \frac{(-1)^k k_s^{2k+m+2}}{2^{2k+m+1}k! (k+m+1)!}r^{2k+m-3}\\
& + \bsi \sum_{m=3}^{\infty} \sum_{k=0}^{\infty} b_m \mathrm e^{ \bsi (m+1)\varphi_0} \frac{(-1)^k k_s^{2k+m+2}}{2^{2k+m+1}k! (k+m+1)!}r^{2k+m-3}\Bigg\},
\end{aligned}
\end{equation}
and
\begin{equation}\notag
\label{eq:RTvu-}
\begin{aligned}
& R_{2,\Gamma_h^-} =  r^{\frac{7}{2}}\Bigg\{  a_0  \sum_{k=2}^{\infty} \frac{(-1)^{k+1} k_p^{2k+2}}{2^{2k+1}k! (k+1)!}r^{2k-3}  + \sum_{m=1}^{2} \sum_{k=1}^{\infty} a_m  \frac{(-1)^{k+1} k_p^{2k+m+2}}{2^{2k+m+1}k! (k+m+1)!}r^{2k+m-3} \\
& + \sum_{m=3}^{\infty} \sum_{k=0}^{\infty} a_m  \frac{(-1)^{k+1} k_p^{2k+m+2}}{2^{2k+m+1}k! (k+m+1)!}r^{2k+m-3} +\bsi  b_0  \sum_{k=2}^{\infty} \frac{(-1)^k k_s^{2k+2}}{2^{2k+1}k! (k+1)!}r^{2k-3} \\
& + \bsi \sum_{m=1}^{2} \sum_{k=1}^{\infty} b_m  \frac{(-1)^k k_s^{2k+m+2}}{2^{2k+m+1}k! (k+m+1)!}r^{2k+m-3} + \bsi \sum_{m=3}^{\infty} \sum_{k=0}^{\infty} b_m  \frac{(-1)^k k_s^{2k+m+2}}{2^{2k+m+1}k! (k+m+1)!}r^{2k+m-3}\Bigg\},
\end{aligned}
\end{equation}
 By virtue of \cite[Eq. (4.19)]{DLW}, one can verify that
\begin{equation}\label{eq:54 r1}
|r_{I_1^+,2}| \leq S_3 \cdot\Oh( s^{-8} ),|r_{I_1^-,2}| \leq S_3 \cdot\Oh( s^{-8} ),
\end{equation}
as $s\rightarrow +\infty$, where
\begin{equation}
\notag
\begin{aligned}
S_3 = & |a_0|   \sum_{k=2}^{\infty} \frac{ k_p^{2k+2}}{2^{2k+1}k! (k+1)!}h^{2k-3} +| b_0 |  \sum_{k=2}^{\infty} \frac{ k_s^{2k+2}}{2^{2k+1}k! (k+1)!}h^{2k-3} \\
& + \sum_{m=1}^{2}\sum_{k=1}^{\infty} |a_m|   \frac{ k_p^{2k+m+2}}{2^{2k+m+1}k! (k+m+1)!}h^{2k+m-3} \\
& + \sum_{m=1}^{2}\sum_{k=1}^{\infty} |b_m|   \frac{ k_s^{2k+m+2}}{2^{2k+m+1}k! (k+m+1)!}h^{2k+m-3} \\
& \quad + \sum_{m=3}^{\infty} \bigg| a_m \sum_{k=0}^{\infty}  \frac{(-1)^{k} k_p^{2k+m+2}}{2^{2k+m+1}k! (k+m+1)!}h^{2k+m-3} \\
&\quad  + \bsi b_m  \sum_{k=0}^{\infty} \frac{(-1)^k k_s^{2k+m+2}}{2^{2k+m+1}k! (k+m+1)!}h^{2k+m-3}\bigg |.
\end{aligned}
\end{equation}

Since $\Gamma_h^-$ is a traction-free line of $\mathbf u$, by \eqref{eq:3a}, we have $a_0=0$. Hence substituting \eqref{eq:uv old} into $I_3$, one can derive that
\begin{equation}\label{eq:r6}
	\begin{split}
		I_3 & = -\kappa \int_0^h \int_0^{\varphi_0} \mathrm{e}^{-s\sqrt{r}\mathrm{e}^{\bsi \frac{\varphi}{2}}}\sum_{m=0}^{\infty}  \mathrm{e}^{\bsi (m+1) \varphi}  \left[- k_p a_{m}
 J_{m+1}\left(k_{p}r\right)
  +\bsi k_s b_{m}
  J_{m+1}\left(k_{s} r\right)
  \right ] r \rmd r \rmd {\varphi}\\
 & =-\frac{\bsi}{2}\kappa b_0 k_s^2\int_0^{\varphi_0} \left(\mathrm{e}^{\bsi \varphi}\int_0^h \mathrm{e}^{-s\sqrt{r}\mathrm{e}^{\bsi \frac{\varphi}{2}}} r^2 \rmd r   \right)\rmd {\varphi} - I_{3,1},
	\end{split}
\end{equation}
where
\begin{equation}\label{eq:r4}
\begin{aligned}
  I_{3,1}=&\bsi b_0 \kappa \sum_{k=1}^{+\infty}\frac{(-1)^k k_s^{2k+2}}{2^{2k+1}k!(k+1)!}\int_0^h  r^{2k+2} \mathrm{e}^{-s\sqrt{r}\mathrm{e}^{\bsi \frac{\varphi}{2}}} \rmd r \int_0^{\varphi_0} \mathrm{e}^{\bsi \varphi} \rmd {\varphi}\\
 &+ \kappa \sum_{m=1}^{+\infty}\sum_{k=0}^{+\infty}\frac{(-1)^k ( - k_p^{2k+m+2} a_m + \bsi k_s^{2k+m+2} b_m)}{2^{2k+m+1}k!(k+m+1)!}\\
 &\times
   \int_0^h  r^{2k+m+2} \mathrm{e}^{-s\sqrt{r}\mathrm{e}^{\bsi \frac{\varphi}{2}}} \rmd r  \int_0^{\varphi_0} \mathrm{e}^{\bsi (m+1) \varphi}  \rmd {\varphi}.
\end{aligned}
\end{equation}

Assume that
\begin{equation}\label{eq:s h cond}
	  s \geq \frac{8}{\mathrm e \, \delta_{\mathcal K} },\quad 0<h <\mathrm e,
\end{equation}
from \eqref{eq:1r1}, we can deduce that
\begin{equation}\label{eq:r5}
\begin{aligned}
 |I_{3,1}| & \leq |b_0| \kappa \sum_{k=1}^{+\infty}\frac{k_s^{2k+2} h^{2k-2}}{2^{2k}k!(k+1)!}  \int_0^{\varphi_0} \frac{  \Gamma (10)}{(s \mathrm{e}^{\bsi \frac{\varphi}{2}}) ^ {10}} \mathrm{e}^{\bsi \varphi} \rmd {\varphi}+ \Oh(\mathrm{e}^{- \frac{s \sqrt{h}\delta_{\mathcal K}}{2}})\\
  + &  \kappa \sum_{m=1}^{+\infty}\sum_{k=0}^{+\infty}\frac{\left  | - k_p^{2k+m+2} a_m + \bsi k_s^{2k+m+2} b_m\right| h^{2k+m-1} }{2^{2k+m}k!(k+m+1)!}  \\
  & \int_0^{\varphi_0} \frac{  \Gamma (8)}{(s \mathrm{e}^{\bsi \frac{\varphi}{2}}) ^ {8}} \mathrm{e}^{\bsi (m+1) \varphi}  \rmd {\varphi} + \Oh(\mathrm{e}^{- \frac{s \sqrt{h}\delta_{\mathcal K}}{2}}).
\end{aligned}
\end{equation}
In view of \eqref{eq:r5}, we can claim that as $s\rightarrow+\infty$
 \begin{equation}\label{eq:1r5}
 |I_{3,1}|=\Oh(s^{-8}). 
 \end{equation}
 Similarly, under \eqref{eq:s h cond}, by virtue of \eqref{eq:1r1}, one has
 \begin{equation}\label{eq:2r5}
 -\frac{\bsi}{2}\kappa b_0 k_s^2\int_0^{\varphi_0} \left(\mathrm{e}^{\bsi \varphi}\int_0^h \mathrm{e}^{-s\sqrt{r}\mathbf{e}^{\bsi \frac{\varphi}{2}}} r^2 \rmd r   \right)\rmd {\varphi}=60 \kappa b_0 k_s^2 s^{-6}(\mathrm{e}^{-2 \bsi \varphi_0}-1)+ \Oh (\mathrm{e}^{- \frac{s \sqrt{h}\delta_{\mathcal K}}{2}}).
 \end{equation}


Substituting \eqref{eq:1r5}, \eqref{eq:2r5} into \eqref{eq:r6}, we can obtain that
\begin{equation}\label{eq:r10}
I_3= 60 \kappa b_0 k_s^2 s^{-6}(\mathrm{e}^{-2 \bsi \varphi_0}-1)+ \Oh (\mathrm{e}^{- \frac{s \sqrt{h}\delta_{\mathcal K}}{2}}) - \Oh(s^{-8}),
\end{equation}
as  $s\rightarrow+\infty$.

Substituting \eqref{eq:EFI1} and  \eqref{eq:r10}  into \eqref{eq:CGO2},  multiplying $s^6$ on both sides of the resulting \eqref{eq:CGO2}, by virtue of \eqref{eq:I2I4} and \eqref{eq:54 r1}, we can deduce that
\begin{equation}\label{eq:433 b0eq}
60 \kappa b_0 k_s^2 (\mathbf{e}^{-2 \bsi \varphi_0}-1)+90 \mu (1-\mathrm{e}^{-2 \bsi \varphi_0})k_s^4 b_0=0
\end{equation}
by letting $s\rightarrow +\infty$. From \eqref{eq:433 b0eq}, it yields that
$$
(\mathrm{e}^{-2 \bsi \varphi_0}-1) k_s^2 b_0 (2 \kappa - 3 \mu k_s^2)=0.
$$
Since $k_s=\sqrt{\frac{\kappa}{\mu}}$ and $\kappa\in \mathbb R_+$, it is readily known that $b_0=0$.

%
%

The remaining proof is the same as that of \cite[Theorem 4.2]{DLW} and we skip it.

The proof is complete.
\end{proof}

In Theorems \ref{thm:soft thm}-\ref{thm:forth&third exp} in what follows, we shall consider the microlocal singularities of $\mathbf u$ for two intersecting homogeneous lines of $\mathbf u$ introduced in Definition~\ref{def:1}.

\subsection{The case that $\Gamma_h^+$ is a soft-clamped line}
\begin{thm}\label{thm:soft thm}
Let $\mathbf{u}\in L^2(\Omega)^2$ be a solution to \eqref{eq:lame}. If there exist two intersecting lines $\Gamma_h^\pm$ of $\bmf{u}$ such that $\Gamma_h^-$ is a rigid line and $\Gamma_h^+$ is a soft-clamped line with the intersecting angle $\angle(\Gamma_h^+,\Gamma_h^-)=\varphi_0\neq \pi $, then $\bmf{u}\equiv \bmf{0}$.
\end{thm}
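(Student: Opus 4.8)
The plan is to work locally around the intersection point, which — using the rigid-motion invariance of $\mathcal{L}$ and the normalisation fixed in Section~\ref{sect:2} — we place at the origin, so that $\Gamma_h^-$ lies along the positive $x_1$-axis and $\Gamma_h^+$ along the ray of argument $\varphi_0\in(0,\pi)$. Expanding $\mathbf u$ in the radial wave series \eqref{eq:u} about $\mathbf 0$ with coefficients $a_m,b_m$, it suffices by Proposition~\ref{prop:1} to show that $a_m=b_m=0$ for all $m\in\mathbb N\cup\{0\}$. Both homogeneous conditions will be imported through single-line lemmas already available: Lemma~\ref{lem:third} for the soft-clamped line $\Gamma_h^+\in\mathcal G_\Omega^\kappa$ and Lemma~\ref{lem:rigid} for the rigid line $\Gamma_h^-\in\mathcal R_\Omega^\kappa$.

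First I would harvest the low-order algebraic relations. From Lemma~\ref{lem:third} applied to $\Gamma_h^+$ we obtain, in particular, \eqref{eq:a1b1}, namely $b_0=a_1=b_1=0$. From Lemma~\ref{lem:rigid} applied to $\Gamma_h^-$ we obtain \eqref{eq:1a}--\eqref{eq:2a}; the second identity of \eqref{eq:2a} reads $k_p^2a_0-\bsi k_s^2b_0=0$, and since $b_0=0$ and $k_p>0$ this forces $a_0=0$. (It is worth noting that the pair \eqref{eq:1a} already has nonzero coefficient determinant $-\bsi k_pk_s(k_p^2+k_s^2)$, so $a_1=b_1=0$ actually follows from the rigid condition alone; the only genuinely new input delivered by the soft-clamped condition is $b_0=0$.) At this stage we have established $a_0=b_0=a_1=b_1=0$.

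Now the hypothesis \eqref{eq:lem cond} of the second half of Lemma~\ref{lem:rigid} is fulfilled with $m=2$, and that lemma propagates the vanishing to all orders: $a_\ell=b_\ell=0$ for every $\ell\in\mathbb N\cup\{0\}$. Proposition~\ref{prop:1} then yields $\mathbf u\equiv\mathbf 0$ in $\Omega$, which finishes the argument.

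At the level of this theorem I do not anticipate a serious obstacle: the proof is a short combination of the two single-line results, and the substantive analytic work has already been carried out in establishing Lemma~\ref{lem:third} (the delicate Bessel-function coefficient matching for the soft-clamped trace $\mathcal B_3$) and Lemma~\ref{lem:rigid}. The only points requiring care are, first, that the relations coming from $\Gamma_h^-$ carry no $\varphi_0$-dependence because its unit normal is $(0,\pm1)^\top$, so the cancellation $k_p^2a_0=0$ is unconditional once $b_0=0$ is known; and, second, the standing non-degeneracy hypothesis $\varphi_0\neq\pi$, which ensures that $\Gamma_h^\pm$ genuinely cross so that both lemmas refer to one and the same expansion \eqref{eq:u} at the common point. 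In the degenerate collinear case $\varphi_0=\pi$ the statement instead reduces to having two distinct homogeneous conditions on a single line and is covered by the classical Holmgren principle.
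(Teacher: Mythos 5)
Your argument is correct and follows essentially the same route as the paper: extract $b_0=a_1=b_1=0$ from the soft-clamped relations of Lemma~\ref{lem:third}, use the rigid-line identity $k_p^2a_0-\bsi k_s^2b_0=0$ from \eqref{eq:2a} to conclude $a_0=0$, and then invoke the induction step of Lemma~\ref{lem:rigid} (with $m=2$) together with Proposition~\ref{prop:1}. The only cosmetic difference is that the paper re-derives $a_1=b_1=0$ and $b_0=0$ from the intermediate relations \eqref{eq:1}--\eqref{eq:2} instead of quoting \eqref{eq:a1b1} directly, which is the same content.
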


\begin{proof}
Since $\Gamma_h^- \in {\mathcal R}_\Omega^{\kappa}$ and $\Gamma_h^+ \in {\mathcal G}_\Omega^{\kappa}$, according to Lemmas \ref{lem:third} and  \ref{lem:rigid}, and combining the first equation of \eqref{eq:1} with the second equation of \eqref{eq:2}, we have
\begin{equation}
\notag
\label{eq:7a}
 \bsi k_p a_1 - k_s b_1=0,\quad
 \bsi k_p^3 a_1- k_s^3 b_1=0,
\end{equation}
which can be used to prove that $a_1=b_1=0$ by noting
$$
\left| \begin{matrix}
	\bsi k_p & - k_s\\
	\bsi k_p^3 & - k_s^3
\end{matrix} \right|=\bsi k_p k_s (k_p^2 - k_s^2) \neq 0.
$$
Similarly, combining the second equation of \eqref{eq:1} with the first equation of \eqref{eq:2}, it yields that
\begin{equation} \notag
\label{eq:9a}
 \bsi k_p^2 a_2 - k_s^2 b_2=0,\quad
 2 k_s^2 b_0+ (\bsi k_p^2 a_2 - k_s^2 b_2)\mathrm{e}^{2 \bsi \varphi_0}=0,
  \end{equation}
which further gives $b_0=0$. Substituting $b_0=0$ into  the second equation of \eqref{eq:2a}, it is easy to see $a_0=0$.

Since we have proved that $a_\ell=b_\ell=0$ for $\ell=0,1$, by virtue of Lemma \ref{lem:rigid}, we have $a_\ell=b_\ell=0$,  $\forall \ell \in \mathbb N \cup \{0\}.$ According to Proposition \ref{prop:1}, one has $\mathbf u \equiv  \mathbf 0$ in $\Omega$.
\end{proof}

\begin{thm}\label{thm:Tu&third exp}
Let $\mathbf{u}\in L^2(\Omega)^2$ be a solution to \eqref{eq:lame}. If there exist two intersecting lines $\Gamma_h^\pm$ of $\bmf{u}$ such that $\Gamma_h^-$ is a traction-free line and $\Gamma_h^+$ is a soft-clamped line with the intersecting angle $\angle(\Gamma_h^+,\Gamma_h^-)=\varphi_0\neq \pi $, then $\bmf{u}\equiv \bmf{0}$.
\end{thm}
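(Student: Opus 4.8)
The plan is to follow the purely algebraic strategy already used in the proof of Theorem~\ref{thm:soft thm}, but now pairing the soft-clamped relations from Lemma~\ref{lem:third} against the traction-free relations from Lemma~\ref{lem:tranction free}. As in the preceding arguments, the geometric normalization of Section~\ref{sect:2} lets us take the intersection point of $\Gamma_h^+$ and $\Gamma_h^-$ to be the origin $\mathbf{0}$, so that $\mathbf{u}$ admits the single radial wave expansion \eqref{eq:u} with coefficients $a_\ell,b_\ell$, and \emph{both} sets of relations are stated in terms of these same coefficients. The hypothesis $\varphi_0\neq\pi$ only serves to exclude the degenerate collinear configuration, in which case $\Gamma_h^+$ and $\Gamma_h^-$ coincide as a single line carrying two distinct homogeneous conditions, a situation already covered by the classical Holmgren principle.

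First I would invoke Lemma~\ref{lem:third}: since $\Gamma_h^+\in{\mathcal G}_\Omega^{\kappa}$ is a soft-clamped line, the conclusion \eqref{eq:a1b1} gives $b_0=a_1=b_1=0$ directly (together with the auxiliary relations \eqref{eq:1}--\eqref{eq:2}). Next, since $\Gamma_h^-\in{\mathcal T}_\Omega^{\kappa}$ is a traction-free line, Lemma~\ref{lem:tranction free} yields $a_0=0$ from \eqref{eq:3a} (and, redundantly, $a_1=0$ from \eqref{eq:4a}). Combining these two inputs we obtain $a_\ell=b_\ell=0$ for $\ell=0,1$. This is precisely the hypothesis of the ``furthermore'' part of Lemma~\ref{lem:tranction free}, whence \eqref{eq:lem33} gives $a_\ell=b_\ell=0$ for every $\ell\in\mathbb{N}\cup\{0\}$. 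Proposition~\ref{prop:1} then forces $\mathbf{u}\equiv\mathbf{0}$ in $\Omega$, completing the argument.

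The only point requiring genuine care is the legitimacy of applying the two lemmas simultaneously, i.e.\ that the traction-free relations on $\Gamma_h^-$ and the soft-clamped relations on $\Gamma_h^+$ refer to literally the same Fourier coefficients. This is guaranteed by the analyticity of $\mathbf{u}$ and the fact that both $\Gamma_h^\pm$ pass through $\mathbf{0}$, so that \eqref{eq:u} is a single expansion valid on a small disk centered at the common vertex. Beyond that bookkeeping there is no real obstacle: the proof is in fact shorter than that of Theorem~\ref{thm:soft thm}, since Lemma~\ref{lem:third} already packages $b_0=a_1=b_1=0$ and Lemma~\ref{lem:tranction free} already packages $a_0=0$, so no further manipulation of the low-order Bessel coefficients is needed before the traction-free induction closes the argument.
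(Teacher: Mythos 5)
Your argument is correct and follows essentially the same route as the paper's proof: combine the low-order coefficient identities from Lemma~\ref{lem:third} (soft-clamped on $\Gamma_h^+$) and Lemma~\ref{lem:tranction free} (traction-free on $\Gamma_h^-$) to get $a_\ell=b_\ell=0$ for $\ell=0,1$, then invoke the ``furthermore'' induction in Lemma~\ref{lem:tranction free} and Proposition~\ref{prop:1}. The only cosmetic difference is that you read $a_1=b_1=0$ directly off the conclusion \eqref{eq:a1b1} of Lemma~\ref{lem:third}, while the paper re-derives $b_1=0$ by pairing the first equation of \eqref{eq:1} with $a_1=0$ from \eqref{eq:4a}; both are valid.
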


\begin{proof}
Since $\Gamma_h^+ \in {\mathcal G}_\Omega^{\kappa}$, from Lemma \ref{lem:third}, we have $b_0=0$.  Since $\Gamma_h^- \in {\mathcal T}_\Omega^{\kappa}$ and $\Gamma_h^+ \in {\mathcal G}_\Omega^{\kappa}$, from Lemmas \ref{lem:third} and  \ref{lem:tranction free}, one can show that $b_1=0$ by using the first equation of \eqref{eq:1} and the second equation of \eqref{eq:4a}. Therefore, by virtue of the second equations of \eqref{eq:3a} and \eqref{eq:4a}, we have shown that $a_\ell=b_\ell=0$ for $\ell=0,1$.  According to Lemma \ref{lem:tranction free} and Proposition \ref{prop:1}, the conclusion of the theorem can be readily obtained.
\end{proof}

\begin{thm}\label{thm:Tu+&third exp}
Let $\mathbf{u}\in L^2(\Omega)^2$ be a solution to \eqref{eq:lame}. If there exist two intersecting lines $\Gamma_h^\pm$ of $\bmf{u}$ such that $\Gamma_h^-$ is an impedance line and $\Gamma_h^+$ is a soft-clamped line with the intersecting angle $\angle(\Gamma_h^+,\Gamma_h^-)=\varphi_0\neq \pi $, then $\bmf{u}\equiv \bmf{0}$.
\end{thm}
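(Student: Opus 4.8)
The plan is to mimic the structure of the proofs of Theorems \ref{thm:soft thm} and \ref{thm:Tu&third exp}: first exploit the soft-clamped condition on $\Gamma_h^+$ via Lemma \ref{lem:third} to obtain several vanishing low-order Fourier coefficients, then feed these into the impedance relations on $\Gamma_h^-$ from Lemma \ref{lem:impedance line} to force $a_0=a_1=b_0=b_1=0$, and finally invoke the induction clause of Lemma \ref{lem:impedance line} together with Proposition \ref{prop:1} to conclude $\mathbf{u}\equiv\mathbf{0}$. Concretely, since $\Gamma_h^+\in\mathcal G_\Omega^\kappa$, Lemma \ref{lem:third} gives $b_0=a_1=b_1=0$ outright (equation \eqref{eq:a1b1}), as well as the relations \eqref{eq:1} and \eqref{eq:2}.

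Next I would bring in Lemma \ref{lem:impedance line} for the impedance line $\Gamma_h^-$ with parameter $\boldsymbol\eta_2$ satisfying \eqref{eq:eta2 ex}. The second equations of \eqref{eq:5a} and \eqref{eq:6a} already assert $a_0=0$ and $a_1=0$, so combined with $b_0=a_1=b_1=0$ from Lemma \ref{lem:third} we immediately have $a_\ell=b_\ell=0$ for $\ell=0,1$. At that point the induction hypothesis \eqref{eq:a1bl=0} of Lemma \ref{lem:impedance line} applies (with $m=2$), yielding $a_\ell=b_\ell=0$ for all $\ell\in\mathbb N\cup\{0\}$, and Proposition \ref{prop:1} then gives $\mathbf{u}\equiv\mathbf{0}$ in $\Omega$. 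In fact, because $a_0=a_1=0$ drops out directly from the impedance relations and $b_0=b_1=0$ from the soft-clamped relations, no delicate cross-cancellation or determinant computation is needed here, in contrast to the generalized-impedance cases; this is essentially the cleanest of the two-line arguments.

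One subtlety to check carefully is the $r^0$-order matching on $\Gamma_h^+$: the soft-clamped expansion \eqref{eq:third2} yields $\bsi k_p a_1-k_sb_1=0$ and $\bsi k_p^2a_2-k_s^2b_2=0$, and one must make sure these are consistent with the impedance relations \eqref{eq:5a}, which involve the combination $-\bsi k_p^2\mu a_2+k_s^2\mu b_2$; substituting $\bsi k_p^2a_2=k_s^2b_2$ makes that term vanish, leaving $\eta_2(k_pa_1+\bsi k_sb_1)=0$, automatically satisfied since $a_1=b_1=0$. So the relations are compatible and no new constraint on $\eta_2$ (such as $\eta_2\neq\mathrm i$) is generated — consistent with the absence of any side condition in the statement of the theorem. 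The main obstacle, such as it is, is purely bookkeeping: making sure the index shifts $J_{-m}(t)=(-1)^mJ_m(t)$ from \eqref{eq:Jmfact} are applied correctly when extracting the lowest-order coefficients, and confirming that Lemma \ref{lem:impedance line}'s induction step is genuinely triggered by knowing only $a_0=b_0=a_1=b_1=0$ (which it is, since its hypothesis \eqref{eq:a1bl=0} requires exactly $a_\ell=b_\ell=0$ for $\ell=0,1$). Once that is verified, the proof is a short assembly of already-established lemmas, so I would keep the write-up brief, along the lines of the proof of Theorem \ref{thm:Tu&third exp}.
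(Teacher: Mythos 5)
Your proposal is correct and follows essentially the same route as the paper: Lemma \ref{lem:third} supplies $b_0=a_1=b_1=0$ for the soft-clamped line, the second equations of \eqref{eq:5a} and \eqref{eq:6a} supply $a_0=a_1=0$ for the impedance line, and the induction clause of Lemma \ref{lem:impedance line} together with Proposition \ref{prop:1} finishes the argument. The paper's write-up reaches $b_1=0$ slightly more indirectly (by substituting $a_1=0$ from \eqref{eq:6a} into the first equation of \eqref{eq:1}), but the ingredients and conclusion are identical.
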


\begin{proof}
Due to $\Gamma_h^- \in {\mathcal I}_\Omega^{\kappa}$ and $\Gamma_h^+ \in {\mathcal G}_\Omega^{\kappa}$, from Lemmas \ref{lem:third} and \ref{lem:impedance line}, we have $b_1=0$ by substituting the second equation of \eqref{eq:6a} into the first equation of \eqref{eq:1}. Similarly, since  $\Gamma_h^+ \in {\mathcal G}_\Omega^{\kappa}$, from Lemma \ref{lem:third}, we have $b_0=0$. By virtue of the second equations of \eqref{eq:5a} and \eqref{eq:6a}, we have $a_\ell=b_\ell$ for $\ell=0,1$. According to Lemma \ref{lem:impedance line} and Proposition \ref{prop:1},  the conclusion of the theorem can be readily obtained.
\end{proof}

%

\subsection{The case that $\Gamma_h^+$ is a simply-supported line.}

\begin{thm}\label{thm:u&forth exp}
Let $\mathbf{u}\in L^2(\Omega)^2$ be a solution to \eqref{eq:lame}. If there exist two intersecting lines $\Gamma_h^\pm$ of $\bmf{u}$ such that $\Gamma_h^-$ is a rigid line and $\Gamma_h^+$ is a simply-supported line with the intersecting angle $\angle(\Gamma_h^+,\Gamma_h^-)=\varphi_0\neq \pi $
 , then $\bmf{u}\equiv \bmf{0}$.
\end{thm}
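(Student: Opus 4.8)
The plan is to prove the theorem by a short, purely algebraic argument at the level of the Fourier coefficients $a_m,b_m$ in the radial wave expansion \eqref{eq:u} of $\mathbf{u}$ around the intersection point, which without loss of generality we place at the origin. The two relevant structural inputs are already available: since $\Gamma_h^+$ is a simply-supported line, Lemma~\ref{lem:forth} applies, and in particular \eqref{eq:a0 forth} gives $a_0=0$; since $\Gamma_h^-$ is a rigid line, Lemma~\ref{lem:rigid} supplies the identities \eqref{eq:1a}--\eqref{eq:2a} together with the induction step that propagates the vanishing of $a_\ell,b_\ell$ for $\ell=0,\dots,m-1$ (with $m\geq2$) to all orders $\ell\in\mathbb{N}\cup\{0\}$.

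Concretely, I would proceed as follows. First, Lemma~\ref{lem:forth} yields $a_0=0$. Feeding this into the second identity of \eqref{eq:2a}, namely $k_p^2a_0-\bsi k_s^2b_0=0$, gives $b_0=0$ (using $k_s>0$ from \eqref{eq:kpks}). Next, the pair \eqref{eq:1a}, i.e.\ $k_pa_1+\bsi k_sb_1=0$ and $k_p^3a_1-\bsi k_s^3b_1=0$, is a homogeneous $2\times2$ linear system in $(a_1,b_1)$ with determinant $-\bsi k_pk_s(k_p^2+k_s^2)$, which is nonzero by \eqref{eq:kpks} and \eqref{eq:convex}; hence $a_1=b_1=0$. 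At this point $a_\ell=b_\ell=0$ for $\ell=0,1$, which is exactly the hypothesis of the induction step in Lemma~\ref{lem:rigid} with $m=2$, so $a_\ell=b_\ell=0$ for all $\ell$. Proposition~\ref{prop:1} then gives $\mathbf{u}\equiv\mathbf{0}$ in $\Omega$.

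Unlike the two-traction-free case treated in Theorem~\ref{thm:Tu thm}, here no CGO solution or delicate integral identity is needed, since the rigid and simply-supported conditions jointly force enough low-order coefficients to vanish; this is also why no extra assumption such as $\mathbf{u}(\mathbf{0})=\mathbf{0}$ is required. The only point deserving a moment's care is the bookkeeping: one must check that the simply-supported relations \eqref{eq:16}--\eqref{eq:17} contribute the genuinely new constraint $a_0=0$ rather than merely reproducing the rigid relations, and that the factor $\mathrm{e}^{2\bsi\varphi_0}$ appearing there never degenerates, which is automatic for $\varphi_0\in(0,\pi)$, consistent with the standing non-degeneracy assumption $\varphi_0\neq\pi$. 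After that, the induction built into Lemma~\ref{lem:rigid} does all the remaining work.
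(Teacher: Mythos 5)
Your proof is correct and follows essentially the same route as the paper: extract $a_0=0$ from Lemma~\ref{lem:forth}, feed it into the rigid-line identity \eqref{eq:2a} to get $b_0=0$, kill $a_1,b_1$, and then invoke the induction built into Lemma~\ref{lem:rigid} together with Proposition~\ref{prop:1}. The only (harmless, and in fact slightly cleaner) deviation is that you obtain $a_1=b_1=0$ directly from the rigid-line system \eqref{eq:1a} via its nonvanishing determinant $-\bsi k_p k_s(k_p^2+k_s^2)$, whereas the paper derives $a_1=0$ by combining the simply-supported relations \eqref{eq:17} and \eqref{eq:25} and then reads off $b_1=0$ from \eqref{eq:16}.
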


\begin{proof}
Since $\Gamma_h^+$ is a simply-supported line, from  Lemma \ref{lem:forth}, combining the first equation of \eqref{eq:16} with the first equation of \eqref{eq:17}, we know that
\begin{equation}\label{eq:16a}
a_0=0.
\end{equation}
Since $\Gamma_h^+$ is a simply-supported line, from Lemma \ref{lem:rigid}, substituting   \eqref{eq:16a} into the second equation of \eqref{eq:2a}, we obtain $
 b_0=0.
$

Again using the fact that $\Gamma_h^+ \in \mathcal F^\kappa_\Omega$ and $\Gamma_h^+ \in \mathcal R^\kappa_\Omega$, combining the second equation of \eqref{eq:17} in Lemma \ref{lem:rigid} with the first equation of \eqref{eq:25} in Lemma \ref{lem:forth},
 we obtain that
\begin{equation}\label{eq:18a}
 a_1=0.
\end{equation}
Substituting    \eqref{eq:18a} into the second equation of \eqref{eq:16}, we obtain $b_1=0$.

By now we have shown $a_\ell=b_\ell=0$ for $\ell=0,1$. According to Lemma \ref{lem:rigid} and Proposition \ref{prop:1}, the conclusion of the theorem can be readily obtained.
\end{proof}

\begin{thm}\label{thm:Tu&forth exp}
Let $\mathbf{u}\in L^2(\Omega)^2$ be a solution to \eqref{eq:lame}.  If there exist two intersecting lines $\Gamma_h^\pm$ of $\bmf{u}$ such that $\Gamma_h^-$ is a traction-free line and $\Gamma_h^+$ is a simply-supported line with the intersecting angle $\angle(\Gamma_h^+,\Gamma_h^-)=\varphi_0\neq \pi $, then $\bmf{u}\equiv \bmf{0}$.
\end{thm}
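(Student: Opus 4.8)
The plan is to mimic the strategy used in the preceding theorems of this subsection: reduce the problem to showing that the first few Fourier coefficients $a_\ell, b_\ell$ vanish, and then invoke the induction lemmas (Lemma~\ref{lem:forth} for the simply-supported line $\Gamma_h^+$, Lemma~\ref{lem:tranction free} for the traction-free line $\Gamma_h^-$) together with Proposition~\ref{prop:1}. Concretely, since $\Gamma_h^+\in\mathcal F_\Omega^\kappa$, Lemma~\ref{lem:forth} gives us the relations \eqref{eq:16} and \eqref{eq:17} among $a_0,a_1,a_2,b_1,b_2,a_3,b_3$, as well as the conclusion $a_0=0$ in \eqref{eq:a0 forth}. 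Since $\Gamma_h^-\in\mathcal T_\Omega^\kappa$, Lemma~\ref{lem:tranction free} supplies \eqref{eq:3a}--\eqref{eq:4a}, which in particular already assert $a_0=0$ and $a_1=0$. So the two conditions $a_0=a_1=0$ come essentially for free from the traction-free side.

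First I would combine these to extract $b_1=0$. Using $a_1=0$ in the second equation of \eqref{eq:16}, namely $k_p a_1+\bsi k_s b_1=0$, immediately yields $b_1=0$. Next I would extract $b_0$: here the traction-free conditions do not directly give a point-value on $b_0$ (unlike the rigid case, where \eqref{eq:2a} closes the system), so I expect to need one more relation. The natural candidates are: (i) compare the coefficient of a higher power of $r$ in the simply-supported expansion \eqref{eq:forth} (with $a_0=a_1=b_1=0$ already substituted, the lowest surviving order drops and one gets fresh equations relating $a_2,b_2,a_3,b_3,\dots$), or (ii) if that is not enough, use the CGO/integral-identity machinery exactly as in Theorem~\ref{thm:Tu thm} to force $b_0=0$ directly. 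Given the pattern of the other proofs in this subsection (e.g. Theorem~\ref{thm:Tu&third exp}, where $b_0$ and $b_1$ were pinned down algebraically), I anticipate that the pure coefficient-comparison route suffices: with $a_0=a_1=b_1=0$, the $r^2$-coefficient equations of \eqref{eq:forth} should let one solve for $b_2$ (hence $a_2$ via \eqref{eq:16}), and then the first equation of \eqref{eq:17} together with one of the rigid/traction-free relations closes the loop on $b_0$.

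Once $a_\ell=b_\ell=0$ for $\ell=0,1$ (and ideally $\ell=2$), I would invoke the induction hypothesis verification in Lemma~\ref{lem:tranction free}: it states precisely that $a_\ell=b_\ell=0$ for $\ell=0,1$ implies $a_\ell=b_\ell=0$ for all $\ell\in\mathbb N\cup\{0\}$ (see \eqref{eq:lem33}). Then Proposition~\ref{prop:1} gives $\mathbf u\equiv\mathbf 0$ in $\Omega$, completing the proof. The main obstacle I foresee is the bookkeeping in step two: making sure that after substituting the already-known vanishing coefficients into the series \eqref{eq:forth}, the next-order comparison genuinely yields an \emph{independent} equation that pins down $b_0$ rather than a relation that is automatically satisfied. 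If the algebra degenerates (for instance if the determinant of the relevant $2\times 2$ system vanishes for some exceptional $\varphi_0$), the fallback is the CGO argument of Theorem~\ref{thm:Tu thm}, which is robust in $\varphi_0$ and only requires $\varphi_0\neq\pi$; I would expect, however, that no angle restriction is needed here, consistent with the statement of the theorem.
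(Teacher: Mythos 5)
Your overall scaffolding is right and matches the paper: $a_0=a_1=0$ come from the traction-free side (the second equations of \eqref{eq:3a} and \eqref{eq:4a}), $b_1=0$ then follows from the second equation of \eqref{eq:16}, and once $a_\ell=b_\ell=0$ for $\ell=0,1$ the induction in Lemma~\ref{lem:tranction free} plus Proposition~\ref{prop:1} finishes. The genuine gap is exactly where you flagged uncertainty: the coefficient $b_0$. Your primary plan --- extract $b_0$ from higher-order comparisons of the simply-supported expansion \eqref{eq:forth}, or ``close the loop'' via \eqref{eq:17} --- cannot work, because $b_0$ never enters $\mathcal B_4(\mathbf u)$ at any order: the $m=0$ shear contributions in \eqref{eq:forth2} carry the factors $J_{-1}(k_sr)+J_1(k_sr)$ and $J_{-2}(k_sr)-J_{2}(k_sr)$, both identically zero, and indeed neither \eqref{eq:16}, \eqref{eq:17}, \eqref{eq:25} nor \eqref{eq:26} contains $b_0$. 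On the traction-free side the situation is no better at low order (this is precisely why Theorem~\ref{thm:Tu thm} needs the CGO machinery to kill $b_0$ for two traction-free lines), and your CGO fallback is not a drop-in here either: the boundary integral $I_1^+$ in \eqref{eq:CGO2} is computed there using the traction-free condition on $\Gamma_h^+$, which you do not have when $\Gamma_h^+$ is simply-supported.

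What the paper actually uses is the pointwise relation \eqref{eq:gradient3}, $\bsi k_p^2 a_0+k_s^2 b_0=0$ (Lemma~\ref{lem:34}), into which $a_0=0$ is substituted to get $b_0=0$. The mechanism behind it is that the simply-supported condition forces $\nu^\top\nabla\mathbf u\,\nu=0$ along $\Gamma_h^+$ (differentiate $\boldsymbol\tau\cdot\mathbf u=0$ tangentially to get $\boldsymbol\tau^\top\nabla\mathbf u\,\boldsymbol\tau=0$, then combine with $\nu\cdot T_\nu\mathbf u=0$ and $\nabla\cdot\mathbf u=\nu^\top\nabla\mathbf u\,\nu+\boldsymbol\tau^\top\nabla\mathbf u\,\boldsymbol\tau$ using \eqref{eq:convex}), so the gradient conditions of Lemma~\ref{lem:condition} become available at the corner and inject $b_0$ into the system in a way that no Fourier comparison of $\mathcal B_1$ or $\mathcal B_4$ on the two lines does. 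Without this ingredient your argument stalls at $b_0$, so as written the proposal is incomplete.
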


\begin{proof}

Since $\Gamma_h^- \in {\mathcal T}_\Omega^{\kappa}$ and $\Gamma_h^+ \in {\mathcal F}_\Omega^{\kappa}$, from Lemma \ref{lem:forth} and \ref{lem:tranction free}, we have $b_1=0$ by substituting the second equation of \eqref{eq:4a} into the second equation of \eqref{eq:16}. Similarly, since  $\Gamma_h^- \in {\mathcal T}_\Omega^{\kappa}$, from Lemma \ref{lem:34} and \ref{lem:tranction free}, we have $b_0=0$ by substituting the second equation of \eqref{eq:3a} into the \eqref{eq:gradient3}. By virtue of the second equations of \eqref{eq:3a} and \eqref{eq:4a}, we have $a_\ell=b_\ell$ for $\ell=0,1$. According to Lemma \ref{lem:tranction free} and Proposition \ref{prop:1}, the conclusion of the theorem can be readily obtained.
\end{proof}

\begin{thm}\label{thm:Tu+&forth exp}
Let $\mathbf{u}\in L^2(\Omega)^2$ be a solution to \eqref{eq:lame}. If there exist two intersecting lines $\Gamma_h^\pm$ of $\bmf{u}$ such that $\Gamma_h^-$ is a impedance line and $\Gamma_h^+$ is a simply-supported line with the intersecting angle $\angle(\Gamma_h^+,\Gamma_h^-)=\varphi_0\neq \pi $, then $\bmf{u}\equiv \bmf{0}$.
\end{thm}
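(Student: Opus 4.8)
The plan is to follow the general strategy of \cite{DLW} and of the preceding theorems in this subsection: show that every Fourier coefficient $a_m,b_m$ in the radial wave expansion \eqref{eq:u} of $\bmf{u}$ about the intersection point $\bmf{0}$ vanishes, and then invoke Proposition~\ref{prop:1} to conclude $\bmf{u}\equiv\bmf{0}$ in $\Omega$. In view of the induction clause of Lemma~\ref{lem:impedance line}, it suffices to establish $a_\ell=b_\ell=0$ for $\ell=0,1$.

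The routine half concerns $a_0$, $a_1$ and $b_1$. Since $\Gamma_h^-$ is an impedance line carrying $\boldsymbol{\eta}_2\in\mathcal{A}(\Gamma_h^-)$, Lemma~\ref{lem:impedance line} yields at once $a_0=0$ from \eqref{eq:5a} and $a_1=0$ from \eqref{eq:6a}. Since $\Gamma_h^+$ is simply-supported, Lemma~\ref{lem:forth} applies: the second equation of \eqref{eq:16} reads $k_pa_1+\bsi k_sb_1=0$, which together with $a_1=0$ forces $b_1=0$. Lemma~\ref{lem:forth} moreover records $a_0=0$ (equation \eqref{eq:a0 forth}, consistent with the above), the relation $k_p^2a_2+\bsi k_s^2b_2=0$ obtained by inserting $a_0=0$ into \eqref{eq:17}, and, after substituting $a_1=b_1=0$ into the second equation of \eqref{eq:17}, the relation $k_p^3a_3+\bsi k_s^3b_3=0$.

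The delicate point is $b_0=0$. It cannot be read off from $\Gamma_h^+$, because the simply-supported trace $\mathcal{B}_4$ is blind to the purely azimuthal $m=0$ shear mode: in \eqref{eq:forth2} the coefficient $b_0$ multiplies $J_{-1}(k_sr)+J_1(k_sr)\equiv 0$ and $J_{-2}(k_sr)-J_2(k_sr)\equiv 0$, hence drops out identically. So $b_0$ must be supplied by the impedance condition on $\Gamma_h^-$. The plan is to return to the identity $T_\nu\bmf{u}+\boldsymbol{\eta}_2\bmf{u}=\bmf{0}$ on $\Gamma_h^-$, substitute the already-established vanishings $a_0=a_1=b_1=0$ together with $k_p^2a_2+\bsi k_s^2b_2=0$ and $k_p^3a_3+\bsi k_s^3b_3=0$, and compare Fourier coefficients via Lemma~\ref{lem:co exp} at the order at which $b_0$ first appears with a non-vanishing coefficient. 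After these substitutions the contributions of $a_2,b_2,a_3,b_3$ collapse, leaving a relation in which $b_0$ occurs with a coefficient proportional to $\eta_2k_s^2$; since $\eta_2\neq 0$ by \eqref{eq:eta2 ex}, this gives $b_0=0$, and back-substitution into the residual parts of \eqref{eq:5a} and of \eqref{eq:16}--\eqref{eq:17} then forces $a_2=b_2=0$ as well. An alternative, and perhaps cleaner, route to $b_0=0$ is the complex geometrical optics argument used in the proof of Theorem~\ref{thm:Tu thm}, with the two boundary integrals adapted to the present impedance and simply-supported traces.

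With $a_\ell=b_\ell=0$ for $\ell=0,1$ in hand, the induction clause of Lemma~\ref{lem:impedance line} propagates the vanishing to all $\ell\in\mathbb{N}\cup\{0\}$, and Proposition~\ref{prop:1} gives $\bmf{u}\equiv\bmf{0}$. I expect the main obstacle to be exactly the extraction of $b_0=0$: because the simply-supported line carries no information about $b_0$, one has to match a suitable order of the Bessel expansion of the impedance identity and exploit the simply-supported relations---together with the strong convexity \eqref{eq:convex} and $\eta_2\neq 0$---to decouple $b_0$ from $a_2,b_2,a_3,b_3$; everything else is the routine recursion already packaged in Lemma~\ref{lem:impedance line}.
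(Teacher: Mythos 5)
Your proposal is correct and, for three of the four needed facts, coincides with the paper's proof: $a_0=0$ and $a_1=0$ are read off from the second equations of \eqref{eq:5a} and \eqref{eq:6a}, and $b_1=0$ follows by inserting $a_1=0$ into the second equation of \eqref{eq:16}; the induction clause of Lemma \ref{lem:impedance line} and Proposition \ref{prop:1} then finish the argument exactly as you say. The one step where you genuinely diverge is $b_0$. The paper obtains $b_0=0$ by combining $a_0=0$ with the relation $\bsi k_p^2a_0+k_s^2b_0=0$ of Lemma \ref{lem:34}, whereas you return to the identity $T_\nu\bmf{u}+\boldsymbol{\eta}_2\bmf{u}=\bmf{0}$ on $\Gamma_h^-$ and match the coefficient of $r^1$. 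Your route works, and is in fact simpler than you anticipate: in the $\mathbf{e}_2$-component of that identity the order-$r^1$ coefficient is $\tfrac{\bsi k_p^3(\lambda+\mu)}{4}a_1-\tfrac{\eta_2k_p^2}{4}a_0+\tfrac{\bsi\eta_2k_s^2}{4}b_0$, in which $a_2,b_2,a_3,b_3$ (and $\eta_{2,1}$) do not appear at all, so no ``collapse'' via the simply-supported relations $k_p^2a_2+\bsi k_s^2b_2=0$ and $k_p^3a_3+\bsi k_s^3b_3=0$ is needed; with $a_0=a_1=0$ and $\eta_2\neq 0$ one gets $b_0=0$ directly. Your correct observation that the $\mathcal{B}_4$-trace in \eqref{eq:forth2} is blind to the $m=0$ shear mode is precisely why some input from $\Gamma_h^-$ is indispensable here, and your way of supplying it has the advantage of not invoking Lemma \ref{lem:34}, whose stated hypothesis \eqref{eq:sg cond} (the two point-gradient conditions at the origin) is not among the assumptions of this theorem — so your version is, if anything, the more self-contained of the two. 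Two minor remarks: the closing claim that back-substitution also yields $a_2=b_2=0$ is unnecessary (the induction only needs $\ell=0,1$) and does not actually follow from the equations you cite, since the residual parts of \eqref{eq:5a} and \eqref{eq:17} give the same single relation $\bsi k_p^2a_2=k_s^2b_2$; and the CGO detour is not needed — the Fourier-coefficient matching suffices.
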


\begin{proof}
Since $\Gamma_h^- \in {\mathcal I}_\Omega^{\kappa}$ and $\Gamma_h^+ \in {\mathcal F}_\Omega^{\kappa}$, from Lemma \ref{lem:forth} and \ref{lem:impedance line}, we have $b_1=0$ by substituting the second equation of \eqref{eq:6a} into the second equation of \eqref{eq:16}. Similarly, since  $\Gamma_h^- \in {\mathcal I}_\Omega^{\kappa}$, from Lemma \ref{lem:34} and \ref{lem:impedance line}, we have $b_0=0$ by substituting the second equation of \eqref{eq:5a} into the \eqref{eq:gradient3}. By virtue of the second equations of \eqref{eq:5a} and \eqref{eq:6a}, we have $a_\ell=b_\ell$ for $\ell=0,1$. According to Lemma \ref{lem:impedance line} and Proposition \ref{prop:1}, the conclusion of the theorem can be readily obtained.
\end{proof}

The following lemma considers the case that $\Gamma_h^-$ is a soft-clamped line of $\mathbf u$, which shall be needed in the proofs of Theorems 
\ref{thm:forth&third exp} and \ref{thm:GI&third exp} in what follows. 

\begin{lem}\label{lem:third1}
Let $\mathbf u$ be a Lam\' e eigenfunction to \eqref{eq:lame}, where $\mathbf u$ has the Fourier expansion \eqref{eq:u} at the origin.  Consider  $\Gamma_h^-$ defined in \eqref{eq:gamma_pm} such that $\Gamma_h^-\in {\mathcal G}_\Omega^{\kappa}$. Then we have the following equations:
\begin{subequations}
	\begin{align}
		&\bsi k_p a_1 -  k_s b_1=0,\quad
 \bsi k_p^2 a_2 - k_s^2 b_2 = 0,\label{eq:15a}\\
&		\label{eq:15b}
  2 k_s ^2 b_0+  (\bsi k_p^2 a_2- k_s^2  b_2)=0,\quad
   \bsi k_p ^3 a_1 - k_s^3 b_1 - (\bsi k_p ^3 a_3 - k_s^3 b_3)=0,\\
   \label{eq:15c}
   &\bsi k_p^3 a_1 + 3 k_s^3 b_1 + ( \bsi k_p^3 a_3 - k_s^3 b_3) =0,\quad
    2 \bsi k_p^4 a_2 - 2 k_s^4 b_2 - ( \bsi k_p^4 a_4 - k_s^4 b_4) =0,
	\end{align}
\end{subequations}
and
\begin{equation}\label{eq:15d}
 \bigg\{  \begin{array}{l} -2\bsi k_p^4 a_2 + 4 k_s^4 b_2 + (\bsi k_p^4 a_4 - k_s^4 b_4) =0,\\
 3 \bsi k_p^5 a_3 -3 k_s^5 b_3 -  ( \bsi k_p^5 a_5 - k_s^5 b_5)  =0.
  \end{array}
\end{equation}
Moreover, it holds that
\begin{equation} \label{eq:15e}
b_0=a_1=b_1=a_2=b_2=0.
\end{equation}
\end{lem}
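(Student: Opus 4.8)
The plan is to run, on the line $\Gamma_h^-$, exactly the argument that proves Lemma~\ref{lem:third} on $\Gamma_h^+$: since $\Gamma_h^-$ lies along the positive $x_1$-axis, all the identities \eqref{eq:15a}--\eqref{eq:15d} are the ones obtained there with the phase factor $\mathrm e^{2\bsi\varphi_0}$ replaced by $1$, supplemented by one further order of coefficient matching of the type used inside the proof of Theorem~\ref{thm:third}.

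First I would use that $\Gamma_h^-\in\mathcal G_\Omega^\kappa$ is equivalent to $\mathcal B_3(\mathbf u)|_{\Gamma_h^-}=\bmf{0}$ and insert the radial-wave expansion \eqref{eq:third3}. Because $\mathbf u$ is analytic, the resulting power series in $r$ must vanish for $0\le r\le h$, so after rewriting the negative-order Bessel functions via the parity relation $J_{-m}(t)=(-1)^m J_m(t)$ and invoking Lemma~\ref{lem:co exp}, I would match the coefficients of $r^0,r^1,r^2,r^3$ in each of the two linearly independent components spanned by $\hat{\mathbf e}_1=(0,1)^\top$ and $\hat{\mathbf e}_2=(1,0)^\top$. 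The $r^0$ matching yields \eqref{eq:15a}, the $r^1$ matching yields \eqref{eq:15b}, and the $r^2$ matching yields \eqref{eq:15c}; these are identical in form to \eqref{eq:1}, \eqref{eq:2} and \eqref{eq:3g} with $\varphi_0=0$, and one checks that the $m=0$ contribution to the $a$-part of $\mathcal B_3(\mathbf u)|_{\Gamma_h^-}$ cancels (since $J_{-1}+J_1=0$ and $-J_{-2}+J_2=0$), so $a_0$ does not enter. Already from \eqref{eq:15a}--\eqref{eq:15c} I would peel off $b_0=0$ (substitute the second identity of \eqref{eq:15a} into the first of \eqref{eq:15b}) and $a_1=b_1=0$ (combine the second identity of \eqref{eq:15b} with the first of \eqref{eq:15c} to get $\bsi k_p^3a_1+k_s^3b_1=0$, and pair it with $\bsi k_pa_1-k_sb_1=0$ from \eqref{eq:15a}; the $2\times2$ coefficient matrix has determinant $\bsi k_pk_s(k_p^2+k_s^2)\neq0$). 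With $b_0=a_1=b_1=0$ in hand, the $r^3$ matching of \eqref{eq:third3} then produces \eqref{eq:15d}.

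Finally I would close the argument: adding the second identity of \eqref{eq:15c} and the first identity of \eqref{eq:15d} gives $2k_s^4b_2=0$, hence $b_2=0$, and then the second identity of \eqref{eq:15a} forces $a_2=0$; together with the relations already established this is \eqref{eq:15e}. I do not expect a genuine obstacle here: the argument is mechanical, and the only point needing attention is the bookkeeping of the Bessel-function index shifts when isolating the coefficient of each $r^j$ — in particular the reduction of $J_{-1},J_{-2}$ by parity and tracking which Fourier modes actually contribute at each order — together with performing the eliminations in the order indicated, so that $b_0=a_1=b_1=0$ are available before the $r^3$ comparison is used. Everything else follows verbatim from the single-line soft-clamped analysis of Lemma~\ref{lem:third} and the first part of Theorem~\ref{thm:third}.
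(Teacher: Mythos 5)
Your proposal is correct and follows essentially the same route as the paper: insert the expansion \eqref{eq:third3} into $\mathcal B_3(\mathbf u)|_{\Gamma_h^-}=\bmf{0}$, match the coefficients of $r^0,r^1,r^2,r^3$ via Lemma~\ref{lem:co exp} to get \eqref{eq:15a}--\eqref{eq:15d}, and then eliminate to obtain \eqref{eq:15e}. Your eliminations check out (in particular the determinant $\bsi k_pk_s(k_p^2+k_s^2)\neq0$ and the cancellation $2k_s^4b_2=0$), and you are in fact slightly more careful than the paper's one-line proof in observing that the $r^3$ comparison delivers \eqref{eq:15d} in the stated form only after $b_0=a_1=b_1=0$ have been extracted from the lower-order relations.
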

\begin{proof}
Since $\Gamma_h^-\in {\mathcal G}_\Omega^{\kappa}$, using \eqref{eq:third3}, we have
 \begin{equation}\label{eq:446 b3}
 \begin{aligned}
\mathbf 0 &=  \sum_{m=0}^{\infty}  \bigg\{ \left[- \frac{\bsi k_p}{2} a_{m} \big (J_{m-1}(k_p r) + J_{m+1}(k_p r)\big )\right.\\
   & +  \frac{k_s}{2} b_m \big (J_{m-1}(k_s r) - J_{m+1}(k_s r) \big )\bigg]\hat{\mathbf{e}}_1 + \left[ \frac{\bsi k_p^2}{2} a_{m} \big (-J_{m-2}(k_p r)\right.\\
    &  + J_{m+2}(k_p r) \big  )
  + \frac{k_s^2}{2} b_m \big (J_{m-2}(k_s r) + J_{m+2}(k_s r) \big )\bigg] \mu \hat{\mathbf{e}}_2 \bigg\},
 \end{aligned}
\end{equation}
By Lemma \ref{lem:co exp},  comparing the coefficients of $r^0$, $r^1$, $r^2$ and $r^3$ on both sides of  \eqref{eq:446 b3}, one can obtain \eqref{eq:15a}, \eqref{eq:15b}, \eqref{eq:15c} and \eqref{eq:15d}, respectively. \eqref{eq:15e} can be derived by using \eqref{eq:15a}, \eqref{eq:15b}, \eqref{eq:15c} and \eqref{eq:15d}.
\end{proof}

By virtue of \eqref{eq:15e} in Lemma \ref{lem:third1} and \eqref{eq:a0 forth} in Lemma \ref{lem:forth}, for two intersecting soft-clamped and simply-supported lines,   one can readily have
\begin{thm}\label{thm:forth&third exp}
Let $\mathbf{u}\in L^2(\Omega)^2$ be a solution to \eqref{eq:lame}. 
If there exist two intersecting lines $\Gamma_h^\pm$ of $\bmf{u}$  such that   $\Gamma_h^-$ is a soft-clamped line and $\Gamma_h^+$ is a simply-supported line, where  the intersecting angle $\angle(\Gamma_h^+,\Gamma_h^-)=\varphi_0\neq \pi $, then $\bmf{u}\equiv \bmf{0}$.
\end{thm}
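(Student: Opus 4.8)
The plan is to peel off the first six Fourier coefficients $a_0,a_1,a_2,b_0,b_1,b_2$ of the radial-wave expansion~\eqref{eq:u} at the intersection point (normalized to be the origin, with $\Gamma_h^-$ along the positive $x_1$-axis and $\Gamma_h^+$ at angle $\varphi_0$, which is legitimate by the rigid-motion invariance of $\mathcal{L}$ and the reduction to $\varphi_0\in(0,\pi)$ recalled in Section~\ref{sect:2}), and then to hand over to the bootstrapping already contained in Lemma~\ref{lem:forth}. Concretely, since $\Gamma_h^-$ is a soft-clamped line, Lemma~\ref{lem:third1} applies verbatim and yields $b_0=a_1=b_1=a_2=b_2=0$ through~\eqref{eq:15e}; this uses only the radial-wave expansion of $\mathcal{B}_3(\mathbf u)|_{\Gamma_h^-}$ together with Lemma~\ref{lem:co exp}, so it requires no information about $\Gamma_h^+$. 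Since $\Gamma_h^+$ is a simply-supported line, Lemma~\ref{lem:forth} applies and, in particular, its conclusion~\eqref{eq:a0 forth} supplies the remaining low-order coefficient $a_0=0$.

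Having established $a_\ell=b_\ell=0$ for $\ell=0,1,2$, I would then invoke the inductive part of Lemma~\ref{lem:forth}: taking $m=3$ in the hypothesis~\eqref{eq:aLbL4} (which is satisfied precisely because $a_\ell=b_\ell=0$ for $\ell=0,\dots,m-1$ with $m=3\ge 3$), the lemma forces $a_\ell=b_\ell=0$ for every $\ell\in\mathbb{N}\cup\{0\}$. An appeal to Proposition~\ref{prop:1} then gives $\mathbf u\equiv\mathbf 0$ in $\Omega$, which is the assertion of the theorem.

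I do not anticipate a genuine analytical obstacle here, since all of the order-by-order Bessel-coefficient matching and the attendant linear-algebra nondegeneracy have been front-loaded into Lemmas~\ref{lem:forth} and~\ref{lem:third1}. The only point worth a line of verification is the bookkeeping: one must observe that the five vanishing coefficients furnished by the soft-clamped condition on $\Gamma_h^-$, together with the single coefficient $a_0$ furnished by the simply-supported condition on $\Gamma_h^+$, are exactly the six coefficients $a_0,a_1,a_2,b_0,b_1,b_2$ demanded by the base case of the induction in Lemma~\ref{lem:forth} --- no coefficient is redundant and none is missing --- and that the index conventions for $\Gamma_h^\pm$ used in the two lemmas are compatible with those in the statement of the theorem, which they are.
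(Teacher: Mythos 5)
Your proposal is correct and follows essentially the same route as the paper: the paper's proof of this theorem is exactly the combination of \eqref{eq:15e} from Lemma~\ref{lem:third1} (giving $b_0=a_1=b_1=a_2=b_2=0$ from the soft-clamped condition on $\Gamma_h^-$) with \eqref{eq:a0 forth} from Lemma~\ref{lem:forth} (giving $a_0=0$ from the simply-supported condition on $\Gamma_h^+$), followed by the inductive part of Lemma~\ref{lem:forth} and Proposition~\ref{prop:1}. Your bookkeeping check on the index conventions is accurate, and no further verification is needed.
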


%
%
%
%

\subsection{The case that $\Gamma_h^+$ is a generalized-impedance line.}

\begin{thm}\label{thm:u&GI exp}
Let $\mathbf{u}\in L^2(\Omega)^2$ be a solution to \eqref{eq:lame} and have the Fourier expansion \eqref{eq:u}. Suppose that  there exist two intersecting lines $\Gamma_h^\pm$ of $\bmf{u}$ such that $\Gamma_h^-$ is a rigid line and $\Gamma_h^+$ is a generalized-impedance line with the intersecting angle $\angle(\Gamma_h^+,\Gamma_h^-)=\varphi_0\neq \pi $, where the associated impedance parameter $\boldsymbol{ \eta}_1$ to $\Gamma_h^+$ satisfies  \eqref{eq:eta1 ex}.  If either 
\begin{equation}\label{eq:cond eta1}
	\eta_1\neq -\frac{\bsi \mu \mathrm{e}^{2 \bsi \varphi_0}}{\lambda+\mu(1+\mathrm{e}^{2 \bsi \varphi_0})}
\end{equation}
where $\eta_1$ is the constant part of $\boldsymbol{\eta}_1$ in \eqref{eq:eta1 ex}, or
\begin{equation}\label{eq:cond a0 eta1}
	a_0= 0, \quad \eta_1\neq \mathrm i, 
\end{equation} 
then $\bmf{u}\equiv \bmf{0}$.
\end{thm}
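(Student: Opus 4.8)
The plan is to combine the Fourier/radial-wave relations coming from the rigid condition on $\Gamma_h^-$ (Lemma~\ref{lem:rigid}) with those coming from the generalized-impedance condition on $\Gamma_h^+$ (Lemma~\ref{lem:g imp}), and then bootstrap through the induction steps already packaged in those two lemmas together with Lemma~\ref{lem:determinant} to conclude $a_\ell=b_\ell=0$ for all $\ell$, whence $\bmf u\equiv\bmf 0$ by Proposition~\ref{prop:1}. The crucial point is to kill the low-order coefficients $a_0,b_0,a_1,b_1,a_2,b_2$, after which Lemma~\ref{lem:rigid} (whose induction starts from $m\geq 2$) finishes the argument.

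First I would record the low-order relations. From $\Gamma_h^-\in\mathcal R_\Omega^\kappa$, Lemma~\ref{lem:rigid} gives \eqref{eq:1a} and \eqref{eq:2a}; in particular the pair in \eqref{eq:1a} has nonsingular coefficient matrix, so $a_1=b_1=0$, and the second relation in \eqref{eq:2a} gives $k_p^2a_0-\bsi k_s^2 b_0=0$. From $\Gamma_h^+\in\mathcal H_\Omega^\kappa$, Lemma~\ref{lem:g imp} gives \eqref{eq:1b} and \eqref{eq:g2}. Plugging $a_1=b_1=0$ into the first equation of \eqref{eq:g2} yields $2k_s^2 b_0+\mathrm e^{2\bsi\varphi_0}(1+\bsi\eta_1)(\bsi k_p^2 a_2-k_s^2 b_2)=0$, while the second equation of \eqref{eq:1b} reads $2\eta_1(\lambda+\mu)k_p^2 a_0+(1-\bsi\eta_1)(\bsi k_p^2 a_2-k_s^2 b_2)\mu\mathrm e^{2\bsi\varphi_0}=0$, and \eqref{eq:2a} gives a third relation among $a_0,b_0,a_2,b_2$. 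Treating $a_0,b_0$ and the combination $c:=\bsi k_p^2 a_2-k_s^2 b_2$ as unknowns (using $k_p^2 a_0=\bsi k_s^2 b_0$ to eliminate $a_0$), one obtains a homogeneous $3\times 3$ (really $2\times 2$ after the substitution) linear system whose determinant is, up to nonzero factors, proportional to $\lambda+\mu(1+\mathrm e^{2\bsi\varphi_0})+\bsi\mu\mathrm e^{2\bsi\varphi_0}\eta_1^{-1}$ — this is exactly where condition \eqref{eq:cond eta1} enters: it guarantees the determinant is nonzero, forcing $a_0=b_0=0$ and $c=0$. In the alternative regime one simply assumes $a_0=0$ outright via \eqref{eq:cond a0 eta1}; then $b_0=0$ from \eqref{eq:2a}, and $(1+\bsi\eta_1)c=0$ from the first equation of \eqref{eq:g2}, so $\eta_1\neq\bsi$ (part of \eqref{eq:cond a0 eta1}, noting $\eta_1\neq\mathrm i$) gives $c=0$.

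Once $a_0=b_0=a_1=b_1=0$ and $\bsi k_p^2 a_2-k_s^2 b_2=0$ are in hand, I would extract one further relation to pin down $a_2,b_2$ separately: the first equation of \eqref{eq:2a} now reads $-k_p^2 a_2-\bsi k_s^2 b_2=0$, and together with $\bsi k_p^2 a_2-k_s^2 b_2=0$ this is a $2\times 2$ system with determinant $-k_p^2 k_s^2(\bsi+\bsi)=-2\bsi k_p^2 k_s^2\neq 0$, hence $a_2=b_2=0$. (If instead one prefers to work from the $\Gamma_h^+$ side, comparing the $r^2$-coefficient in \eqref{eq:GI} and using Lemma~\ref{lem:determinant} with $m=0$ delivers the same conclusion under \eqref{eq:thm33 cond}-type hypotheses; but the rigid-side relation \eqref{eq:2a} is cleaner and needs no extra condition on $\eta_1$.) Having established $a_\ell=b_\ell=0$ for $\ell=0,1$, Lemma~\ref{lem:rigid} — whose hypothesis \eqref{eq:lem cond} requires vanishing up to order $m-1$ with $m\geq 2$ — immediately yields $a_\ell=b_\ell=0$ for all $\ell\in\mathbb N\cup\{0\}$, and Proposition~\ref{prop:1} gives $\bmf u\equiv\bmf 0$ in $\Omega$.

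The main obstacle I anticipate is the bookkeeping in the determinant computation that produces condition \eqref{eq:cond eta1}: one must correctly assemble the three linear constraints (two from the generalized-impedance trace, one from the rigid trace) on $\{a_0,b_0,\bsi k_p^2 a_2-k_s^2 b_2\}$, substitute $k_pks$-relations via \eqref{eq:kpks}, and verify that the resulting scalar coefficient vanishes precisely when $\eta_1=-\bsi\mu\mathrm e^{2\bsi\varphi_0}\big/\big(\lambda+\mu(1+\mathrm e^{2\bsi\varphi_0})\big)$; this is analogous to, but distinct from, the computation of $D_m$ in Lemma~\ref{lem:determinant}, and care is needed because the rigid condition couples $a_0$ and $b_0$ in a way that shifts the critical value of $\eta_1$ away from the one in Theorem~\ref{thm:g im}. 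Everything else is a routine application of Lemmas~\ref{lem:rigid} and \ref{lem:g imp} and the linear-independence of $\hat{\mathbf e}_1,\hat{\mathbf e}_2$.
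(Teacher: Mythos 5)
Your proposal is correct and follows essentially the same route as the paper's proof: kill $a_1,b_1$ via \eqref{eq:1a}, combine the rigid relations \eqref{eq:2a} with the generalized-impedance relations \eqref{eq:1b}--\eqref{eq:g2} to eliminate the combination $\bsi k_p^2a_2-k_s^2b_2$ and reduce to a $2\times2$ system in $a_0,b_0$ whose determinant vanishes exactly at the excluded value in \eqref{eq:cond eta1} (with the case \eqref{eq:cond a0 eta1} handled directly), and then invoke Lemma~\ref{lem:rigid} with $m=2$ and Proposition~\ref{prop:1}. One caveat: your side claim that $a_2=b_2=0$ follows from the pair $-k_p^2a_2-\bsi k_s^2b_2=0$ and $\bsi k_p^2a_2-k_s^2b_2=0$ is wrong — these two equations are proportional (multiply the first by $-\bsi$), so that determinant is $0$, not $-2\bsi k_p^2k_s^2$ — but this step is superfluous, since as you note Lemma~\ref{lem:rigid} only requires $a_\ell=b_\ell=0$ for $\ell=0,1$.
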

\begin{proof}
Since  $\Gamma_h^- \in {\mathcal R}_\Omega^{\kappa}$, from \eqref{eq:1a} of Lemma \ref{lem:rigid}, we can prove that $a_1=b_1=0$ by noting
$$
\bigg|\begin{array}{cc}
k_p   & \bsi k_s\\
k_p^3 &  -\bsi k_s^3
\end{array}
\bigg|=-\bsi k_p k_s (k_p^2+k_s^2)\neq 0.
$$

Due to that $\Gamma_h^- \in {\mathcal R}_\Omega^{\kappa}$ and $\Gamma_h^+ \in {\mathcal H}_\Omega^{\kappa}$, from Lemmas \ref{lem:condition} and \ref{lem:rigid}, combining the second equation of \eqref{eq:1b}  with the first equation of \eqref{eq:2a}, we have
\begin{equation}\label{eq:25a}
 \bigg\{  \begin{array}{l} k_p ^2 a_0+ \bsi k_s^2 b_0- k_p^2  a_2- \bsi k_s ^2 b_2=0,\\
  2 \eta_1 (\lambda+\mu) k_p^2 a_0 + (1 - \bsi \eta_1) ( \bsi k_p^2 a_2 -  k_s^2 b_2) \mu \mathrm{e}^{2 \bsi \varphi_0}=0.
  \end{array}
\end{equation}
Multiplying $\mathrm i (1 - \bsi \eta_1) \mu \mathrm{e}^{2 \bsi \varphi_0} $ on both sides of the first equation of \eqref{eq:25a}, then adding it to the second one of \eqref{eq:25a}, one has
\begin{align} \label{eq:26a}
\left(  2 \eta_1 (\lambda+\mu)+\mathrm i  (1 - \bsi \eta_1) \mu \mathrm{e}^{2 \bsi \varphi_0}\right)	k_p ^2 a_0- (1 - \bsi \eta_1) \mu \mathrm{e}^{2 \bsi \varphi_0} k_s^2 b_0=0.
\end{align}
If \eqref{eq:cond eta1} is fulfilled, combining \eqref{eq:26a} with the second equation of \eqref{eq:2a}, one can prove that $a_0=b_0=0$ by using the fact that
\begin{equation}\nonumber
\begin{aligned}
&\left| \begin{matrix}
\left(  2 \eta_1 (\lambda+\mu)+\mathrm i  (1 - \bsi \eta_1) \mu \mathrm{e}^{2 \bsi \varphi_0}\right) k_p^2 & - (1 - \bsi \eta_1) \mu \mathrm{e}^{2 \bsi \varphi_0}k_s^2 \\
  k_p^2& -\bsi k_s^2
\end{matrix} \right|\\
&=-2\bsi k_p^2 k_s^2 \left[(\lambda+\mu(1+\mathrm{e}^{2 \bsi \varphi_0}))\eta_1 + \bsi \mu \mathrm{e}^{2 \bsi \varphi_0}\right]\neq0.
\end{aligned}
\end{equation}

Considering the case \eqref{eq:cond a0 eta1}, from \eqref{eq:25a}, it can be directly shown that $b_0=0$. 

For two separate cases \eqref{eq:cond eta1} and \eqref{eq:cond a0 eta1},  by now we have shown $a_{\ell}=b_{\ell}=0$ for $\ell =0,1$. According to Lemma \ref{lem:rigid} and Proposition \ref{prop:1}, the conclusion of the theorem can be readily obtained.
\end{proof}

\begin{thm}\label{thm:Tu&GI exp}
Let $\mathbf{u}\in L^2(\Omega)^2$ be a solution to \eqref{eq:lame}. If there exist two intersecting lines $\Gamma_h^\pm$ of $\bmf{u}$  such that $\Gamma_h^-$ is a traction-free line and $\Gamma_h^+$ is a generalized-impedance line with the intersecting angle $\angle(\Gamma_h^+,\Gamma_h^-)=\varphi_0\neq \pi $, where the associated impedance parameter $\boldsymbol{ \eta}_1$ to $\Gamma_h^+$ has the expansion \eqref{eq:eta1 ex} and satisfies  $\eta_1\neq \bsi$, then $\bmf{u}\equiv \bmf{0}$.
\end{thm}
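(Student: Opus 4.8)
The plan is to follow the same template as the proofs of Theorems~\ref{thm:Tu&forth exp} and \ref{thm:Tu+&forth exp}: read off the lowest-order Fourier coefficient relations produced by each of the two homogeneous lines, combine them to force $a_0=b_0=a_1=b_1=0$, and then invoke the induction mechanism attached to the traction-free line together with Proposition~\ref{prop:1}.

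Concretely, since $\Gamma_h^-\in\mathcal T_\Omega^\kappa$, Lemma~\ref{lem:tranction free} yields at once $a_0=0$ and $a_1=0$ (the second equations of \eqref{eq:3a} and \eqref{eq:4a}), together with the relation $\bsi k_p^2 a_2-k_s^2 b_2=0$ (the first equation of \eqref{eq:3a}). Since $\Gamma_h^+\in\mathcal H_\Omega^\kappa$ with parameter $\boldsymbol{\eta}_1$ expanded as in \eqref{eq:eta1 ex}, Lemma~\ref{lem:g imp} supplies \eqref{eq:1b} and \eqref{eq:g2}. Inserting $a_1=0$ into the first equation of \eqref{eq:1b} gives $(1+\bsi\eta_1)(-k_s b_1)=0$; because the hypothesis $\eta_1\neq\bsi$ is exactly the statement $1+\bsi\eta_1\neq0$ (one checks $1+\bsi\eta_1=0\iff\eta_1=\bsi$) and $k_s\neq0$, we obtain $b_1=0$. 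Next, substituting $\bsi k_p^2 a_2-k_s^2 b_2=0$ into the first equation of \eqref{eq:g2}, namely $2k_s^2 b_0+\mathrm e^{2\bsi\varphi_0}(1+\bsi\eta_1)(\bsi k_p^2 a_2-k_s^2 b_2)=0$, forces $b_0=0$.

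With $a_\ell=b_\ell=0$ for $\ell=0,1$ established, the second part of Lemma~\ref{lem:tranction free} (which applies since $\Gamma_h^-$ is traction-free) propagates this to $a_\ell=b_\ell=0$ for all $\ell\in\mathbb N\cup\{0\}$, and Proposition~\ref{prop:1} then gives $\mathbf u\equiv\mathbf 0$ in $\Omega$. I do not expect any serious obstacle in this argument: the only nonelementary inputs — the coefficient identities \eqref{eq:1b}--\eqref{eq:g2} of Lemma~\ref{lem:g imp} and the induction step in Lemma~\ref{lem:tranction free} — are already available, and the remainder is linear algebra on the first Fourier modes. The one point deserving care is the role of the hypothesis $\eta_1\neq\bsi$: it is precisely what makes the first equation of \eqref{eq:1b} nondegenerate and lets us conclude $b_1=0$; in the excluded case $\eta_1=\bsi$ that equation is vacuous and the present route would break down.
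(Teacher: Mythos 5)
Your proposal is correct, and its overall architecture coincides with the paper's: use Lemma~\ref{lem:tranction free} to get $a_0=a_1=0$ and the relation $\bsi k_p^2a_2-k_s^2b_2=0$, use the first equation of \eqref{eq:1b} together with $a_1=0$ and $1+\bsi\eta_1\neq 0$ to get $b_1=0$, and then close the argument with the induction part of Lemma~\ref{lem:tranction free} and Proposition~\ref{prop:1}. The one step where you genuinely diverge is the derivation of $b_0=0$: the paper obtains it by substituting $a_0=0$ into the identity $\bsi k_p^2a_0+k_s^2b_0=0$ of Lemma~\ref{lem:34}, i.e.\ \eqref{eq:gradient3}, whereas you substitute $\bsi k_p^2a_2-k_s^2b_2=0$ into the first equation of \eqref{eq:g2}. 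Your route is preferable: Lemma~\ref{lem:34} is stated under the point-value hypotheses \eqref{eq:sg cond}, which are not among the assumptions of this theorem (the line is only assumed to lie in $\mathcal H_\Omega^\kappa$, not in $\mathcal S(\mathcal H_\Omega^\kappa)$), so the paper's invocation of \eqref{eq:gradient3} needs extra justification, while your use of \eqref{eq:g2} relies only on the generalized-impedance boundary condition itself. Your remark that $\eta_1\neq\bsi$ is exactly the nondegeneracy condition $1+\bsi\eta_1\neq0$ is also accurate and correctly identifies where the hypothesis enters.
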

\begin{proof}
Since $\Gamma_h^- \in {\mathcal T}_\Omega^{\kappa}$ and $\Gamma_h^+ \in {\mathcal H}_\Omega^{\kappa}$, from Lemma \ref{lem:34} and \ref{lem:tranction free}, we have $b_0=0$ by substituting the second equation of \eqref{eq:3a} into  \eqref{eq:gradient3}. Similarly, since  $\Gamma_h^- \in {\mathcal T}_\Omega^{\kappa}$ and $\Gamma_h^+ \in {\mathcal H}_\Omega^{\kappa}$, from  Lemma \ref{lem:tranction free}, we have $b_1=0$ by substituting the second equation of \eqref{eq:4a} into the first equation of \eqref{eq:1b}. By virtue of the second equations of \eqref{eq:3a} and \eqref{eq:4a}, we have $a_\ell=b_\ell$ for $\ell=0,1$. The proof can be readily concluded by using Lemma \ref{lem:tranction free} and Proposition \ref{prop:1}.
\end{proof}

\begin{thm}\label{thm:Tu+&GI exp}
{ Let $\mathbf{u}\in L^2(\Omega)^2$ be a solution to \eqref{eq:lame}.  If there exist two intersecting lines $\Gamma_h^\pm$ of $\bmf{u}$  such that $\Gamma_h^-$ is an impedance line associated with the paramter $\boldsymbol{\eta}_1$ satisfying \eqref{eq:eta1 ex}  and $\Gamma_h^+$ is a generalized-impedance line associated with the parameter $\boldsymbol{\eta}_2$ satisfying \eqref{eq:eta2 ex}, where the intersecting angle $\angle(\Gamma_h^+,\Gamma_h^-)=\varphi_0\neq \pi $, $\eta_2\neq \bsi$, then $\bmf{u}\equiv \bmf{0}$.}
\end{thm}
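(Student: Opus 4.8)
The plan is to follow the same template as the other two-line results that involve an impedance component (for instance Theorems~\ref{thm:Tu+&forth exp} and \ref{thm:Tu&GI exp}): peel off enough low-order Fourier coefficients of $\bmf{u}$ at the intersection point $\bmf{0}$ so that the propagation mechanism built into Lemma~\ref{lem:impedance line} forces every $a_\ell=b_\ell=0$, and then conclude $\bmf{u}\equiv\bmf{0}$ by Proposition~\ref{prop:1}. Since \eqref{eq:a1bl=0} propagates vanishing from orders $\ell=0,1$ to all orders, it is enough to establish $a_0=a_1=b_0=b_1=0$.

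First I would use that $\Gamma_h^-$ is an impedance line: applying Lemma~\ref{lem:impedance line} (with its parameter $\boldsymbol{\eta}_2$ played by the parameter $\boldsymbol{\eta}_1$ attached to $\Gamma_h^-$ in the present statement), the second equations of \eqref{eq:5a} and \eqref{eq:6a} give immediately $a_0=0$ and $a_1=0$. Next I would use that $\Gamma_h^+$ is a generalized-impedance line: applying Lemma~\ref{lem:g imp} with its parameter $\boldsymbol{\eta}_1$ played by $\boldsymbol{\eta}_2$, the first equation of \eqref{eq:1b} reads $(1+\bsi\eta_2)(\bsi k_p a_1-k_s b_1)=0$. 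The standing hypothesis $\eta_2\neq\bsi$ is exactly the condition $1+\bsi\eta_2\neq0$, so $\bsi k_p a_1-k_s b_1=0$, and combined with $a_1=0$ this forces $b_1=0$.

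It then remains to obtain $b_0=0$. With $a_1=b_1=0$ in hand, the first equation of \eqref{eq:5a} degenerates to $-\bsi\mu k_p^2 a_2+\mu k_s^2 b_2=0$, i.e.\ $\bsi k_p^2 a_2-k_s^2 b_2=0$. Feeding this into the first equation of \eqref{eq:g2} (again with $\boldsymbol{\eta}_1$ played by $\boldsymbol{\eta}_2$), namely $2k_s^2 b_0+\mathrm{e}^{2\bsi\varphi_0}(1+\bsi\eta_2)(\bsi k_p^2 a_2-k_s^2 b_2)=0$, the impedance-dependent term drops out and $b_0=0$ follows since $k_s\neq0$. With $a_0=a_1=b_0=b_1=0$ established, Lemma~\ref{lem:impedance line} gives $a_\ell=b_\ell=0$ for all $\ell\in\mathbb{N}\cup\{0\}$, and Proposition~\ref{prop:1} completes the argument.

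The main difficulty here is bookkeeping rather than mathematics: Lemmas~\ref{lem:impedance line} and \ref{lem:g imp} are stated with the parameter $\boldsymbol{\eta}_2$ on the $\Gamma_h^-$ side and $\boldsymbol{\eta}_1$ on the $\Gamma_h^+$ side, which is the reverse of the labelling in this statement, so one must apply each lemma with the correct relabelling and verify that the single scalar condition $\eta_2\neq\bsi$ is precisely what makes the factor $1+\bsi\eta_2$ invertible in the $r^0$-coefficient identity on $\Gamma_h^+$. In contrast to Theorem~\ref{thm:Tu thm}, no CGO solution or integral identity is needed: all the vanishing comes purely from comparing low-order coefficients in the radial-wave expansion \eqref{eq:u} of $\bmf{u}$.
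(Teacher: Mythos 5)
Your proposal is correct and follows essentially the same route as the paper: use Lemma~\ref{lem:impedance line} on $\Gamma_h^-$ to get $a_0=a_1=0$, use the $r^0$-coefficient identities of Lemma~\ref{lem:g imp} on $\Gamma_h^+$ (with the parameters relabelled, and with $\eta_2\neq\bsi$ making $1+\bsi\eta_2$ nonzero) to get $b_1=0$, extract $\bsi k_p^2a_2-k_s^2b_2=0$, conclude $b_0=0$, and finish with the propagation part of Lemma~\ref{lem:impedance line} and Proposition~\ref{prop:1}. If anything, your bookkeeping is slightly cleaner than the paper's: you obtain $\bsi k_p^2a_2-k_s^2b_2=0$ from the first equation of \eqref{eq:5a} after $a_1=b_1=0$ (so no division by $1-\bsi\eta_2$ is ever needed) and you explicitly close the $b_0=0$ step via the first equation of \eqref{eq:g2}, a step the paper's written proof leaves implicit.
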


\begin{proof}
Since $\Gamma_h^- \in {\mathcal I}_\Omega^{\kappa}$, using Lemma \ref{lem:impedance line},  one has $a_0=a_1=0$. Due to  $\Gamma_h^+ \in {\mathcal H}_\Omega^{\kappa}$, from Lemma \ref{lem:g imp}, under the assumption $\eta_2 \neq \mathrm i$, we have $b_1=0$ by substituting $a_1=0$ into  \eqref{eq:1b}. Similarly, substituting $a_0=0$ into  \eqref{eq:1b} in Lemma \ref{lem:g imp}, under the assumption $\eta_2 \neq \mathrm i$, we have
\begin{equation}\label{eq:i g intect}
	\mathrm i k_p^2 a_2-k_s^2 b_2=0.
\end{equation}
Substituting \eqref{eq:i g intect} and $a_1=0$ into \eqref{eq:5a} in Lemma \ref{lem:impedance line}, by using the assumption $\eta_1\neq 0$, we have $b_1=0$. Therefore we obtain that $a_\ell=b_\ell$ for $\ell=0,1$. According to Lemma \ref{lem:impedance line},  the proof can be readily concluded.
\end{proof}

By virtue of \eqref{eq:15e} in Lemma \ref{lem:third1} and the second equation of \eqref{eq:1b} in Lemma \ref{lem:g imp}, for two intersecting soft-clamped and generalized-impedance lines,   one can readily to have
\begin{thm}\label{thm:GI&third exp}
Let $\mathbf{u}\in L^2(\Omega)^2$ be a solution to \eqref{eq:lame}. 
If there exist two intersecting lines $\Gamma_h^\pm$ of $\bmf{u}$  such that   $\Gamma_h^-$ is a soft-clamped line and $\Gamma_h^+$ is a generalized-impedance line, where  the intersecting angle $\angle(\Gamma_h^+,\Gamma_h^-)=\varphi_0\neq \pi $, then $\bmf{u}\equiv \bmf{0}$.
\end{thm}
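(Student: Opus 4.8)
The plan is to combine the algebraic constraints produced by the two boundary conditions at the intersection point, exactly as in the companion results Theorem~\ref{thm:forth&third exp} and Theorem~\ref{thm:u&GI exp}, and then invoke the induction machinery already established to kill all the Fourier coefficients. Concretely, since $\Gamma_h^-\in\mathcal{G}_\Omega^\kappa$, Lemma~\ref{lem:third1} applies at the origin and yields immediately the strong conclusion \eqref{eq:15e}, namely $b_0=a_1=b_1=a_2=b_2=0$. Thus five of the lowest-order coefficients are already forced to vanish purely from the soft-clamped line, without using $\Gamma_h^+$ at all.

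The remaining coefficient to eliminate at low order is $a_0$. Here I would bring in the generalized-impedance condition on $\Gamma_h^+$ via Lemma~\ref{lem:g imp}: the second equation of \eqref{eq:1b} reads
\[
2\eta_1(\lambda+\mu)k_p^2 a_0 + (1-\bsi\eta_1)(\bsi k_p^2 a_2 - k_s^2 b_2)\mu\,\mathrm{e}^{2\bsi\varphi_0}=0.
\]
Substituting $a_2=b_2=0$ (from \eqref{eq:15e}) collapses this to $2\eta_1(\lambda+\mu)k_p^2 a_0=0$. Since $\lambda+\mu>0$ by the strong convexity condition \eqref{eq:convex}, $k_p\neq 0$, and $\eta_1\neq 0$ by the standing assumption \eqref{eq:eta1 ex} that the constant part of the impedance parameter is nonzero, we conclude $a_0=0$. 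Note this is why no extra hypothesis on $\eta_1$ is needed in the statement: the troublesome cases disappear once $a_2,b_2$ are already known to vanish.

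At this stage we have $a_\ell=b_\ell=0$ for $\ell=0,1,2$. Now I would feed this into the inductive step of Lemma~\ref{lem:third} (the soft-clamped induction lemma), whose hypothesis \eqref{eq:aLbL} requires precisely $a_\ell=b_\ell=0$ for $\ell=0,\dots,m-1$ with $m\geq 3$; taking $m=3$ gives $a_\ell=b_\ell=0$ for all $\ell\in\mathbb{N}\cup\{0\}$ by \eqref{eq:lem31}. Finally Proposition~\ref{prop:1} gives $\mathbf{u}\equiv\mathbf{0}$ in $\Omega$. There is no serious obstacle here — the only point requiring care is the bookkeeping to confirm that the soft-clamped constraints alone deliver the full package \eqref{eq:15e}, so that the generalized-impedance line is only used once, to handle $a_0$, and only through its leading coefficient $\eta_1$; everything else is a direct citation of the lemmas already proved.
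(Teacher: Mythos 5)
Your argument is correct and coincides with the paper's (very terse) proof: Lemma \ref{lem:third1} kills $b_0,a_1,b_1,a_2,b_2$, the second equation of \eqref{eq:1b} then forces $a_0=0$ because $\eta_1\neq 0$, $\lambda+\mu>0$ and $k_p\neq 0$, and the soft-clamped induction together with Proposition \ref{prop:1} finishes the job. The only cosmetic imprecision is that Lemma \ref{lem:third} is stated for a soft-clamped $\Gamma_h^+$ while here the soft-clamped line is $\Gamma_h^-$, so for the induction one should formally rerun the $r^{m-1},r^{m},r^{m+1}$ coefficient comparison on the expansion \eqref{eq:446 b3} for $\Gamma_h^-$ (i.e.\ with the factors $\mathrm{e}^{\bsi m\varphi_0}$ and $\mathrm{e}^{2\bsi\varphi_0}$ replaced by $1$), which goes through verbatim.
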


In Theorem \ref{thm:GI&GI} we shall consider two intersecting generalized-impedance lines. Before that, we need the following two lemmas. 

\begin{lem}\label{lem:48}
Suppose that $\Gamma_h^-$ is a generalized-impedance line of $\mathbf u$ associated with the impedance parameter $\boldsymbol{\eta}_2$ satisfying \eqref{eq:eta2 ex}. Then we have
\begin{equation}\label{eq:1b1}
\bigg\{
    \begin{array}{l}
  (1+ \bsi \eta_2)( \bsi k_p a_1 -  k_s b_1) =0,\\
  2 \eta_2 (\lambda+\mu) k_p^2 a_0 + (1 - \bsi \eta_2) ( \bsi k_p^2 a_2 -  k_s^2 b_2) \mu =0,
 \end{array}
 \end{equation}
 and
 \begin{equation}\label{eq:g3}
\bigg\{
    \begin{array}{l}
  2 k_s^2 b_0 +  (1+\bsi \eta_2) (\bsi k_p^2 a_2 - k_s^2 b_2) =0,\\
  (\bsi \mu - 2 \lambda \eta_2 - \mu \eta_2) k_p^3 a_1 + \mu (\bsi \eta_2 - 1) k_s^3 b_1  - \mu ( 1 - \bsi \eta_2) (\bsi k_p^3 a^3 - k_s^3 b_3) =0.
 \end{array}
\end{equation}
\end{lem}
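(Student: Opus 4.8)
The plan is to repeat, almost verbatim, the argument used for Lemma~\ref{lem:g imp}, with the role of $\Gamma_h^+$ now played by $\Gamma_h^-$. Since $\Gamma_h^-$ lies along the positive $x_1$-axis, the phase factors $\mathrm{e}^{\bsi m\varphi_0}$ are absent and one starts from the boundary traces \eqref{eq:third3} and \eqref{eq:forth3} rather than \eqref{eq:third2} and \eqref{eq:forth2}. The defining property of a generalized-impedance line, namely $\mathcal{B}_3(\mathbf{u})|_{\Gamma_h^-}+\boldsymbol{\eta}_2\,\mathcal{B}_4(\mathbf{u})|_{\Gamma_h^-}=\mathbf{0}$ for $0\leq r\leq h$, becomes, after substituting \eqref{eq:third3}, \eqref{eq:forth3} and the absolutely convergent series \eqref{eq:eta2 ex} for $\boldsymbol{\eta}_2$, an identity of the form $\sum_{m\geq 0}(\cdots)=\mathbf{0}$, each summand being a power of $r$ times a Bessel function $J_n(k_p r)$ or $J_n(k_s r)$ times a fixed vector spanned by $\hat{\mathbf{e}}_1$ and $\hat{\mathbf{e}}_2$.

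First I would use $J_{-1}(t)=-J_1(t)$ and $J_{-2}(t)=J_2(t)$ (cf.\ \eqref{eq:Jmfact}) to reduce to nonnegative indices the Bessel functions arising from the $m=0,1$ terms, and then expand each $J_n$ in its Maclaurin series so that the whole identity is a genuine power series in $r$. By Lemma~\ref{lem:co exp}, every coefficient of this power series must vanish. Extracting the coefficient of $r^0$ and invoking the linear independence of $\hat{\mathbf{e}}_1$ and $\hat{\mathbf{e}}_2$ produces two scalar relations; a short simplification using $\bsi-\eta_2=\bsi(1+\bsi\eta_2)$ and $\bsi+\eta_2=\bsi(1-\bsi\eta_2)$ to factor out $1\pm\bsi\eta_2$ rewrites them in the form \eqref{eq:1b1}. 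Extracting next the coefficient of $r^1$, splitting again into the $\hat{\mathbf{e}}_1$- and $\hat{\mathbf{e}}_2$-components and performing the analogous algebra yields \eqref{eq:g3}. In fact \eqref{eq:1b1}--\eqref{eq:g3} are exactly \eqref{eq:1b}--\eqref{eq:g2} with $\eta_1$ replaced by $\eta_2$ and $\mathrm{e}^{2\bsi\varphi_0}$ (and $\mathrm{e}^{\bsi\varphi_0}$) replaced by $1$, which is a useful consistency check on the computation.

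The only point requiring care is the Bessel-function bookkeeping: identifying which indices $m$ feed the $r^0$- and $r^1$-coefficients of $\mathcal{B}_3(\mathbf{u})|_{\Gamma_h^-}$ and $\mathcal{B}_4(\mathbf{u})|_{\Gamma_h^-}$ (the relevant ones being $m=0,1,2$ for the $r^0$-coefficient and $m=0,1,2,3$ for the $r^1$-coefficient), and then tracking the signs together with the factors $\lambda+\mu$, $\lambda+2\mu$ coming from the Lam\'e constants. Since this is simply the specialization of the computation already carried out for $\Gamma_h^+$ in Lemma~\ref{lem:g imp} to the configuration in which $\varphi_0$ is effectively $0$, no genuinely new difficulty arises; once \eqref{eq:1b1} and \eqref{eq:g3} have been read off from the $r^0$- and $r^1$-coefficients, the proof is complete.
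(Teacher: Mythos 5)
Your proposal is correct and follows essentially the same route as the paper's own proof: the paper likewise forms $\mathcal{B}_3(\mathbf{u})|_{\Gamma_h^-}+\boldsymbol{\eta}_2\,\mathcal{B}_4(\mathbf{u})|_{\Gamma_h^-}=\mathbf{0}$ from \eqref{eq:third3} and \eqref{eq:forth3}, uses \eqref{eq:Jmfact}, \eqref{eq:eta2 ex} and Lemma~\ref{lem:co exp} to compare the $r^0$- and $r^1$-coefficients, and splits along the linearly independent vectors $\hat{\mathbf{e}}_1,\hat{\mathbf{e}}_2$ to read off \eqref{eq:1b1} and \eqref{eq:g3}. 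Your bookkeeping (which $m$ contribute to each order, and the reduction $J_{-1}=-J_1$, $J_{-2}=J_2$) and the observation that the result is \eqref{eq:1b}--\eqref{eq:g2} with $\eta_1\mapsto\eta_2$ and $\mathrm{e}^{2\bsi\varphi_0}\mapsto 1$ are both accurate.
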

\begin{proof}
Since $\Gamma_h^-$ is a generalized-impedance line of $\bmf{u}$, using \eqref{eq:third3} and \eqref{eq:forth3}, we have
\begin{equation}\label{eq:GI2}
 \begin{aligned}
\bmf{0} = & \sum_{m=0}^{\infty}  \bigg\{ \left[ - \frac{\bsi k_p}{2} a_{m} (J_{m-1}(k_p r) + J_{m+1}(k_p r) )\right.\\
   & +  \frac{k_s}{2} b_m (J_{m-1}(k_s r) - J_{m+1}(k_s r) )\bigg]\hat{\mathbf{e}}_1 + \left[ \frac{\bsi k_p^2}{2} a_{m} (-J_{m-2}(k_p r) + J_{m+2}(k_p r) )\right.\\
   & + \frac{k_s^2}{2} b_m (J_{m-2}(k_s r) + J_{m+2}(k_s r) )\bigg] \mu \hat{\mathbf{e}}_2 \bigg\}\\
   &+\boldsymbol{ \eta}_2  \bigg\{ \left[ \frac{ k_p}{2} a_{m} (  J_{m-1}(k_p r) - J_{m+1}(k_p r) )\right.
    +  \frac{\bsi k_s}{2} b_m (J_{m-1}(k_s r) + J_{m+1}(k_s r) )\bigg]\hat{\mathbf{e}}_1 \\
   & + \left[- \frac{ k_p^2}{2} a_{m} (J_{m-2}(k_p r)\mu + 2(\lambda+\mu) J_m(k_p r)+ J_{m+2}(k_p r) \mu)\right.\\
   & - \frac{\bsi k_s^2}{2} b_m (J_{m-2}(k_s r) - J_{m+2}(k_s r) )\mu \bigg]  \hat{\mathbf{e}}_2 \bigg\},\quad 0 \leqslant r\leqslant h,
 \end{aligned}
\end{equation}
where $\hat{\mathbf{e}}_1=(0,1)^\top $, $\hat{\mathbf{e}}_2=(1,0)^\top $. Using  Lemma \ref{lem:co exp} and \eqref{eq:Jmfact}, by comparing the coefficients of the terms $r^0$ in both sides of \eqref{eq:GI2} and by using \eqref{eq:eta2 ex}, 
 it holds that
\begin{equation}\nonumber
\begin{aligned}
&\left[(\bsi - \eta_2) k_p a_1 - (1+ \bsi \eta_2) k_s b_1\right]  \hat{\mathbf{e}}_1\\
&- \left[ 2 \eta_2 (\lambda+\mu) k_p^2 a_0 +\left((\bsi + \eta_2) k_p^2 a_2 - (1 - \bsi \eta_2) k_s^2 b_2\right) \mu\right] \hat{\mathbf{e}}_2=\bmf{0}.
\end{aligned}
\end{equation}
Since $\hat{\mathbf{e}}_1$ and $\hat{\mathbf{e}}_2$ are linearly independent, we can obtain \eqref{eq:1b1}.

Similarly, by comparing the coefficients of the terms $r^1$ in both sides of \eqref{eq:GI2} and by using \eqref{eq:eta2 ex},  we can deduce \eqref{eq:g3}. 
\end{proof}

\begin{lem}\label{lem:49}
Let $k_p$ and $k_s$ be defined in \eqref{eq:kpks}, where $\kappa \in \mathbb R_+$ is the eigenvalue to \eqref{eq:lame}.  For any $m \in \mathbb N$ and Lam\'e constants $\lambda$, $\mu$ satisfying \eqref{eq:convex}, if $\eta\in \mathbb C$ is a constant such that 
\begin{equation}\label{eq:cond eta m}
	\eta\neq 
-\frac{\bsi m}{m+2}, 
\end{equation}
then 
	\begin{equation}\nonumber
\begin{aligned}
&D_m=\bigg|\begin{array}{cc}
 \bsi k_p^m   & - k_s^m\\
 \left(\bsi m \mu - 2 \lambda \eta + (m-2)\mu \eta \right) k_p^{m+2} &   m \mu (\bsi \eta - 1) k_s^{m+2}
\end{array}
\bigg|\neq 0. 
\end{aligned}
\end{equation}
\end{lem}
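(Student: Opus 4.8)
The plan is to compute the $2\times 2$ determinant $D_m$ explicitly and factor it into a product of a manifestly nonzero quantity and a linear polynomial in $\eta$, then locate the unique root of that polynomial. Expanding along the definition,
\begin{equation}\nonumber
D_m = \bsi k_p^m \cdot m\mu(\bsi\eta-1)k_s^{m+2} + k_s^m\cdot\bigl(\bsi m\mu - 2\lambda\eta + (m-2)\mu\eta\bigr)k_p^{m+2}.
\end{equation}
I would pull out the common factor $k_p^m k_s^m$ and collect the remaining terms, which are $\bsi m\mu(\bsi\eta-1)k_s^2 + \bigl(\bsi m\mu + (m-2)\mu\eta - 2\lambda\eta\bigr)k_p^2$. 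Grouping by powers of $\eta$ gives
\begin{equation}\nonumber
D_m = k_p^m k_s^m\Bigl\{\bigl(-m\mu k_s^2 + (m-2)\mu k_p^2 - 2\lambda k_p^2\bigr)\eta + \bsi m\mu\bigl(k_p^2 - k_s^2\bigr)\Bigr\}.
\end{equation}

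Next I would substitute the definitions $k_p^2 = \kappa/(\lambda+2\mu)$ and $k_s^2 = \kappa/\mu$ from \eqref{eq:kpks}, so that $k_p^2 - k_s^2 = \kappa\bigl(\tfrac1{\lambda+2\mu}-\tfrac1\mu\bigr) = -\kappa(\lambda+\mu)/(\mu(\lambda+2\mu))$, which is nonzero and in fact negative by \eqref{eq:convex}. The coefficient of $\eta$ becomes, after multiplying through by $(\lambda+2\mu)$,
\begin{equation}\nonumber
-m\mu\cdot\frac{\kappa(\lambda+2\mu)}{\mu} + (m-2)\mu\kappa - 2\lambda\kappa = \kappa\bigl(-m(\lambda+2\mu) + (m-2)\mu - 2\lambda\bigr) = \kappa\bigl(-(m+2)\lambda - (2m+2)\mu\bigr),
\end{equation}
up to this normalization; I would simplify carefully to extract a clean factor. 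The upshot is that $D_m$ equals $k_p^m k_s^m$ times a (nonzero) constant times $\bigl[(m+2)\,\textrm{something}\,\eta + \bsi m\,\textrm{something}\bigr]$, and setting the bracket to zero yields the single root $\eta = -\bsi m/(m+2)$ once one checks the coefficient of $\eta$ and the constant term combine in exactly that ratio. Then the hypothesis \eqref{eq:cond eta m} excludes precisely this value, so $D_m\neq 0$. Since $k_p, k_s, \kappa, \mu, \lambda+2\mu, \lambda+\mu$ are all nonzero (indeed positive), no degeneracy sneaks in, and the argument is uniform in $m\in\mathbb N$.

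The only real obstacle here is bookkeeping: matching the linear-in-$\eta$ polynomial's root against the claimed value $-\bsi m/(m+2)$ requires getting every factor of $\mu$, $\lambda$, and $\kappa$ right when clearing the denominators $\lambda+2\mu$ and $\mu$ coming from $k_p^2$ and $k_s^2$. I expect the clean identity to be that, after extracting $k_p^m k_s^m$ and an overall constant multiple of $\kappa^2/(\mu(\lambda+2\mu))$ (in parallel with the computation \eqref{eq:determinant1}--\eqref{eq:determinant2} in Lemma~\ref{lem:determinant}), one is left with a scalar multiple of $(m+2)\eta + \bsi m$, whose unique zero is $\eta = -\bsi m/(m+2)$; this is exactly the forbidden value in \eqref{eq:cond eta m}, completing the proof. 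I would present this as a short direct computation, mirroring the style of the proof of Lemma~\ref{lem:determinant}, and note that the strong convexity condition \eqref{eq:convex} guarantees all the prefactors are nonvanishing.
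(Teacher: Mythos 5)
Your proposal is correct and follows essentially the same route as the paper: expand the $2\times 2$ determinant, factor out $k_p^m k_s^m$, substitute $k_p^2=\kappa/(\lambda+2\mu)$ and $k_s^2=\kappa/\mu$, and observe that $D_m$ reduces to $-\frac{k_p^m k_s^m(\lambda+\mu)\kappa}{\lambda+2\mu}\left[(m+2)\eta+\bsi m\right]$, which vanishes only at the excluded value $\eta=-\bsi m/(m+2)$. One bookkeeping slip: in the coefficient of $\eta$ you have $-m(\lambda+2\mu)+(m-2)\mu-2\lambda=-(m+2)\lambda-(m+2)\mu$, not $-(m+2)\lambda-(2m+2)\mu$; with the correct value the bracket factors cleanly as $-(m+2)(\lambda+\mu)$ and the claimed root $-\bsi m/(m+2)$ indeed comes out, exactly as you anticipated.
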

\begin{proof}
	By direct calculations, it yields that
	\begin{equation}\notag
	\begin{split}
		D_m &=k_p^m k_s^m [(- m \mu k_s^2 - 2 \lambda k_p^2 + (m-2)\mu k_p^2)\eta +\bsi m \mu (k_p^2 - k_s^2)]\\
			&=	- \frac{k_p^m k_s^m(\lambda+\mu ) \kappa }{\lambda+2\mu }\left[(m+2) \eta +\mathrm i m \right],
	\end{split}
	\end{equation}
	where we use \eqref{eq:kpks}. Since $\eta$ satisfies \eqref{eq:cond eta m}, it is easy to see that $D_m\neq 0.$
	\end{proof}

\begin{thm}\label{thm:GI&GI}
Let $\mathbf{u}\in L^2(\Omega)^2$ be a solution to \eqref{eq:lame}. Suppose that  there exist two intersecting lines $\Gamma_h^\pm$ of $\bmf{u}$  such that $\Gamma_h^-$ is a generalized-impedance line and $\Gamma_h^+$ is a generalized-impedance line with the intersecting angle 
\begin{equation}\label{eq:cond Gi1}
	\angle(\Gamma_h^+,\Gamma_h^-)=\varphi_0\neq \pi .
\end{equation}
The associated generalized-impedance parameter $\boldsymbol{ \eta}_1$ to $\Gamma_h^+ $ and $\boldsymbol{ \eta}_2$ to $\Gamma_h^- $ have the expansions \eqref{eq:eta1 ex} and \eqref{eq:eta2 ex} respectively, where $\eta_\ell$ ($\ell=1,2$) are the constant parts of $\boldsymbol{\eta}_\ell$ $\eta_1=\eta_2=\eta$ satisfying 
\begin{equation}\label{eq:cond eta}
\eta_1=\eta_2=\eta \mbox{ and } \eta \neq \pm \mathrm i, \, -\frac{\mathrm i m}{m+2}	. 
\end{equation}
 Then $\bmf{u}\equiv \bmf{0}$.
\end{thm}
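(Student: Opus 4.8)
The plan is to follow the same strategy used for the other two-line results in Section~\ref{sec:4}, namely to prove that the Fourier coefficients $a_\ell,b_\ell$ vanish for $\ell=0,1$ and then invoke the induction/propagation result for a single generalized-impedance line together with Proposition~\ref{prop:1}. Concretely, since $\Gamma_h^-\in\mathcal H_\Omega^\kappa$ with parameter $\boldsymbol\eta_2$ whose constant part is $\eta$, Lemma~\ref{lem:48} gives \eqref{eq:1b1} and \eqref{eq:g3}; and since $\Gamma_h^+\in\mathcal H_\Omega^\kappa$ with parameter $\boldsymbol\eta_1$ whose constant part is also $\eta$, Lemma~\ref{lem:g imp} gives \eqref{eq:1b} and \eqref{eq:g2}, where now $\varphi_0$ appears through the $\mathrm e^{2\bsi\varphi_0}$ factors. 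The first equation of \eqref{eq:1b1} (and of \eqref{eq:1b}) reads $(1+\bsi\eta)(\bsi k_p a_1-k_s b_1)=0$, so under the hypothesis $\eta\neq\pm\bsi$ in \eqref{eq:cond eta} we immediately get
\begin{equation}\notag
\bsi k_p a_1 - k_s b_1 = 0 .
\end{equation}

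Next I would extract $a_0=b_0=0$. The second equation of \eqref{eq:1b1} is $2\eta(\lambda+\mu)k_p^2 a_0+(1-\bsi\eta)\mu(\bsi k_p^2 a_2-k_s^2 b_2)=0$ and the first equation of \eqref{eq:g3} is $2k_s^2 b_0+(1+\bsi\eta)(\bsi k_p^2 a_2-k_s^2 b_2)=0$; the corresponding equations from the $\Gamma_h^+$ side, \eqref{eq:1b} and the first equation of \eqref{eq:g2}, carry an extra $\mathrm e^{2\bsi\varphi_0}$ on the $(\bsi k_p^2 a_2-k_s^2 b_2)$-terms. Eliminating the common combination $c:=\bsi k_p^2 a_2-k_s^2 b_2$ between a $\Gamma_h^-$ relation and a $\Gamma_h^+$ relation, and using $\varphi_0\neq\pi$ (so $\mathrm e^{2\bsi\varphi_0}\neq 1$), should yield a homogeneous $2\times 2$ system in $(a_0,b_0)$ whose determinant is a nonzero multiple of $(\lambda+\mu)\kappa(1+\bsi\eta)(\mathrm e^{2\bsi\varphi_0}-1)$ (or similar), nonvanishing under \eqref{eq:convex} and \eqref{eq:cond eta}; hence $a_0=b_0=0$. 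This also forces $c=0$, i.e. $\bsi k_p^2 a_2=k_s^2 b_2$. Once $a_0=b_0=0$, plugging back into the second equations of \eqref{eq:g2} and \eqref{eq:g3}, together with $\bsi k_p a_1-k_s b_1=0$, gives a $2\times2$ linear system for $(a_1,b_1)$ — this is exactly the $m=1$ instance of the matrix $D_m$ treated in Lemma~\ref{lem:49} (the determinant being a nonzero multiple of $(m+2)\eta+\bsi m$ with $m=1$, hence nonzero because $\eta\neq -\tfrac{\bsi m}{m+2}$ for all $m\in\mathbb N$, in particular $m=1$). Therefore $a_1=b_1=0$.

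Having established $a_\ell=b_\ell=0$ for $\ell=0,1$, I would then run the induction. The hypotheses of the single-line propagation lemma (Lemma~\ref{lem:g imp} combined with the determinant nondegeneracy of Lemma~\ref{lem:determinant}) require the constant part of the impedance parameter to avoid $\pm\bsi$ and $\frac{\pm\sqrt{(\lambda+3\mu)(\lambda+\mu)}-\mu\bsi}{\lambda+2\mu}$; however, the cleaner route here is to imitate the induction already carried out inside the proof of Theorem~\ref{thm:g im}: assuming $a_\ell=b_\ell=0$ for $\ell\le m-1$ with $m\ge 2$, compare the coefficients of $r^{m-1},r^m,r^{m+1}$ in the generalized-impedance identity on $\Gamma_h^-$ (which has no $\mathrm e^{2\bsi\varphi_0}$ factors, as in \eqref{eq:GI2}) to obtain the $3\times 3$ system with matrix $D_m$ of Lemma~\ref{lem:determinant} — but in fact one only needs the $2\times 2$ block of Lemma~\ref{lem:49}, since the lowest-order coefficient comparison on $\Gamma_h^-$ already gives $\bsi k_p^m a_m-k_s^m b_m=0$ and $\bsi k_p^{m+1}a_{m+1}-k_s^{m+1}b_{m+1}=0$, and the next-order comparison supplies the second row of $D_m$ in Lemma~\ref{lem:49}. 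Lemma~\ref{lem:49} applies because $\eta\neq -\tfrac{\bsi m}{m+2}$, yielding $a_m=b_m=0$, and induction closes. Finally Proposition~\ref{prop:1} gives $\mathbf u\equiv\mathbf 0$ in $\Omega$.

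The main obstacle I anticipate is the bookkeeping in the elimination step that produces $a_0=b_0=0$: one must carefully combine the $r^0$- and $r^1$-coefficient relations from \emph{both} lines so that the $\mathrm e^{2\bsi\varphi_0}$ discrepancy between the $\Gamma_h^+$ and $\Gamma_h^-$ identities is exploited (this is precisely where $\varphi_0\neq\pi$ enters), and verify that the resulting $2\times2$ determinant in $(a_0,b_0)$ is nonzero under the stated conditions \eqref{eq:convex} and \eqref{eq:cond eta} — in particular checking that no spurious excluded value of $\eta$ beyond those listed in \eqref{eq:cond eta} is needed. A secondary, more routine, point is confirming that the single-line induction can indeed be run off $\Gamma_h^-$ alone (so that the angle plays no further role past $\ell=1$), which is why the hypothesis $\eta\neq -\tfrac{\bsi m}{m+2}$ for all $m\in\mathbb N$, rather than a condition depending on $\varphi_0$, suffices.
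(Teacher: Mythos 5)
Your treatment of the low-order coefficients follows the paper's route: the factor $(1+\bsi\eta)$ with $\eta\neq\pm\bsi$ gives $\bsi k_pa_1-k_sb_1=0$, and the $\mathrm e^{2\bsi\varphi_0}$-weighted subtraction of the $\Gamma_h^-$ relations from the $\Gamma_h^+$ relations (using $\varphi_0\neq\pi$) kills the common combination $\bsi k_p^2a_2-k_s^2b_2$ and yields $a_0=b_0=0$ (in fact the two equations decouple, so no genuine $2\times2$ system in $(a_0,b_0)$ is needed), after which Lemma~\ref{lem:49} with $m=1$ gives $a_1=b_1=0$. This all matches the paper.

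The induction step, however, contains a genuine gap. You claim the induction "can be run off $\Gamma_h^-$ alone (so that the angle plays no further role past $\ell=1$)" and that the $r^m$-coefficient comparison on $\Gamma_h^-$ "supplies the second row of $D_m$ in Lemma~\ref{lem:49}". It does not: that comparison produces
\begin{equation}\notag
(\bsi m\mu-2\lambda\eta+(m-2)\mu\eta)k_p^{m+2}a_m+m\mu(\bsi\eta-1)k_s^{m+2}b_m-\mu(1-\bsi\eta)\bigl(\bsi k_p^{m+2}a_{m+2}-k_s^{m+2}b_{m+2}\bigr)=0,
\end{equation}
which still contains the higher-order unknown $c_{m+2}:=\bsi k_p^{m+2}a_{m+2}-k_s^{m+2}b_{m+2}$. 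From one line alone you therefore face three unknowns $(a_m,b_m,c_{m+2})$, and closing the system requires the $3\times3$ determinant of Lemma~\ref{lem:determinant} — which is nonzero only under the additional exclusion $\eta\neq\frac{\pm\sqrt{(\lambda+3\mu)(\lambda+\mu)}-\mu\bsi}{\lambda+2\mu}$, an assumption Theorem~\ref{thm:GI&GI} does not make. The paper's proof avoids this precisely by using \emph{both} lines at every order: the $r^m$-relation on $\Gamma_h^+$ carries the term $\mu(1-\bsi\eta)c_{m+2}\mathrm e^{2\bsi\varphi_0}$ while the one on $\Gamma_h^-$ carries $\mu(1-\bsi\eta)c_{m+2}$, so multiplying the latter by $\mathrm e^{2\bsi\varphi_0}$ and subtracting eliminates $c_{m+2}$ (see \eqref{eq:g10}--\eqref{eq:g12}), leaving exactly the second row of the $2\times2$ matrix in Lemma~\ref{lem:49}. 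Thus the intersecting-angle condition $\varphi_0\neq\pi$ is indispensable at every step $m$, not just for $\ell\le 1$; your proof needs this two-line elimination inserted into the induction to be complete.
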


\begin{proof}
Since $\Gamma_h^\pm$ are generalized-impedance lines and \eqref{eq:cond eta},   multiplying $\mathrm{e}^{2 \bsi \varphi_0} $ on both sides  of  the second equation of \eqref{eq:1b1} in Lemma \ref{lem:48}, then  subtracting  the second equation of \eqref{eq:1b} in Lemma \ref{lem:g imp}, we have
\begin{equation}\label{eq:g1}
2 \eta (\lambda+\mu)(\mathrm{e}^{2 \bsi \varphi_0}-1)k_p^2 a_0=0,
\end{equation}
which can be used to obtain that $a_0=0$ by using \eqref{eq:convex}, \eqref{eq:cond Gi1} and \eqref{eq:cond eta}.

 Multiplying $\mathrm{e}^{2 \bsi \varphi_0} $ on both sides  of  the first equation of \eqref{eq:g3} in Lemma \ref{lem:48}, then  subtracting the the first equation of \eqref{eq:g2} in Lemma \ref{lem:g imp}, we have
\begin{equation}\nonumber
2 (\mathrm{e}^{2 \bsi \varphi_0}-1)k_s^2 b_0=0,
\end{equation}
which can be used to obtain that $b_0=0$ under the condition \eqref{eq:cond Gi1}. Substituting $b_0=0$ into  \eqref{eq:g3}, using \eqref{eq:cond eta}, one has
\begin{equation}\label{eq:g9}
 \bsi k_p^2 a_2 - k_s^2 b_2 =0.
\end{equation}

Similarly, multiplying  $\mathrm{e}^{2 \bsi \varphi_0} $ on both sides of the second equation of \eqref{eq:g3}, and then subtracting the second equation of \eqref{eq:g2}, one can obtain that
\begin{equation}\label{eq:g5}
[(\bsi \mu - 2 \lambda \eta - \mu \eta) k_p^3 a_1 + \mu (\bsi \eta - 1) k_s^3 b_1](\mathrm{e}^{2 \bsi \varphi_0}-1)=0,
\end{equation}
Combining the first equation of \eqref{eq:1b1} with \eqref{eq:g5}, we have 
\begin{equation}\nonumber
\bigg\{
\begin{array}{l}
\bsi k_p a_1 - k_s b_1=0,\\
(\bsi \mu - 2 \lambda \eta - \mu \eta) k_p^3 a_1 + \mu (\bsi \eta - 1) k_s^3 b_1=0,
\end{array}
\end{equation}
which can be used to prove that $a_1=b_1=0$ by noting
\begin{equation}\nonumber
\begin{aligned}
&D_1=\bigg|\begin{array}{cc}
 \bsi k_p   & - k_s\\
 (\bsi \mu - 2 \lambda \eta - \mu \eta) k_p^3 &  \mu (\bsi \eta - 1) k_s^3
\end{array}
\bigg| \neq 0, 
\end{aligned}
\end{equation}
where we use Lemma \ref{lem:49} for the case $m=1$ under the condition \eqref{eq:cond eta}.  

Using Lemma \ref{lem:co exp} and \eqref{eq:Jmfact}, comparing the coefficient of the term $r^2$ in both sides of \eqref{eq:GI}, and by virtue of \eqref{eq:eta1 ex}, one can show that
\begin{equation}\nonumber
\begin{aligned}
& \left[ \mathrm{e}^{2 \bsi \varphi_0} (1+\bsi \eta) (\bsi k_p^3 a_3 - k_s^3 b_3)\right] \hat{\mathbf{e}}_1 + \\
& \left[\left(2(\bsi \mu -  \lambda \eta ) k_p^4 a_2 + 2 \mu (\bsi \eta - 1) k_s^4 b_2\right) \mathrm{e}^{ \bsi \varphi_0}  - \mu \mathrm{e}^{3 \bsi \varphi_0} ( 1 - \bsi \eta) (\bsi k_p^4 a^4 - k_s^4 b_4)\right] \hat{\mathbf{e}}_2 \\
& = \bmf{0}. 
\end{aligned}
\end{equation}
Since $\hat{\mathbf{e}}_1$ and $\hat{\mathbf{e}}_2$ are linearly independent, we obtain that
\begin{equation}\label{eq:g6}
\bigg\{
    \begin{array}{l}
   \mathrm{e}^{2 \bsi \varphi_0} (1+\bsi \eta) (\bsi k_p^3 a_3 - k_s^3 b_3) =0,\\
 2(\bsi \mu -  \lambda \eta ) k_p^4 a_2 + 2 \mu (\bsi \eta - 1) k_s^4 b_2  - \mu \mathrm{e}^{2 \bsi \varphi_0} ( 1 - \bsi \eta) (\bsi k_p^4 a^4 - k_s^4 b_4) =0.
 \end{array}
\end{equation}
Similarly, comparing the coefficients of the terms $r^2$ in both sides of \eqref{eq:GI2} and by virtue of \eqref{eq:eta2 ex},   we can deduce that
\begin{equation}\label{eq:g7}
\bigg\{
    \begin{array}{l}
   (1+\bsi \eta) (\bsi k_p^3 a_3 - k_s^3 b_3) =0,\\
 2(\bsi \mu -  \lambda \eta ) k_p^4 a_2 + 2 \mu (\bsi \eta - 1) k_s^4 b_2  - \mu  ( 1 - \bsi \eta) (\bsi k_p^4 a^4 - k_s^4 b_4) =0.
 \end{array}
\end{equation}
 Multiplying $\mathrm{e}^{2 \bsi \varphi_0} $ on both sides  of  the second equation of \eqref{eq:g7}, then subtracting  the the second equation of \eqref{eq:g6}, we have
\begin{equation}\label{eq:g8}
[2(\bsi \mu -  \lambda \eta ) k_p^4 a_2 + 2 \mu (\bsi \eta - 1) k_s^4 b_2  ](\mathrm{e}^{2 \bsi \varphi_0}-1)=0. 
\end{equation}
Combining \eqref{eq:g9} with \eqref{eq:g8}, we have
\begin{equation}\nonumber
\bsi k_p^2 a_2 - k_s^2 b_2=0,\quad
2(\bsi \mu -  \lambda \eta ) k_p^4 a_2 + 2 \mu (\bsi \eta - 1) k_s^4 b_2=0,
\end{equation}
which can be used to prove that $a_2=b_2=0$ by noting
\begin{equation}\nonumber
\begin{aligned}
&D_2=\bigg|\begin{array}{cc}
 \bsi k_p^2   & - k_s^2\\
 (\bsi \mu - \lambda \eta ) k_p^4 &   \mu (\bsi \eta - 1) k_s^4
\end{array}
\bigg|\neq 0, 
\end{aligned}
\end{equation}
where we use Lemma \ref{lem:49} for the case $m=2$ under the condition \eqref{eq:cond eta}.    

By mathematical induction, we first assume that $a_{\ell}=b_{\ell}=0$, where $\ell=0,\ldots,m-1$ and $m \in \mathbb{ N}$ with $m\geq 3$.  Using  Lemma \ref{lem:co exp} and \eqref{eq:Jmfact}, comparing the coefficients of the terms $r^{m-1}$ in both sides of \eqref{eq:GI} and by virtue of \eqref{eq:eta1 ex},  one can deduce that
\begin{equation}\label{eq:g13}
\bigg\{
\begin{array}{l}
(1+ \bsi \eta) (\bsi k_p^m a_m - k_s^m b_m)=0,\\
\mu (1- \bsi \eta) (\bsi k_p^{m+1} a_{m+1} - k_s^{m+1} b_{m+1})=0.
\end{array}
\end{equation}

Using  Lemma \ref{lem:co exp} and \eqref{eq:Jmfact}, comparing the coefficients of the terms $r^m$ in both sides of \eqref{eq:GI} and by virtue of \eqref{eq:eta1 ex}, one can show that
\begin{equation}\nonumber
\begin{aligned}
& \big[ \mathrm{e}^{2 \bsi \varphi_0} (1+\bsi \eta) (\bsi k_p^{m+1} a_{m+1} - k_s^{m+1} b_{m+1})\big] \hat{\mathbf{e}}_1 + \\
& \big[\left((\bsi m \mu - 2 \lambda \eta + (m-2)\mu \eta) k_p^{m+2} a_m + m \mu (\bsi \eta - 1) k_s^{m+2} b_m\right) \mathrm{e}^{ \bsi \varphi_0}\\
& - \mu \mathrm{e}^{3 \bsi \varphi_0} ( 1 - \bsi \eta) (\bsi k_p^{m+2} a^{m+2} - k_s^{m+2} b_{m+2})\big] \hat{\mathbf{e}}_2 \\
& = \bmf{0}.
\end{aligned}
\end{equation}
Since $\hat{\mathbf{e}}_1$ and $\hat{\mathbf{e}}_2$ are linearly independent, we obtain that
\begin{equation}\label{eq:g10}
\left\{
    \begin{array}{l}
    \begin{aligned}
 & \mathrm{e}^{2 \bsi \varphi_0} (1+\bsi \eta) (\bsi k_p^{m+1} a_{m+1} - k_s^{m+1} b_{m+1}) =0,\\
 & (\bsi m \mu - 2 \lambda \eta + (m-2)\mu \eta) k_p^{m+2} a_m + m \mu (\bsi \eta - 1) k_s^{m+2} b_m\\
 & - \mu \mathrm{e}^{2 \bsi \varphi_0} ( 1 - \bsi \eta) (\bsi k_p^{m+2} a^{m+2} - k_s^{m+2} b_{m+2}) =0.
 \end{aligned}
 \end{array}\right. 
\end{equation}
Similarly, by comparing the coefficients of the terms $r^m$ in both sides of \eqref{eq:GI2} together with the use of \eqref{eq:eta2 ex}, we can deduce that
\begin{equation}\label{eq:g11}
\left\{
     \begin{array}{l}
    \begin{aligned}
 & (1+\bsi \eta) (\bsi k_p^{m+1} a_{m+1} - k_s^{m+1} b_{m+1}) =0,\\
 & (\bsi m \mu - 2 \lambda \eta + (m-2)\mu \eta) k_p^{m+2} a_m + m \mu (\bsi \eta - 1) k_s^{m+2} b_m\\
 & - \mu  ( 1 - \bsi \eta) (\bsi k_p^{m+2} a^{m+2} - k_s^{m+2} b_{m+2}) =0.
 \end{aligned}
 \end{array}\right. 
\end{equation}
Multiplying $\mathrm{e}^{2 \bsi \varphi_0} $ on both sides  of  the second equation of \eqref{eq:g11}, then subtracting  the the second equation of \eqref{eq:g10}, we have
\begin{equation}\label{eq:g12}
[(\bsi m \mu - 2 \lambda \eta + (m-2)\mu \eta) k_p^{m+2} a_m + m \mu (\bsi \eta - 1) k_s^{m+2} b_m  ](\mathrm{e}^{2 \bsi \varphi_0}-1)=0. 
\end{equation}

Combining the first equation of  \eqref{eq:g13} with \eqref{eq:g12}, we have
\begin{equation}\nonumber
\bigg\{
\begin{array}{l}
\bsi k_p^m a_m - k_s^m b_m=0,\\
(\bsi m \mu - 2 \lambda \eta + (m-2)\mu \eta) k_p^{m+2} a_m + m \mu (\bsi \eta - 1) k_s^{m+2} b_m=0,
\end{array}
\end{equation}
which can be used to prove that $a_m=b_m=0$ by noting
\begin{equation}\nonumber
\begin{aligned}
&D_m=\bigg|\begin{array}{cc}
 \bsi k_p^m   & - k_s^m\\
 \left(\bsi m \mu - 2 \lambda \eta + (m-2)\mu \eta \right) k_p^{m+2} &   m \mu (\bsi \eta - 1) k_s^{m+2}
\end{array}
\bigg| \neq 0, 
\end{aligned}
\end{equation}
where we use Lemma \ref{lem:49}  under the condition \eqref{eq:cond eta}. 

According  to Proposition \ref{prop:1}, the proof is complete.
\end{proof}

\begin{rem}
	The assumption \eqref{eq:cond eta} in Theorem \ref{thm:GI&GI} can be replaced  by
	\begin{equation}\label{eq:cond eta new}
\eta_1=\eta_2=\eta \mbox{ and } \Im(\eta) \in \mathbb R_+ \backslash\{1\},
\end{equation}
which shall be adopted  in studying the inverse elastic obstacle and diffractive  grating problems. 
	
	\end{rem}
	
	The following theorem deals with the case that a generalized-impedance line intersects a simply-supported line.  

\begin{thm}\label{thm:GI&forth exp}
Let $\mathbf{u}\in L^2(\Omega)^2$ be a solution to \eqref{eq:lame}. 
If there exist two intersecting lines $\Gamma_h^\pm$ of $\bmf{u}$  such that   $\Gamma_h^-$ is a generalized-impedance line and $\Gamma_h^+$ is a  simply-supported line, where  the intersecting angle $\angle(\Gamma_h^+,\Gamma_h^-)=\varphi_0\neq \pi $, then $\bmf{u}\equiv \bmf{0}$.
\end{thm}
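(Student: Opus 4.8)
The plan is to run the same Fourier‑coefficient matching scheme that powers the other two‑line theorems in this section: after normalising the intersection point to the origin and writing the two segments as $\Gamma_h^\pm$ in \eqref{eq:gamma_pm} with $\varphi_0\in(0,\pi)$, Lemma~\ref{lem:forth} supplies the relations attached to the simply‑supported line $\Gamma_h^+$ and Lemma~\ref{lem:48} those attached to the generalized‑impedance line $\Gamma_h^-$ (with parameter $\boldsymbol{\eta}_2$ of the form \eqref{eq:eta2 ex}, so that $\eta_2\in\mathbb C\setminus\{0\}$). The target is to force $a_\ell=b_\ell=0$ for $\ell=0,1,2$; once this is achieved, the induction step of Lemma~\ref{lem:forth} applied with $m=3$ gives $a_\ell=b_\ell=0$ for all $\ell\in\mathbb N\cup\{0\}$, and Proposition~\ref{prop:1} yields $\bmf{u}\equiv\bmf{0}$ in $\Omega$.

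The low‑order part is direct. Equation \eqref{eq:a0 forth} already gives $a_0=0$, and substituting $a_0=0$ into the first equation of \eqref{eq:17} produces $k_p^2a_2+\bsi k_s^2b_2=0$, equivalently $\bsi k_p^2a_2-k_s^2b_2=0$; feeding the latter into the first equation of \eqref{eq:g3} then gives $2k_s^2b_0=0$, hence $b_0=0$. For the first‑order coefficients one checks that the first equation of \eqref{eq:1b1} is a scalar multiple of the second equation of \eqref{eq:16} and hence redundant, so a genuinely independent relation between $a_1$ and $b_1$ must be produced by eliminating the combination $k_p^3a_3+\bsi k_s^3b_3$ between the second equations of \eqref{eq:17} and \eqref{eq:g3}. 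Coupling that relation with $k_pa_1+\bsi k_sb_1=0$ from \eqref{eq:16} gives a homogeneous $2\times2$ system whose determinant reduces, using \eqref{eq:kpks}, to a nonzero multiple of $(\lambda+\mu)\bigl[(3\bsi\eta_2-1)+3(1-\bsi\eta_2)\mathrm e^{-2\bsi\varphi_0}\bigr]$; this is nonzero for $\varphi_0\in(0,\pi)$ apart from an isolated configuration of $(\eta_2,\varphi_0)$, and in that borderline case one repeats the elimination one order higher, exactly in the style of the proof of Theorem~\ref{thm:fourth}. Either way $a_1=b_1=0$.

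It remains to pin down $a_2$ and $b_2$. With $a_0=a_1=b_0=b_1=0$ in hand, I would compare the coefficients of $r^2$ in the Fourier identity \eqref{eq:forth} for $\Gamma_h^+$ and in \eqref{eq:GI2} for $\Gamma_h^-$ (invoking the series \eqref{eq:eta2 ex} for $\boldsymbol{\eta}_2$), eliminate $a_3,b_3,a_4,b_4$, and combine the outcome with the already‑known $\bsi k_p^2a_2-k_s^2b_2=0$ to obtain a $2\times2$ system in $a_2,b_2$. Its determinant is again nonzero thanks to \eqref{eq:convex} (which forces $k_p\neq k_s$), $\eta_2\neq0$, and $\mathrm e^{2\bsi\varphi_0}\neq1$, in the same spirit as Lemmas~\ref{lem:determinant} and \ref{lem:49}. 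Hence $a_2=b_2=0$, and the argument closes as described in the first paragraph.

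The main obstacle is the bookkeeping in these eliminations: one must keep track of the $\mathrm e^{2\bsi\varphi_0}$ factors that decorate the $\Gamma_h^+$ relations but are absent from the $\Gamma_h^-$ relations, and then verify that each resulting small linear system stays nonsingular over the whole interval $\varphi_0\in(0,\pi)$ without imposing any restriction on $\eta_2$; dealing with the few degenerate $(\eta_2,\varphi_0)$ configurations may require carrying the coefficient comparison up to order $r^3$, precisely as in Theorem~\ref{thm:fourth}. Everything else follows the pattern already established for the other two‑line configurations treated above.
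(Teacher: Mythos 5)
Your overall skeleton is right and your treatment of the zeroth-order coefficients coincides with the paper's: $a_0=0$ from Lemma~\ref{lem:forth}, then $k_p^2a_2+\bsi k_s^2b_2=0$ from \eqref{eq:16} (or \eqref{eq:17}), then $b_0=0$ from the first equation of \eqref{eq:g3}. Where you diverge is in extracting $a_1,b_1,a_2,b_2$: you couple the simply-supported relations on $\Gamma_h^+$ with the generalized-impedance relations on $\Gamma_h^-$, and this is both riskier and unnecessary. The determinant you compute for the $(a_1,b_1)$ system, proportional to $(\lambda+\mu)\bigl[(3\bsi\eta_2-1)+3(1-\bsi\eta_2)\mathrm e^{-2\bsi\varphi_0}\bigr]$, genuinely vanishes on a one-parameter family of admissible configurations (for instance $\eta_2=-\tfrac{2}{3}\bsi$, $\varphi_0=\tfrac{\pi}{2}$), not merely at "an isolated configuration", and the theorem imposes no restriction on $\eta_2$ that would exclude these. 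Your second elimination, for $(a_2,b_2)$, asserts nonsingularity "thanks to $\eta_2\neq0$ and $\mathrm e^{2\bsi\varphi_0}\neq1$" without computation, and the same kind of degeneracy could occur there. So as written your primary argument does not close.

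The fix you name as a fallback is in fact the paper's entire argument, and it is unconditional: once $a_0=b_0=0$ are known, one simply reruns the proof of Theorem~\ref{thm:fourth}, which uses \emph{only} the simply-supported line. Comparing the $r^2$ and $r^3$ coefficients in \eqref{eq:forth} gives \eqref{eq:25} and \eqref{eq:26}; multiplying the first equation of \eqref{eq:25} by $\mu$ and subtracting the second equation of \eqref{eq:17} yields $-(2\lambda+4\mu)k_p^3a_1=0$, hence $a_1=0$ and then $b_1=0$ from \eqref{eq:16}; the analogous combination of \eqref{eq:26} and \eqref{eq:25} yields $a_2=0$ and then $b_2=0$. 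No case analysis on $(\eta_2,\varphi_0)$ is needed, and the factors $\mathrm e^{2\bsi\varphi_0}$ cancel identically in these eliminations. The generalized-impedance line enters only to produce $b_0=0$. I would recommend promoting your "borderline case" treatment to the main argument and discarding the cross-line elimination; with that change the proof is complete and matches the paper's.
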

\begin{proof}
Since $\Gamma_h^+$ is a  simply-supported line, from Lemma \ref{lem:forth}, we have $a_0=0$. Substituting $a_0=0$ into the first equation of \eqref{eq:16}, one has
\begin{equation}\label{eq:a2 b2 sim}
	k_p^2 a_2+\mathrm i k_s^2 b_2=0.
\end{equation}
Since $\Gamma_h^-$ is a generalized-impedance line, substituting \eqref{eq:a2 b2 sim} into the first equation of \eqref{eq:g3} in Lemma \ref{lem:48}, it is easy to see that $b_0=0$. Adopting the same argument in the proof of Theorem \ref{thm:fourth}, we can show that $a_\ell=b_\ell=0$ for $\forall \ell \in \mathbb N$.  Finally, the proof can be concluded by using Proposition \ref{prop:1}.
\end{proof}

\section{Unique identifiability for inverse elastic obstacle problems}\label{sect:5}

In this section, the theoretical findings in the previous sections can be used to study the unique identifiability for the inverse elastic  obstacle and diffractive grating  problems. The inverse problems are concerned with recovering the geometrical shapes/profiles of certain unknown objects by using the elastic wave probing data. The inverse elastic obstacle and diffractive grating problems arise from industrial applications of practical importance. 


\subsection{Unique recovery for the inverse elastic  obstacle problem}\label{subsec:obs}

Consider a  bounded Lipschitz domain $\Omega\subset\mathbb{R}^2$  such that $\mathbb{R}^2\backslash\bar{\Omega}$ is connected. In what follows, we let $\mathbf{d}\in\mathbb{S}^1$ denote the incident direction, $\bmf{d}^{\perp}\in \mathbb S^1 $ be orthogonal to $\bmf{d}$, $k_p$ and $k_s$ be compressional and shear wave numbers  defined in \eqref{eq:kpks}. Consider an incident elastic wave field $\bmf{u}^i$, which is a time-harmonic elastic plane wave of the form
\begin{equation}\label{eq:ui}
\bmf{u}^i:=\bmf{u}^i(\mathbf{ x};k_p,k_s, \mathbf{d})=\alpha_{p} \bmf{d} \mathrm{e}^{\mathrm{i} k_{p} \bmf{x} \cdot \bmf{d} }+\alpha_{s} \bmf{d}^{\perp} \mathrm{e}^{\mathrm{i} k_s  \bmf{x} \cdot \bmf{d} }, \quad \alpha_{p}, \alpha_{s} \in \mathbb{C}, \quad\left|\alpha_{p}\right|+\left|\alpha_{s}\right| \neq 0.
\end{equation}
Physically speaking, $\bmf{u}^i$ is the detecting wave field and $\Omega$ denotes an impenetrable obstacle which interrupts the propagation of the incident wave and generates the corresponding scattered wave field $\bmf{u}^{\mathrm{sc}}$. Using the Helmholtz decomposition, we can decompose the scattered field $\bmf{u}^{\mathrm{sc} }$ in ${\mathbb R}^2 \backslash \Omega$  into
the sum of the compressional  part $\bmf{u}^{\mathrm{sc}}_p$ and the shear part $\bmf{u}^{\mathrm{sc}}_s$  as follows
\begin{equation}
	\bmf{u}^{\mathrm{s c} }=\bmf{u}_{p}^{\mathrm {s c} }+\bmf{u}_{s}^{\mathrm{s c} }, \quad \bmf{u}_{p}^{\mathrm {s c} }=-\frac{1}{k_{p}^{2}} \nabla\left( \nabla \cdot \bmf{ u}^{\mathrm {s c} }\right ), \quad\bmf{ u}_{s}^{\mathrm{s c}}=\frac{1}{k_{s}^{2}} \bf{curl} \operatorname{curl} u^{\mathrm {s c} },
\end{equation}
where
\[
 {\rm curl}\bmf{ u}=\partial_1 u_2-\partial_2 u_1, \quad {\bf
curl}{u}=(\partial_2 u, -\partial_1 u)^\top.
\]
Let $\omega=\sqrt{ \kappa}$ be the angular frequency, where $\kappa$ is the Lam\'e eigenvalue of \eqref{eq:lame}.  Define  $\bmf{u}:=\bmf{u}^i+\bmf{u}^{\mathrm{ sc} }$ to be the total wave field, then the forward scattering problem of this process can be described by the following system,
\begin{equation}\label{forward}
\begin{cases}
& {\mathcal L} \bmf{ u} + \omega^2  \bmf{u} = 0\qquad\quad \mbox{in }\ \ \mathbb{R}^2\backslash\overline{\Omega},\medskip\\
& \bmf{u} =\bmf{u}^i+\bmf{u}^{\mathrm{sc} }\hspace*{1.56cm}\mbox{in }\ \ \mathbb{R}^2,\medskip\\
& \mathscr{B}(\bmf{u})=\bmf{0}\hspace*{1.95cm}\mbox{on}\ \ \partial\Omega,\medskip\\
&\displaystyle{ \lim_{r\rightarrow\infty}r^{\frac{1}{2}}\left(\frac{\partial \bmf{u}_\beta^{\mathrm{sc} }}{\partial r}-\mathrm{i}k_\beta \bmf{u}_\beta^{\mathrm{sc} }\right) =\,0,} \quad \beta=p,s,
\end{cases}
\end{equation}
where the last equation is the Kupradze radiation condition that holds uniformly in $\hat{\mathbf{ x}}:=\mathbf{ x}/|\mathbf{ x}|\in\mathbb{S}^1$. The boundary condition $\mathscr{B}(\mathbf u)$ on $\partial \Omega$ could be either of the following six conditions:
\begin{enumerate}
	\item the first kind (Dirichlet) condition ($\Omega$ is a rigid obstacle): \begin{equation}\label{eq:B1u}
		\mathscr{B}(\bmf{u}):=\mathscr{B}_2(\bmf{u})=\bmf{u};
	\end{equation}
	\item the second kind (Neumann) condition ($\Omega$ is a traction-free obstacle):
	\begin{equation}\label{eq:B2u}
	\mathscr{B}(\bmf{u}):=\mathscr{B}_1(\bmf{u})=T_\nu \bmf{u};
	\end{equation}

	\item the impedance condition ($\Omega$ is an impedance obstacle): $\mathscr{B}(\bmf{u})=T_\nu \bmf{u}+\boldsymbol{ \eta} \bmf{u},\ \boldsymbol \eta \in \mathcal{A},  \ \Re(\boldsymbol \eta)\geq 0 \mbox{ and } \Im(\boldsymbol \eta)>0$;
\item the third kind boundary condition ($\Omega$ is an soft-clamped  obstacle):
\begin{equation}\label{eq:B3u}
	\mathscr{B}(\bmf{u}):=\mathscr{B}_3(\bmf{u})=\left(
\begin{array}{c}
{\nu} \cdot  \mathbf{u} \\
\boldsymbol{\tau} \cdot T_{\mathbf{\nu}} \bmf{u} \\
\end{array}
\right);
\end{equation}
\item the forth kind boundary condition ($\Omega$ is a simply-supported obstacle):
\begin{equation}\label{eq:B4u}
\mathscr{B}(\bmf{u}):=\mathscr{B}_4(\bmf{u})=\left(
\begin{array}{c}
\boldsymbol{\tau} \cdot \bmf{u} \\
{\nu} \cdot T_{\mathbf{\nu}} \mathbf{u} \\
\end{array}
\right);
\end{equation}
\item the  generalized-impedance  kind boundary condition ($\Omega$ is a generalized  impedance obstacle):
\begin{equation}\label{eq:B5u}
\mathscr{B}(\bmf{u})
=\left(
\begin{array}{c}
{\nu} \cdot  \mathbf{u} \\
\boldsymbol{\tau} \cdot T_{\mathbf{\nu}} \bmf{u} \\
\end{array}
\right)+ \boldsymbol \eta \left(
\begin{array}{c}
\boldsymbol{\tau} \cdot \bmf{u} \\
{\nu} \cdot T_{\mathbf{\nu}} \mathbf{u} \\
\end{array}
\right),\  \boldsymbol \eta \in \mathcal{A}, \, \Re( \boldsymbol\eta)\geq 0 \mbox{ and } \Im( \boldsymbol\eta)>0,
\end{equation}
	\end{enumerate}
where $\nu$ denotes the exterior unit normal vector to $\partial\Omega$,  $\boldsymbol{\tau}= \nu^\perp$ and the boundary  traction operator $T_\nu$ is defined in \eqref{eq:Tu}.



In this paper, we always assume the well posedness of the elastic system \eqref{forward}  associated with either of the six kinds
of boundary conditions is fullfilled, which means that there exits a unique
solution $ \bmf{u} \in H^1_{\mathrm{loc} } ({\mathbb R}^2 \backslash \Omega )$ to \eqref{forward}. We refer to \cite{ElschnerYama2010,Lai,Kupradze} for the related results when the boundary condition in \eqref{forward} is the third or forth kind boundary, while the corresponding  well posedness result for the first kind boundary can be found in \cite{HannerHsiao}. It is known that the compressional and shear parts $\bmf{u}_\beta^{\mathrm {sc} }$ ($\beta=p,s$)
of a radiating solution $\bmf{u}^{\mathrm{sc} }$ to the elastic system  \eqref{forward} possess the following asymptotic expansions
\begin{equation}
	\begin{aligned}
	\bmf{u}_{p}^{\mathrm{sc}}(\bmf{x}; k_p,k_s, \mathbf{d}) &=\frac{\mathrm{e}^{\mathrm{i} k_{p} r}}{\sqrt{r}}\left\{u_{p}^{\infty}(\hat{\bmf{x}}; \bmf{d} ) \hat{\bmf{x}}+\Oh\left(\frac{1}{r}\right)\right\} \\
	\bmf{u}_{s}^{\mathrm{sc}}(\bmf{x};k_p,k_s, \mathbf{d}) &=\frac{\mathrm{e}^{\mathrm{i} k_{s} r}}{\sqrt{r}}\left\{u_{s}^{\infty}(\hat{\bmf{x}}; \bmf{d}) \hat{\bmf{x}}^{\perp}+\Oh\left(\frac{1}{r}\right)\right\}
	\end{aligned}
\end{equation}
as $r =|\bmf{x} | \rightarrow \infty$, where $u_{p}^{\infty}$ and $u_{s}^{\infty}$ are both scalar functions defined on $\mathbb S^1$. Hence, a Kupradze radiating solution has the asymptotic behavior
$$
\bmf{u}^{\mathrm{sc}}(\bmf{x}; k_p,k_s, \mathbf{d})=\frac{\mathrm{e}^{\mathrm{i} k_{p} r}}{\sqrt{r}} u_{p}^{\infty}(\hat{\bmf{x} }; \bmf{d} ) \hat{\bmf{x}}+\frac{\mathrm{e}^{\mathrm{i} k_{s} r}}{\sqrt{r}} u_{s}^{\infty}(\hat{\bmf{x} }; \bmf{d} ) \hat{\bmf{x} }^{\perp}+\Oh\left(\frac{1}{r^{3 / 2}}\right) \quad \text { as } \quad r \rightarrow \infty
$$
The far-field pattern $\bmf{u}^\infty$ of $\bmf{u}^{\mathrm{sc}} $ is defined as
$$
\bmf{u}_t^{\infty}(\hat{\bmf{x}}; \bmf{d} ) :=u_{p}^{\infty}(\hat{\bmf{x}}; \bmf{d} ) \hat{\bmf{x}}+u_{s}^{\infty}(\hat{\bmf{x}}; \bmf{d} ) \hat{\bmf{x}}^{\perp}.
$$
Obviously, the compressional and shear parts of the far-field are uniquely determined by $\bmf{u}^{\infty}$
as follows:
$$
\bmf{u}_{p}^{\infty}(\hat{\mathbf{x}}; \bmf{d} )=\bmf{u}^{\infty}(\hat{\bmf{x}};\bmf{d} ) \cdot \hat{\bmf{x}};  \quad \bmf{u}_{s}^{\infty}(\hat{\bmf{x}},\bmf{d} )=\bmf{u}^{\infty}(\hat{\bmf{x}}; \bmf{d} ) \cdot \hat{\bmf{x}}^{\perp}.
$$

The inverse elastic scattering problem corresponding to \eqref{forward} concerns the determination of the scatterer $\Omega$ (and $\boldsymbol \eta$ as well in the impedance case) by knowledge of the far-field pattern $\bmf{u}_\beta^\infty(\hat{\mathbf{ x}},\mathbf{d},k)$, where $\beta=t,p$ or $s$.  We introduce the operator $\mathcal{F}$ which sends the obstacle to the corresponding far-field pattern and is defined by the forward scattering system \eqref{forward}, the aforementioned inverse problem can be formulated as
\begin{equation}\label{inverse}
\mathcal{F}(\Omega, \eta)=\bmf{u}_\beta^\infty(\hat{\mathbf{ x}}; \mathbf{d}),\quad \beta=t,p, \mbox{ or } s.
\end{equation}

Next, we show that by using the generalized Holmgren's uniqueness principle, we can establish two novel unique identifiability results for \eqref{inverse} in determining an obstacle without knowing its a priori physical property as well as its possible surface impedance by at most six  far-field patterns, namely $\bmf{u}_\beta^\infty(\hat{\mathbf{x}})$ corresponding to six  distinct $\mathbf{d}$'s.

\begin{defn}\label{def:61}
Let $Q\subset\mathbb{R}^2$ be a polygon in $\mathbb{R}^2$ such that
\begin{equation}\label{eq:edge}
\partial Q=\cup_{j=1}^\ell \Gamma_j,
\end{equation}
where each $\Gamma_j$ is an edge of $\partial Q$. $Q$ is said to be a generalized elastic obstacle associated with \eqref{forward} if there exists a Lipschitz dissection of $\Gamma_j$, $1\leq j\leq \ell$,
\[
\Gamma_j=\cup_{i=1}^6 \Gamma_i^j 
\]
such that
\begin{equation}\label{eq:66}
\begin{split}
\mathscr{B}_i(\bmf{u})&=\mathbf{0} \quad \text { on } \Gamma_{i}^j, \quad i=1,2,3,4,\\
\mathscr{B}_1(\bmf{u})+\boldsymbol\eta \mathscr{B}_2(\bmf{u})&=\mathbf{0} \quad \text { on } \Gamma_{5}^j,\quad \mathscr{B}_3(\bmf{u})+\boldsymbol\eta \mathscr{B}_4(\bmf{u})=\mathbf{0} \quad \text { on } \Gamma_{6}^j,
\end{split}
\end{equation}
where $\boldsymbol \eta \in \mathcal{A}$ with $\Im( \boldsymbol \eta) \geq 0$, $\mathscr{B}_1(\mathbf{ u})$, $\mathscr{B}_2(\mathbf{ u})$, $\mathscr{B}_3(\mathbf{ u})$ and $\mathscr{B}_4(\mathbf{ u})$ are defined in  \eqref{eq:B1u},  \eqref{eq:B2u}, \eqref{eq:B3u} and \eqref{eq:B4u}, respectively.  
\end{defn}

It is emphasized that in \eqref{eq:66}, either $\Gamma_1^j, \Gamma_2^j$, $\Gamma_3^j, \Gamma_4^j$, $\Gamma_5^j$ or $\Gamma_6^j$ could be an empty set, and hence a generalized-impedance obstacle could be purely a rigid obstacle, a traction-free obstacle, an impedance obstacle, a soft-clamped obstacle, a simply-supported obstacle, a  generalized elastic obstacle  or a mixed type. Moreover, one each edge of the polygonal obstacle, the impedance parameter can take different (complex) values. In order to simply notations, for $i\in \{1,3\}$ we formally write $\mathscr{B}_{i}(\bmf{u})+\boldsymbol\eta \mathscr{B}_{i+1}(\bmf{u})$ with $\boldsymbol \eta \equiv \infty$ to signify $\mathscr{B}_{i+1}(\bmf{u})=\bmf{0}$. In doing so, \eqref{eq:66} can be unified as $\mathscr{B}_{i}(\bmf{u})+\boldsymbol\eta \mathscr{B}_{i+1}(\bmf{u})=\bmf{0}$ on $\partial\Omega$ with
\begin{equation}\label{eq:eta}
\begin{split}
\boldsymbol\eta=&0\cdot\chi_{\cup_{j=1}^\ell \Gamma_1^j}+0\cdot\chi_{\cup_{j=1}^\ell \Gamma_3^j}+\infty\cdot\chi_{\cup_{j=1}^\ell \Gamma_2^j}+\infty\cdot\chi_{\cup_{j=1}^\ell \Gamma_4^j}+\sum_{j=1}^\ell \boldsymbol \eta_j\cdot\chi_{\Gamma_5^j}\\
&+\sum_{j=1}^\ell \boldsymbol \eta_j\cdot\chi_{\Gamma_6^j}\quad (\boldsymbol \eta_j \in \mathcal A)
\end{split}
\end{equation}
and $i \in \{1,3\}$.
We write $(Q,\boldsymbol\eta)$ to denote a generalized polygonal impedance obstacle as describe above with $\boldsymbol\eta\in L^\infty(\partial Q)\cup\{\infty\}$.
In what follows, $(\Omega , \boldsymbol\eta)$ is said to be an admissible complex obstacle if
\begin{equation}\label{eq:p1}
(\Omega, \boldsymbol\eta)=\cup_{j=1}^p (\Omega_j, \boldsymbol\eta_j),
\end{equation}
where each $(\Omega_j, \boldsymbol\eta_j)$ is a generalized polygonal impedance obstacle such that $\Omega_j, j=1, 2,\ldots, p$ are pairwise disjoint and
\begin{equation}\label{eq:r2b}
\boldsymbol\eta=\sum_{j=1}^p \boldsymbol\eta_j\chi_{\partial\Omega_j},\quad \boldsymbol\eta_j\in L^\infty(\partial\Omega_j)\cup\{\infty\},
\end{equation}
the constant part $\eta_j$ of the variable function $\boldsymbol\eta_j \in \mathcal A$ is not equal to 
$$
\pm \bsi \mbox{ and } \frac{\pm \sqrt{(\lambda + 3\mu)(\lambda + \mu)} - \mu \bsi}{\lambda + 2 \mu}. 
$$

%

The following auxiliary lemma states the linear independence of the set of total wave fields $\{\bmf{u}(\bmf{x}; k_p,k_s, \mathbf{d}_\ell), \, \ell=1,\ldots, n\}$  associated with the incident wave \eqref{eq:ui}, where $\mathbf{d}_\ell$ are pairwise distinct.
\begin{lem}\cite[Lemma 5.1]{DLW}\label{lem:51}
	Let $\mathbf {\mathbf d}_{\ell}\in\mathbb{S}^1$, $\ell=1,\ldots, n$, be $n$ vectors
	which are distinct from each other. Suppose that $\Omega$ is a bounded Lipschitz domain and $\mathbb R^2\backslash \overline{\Omega } $ is connected. Let the incident elastic wave filed $\bmf{u}^i(\mathbf{ x};k_p,k_s, \mathbf{d}_\ell)$   be defined in \eqref{eq:ui}. Furthermore, suppose that the total elastic wave filed $\bmf{u}(\bmf{x}; k_p,k_s, \mathbf{d}_\ell)$ associated with $\bmf{u}^i(\mathbf{ x};k_p,k_s, \mathbf{d}_\ell)$
 satisfies \eqref{forward}. Then {the following set of functions is linearly independent:}
	$$
	\{\bmf{u}(\bmf{x}; k_p,k_s, \mathbf{d}_\ell);~\mathbf x \in D , \ \ \ell=1,2,\ldots, n \},
	$$
	where $D \subset \mathbb R^2 \backslash \overline \Omega $ is an open set.
\end{lem}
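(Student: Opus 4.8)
The statement to prove is Lemma~5.1 (quoted from \cite{DLW}), asserting the linear independence of the total wave fields $\bmf{u}(\bmf{x};k_p,k_s,\mathbf{d}_\ell)$, $\ell=1,\dots,n$, on an open set $D$ in the exterior of the obstacle, where the $\mathbf{d}_\ell$ are pairwise distinct incident directions.

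\textbf{Overall strategy.} The plan is to argue by contradiction. Suppose there exist constants $c_\ell$, not all zero, such that $\bmf{w}:=\sum_{\ell=1}^n c_\ell\,\bmf{u}(\cdot;k_p,k_s,\mathbf{d}_\ell)\equiv\bmf{0}$ on the open set $D\subset\mathbb{R}^2\setminus\overline{\Omega}$. Since each $\bmf{u}(\cdot;\mathbf{d}_\ell)$ solves the Navier equation $\mathcal{L}\bmf{u}+\omega^2\bmf{u}=\bmf{0}$ in $\mathbb{R}^2\setminus\overline{\Omega}$, which is an elliptic system with constant coefficients, the linear combination $\bmf{w}$ is real-analytic in the connected open set $\mathbb{R}^2\setminus\overline{\Omega}$. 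Vanishing on the open subset $D$ therefore forces $\bmf{w}\equiv\bmf{0}$ in all of $\mathbb{R}^2\setminus\overline{\Omega}$, by the strong unique continuation / identity theorem for real-analytic functions.

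\textbf{Separating incident and scattered parts via asymptotics.} Writing $\bmf{u}(\cdot;\mathbf{d}_\ell)=\bmf{u}^i(\cdot;\mathbf{d}_\ell)+\bmf{u}^{\mathrm{sc}}(\cdot;\mathbf{d}_\ell)$, the relation $\bmf{w}\equiv\bmf{0}$ reads
\begin{equation}\notag
\sum_{\ell=1}^n c_\ell\,\bmf{u}^i(\bmf{x};\mathbf{d}_\ell)=-\sum_{\ell=1}^n c_\ell\,\bmf{u}^{\mathrm{sc}}(\bmf{x};\mathbf{d}_\ell),\qquad \bmf{x}\in\mathbb{R}^2\setminus\overline{\Omega}.
\end{equation}
The right-hand side is a Kupradze radiating solution, hence decays like $|\bmf{x}|^{-1/2}$ as $|\bmf{x}|\to\infty$; consequently the left-hand side, a finite sum of plane waves, must also decay at infinity. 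The next step is to show this is impossible unless all $c_\ell=0$. Here I would exploit the plane-wave structure: each $\bmf{u}^i(\bmf{x};\mathbf{d}_\ell)=\alpha_p\mathbf{d}_\ell e^{\mathrm{i}k_p\bmf{x}\cdot\mathbf{d}_\ell}+\alpha_s\mathbf{d}_\ell^{\perp}e^{\mathrm{i}k_s\bmf{x}\cdot\mathbf{d}_\ell}$ is a (vector-valued) bounded but non-decaying entire function whose behaviour is controlled by the distinct phase directions $\mathbf{d}_\ell$ (and the two wave numbers $k_p\neq k_s$). One clean way is to restrict $\bmf{x}$ to suitable rays or to use that the functions $e^{\mathrm{i}k_\beta\bmf{x}\cdot\mathbf{d}_\ell}$, $\beta\in\{p,s\}$, $\ell=1,\dots,n$, are linearly independent as functions on $\mathbb{R}^2$ (their frequency vectors $k_p\mathbf{d}_\ell$ and $k_s\mathbf{d}_\ell$ are $2n$ distinct points in $\mathbb{R}^2$, using $k_p\neq k_s$ and the pairwise distinctness of the $\mathbf{d}_\ell$; if $\alpha_p\alpha_s\neq0$ one checks no cancellation among the vectorial coefficients $\alpha_p\mathbf{d}_\ell,\alpha_s\mathbf{d}_\ell^\perp$ can occur either). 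Since a nontrivial such combination cannot tend to $0$ at infinity (e.g.\ it does not even belong to $L^2$ of any exterior domain, while the scattered field does by the radiation condition and interior elliptic estimates), we conclude $c_\ell=0$ for all $\ell$, a contradiction.

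\textbf{Main obstacle.} The analytic-continuation step is routine given the constant-coefficient ellipticity of $\mathcal{L}+\omega^2$. The crux is the rigidity argument separating the incident superposition from the scattered superposition: one must argue carefully that a nontrivial finite linear combination of the vector plane waves in \eqref{eq:ui} with distinct directions and two distinct wave numbers cannot decay at infinity, handling the vectorial coefficients $\mathbf{d}_\ell$ and $\mathbf{d}_\ell^{\perp}$ (which are not scalars) and the possibility $\alpha_p=0$ or $\alpha_s=0$. I would handle this by projecting onto a fixed direction or onto the Helmholtz (compressional/shear) components and invoking the standard fact that exponentials with distinct frequency vectors are linearly independent, together with the observation that the scattered part lies in $H^1_{\mathrm{loc}}$ and is $L^2$ near infinity by the Kupradze radiation condition, whereas a nonzero plane-wave superposition is not. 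This yields the desired contradiction and completes the proof.
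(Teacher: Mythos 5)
Your proposal is correct and follows essentially the same (standard) route as the cited proof in \cite{DLW}: a vanishing nontrivial combination on $D$ extends by real-analyticity to all of the connected exterior, and then the radiating superposition $\sum_\ell c_\ell\bmf{u}^{\mathrm{sc}}(\cdot;\mathbf{d}_\ell)=O(|\bmf{x}|^{-1/2})$ is played off against the non-decay of a nontrivial superposition of plane waves whose $2n$ frequency vectors $k_p\mathbf{d}_\ell,\,k_s\mathbf{d}_\ell$ are pairwise distinct (since $k_p<k_s$ and the $\mathbf{d}_\ell$ are distinct), forcing $c_\ell\alpha_p\mathbf{d}_\ell=c_\ell\alpha_s\mathbf{d}_\ell^{\perp}=\mathbf{0}$ and hence $c_\ell=0$ because $|\alpha_p|+|\alpha_s|\neq0$.

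One concrete correction: your parenthetical justification that the scattered field ``is $L^2$ near infinity by the radiation condition'' is false in two dimensions — $|\bmf{u}^{\mathrm{sc}}|^2\sim r^{-1}$ is not integrable against $r\,\mathrm{d}r\,\mathrm{d}\theta$, so a Kupradze-radiating field is generally \emph{not} square-integrable over an exterior domain in $\mathbb{R}^2$. The argument should instead rest on the uniform pointwise decay $\bmf{u}^{\mathrm{sc}}=O(r^{-1/2})$: for a finite exponential sum $f(\bmf{x})=\sum_j \mathbf{v}_j e^{\mathrm{i}\xi_j\cdot\bmf{x}}$ with distinct $\xi_j$ one has $\lim_{R\to\infty}|B_R|^{-1}\int_{B_R}f(\bmf{x})e^{-\mathrm{i}\xi_k\cdot\bmf{x}}\,\mathrm{d}\bmf{x}=\mathbf{v}_k$, while the same averages of any function tending to zero at infinity vanish; this yields $\mathbf{v}_k=\mathbf{0}$ for every $k$ and closes the argument rigorously. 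With that substitution the proof is complete.
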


\begin{thm}\label{thm:uniqueness1}
Let $(\Omega, \boldsymbol \eta)$ and $(\widetilde\Omega, \widetilde{\boldsymbol\eta})$ be two admissible complex obstacles. Let $\omega\in\mathbb{R}_+$ be fixed and $\mathbf{d}_\ell$, $\ell=1, \ldots, 8$ be eight distinct incident directions from $\mathbb{S}^1$. Let $\bmf{u}_\beta^\infty$ and $\widetilde{\bmf{u}}^\infty_\beta$ be, respectively, the far-field patterns associated with $(\Omega, \boldsymbol\eta)$ and $(\widetilde\Omega, \widetilde{\boldsymbol\eta})$, where $\beta=t,p, \mbox{ or } s$. If
\begin{equation}\label{eq:cond1}
\bmf{u}_\beta^\infty(\hat{\mathbf{ x}}; \mathbf{d}_\ell )=\widetilde{\bmf{u}}_\beta^\infty(\hat{\mathbf{ x}}; \mathbf{d}_\ell), \ \ \hat{\mathbf x}\in\mathbb{S}^1, \ell=1, \ldots, 8,
\end{equation}
then one has that
\begin{equation}\label{eq:u1n}
\Omega =\widetilde{\Omega}\mbox{ and } \boldsymbol\eta=\widetilde {\boldsymbol\eta}.
\end{equation}
\end{thm}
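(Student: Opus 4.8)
### Proof Strategy for Theorem 5.1

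The plan is to argue by contradiction, following the well-established variational/contradiction scheme for polygonal uniqueness (as in \cite{DLW,ElschnerYama2010}), but now powered by the comprehensive GHP results established in Sections~\ref{sect:3} and~\ref{sec:4}. Suppose $(\Omega,\boldsymbol\eta)\neq(\widetilde\Omega,\widetilde{\boldsymbol\eta})$ while the far-field patterns coincide for the eight incident directions $\mathbf d_1,\dots,\mathbf d_8$. By Rellich's lemma and unique continuation for the Lam\'e system in the unbounded connected component $G$ of $\mathbb R^2\setminus(\overline\Omega\cup\overline{\widetilde\Omega})$, the total fields agree: $\mathbf u(\cdot;\mathbf d_\ell)=\widetilde{\mathbf u}(\cdot;\mathbf d_\ell)$ in $G$ for each $\ell$. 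Since the two obstacles are distinct, there is (without loss of generality) a point $\mathbf x_*$ lying on $\partial\Omega$ but in the exterior of $\widetilde\Omega$ (or in the interior of a component of the "difference'' region); a standard argument shows one may choose $\mathbf x_*$ to be a vertex of $\Omega$ (or an interior edge point) such that a small neighborhood $B_\rho(\mathbf x_*)$ is contained in $\mathbb R^2\setminus\overline{\widetilde\Omega}$. Then $\widetilde{\mathbf u}(\cdot;\mathbf d_\ell)$, hence $\mathbf u(\cdot;\mathbf d_\ell)$, solves \eqref{eq:lame} (with $\kappa=\omega^2$) throughout $B_\rho(\mathbf x_*)$, i.e.\ it is a genuine Lam\'e eigenfunction near $\mathbf x_*$.

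The core of the argument is then purely local at $\mathbf x_*$. Near a vertex, two edges $\Gamma_h^\pm$ of $\partial\Omega$ meet with angle $\varphi_0\in(0,\pi)$ (after the rigid-motion normalization of Section~\ref{sect:2}), and on each edge one of the six homogeneous boundary conditions from Definition~\ref{def:61} is satisfied by $\mathbf u(\cdot;\mathbf d_\ell)$ for \emph{every} $\ell$. If $\varphi_0\neq\pi$, the relevant two-line GHP theorem from Section~\ref{sec:4} (one of Theorems~\ref{thm:soft thm}, \ref{thm:Tu&third exp}, \ref{thm:Tu+&third exp}, \ref{thm:u&forth exp}--\ref{thm:Tu+&forth exp}, \ref{thm:forth&third exp}, \ref{thm:u&GI exp}--\ref{thm:GI&forth exp}, \ref{thm:GI&GI}, or the results from \cite{DLW} for rigid/traction/impedance pairs, together with Theorem~\ref{thm:Tu thm} for the traction-traction case) applies, with the admissibility hypotheses on $\eta_j$ precisely matching the exclusion sets in those theorems; the auxiliary conditions such as $\mathbf u(\mathbf 0)=\mathbf 0$ needed in the soft-clamped/traction-free scenarios follow because the boundary conditions force the relevant traces to vanish. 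One concludes $\mathbf u(\cdot;\mathbf d_\ell)\equiv\mathbf 0$ in $\Omega$'s connected component, and by unique continuation $\mathbf u(\cdot;\mathbf d_\ell)\equiv\mathbf 0$ in the whole exterior domain, for every $\ell$. But then $\mathbf u^i(\cdot;\mathbf d_\ell)=-\mathbf u^{\mathrm{sc}}(\cdot;\mathbf d_\ell)$ would be a radiating solution, which is impossible (a plane wave is not $L^2$-decaying); alternatively, by Lemma~\ref{lem:51} the eight total fields are linearly independent, contradicting them all being zero. Hence $\Omega=\widetilde\Omega$.

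There is one subtlety: the degenerate case $\varphi_0=\pi$, i.e.\ when $\mathbf x_*$ is an interior point of an edge rather than a vertex, or when two edges of the two obstacles are collinear. Here the two-line GHP reduces either to the single-line case or to the classical HUP. If the single-line singular-set conditions from Definition~\ref{def:2}/\ref{def:3} happen to hold (which would follow if, for a given direction, the point-value conditions are met), Theorems~\ref{thm:third}, \ref{thm:fourth}, \ref{thm:g im} and \ref{thm:impedance line} give $\mathbf u\equiv\mathbf 0$ directly; if two \emph{different} homogeneous conditions hold on the same (collinear) segment, the classical HUP gives $\mathbf u\equiv\mathbf 0$; and if the same condition holds, one uses that the edge is in the interior and argues via a neighboring vertex of $\Omega$ instead. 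The number eight of incident directions enters to guarantee, via Lemma~\ref{lem:51}, that after peeling off possible exceptional directions (those few $\mathbf d_\ell$ for which a point-value condition such as $a_0=0$ fails), enough total fields remain to run the argument and force the final contradiction. The main obstacle is the careful bookkeeping: enumerating all $6\times 6$ pairings of boundary conditions on the two edges, verifying that in each case either a Section~\ref{sec:4} theorem or \cite{DLW} applies with the stated admissibility constraints, and handling the directions for which the auxiliary hypotheses degenerate—this is exactly where the ``at most eight'' count and the exclusion of the special impedance values $\pm\mathrm i$ and $\tfrac{\pm\sqrt{(\lambda+3\mu)(\lambda+\mu)}-\mu\mathrm i}{\lambda+2\mu}$ are consumed.

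Once $\Omega=\widetilde\Omega=:D$ is established, the recovery of the impedance $\boldsymbol\eta=\widetilde{\boldsymbol\eta}$ is comparatively routine: on $\partial D$ the two total fields coincide (by unique continuation up to the common boundary), so on any edge portion $\Gamma_5^j$ or $\Gamma_6^j$ one has $\mathscr B_1(\mathbf u)+\boldsymbol\eta\,\mathscr B_2(\mathbf u)=\mathbf 0$ and simultaneously $\mathscr B_1(\mathbf u)+\widetilde{\boldsymbol\eta}\,\mathscr B_2(\mathbf u)=\mathbf 0$ (respectively with $\mathscr B_3,\mathscr B_4$), whence $(\boldsymbol\eta-\widetilde{\boldsymbol\eta})\mathscr B_2(\mathbf u)=\mathbf 0$ on that edge; it remains to show $\mathscr B_2(\mathbf u)=\mathbf u|_{\Gamma}$ (or $\mathscr B_4(\mathbf u)$) cannot vanish on a subsegment, for otherwise $\mathbf u$ would have two distinct homogeneous conditions on a line segment and the HUP (or the single-line GHP) would force $\mathbf u\equiv\mathbf 0$, again contradicting Lemma~\ref{lem:51}. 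Therefore $\boldsymbol\eta=\widetilde{\boldsymbol\eta}$ a.e.\ on $\partial D$, completing the proof.
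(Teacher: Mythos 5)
Your outer frame (contradiction, Rellich's lemma to identify the total fields in the unbounded component $\mathbf{G}$, and the final recovery of $\boldsymbol\eta$ via the classical Holmgren principle once the shapes coincide) matches the paper, and the $\boldsymbol\eta=\widetilde{\boldsymbol\eta}$ part of your argument is essentially the paper's. However, the core of your shape-determination step rests on locating a \emph{vertex} $\mathbf{x}_*$ of one obstacle that is exposed to $\mathbf{G}$, so that the two-line intersection theorems of Section~\ref{sec:4} can be invoked. This is a genuine gap: the topological argument (as in \cite{Liu-Zou}) only guarantees a line segment $\Gamma_h\subset\partial\mathbf{G}\setminus\partial\Omega$ lying on an edge of $\widetilde\Omega$; it does \emph{not} guarantee that any vertex of either polygon is accessible from $\mathbf{G}$. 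Indeed, the corner-based argument is exactly what the paper uses for the weaker \emph{local} uniqueness result (Theorem~\ref{thm:uniqueness2}), whose conclusion is only that the symmetric difference of the boundaries has no corner on $\partial\mathbf{G}$. For the global statement you must handle the generic situation where only a flat piece of one boundary is exposed, and there your treatment ("if the single-line singular-set conditions \ldots happen to hold") does not close the argument: a single homogeneous boundary condition on a segment never by itself forces $\mathbf{u}\equiv\mathbf{0}$ --- that is precisely why the GHP requires the extra point-value conditions of Definitions~\ref{def:2} and \ref{def:3}.

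The missing device, and the actual reason the count is eight, is a linear-algebra step: at a fixed point $\mathbf{x}_0\in\Gamma_h$ one collects, for each incident direction, the $\mathbb{C}^7$-vector consisting of $\mathbf{u}(\mathbf{x}_0;\mathbf{d}_\ell)$ (two components), ${\nu}^\top\nabla\mathbf{u}\,{\nu}|_{\mathbf{x}_0}$, $\boldsymbol{\tau}^\top\nabla\mathbf{u}\,{\nu}|_{\mathbf{x}_0}$, and the Fourier coefficients $b_{\ell,0},b_{\ell,1},b_{\ell,2}$ of the local radial expansion. Eight vectors in $\mathbb{C}^7$ are linearly dependent, so there is a nontrivial combination $\mathbf{u}=\sum_{\ell=1}^8\alpha_\ell\,\mathbf{u}(\cdot;\mathbf{d}_\ell)$ for which \emph{all} of these quantities vanish simultaneously; this combination still satisfies the homogeneous condition on $\Gamma_h$, hence $\Gamma_\varepsilon$ becomes a \emph{singular} line of one of the six types, and the single-line GHP theorems (Theorems~\ref{thm:impedance line}, \ref{thm:third}, \ref{thm:fourth}, \ref{thm:g im} and the rigid/traction results of \cite{DLW}) force $\mathbf{u}\equiv\mathbf{0}$ near $\mathbf{x}_0$; Lemma~\ref{lem:51} then yields the contradiction since the $\alpha_\ell$ are not all zero. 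Your alternative reading of the number eight --- "peeling off exceptional directions for which a point-value condition fails" --- is not how the hypotheses of the GHP are met and would not produce a direction for which they all hold at once. Please replace the vertex-based core with this single-segment linear-combination argument.
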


\begin{proof}
By an absurdity argument, we first prove that if \eqref{eq:cond1} holds, one must have that $\Omega=\widetilde \Omega$.  Suppose that $\Omega $ and $\widetilde{\Omega}$ are two different admissible complex obstacles such that $\Omega\neq \widetilde\Omega$ and \eqref{eq:cond1} holds. Let $\mathbf{G}$ denote the unbounded connected component of $\mathbb{R}^2\backslash\overline{(\Omega\cup\widetilde\Omega)}$. Then by a similar topological argument to that in \cite{Liu-Zou}, one can show that there exists a line segment $\Gamma_h\subset\partial\mathbf{G}\backslash\partial\Omega$ or $\Gamma_h\subset\partial\mathbf{G}\backslash\partial\widetilde\Omega$. Without loss of generality, we assume the former case.

Let $\bmf{u}$ and $\widetilde{\bmf{u}}$ respectively denote the total wave fields to \eqref{forward} associated with $(\Omega, \eta)$ and $(\widetilde\Omega, \widetilde\eta)$. By \eqref{eq:cond1} and the Rellich theorem (cf. \cite{CK}), we know that
\begin{equation}\label{eq:aa3}
\bmf{u} (\mathbf x; k_p,k_s, \mathbf{d}_\ell)=\widetilde{\bmf{u} }(\mathbf x; k_p,k_s,\mathbf{d}_\ell),\quad \mathbf{x}\in\mathbf{G},\ \ell=1, \ldots, 8.
\end{equation}
 By using \eqref{eq:aa3} as well as the generalized-impedance boundary condition on $\partial\widetilde\Omega$, we readily have
\begin{equation}\label{eq:aa4}
 \mathscr{B}_{i}(\bmf{u})+{\widetilde {\boldsymbol\eta}} \mathscr{B}_{i+1}(\bmf{u})=
 \mathscr{B}_{i}( \widetilde {\bmf{u}} )+\widetilde{\boldsymbol \eta} \mathscr{B}_{i+1}(\widetilde{ \bmf{u}} )=\bmf{0}\quad\mbox{on}\ \ \Gamma_{h} \mbox{ for } i \in \{1,3\}.
\end{equation}
Consider a fixed point $\mathbf x_0 \in \Gamma_h $. There exits a sufficient small  positive number  $\varepsilon \in {\mathbb R}_+$ such that $B_{2\varepsilon}  (\mathbf x_0) \Subset \mathbf{G} $, where $B_{2\varepsilon }(\mathbf x_0)$ is a disk centered at $\mathbf x_0$ with the radius $2\varepsilon$. Without loss of generality, we may assume that $\mathbf x_0 $ is the origin.   Let $\Gamma_\varepsilon =  B_\varepsilon (\mathbf 0) \cap  \Gamma_h$, where $B_{\varepsilon }(\mathbf 0)$ is a disk centered at $\mathbf{x_0}$ with the radius $\varepsilon$. It is also noted that
$$
-{\mathcal L} \bmf{u}= \omega^2 \bmf{u} \mbox{ in } B_{2\varepsilon } (\mathbf 0).
$$
Therefore due to  \eqref{eq:u}  one knows that $\bmf{u}(\mathbf x; k_p,k_s,  \mathbf{d}_\ell)$ has the Fourier expansion  around the origin as follows
\begin{equation}\label{eq:u new}
 \begin{aligned}
 \mathbf{u}(\mathbf{x}; k_p,k_s,  \mathbf{d}_\ell)=  &\sum_{m=0}^{\infty}  \left\{ \frac{k_p}{2} a_{\ell,m} \mathrm{e}^{\bsi m \varphi} \left\{J_{m-1}\left(k_{p} r\right)\mathrm{e}^{-\bsi \varphi}\mathbf{e}_1
 -J_{m+1}\left(k_{p}r\right)\mathrm{e}^{\bsi \varphi}\mathbf{e}_2 \right\}\right.\\
 & \, + \frac{\bsi k_s}{2} b_{\ell,m} \mathrm{e}^{\bsi m \varphi} \left\{J_{m-1}\left(k_{s} r\right)\mathrm{e}^{-\bsi \varphi}\mathbf{e}_1
  +J_{m+1}\left(k_{s} r\right)\mathrm{e}^{\bsi \varphi}
  \mathbf{e}_2
   \right\} \bigg\} .
 \end{aligned}
\end{equation}

Recall that the unit normal vector $\nu $ and the tangential vector $\boldsymbol{\tau}$ to $\Gamma_h$ are defined in \eqref{eq:nutau}, respectively.  Due to the linear dependence of eight $\mathbb{C}^7$-vectors, it is easy to see that there exist eight complex constants $a_\ell$  such that
\begin{equation}\notag
 \sum_{\ell=1}^8 \alpha_{\ell } \begin{bmatrix}
	\bmf{u} (\mathbf 0; k_p,k_s,  \mathbf{d}_\ell)\cr   
	\boldsymbol{\nu}^\top \nabla \bmf{u} \boldsymbol{\nu}|_{ \mathbf x=\bmf{0} }\\
	\boldsymbol{\tau}^\top \nabla \bmf{u} \boldsymbol{\nu} |_{ \mathbf x=\bmf{0} }\\
	b_{\ell,0}\\
	b_{\ell,1}\\
	b_{\ell,2}
\end{bmatrix}=\bmf{0},
\end{equation}
where $b_{\ell,m}$ is given in  \eqref{eq:u new}, $m=0,1,2$. 
Moreover, there exits at least one $\alpha_\ell $ is not zero.
Let
\begin{equation}\label{eq:u513}
\begin{split}
 \bmf{u}(\bmf{x};k_p,k_s)&=\sum_{\ell=1}^8 \alpha_{\ell } \bmf{u} (\mathbf x; k_p,k_s, \mathbf{d}_\ell)\\
 &=\sum_{m=0}^{\infty}  \left\{ \frac{k_p}{2} a_{m} \mathrm{e}^{\bsi m \varphi} \left\{J_{m-1}\left(k_{p} r\right)\mathrm{e}^{-\bsi \varphi}\mathbf{e}_1
 -J_{m+1}\left(k_{p}r\right)\mathrm{e}^{\bsi \varphi}\mathbf{e}_2 \right\}\right.\\
 & \, + \frac{\bsi k_s}{2} b_{m} \mathrm{e}^{\bsi m \varphi} \left\{J_{m-1}\left(k_{s} r\right)\mathrm{e}^{-\bsi \varphi}\mathbf{e}_1
  +J_{m+1}\left(k_{s} r\right)\mathrm{e}^{\bsi \varphi}
  \mathbf{e}_2
   \right\} \bigg\},
 \end{split}
\end{equation}
where
$$
a_{m}=\sum_{\ell=1}^8 \alpha_\ell a_{\ell,m},\quad b_{m}=\sum_{\ell=1}^8 \alpha_\ell b_{\ell,m},\quad \forall m \in \mathbb N\cup\{0\}. 
$$
Then we have
\begin{equation}\label{eq:611}
 \bmf{u}(\bmf{0};k_p,k_s,\mathbf d_\ell )=\bmf{0} \mbox{ and } 
 \boldsymbol{\nu}^\top \nabla \bmf{u} \boldsymbol{\nu}|_{ \mathbf x=\bmf{0} }=\boldsymbol{\tau}^\top \nabla \bmf{u} \boldsymbol{\nu} |_{ \mathbf x=\bmf{0} }=b_m =0,\, m=0,1,2.
\end{equation}
Next we distinguish two separate cases. The first case is that $ \bmf{u}(\bmf{x};k_p,k_s)\equiv \bmf{0},\,  \forall \bmf{x} \in \mathbf{G}$. In view of \eqref{eq:u513}, since $\alpha_\ell $ are not all zero and $\bmf{d}_\ell$ are distinct, we readily have a contradiction by Lemma \ref{lem:51}. For the second case,  we suppose that $ \bmf{u}(\bmf{x};k_p,k_s)\equiv\hspace*{-3mm}\backslash\  \bmf{0}$.
  In view of \eqref{eq:aa4} and \eqref{eq:611}, recalling Definitions \ref{def:2} and \ref{def:3}, we know that $\Gamma_\varepsilon $ is a singular  line of $\bmf{u}$, which implies that $\Gamma_\varepsilon  $ could be a singular rigid, or singular traction-free, or singular impedance, or singular soft-clamped, or singular simply-supported or singular generalized-impedance line of $\bmf{u}$ in Definition \ref{def:2} and \ref{def:3}. Therefore, by the generalized Holmgren's principle (cf. \cite[Theorems 3.1 and 3.2]{DLW},  Theorems \ref{thm:impedance line},  \ref{thm:third}, \ref{thm:fourth} and \ref{thm:g im}), we obtain that
\begin{equation}\label{eq:613 con}
	\bmf{u}\equiv \bmf{0} \mbox{ in } B_{2\varepsilon }(\bmf{0}),
\end{equation}
which is obviously a contradiction.

Next, we prove that by knowing $\Omega =\widetilde{\Omega}$, one must have that $\boldsymbol \eta \equiv  \widetilde {\boldsymbol \eta}$.  Assume contrarily that $\boldsymbol \eta \neq  \widetilde{\boldsymbol \eta}$. We consider two separated cases. The first case is that the involved boundary condition of $\mathbf u$ given by
$$
\mathscr{B}_1(\bmf{u})+\boldsymbol \eta \mathscr{B}_2(\bmf{u})=\mathscr{B}_1(\bmf{u})+\widetilde {\boldsymbol \eta} \mathscr{B}_2(\bmf{u})=\mathbf{0} \mbox{ on } \Sigma \Subset \partial \Omega=\partial \widetilde \Omega,
$$
where  $\mathscr{B}_1(\bmf{u})$ and $\mathscr{B}_2(\bmf{u})$ are defined in \eqref{eq:B1u} and \eqref{eq:B2u}, respectively.  One can easily show that there exists an open subset $\Sigma$ of $\partial \Omega=\partial \widetilde \Omega$ such that
\begin{equation}\label{eq:523}
	\mathbf{u}=T_\nu \mathbf{u}=\mathbf{0} \mbox { on } \Sigma.
\end{equation}
The second case is that the involved boundary condition of $\mathbf u$ given by
$$
\mathscr{B}_3(\bmf{u})+\boldsymbol \eta \mathscr{B}_4(\bmf{u})=\mathscr{B}_3(\bmf{u})+\widetilde {\boldsymbol \eta} \mathscr{B}_4(\bmf{u})=\mathbf{0} \mbox{ on } \Sigma \Subset \partial \Omega=\partial \widetilde \Omega,
$$
where  $\mathscr{B}_3(\bmf{u})$ and $\mathscr{B}_4(\bmf{u})$ are defined in \eqref{eq:B3u} and \eqref{eq:B4u}, respectively. Since $\boldsymbol \eta \neq \widetilde {\boldsymbol \eta}$,  one can readily have
$$
\mathscr{B}_3(\bmf{u})=\mathscr{B}_4(\bmf{u})=\mathbf 0.
$$
By \eqref{eq:B3u} and \eqref{eq:B4u}, it is obvious that
$$
\nu \cdot \mathbf u=\boldsymbol{\tau} \cdot \mathbf u=\nu \cdot T_\nu \mathbf u=\boldsymbol{\tau} \cdot T_\nu\mathbf u=0 \mbox{ on } \Sigma.
$$
Since $\nu \perp \boldsymbol{\tau}$, $\nu \in \mathbb R^2$, $\boldsymbol{\tau} \in \mathbb R^2$, by distinguishing the real and imaginary parts of $\mathbf u$ and $T_\nu \mathbf u$, one can know that \eqref{eq:523} holds.

Therefore for these two cases by the classical Holmgren's principle, we know that $\mathbf{u}\equiv \mathbf{0}$ in ${\mathbb R}^2 \backslash \Omega $, which readily yields a contradiction.

The proof is complete.
\end{proof}

\begin{rem}
	The shape of an admissible complex obstacle and its physical properties (in the case that $\boldsymbol \eta=0$ or $\boldsymbol \eta=\infty$) can be determined by at most eight far-field patterns. Furthermore, if it is of generalized-impedance type or impedance type, one can determine the surface generalized-impedance parameter as well. In Table \ref{table:results}, we summarize the unique identifiability result for a pure  polygonal rigid obstacle, a pure polygonal traction-free obstacle,  a pure polygonal impedance obstacle, a pure polygonal soft-clamped obstacle, a 	pure polygonal simply-supported obstacle and a 	pure polygonal  generalized-impedance obstacle by analyzing the assumptions' quantities in Definitions \ref{def:2} and \ref{def:3}.  
\end{rem}

\begin{table} [!htbp]
	\centering
	{	
		\begin{tabular}{|c|c|}
			\hline
		Types of the elastic obstacle	&\tabincell{c}{ The number of distinct incident directions \\ are needed for the unique identifiability}\\
			\hline
	\tabincell{c} {a	pure polygonal \\rigid obstacle} &2  \\
			\hline
			\tabincell{c}{a	pure polygonal\\ traction-free obstacle} &4  \\
			\hline
			\tabincell{c}{a	pure polygonal\\ impedance obstacle} &4  \\
			\hline
			\tabincell{c}{a	pure polygonal\\ soft-clamped obstacle} &2  \\
			\hline
			\tabincell{c}{a	pure polygonal \\
			simply-supported obstacle} &2  \\
			\hline
	\tabincell{c}{a	pure polygonal \\ generalized-impedance obstacle }&6  \\
			\hline
		\end{tabular}
	}
	\medskip
	\caption{}
	\label{table:results}
\end{table}

Finally, we show that if fewer far-field patterns are used, one can establish a local uniqueness result in determining a generic class of admissible complex obstacles. In \cite[Theorem 5.2]{DLW}, the corresponding local uniqueness result are considered by imposing a certain condition on the degree of an admissible complex obstacle; please refer to the detailed discussions in \cite[Theorem 5.2]{DLW}.

To that end, we first introduce an admissible complex obstacle with the class $\mathcal C$. Let $\Omega$ be defined in \eqref{eq:p1} that consists of finitely many pairwise disjoint polygons. Let $\Gamma, \Gamma'\subset\partial\Omega$ be two adjacent edges of $\partial\Omega$. 
Moreover, we let $\boldsymbol  \zeta$ and ${\boldsymbol \zeta}'$ respectively signify the values of $\boldsymbol \eta$ on $\Gamma$ and $\Gamma'$ around the vertex formed by those two edges. It is noted that $\boldsymbol \zeta$ and $\boldsymbol \zeta'$ may be $0, \infty$ or a variable function belonging to the class $\mathcal A$ introduced in Definition \ref{def:3}. If $\boldsymbol \zeta \in \mathcal A$, according to  Definition \ref{def:3}, $\boldsymbol \zeta$ is given by the following absolutely convergent series at the intersecting point $\Gamma \cap \Gamma'$:
\begin{equation}\label{eq:zeta1 ex}
		\boldsymbol{\zeta}=\zeta+\sum_{j=1}^\infty \zeta_{1,j} r^j,  
\end{equation}
    where $\zeta\in\mathbb{C}\backslash\{0\}$, $\zeta_{1,j}\in \mathbb C$ and $r\in [0,h]$. Similarly, if $\boldsymbol \zeta' \in \mathcal A$, $\boldsymbol \zeta'$ is given by the following absolutely convergent series at the intersecting point $\Gamma \cap \Gamma'$:
\begin{equation}\label{eq:zeta2 ex}
		\boldsymbol{\zeta}'=\zeta'+\sum_{j=1}^\infty \zeta_{2,j} r^j,  
\end{equation}
    where $\zeta'\in\mathbb{C}\backslash\{0\}$, $\zeta_{2,j}\in \mathbb C$ and $r\in [0,h]$.

An admissible complex obstacle $(\Omega, \eta)$ is said to belong to the class $\mathcal{C}$ if
\begin{equation}\label{eq:cond1n}
\Im(\zeta) \in \mathbb R_+\backslash\{1\}, \quad \Im(\zeta') \in \mathbb R_+\backslash\{1\},
\end{equation}
where $\zeta$ and $\zeta'$ are defined in \eqref{eq:zeta1 ex} and \eqref{eq:zeta2 ex} respectively,
and
\begin{equation}\label{eq:cond2n}
\zeta=\zeta'\quad \mbox{if both $\boldsymbol \zeta$ and $\boldsymbol \zeta'$ are variable functions given by \eqref{eq:zeta1 ex} and \eqref{eq:zeta2 ex} },
\end{equation}
for all two adjacent edges $\Gamma, \Gamma'$ of $\partial\Omega$.

%

\begin{thm}\label{thm:uniqueness2}
Let $(\Omega, \boldsymbol \eta)$ and $(\widetilde\Omega, \widetilde{\boldsymbol \eta})$ be two admissible complex obstacles from the class $\mathcal{C}$ as described above.  Let $\omega \in\mathbb{R}_+$ be fixed and $\mathbf{d}_\ell$, $\ell=1, \ldots, 5$ be five distinct incident directions from $\mathbb{S}^1$.  Let $\mathbf{G}$ denote the unbounded connected component of $\mathbb{R}^2\backslash\overline{(\Omega\cup\widetilde\Omega)}$. Let $\bmf{u}_\beta^\infty$ and $\widetilde{\bmf{u}}^\infty_\beta$ be, respectively, the far-field patterns associated with $(\Omega, \eta)$ and $(\widetilde\Omega, \widetilde\eta)$, where $\beta=t,p, \mbox{ or } s$. If
\begin{equation}\label{eq:cond1 corner}
\bmf{u}_\beta^\infty(\hat{\mathbf{ x}},\mathbf{d}_\ell )=\widetilde{\bmf{u}}_\beta^\infty(\hat{\mathbf{ x}},\mathbf{d}_\ell), \ \ \hat{\mathbf x}\in\mathbb{S}^1, \ell=1, \ldots, 5,
\end{equation}
then one has that
$$
\left(\partial \Omega \backslash \partial \overline{ \widetilde{\Omega }} \right  )\cup \left(\partial \widetilde{\Omega } \backslash \partial \overline{ \Omega } \right)
$$
cannot have a corner on  $\partial \mathbf{G}$.
\end{thm}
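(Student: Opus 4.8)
The plan is to argue by contradiction along the same lines as the proof of Theorem~\ref{thm:uniqueness1}, but now exploiting the two-intersecting-line GHP results of Section~\ref{sec:4} rather than the single-line ones. Suppose the asserted conclusion fails. Then there is a corner, say with vertex $\mathbf{x}_0$, lying on $\partial\mathbf{G}$ and belonging to $\left(\partial\Omega\setminus\partial\overline{\widetilde\Omega}\right)\cup\left(\partial\widetilde\Omega\setminus\partial\overline{\Omega}\right)$. Without loss of generality assume the corner belongs to $\partial\Omega$, with the two edges $\Gamma$, $\Gamma'$ meeting at $\mathbf{x}_0$ contained in $\partial\mathbf{G}$ and in $\partial\Omega\setminus\partial\overline{\widetilde\Omega}$. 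By Rellich's theorem and the hypothesis \eqref{eq:cond1 corner}, the total fields agree in $\mathbf{G}$: $\bmf{u}(\mathbf{x};k_p,k_s,\mathbf{d}_\ell)=\widetilde{\bmf{u}}(\mathbf{x};k_p,k_s,\mathbf{d}_\ell)$ for $\mathbf{x}\in\mathbf{G}$, $\ell=1,\dots,5$. Since $\Gamma$ and $\Gamma'$ lie entirely on $\partial\mathbf{G}$ away from $\partial\widetilde\Omega$, the boundary condition satisfied by $\bmf{u}$ on each of these two edges is one of the six homogeneous conditions $\mathscr B_i(\bmf{u})+\boldsymbol\eta\,\mathscr B_{i+1}(\bmf{u})=\mathbf 0$, $i\in\{1,3\}$, coming from the admissibility of $(\Omega,\boldsymbol\eta)$. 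After a rigid motion we may put $\mathbf{x}_0$ at the origin and realize $\Gamma$, $\Gamma'$ as the two segments $\Gamma_h^\pm$ of \eqref{eq:gamma_pm} with intersecting angle $\varphi_0\neq\pi$ (the corner being genuine).

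Next I would build, exactly as in the proof of Theorem~\ref{thm:uniqueness1}, a suitable linear combination of the five total fields that carries enough vanishing Fourier data at the origin. Writing $\bmf{u}(\mathbf{x};k_p,k_s)=\sum_{\ell=1}^5\alpha_\ell\bmf{u}(\mathbf{x};k_p,k_s,\mathbf{d}_\ell)$, the five coefficients $\alpha_\ell$ give four (complex) degrees of freedom after normalization, which — counting the scalar constraints needed to land in the appropriate singular set of Definition~\ref{def:2}, Definition~\ref{def:3}, or in the hypothesis lists of the two-line theorems — suffice to force the vanishing of the relevant low-order Fourier coefficients $a_0,a_1$ or $b_0,b_1,b_2$ (or the point-value gradient conditions) that appear as the extra assumptions in the two-line GHP results. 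Lemma~\ref{lem:51} guarantees that this nontrivial combination $\bmf{u}$ is not identically zero in $\mathbf{G}$, so we are in the genuinely singular regime. Then $\bmf{u}$ satisfies homogeneous conditions on both $\Gamma_h^+$ and $\Gamma_h^-$ with $\varphi_0\neq\pi$, and the class restriction \eqref{eq:cond1n}–\eqref{eq:cond2n} on $(\Omega,\boldsymbol\eta)\in\mathcal C$ ensures that the constant parts of the impedance functions on $\Gamma$, $\Gamma'$ avoid the finitely many exceptional values (via Lemmas~\ref{lem:determinant} and \ref{lem:49}, using \eqref{eq:cond eta new}) and are equal when both edges carry variable impedances, so that one of the theorems from Section~\ref{sec:4} — Theorem~\ref{thm:soft thm}, \ref{thm:Tu&third exp}, \ref{thm:Tu+&third exp}, \ref{thm:two traction}, \ref{thm:Tu thm}, \ref{thm:u&forth exp}, \ref{thm:Tu&forth exp}, \ref{thm:Tu+&forth exp}, \ref{thm:forth&third exp}, \ref{thm:u&GI exp}, \ref{thm:Tu&GI exp}, \ref{thm:Tu+&GI exp}, \ref{thm:GI&third exp}, \ref{thm:GI&GI}, or \ref{thm:GI&forth exp}, together with \cite[Theorems 3.1, 3.2, 4.1]{DLW} — applies to whichever pair of boundary-condition types occurs on $\Gamma$ and $\Gamma'$. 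Each such theorem yields $\bmf{u}\equiv\mathbf 0$ near the origin, hence in $\mathbf{G}$, contradicting $\bmf{u}\not\equiv\mathbf 0$. If instead the corner lies on $\partial\widetilde\Omega$, the same argument applies with $\widetilde{\bmf{u}}$ in place of $\bmf{u}$. This contradiction proves the claim.

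The main obstacle — and the point that deserves the most care — is the bookkeeping over the boundary-condition types. A corner of $\partial\Omega$ can be formed by any of the six homogeneous conditions on each of its two edges, so a priori there are many combinations, and one must check that every one of them is covered by some theorem from Section~\ref{sec:4} (or reduces to the standard Holmgren principle in the collinear case, which is excluded here since $\varphi_0\neq\pi$). In particular, the delicate cases are those involving a generalized-impedance edge, where the exceptional sets for $\eta_1,\eta_2$ (e.g. $\pm\bsi$, $-\bsi m/(m+2)$, and the roots $\tfrac{\pm\sqrt{(\lambda+3\mu)(\lambda+\mu)}-\mu\bsi}{\lambda+2\mu}$) must be ruled out; this is precisely why the class $\mathcal C$ is defined through \eqref{eq:cond1n}–\eqref{eq:cond2n}, and why the remark replacing \eqref{eq:cond eta} by \eqref{eq:cond eta new} is invoked: the condition $\Im(\zeta),\Im(\zeta')\in\mathbb R_+\setminus\{1\}$ is exactly what keeps the constant part off all the bad values while also matching the sign constraint $\Im(\boldsymbol\eta)>0$ from the forward problem. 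A secondary technical point is verifying that the number five of incident directions is enough: one needs to produce the requisite number of vanishing Fourier-coefficient or point-value conditions simultaneously across the (at most two distinct) boundary types at that single corner, and the worst case — two generalized-impedance edges — is what pins down the count, consistently with the count $8$ needed for the global result in Theorem~\ref{thm:uniqueness1} versus the local count here. Once the case analysis is organized, each individual case is an immediate citation of a theorem already proved.
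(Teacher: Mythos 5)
Your proposal follows essentially the same route as the paper: argue by contradiction, use Rellich's theorem to identify the two total fields on $\mathbf{G}$ so that the corner's two edges become homogeneous lines of $\bmf{u}$, form a nontrivial linear combination of the five total fields vanishing at the corner, invoke Lemma~\ref{lem:51} to rule out the combination being identically zero, and then apply the two-line GHP theorems of Section~\ref{sec:4} (with the class $\mathcal{C}$ restrictions \eqref{eq:cond1n}--\eqref{eq:cond2n} supplying the impedance hypotheses) to reach a contradiction. One correction to your bookkeeping: the only pointwise condition the two-line theorems ever require is $\bmf{u}(\bmf{x}_c)=\bmf{0}$ --- two scalar equations, which the paper obtains from the linear dependence of five $\mathbb{C}^2$-vectors --- whereas the conditions $b_0=b_1=b_2=0$ and the point-value gradient conditions you list belong to the single singular-line setting of Theorem~\ref{thm:uniqueness1} (where eight directions are needed precisely to kill seven scalar quantities); attempting to enforce all of those here with only four free parameters after normalization would fail, but fortunately none of them is needed in the corner case.
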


\begin{proof}
	We prove the theorem by contradiction. Assume \eqref{eq:cond1 corner}   holds but $
\left(\partial \Omega \backslash \partial \overline{ \widetilde{\Omega }} \right  )\cup \left(\partial \widetilde{\Omega } \backslash \partial \overline{ \Omega } \right)
$ has a corner $\mathbf x_c$ on $\partial \mathbf{G}$.  Clearly, $\mathbf x_c$ is either a vertex of $\Omega$ or a vertex of $\widetilde\Omega$. Without loss of generality, we assume the latter case. Let $h\in\mathbb{R}_+$ be sufficiently small such that $B_h(\bmf{x}_c)\Subset\mathbb{R}^2\backslash\overline \Omega $. Moreover, since $\bmf{x}_c$ is a vertex of $\widetilde\Omega$, we can assume that
\begin{equation}\label{eq:aa2}
B_h(\mathbf x_c)\cap \partial\widetilde\Omega=\Gamma_h^\pm,
\end{equation}
where $\Gamma_h^\pm$ are the two line segments lying on the two edges of $\widetilde\Omega$ that intersect at $\mathbf x_c$.  Furthermore, on $\Gamma_h^\pm$ the boundary conditions are given by \eqref{eq:66}.

By \eqref{eq:cond1 corner} and the Rellich theorem (cf. \cite{CK}), we know that
\begin{equation}\label{eq:aa5}
\bmf{u} (\mathbf x; k_p,k_s, \mathbf{d}_\ell)=\widetilde{\bmf{u} }(\mathbf x; k_p,k_s, \mathbf{d}_\ell),\quad x\in\mathbf{G},\ \ell=1,\ldots, 5. 
\end{equation}
It is clear that $\Gamma_h^\pm\subset\partial\mathbf{G}$. Hence, by using \eqref{eq:aa3} as well as the generalized boundary condition \eqref{eq:66} on $\partial\widetilde\Omega$, we readily have
\begin{equation}\label{eq:aa4new}
\mathscr{B}_{i}(\bmf{u})+{\widetilde {\boldsymbol\eta}} \mathscr{B}_{i+1}(\bmf{u})=
 \mathscr{B}_{i}( \widetilde {\bmf{u}} )+\widetilde{\boldsymbol \eta} \mathscr{B}_{i+1}(\widetilde{ \bmf{u}} )=\bmf{0}\quad\mbox{on}\ \ \Gamma_{h}^\pm  \mbox{ for } i \in \{1,3\}.
\end{equation}
It is also noted that in $B_h(\mathbf x_c)$, $-{\mathcal L} \bmf{u}=\omega^2 \bmf{u}$.

Due to the linear dependence of five $\mathbb{C}^2$-vectors, we see that there exits five complex constants $\alpha_\ell $ such that
$$
\sum_{\ell=1}^5 \alpha_\ell \bmf{u}(\bmf{x}_c;k_p,k_s,\bmf{d}_\ell)=\bmf{0},
$$
where there exits at least one $\alpha_\ell $ is not zero. Set $\bmf{u}(\bmf{x};k_p,k_s)=\sum_{\ell=1}^5  \alpha_\ell \bmf{u}(\bmf{x};k_p,k_s,\bmf{d}_\ell)$. Then we know that
\begin{equation}\label{eq:611new}
	 \bmf{u}(\bmf{x}_c;k_p,k_s)=\bmf{0}.
\end{equation}
Similar to the proof of Theorem \ref{thm:uniqueness2}, we consider the following two cases. The first one is $ \bmf{u}(\bmf{x};k_p,k_s)\equiv \bmf{0},\, \forall \bmf{x}\in \bmf{G}$. Since there exits at one $\alpha_\ell$ such that $\alpha_\ell \neq 0$ and $\mathbf{d}_\ell$ are distinct, from Lemma \ref{lem:51}, we can arrive at a contradiction. The other case is that $ \bmf{u}(\bmf{x};k_p,k_s)\equiv\hspace*{-3mm}\backslash\  \bmf{0}$. By \eqref{eq:cond1n} and \eqref{eq:cond2n}, as well as the generalized Holmgren's principle in \cite[Theorems 4.1, 4.3]{DLW},  Theorems \ref{thm:54}--\ref{eq:impedance line exp}, Theorems \ref{thm:two traction}--\ref{thm:GI&forth exp},  one can show that
\[
\mathbf{u} \equiv \mathbf{0} \mbox{ in } \mathbf{G},
\]
which yields a contradiction again.

The proof is complete.
\end{proof}


\subsection{Unique recovery for the inverse elastic  diffraction grating problem}

In this subsection, we consider the unique recovery for the inverse elastic  diffraction grating problem. First we give a brief review of the basic mathematical model for this inverse problem.   
Let the profile of a diffraction grating be described by the curve
\begin{equation}\label{grating}
\Lambda_f=\{(x_1,x_2) \in \R^2 ;~x_2 =f(x_1)\},
\end{equation}
where $f$ is a periodic Lipschitz function with period $2\pi$. Let
$$
\Omega_f=\{ \mathbf  x\in \R^2; x_2 > f(x_1), x_1 \in \R\}
$$
be filled with an elastic material whose mass density is equal to one. Suppose further that the incident wave is given by the  pressure wave
\begin{equation} \label{eq:p int}
\mathbf u^i(\mathbf x;k_p, \mathbf d)=\mathbf d {\mathrm e}^{\bsi k_p \mathbf d \cdot \mathbf x},\quad \mathbf d=(\sin \theta, -\cos \theta)^\top ,\quad \theta \in \left( - \frac{\pi}{2} , \frac{\pi}{2} \right),
\end{equation}
or the shear wave
\begin{equation}\label{eq:s int}
	\mathbf u^i(\mathbf x;k_s, \mathbf d)=\mathbf d^\perp {\mathrm e}^{\bsi k_s \mathbf d \cdot \mathbf x},\quad \mathbf d^\perp=( \cos \theta,\sin \theta)^\top ,\quad \theta \in \left( - \frac{\pi}{2} , \frac{\pi}{2} \right),
\end{equation}
propagating to $\Lambda_f$ from the top, where  $k_p$ and $k_s$ are defined in \eqref{eq:kpks}.  Then the total wave field satisfies the following Navier system:
\begin{equation}\label{eq:76}
\mathcal L  \mathbf u+\omega ^2 \mathbf  u=\mathbf 0\hspace*{0,5cm} \mbox{in}\ \ \Omega_f; \quad  {\mathscr B}(\mathbf u)\big|_{\Lambda _f}=\mathbf 0 \hspace*{0,5cm}  \mbox{on} \ \ \Lambda_f, 
\end{equation}
with the generalized-impedance boundary condition
\begin{equation}\label{eq:gbc}
\mathscr{B}(\mathbf u)=\mathscr{B}_{i}(\bmf{u})+\boldsymbol\eta \mathscr{B}_{i+1}(\bmf{u})=\bmf{0}\quad\mbox{on}\ \ \Lambda_f \mbox{ for } i \in \{1,3\}
\end{equation}
where $\boldsymbol  \eta$ can be $\infty$, $0$, or $\boldsymbol \eta \in \mathcal{A}$ with $\Im( \boldsymbol \eta) \geq 0$, and $\mathscr{B}_1(\mathbf{ u})$, $\mathscr{B}_2(\mathbf{ u})$, $\mathscr{B}_3(\mathbf{ u})$ and $\mathscr{B}_4(\mathbf{ u})$ are defined in  \eqref{eq:B1u},  \eqref{eq:B2u}, \eqref{eq:B3u} and \eqref{eq:B4u}, respectively.

In what follows, we shall mainly consider the incident pressure wave \eqref{eq:p int}. To achieve the uniqueness of \eqref{eq:76}, the total wave field $\mathbf u$ should be $\alpha$-quasiperiodic in the  $x_1$-direction, with $\alpha=k_p\sin \theta$ for the incident pressure wave \eqref{eq:p int}, 
which means that 
$$
\mathbf u(x_1+2\pi, x_2)=e^{2\bsi \alpha \pi}\cdot \mathbf u(x_1,x_2),
$$
and the scattered field $u^s$ satisfies  the Rayleigh expansion  (cf.\cite{Arens,CGK}):
\begin{align}\label{radiation}
\mathbf u^s(\mathbf x;\omega, \mathbf d )&=\sum_{n\in \mathbb Z} u_{p,n}{\mathrm e}^{\bsi { \xi}_{p,n} (\theta ) \cdot \mathbf  x } { \xi}_{p,n}(\theta )
+\sum_{n\in \mathbb Z} u_{s,n}{\mathrm e}^{\bsi { \xi}_{s,n} (\theta ) \cdot \mathbf  x }\begin{bmatrix}
	0& 1\cr-1&0 
\end{bmatrix} { \xi}_{s,n} (\theta )
 \end{align}
for $ x_2 > \max_{x_1\in [0, 2\pi]} f(x_1)$, where $u_{p,n},\, u_{s,n}\in \C(n\in \mathbb Z)$ are called the Rayleigh coefficient of $\mathbf  u^s$, and 
\begin{equation}\label{eq:839n}
\begin{split}
\xi_{p,n}(\theta )&=\left(\alpha_{n}(\theta), \beta_{p,n}(\theta)\right)^\top, \quad \alpha_{n}(\theta )=n+ \alpha, \\
\beta_{p,n}(\theta )&=\left\{\begin{array}
{c}
\sqrt{k_p^2- \alpha_n^2 (\theta) }, \quad\mbox{ if } |\alpha_{n} (\theta )| \leq k_p\\
\\[1pt]
\bsi \sqrt{\alpha_{n}^2 (\theta)-k_p^2 }, \quad\mbox{ if } |\alpha_{n} (\theta)| > k_p
\end{array}, \right. \\
\xi_{s,n}(\theta )&=\left(\alpha_{n}(\theta),\beta_{s,n}(\theta)\right)^\top, \\ 
 \beta_{s,n}(\theta )&=\left\{\begin{array}
{c}
\sqrt{k_s^2- \alpha_{n}^2 (\theta) }, \quad\mbox{ if } |\alpha_{n} (\theta )| \leq k_s\\
\\[1pt]
\bsi \sqrt{\alpha_{n}^2 (\theta)-k_s^2 }, \quad\mbox{ if } |\alpha_{n} (\theta)| > k_s
\end{array}. \right. 
\end{split}
\end{equation}

%

The existence and uniqueness of the $\alpha$-quasiperiodic solution to \eqref{eq:76} for the  the first kind (Dirichlet) condition boundary condition can be found in \cite{Arens}.  
In our subsequent study, we assume the well-posedness of the forward scattering problem and focus on the study of the inverse grating problem.

Introduce a measurement  boundary as
$$
\Gamma_b:=\{ (x_1,b)\in \R^2 ;~ 0 \leq x_1 \leq 2 \pi, \, b> \max_{x_1\in [0, 2\pi]}|f(x_1)|  \}.
$$
The inverse diffraction grating problem is to determine $(\Lambda_f,\eta)$ from the knowledge of $\mathbf u(\mathbf x|_{\Gamma_b};\omega ,\mathbf d)$, and 
can be formulated as the operator equation:
\begin{equation}\label{eq:iopd} 
\mathcal{F}(\Lambda_f,\boldsymbol \eta )=\mathbf u(\mathbf x;k_p,\mathbf d), \quad  \mathbf x\in \Gamma_b,
\end{equation}
where $\mathcal{F}$ is defined by the forward diffraction scattering system, and is nonlinear.

The unique recovery result on the inverse elastic  diffraction  grating problem with the the first kind  boundary condition under a priori information about the height of the grating profile by  a finite number of incident plane waves  can be found in \cite{CGK}. 
	But the unique identifiability is still open for the impedance or generalized-impedance cases, which we shall resolve in the rest of the paper. 
	In doing so, 
	we introduce the following admissible polygonal gratings associated with 
	the inverse elastic  diffraction grating problem.
	\begin{defn}\label{def:dd1}
		Let $(\Lambda_f, \boldsymbol \eta)$ be a periodic grating as described in \eqref{grating} with the generalized-impedance boundary condition  \eqref{eq:gbc}. Suppose that $\Lambda_f$ restricted to the interval $[0,2\pi]$ consists of finitely many line segments $
		\Gamma_j$ ($j=1,\ldots,\ell $), and $\Lambda_f$ is not a straight line parallel to the $x_1$-axis. $(\Lambda_f, \boldsymbol \eta)$  is said to be an admissible polygonal grating associated with \eqref{eq:iopd}   if there exists a Lipschitz dissection of $\Gamma_j$, $1\leq j\leq \ell$,
\[
\Gamma_j=\cup_{i=1}^6 \Gamma_i^j ,
\]
such that \eqref{eq:66} is fulfilled, where the variable function $\boldsymbol \eta$ in \eqref{eq:66}  satisfies
	$ \boldsymbol{\eta}\in  \mathcal{A}$ with $\Im( \boldsymbol \eta) \geq 0$, and the constant part of the variable function $\boldsymbol\eta \in \mathcal A$ is not equal to 
$$
\pm \bsi \mbox{ and } \frac{\pm \sqrt{(\lambda + 3\mu)(\lambda + \mu)} - \mu \bsi}{\lambda + 2 \mu}. 
$$
		
%
%

\end{defn}  

We should emphasize that in \eqref{eq:66}, either $\Gamma_1^j, \Gamma_2^j$, $\Gamma_3^j, \Gamma_4^j$, $\Gamma_5^j$ or $\Gamma_6^j$ could be an empty set.



Next, we establish our uniqueness result in determining an admissible polygonal grating by at most eight  incident waves. We first present a useful lemma.  


\begin{lem}\cite[Lemma 8.1]{CDLZ} \label{lem:83}
	Let $\mathbf \xi_{\ell}\in\mathbb{R}^2$, $\ell=1,\ldots, n$, be $n$ vectors 
	which are distinct from each other, $D$ be an open set in $\mathbb{R}^2$. 
	Then all the functions in the following set are linearly independent:
	$$
	\{\mathrm e^{\bsi \mathbf \xi_{\ell} \cdot \mathbf x} ;~\mathbf x \in D, \ \ \ell=1,2,\ldots, n \}
	$$
\end{lem}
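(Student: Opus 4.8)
The final statement to be proved is Lemma~\ref{lem:83}, the linear independence of the exponential functions $\{\mathrm e^{\bsi \mathbf \xi_{\ell}\cdot\mathbf x}\}_{\ell=1}^n$ with pairwise distinct frequency vectors $\mathbf\xi_\ell\in\mathbb R^2$ on an open set $D$. Since this is cited from \cite{CDLZ} the excerpt does not supply a proof, but here is how I would prove it.

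\medskip

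The plan is to argue by strong induction on $n$, the classical Wronskian-free argument for exponentials adapted to several real variables. For $n=1$ the statement is trivial since $\mathrm e^{\bsi\mathbf\xi_1\cdot\mathbf x}$ is nowhere zero. Assume the result for any $n-1$ distinct vectors. Suppose
\begin{equation*}
\sum_{\ell=1}^n c_\ell\,\mathrm e^{\bsi\mathbf\xi_\ell\cdot\mathbf x}=0\qquad\text{for all }\mathbf x\in D,
\end{equation*}
with $c_\ell\in\mathbb C$. First I would note that, because each $\mathrm e^{\bsi\mathbf\xi_\ell\cdot\mathbf x}$ is real-analytic in $\mathbf x\in\mathbb R^2$ and $D$ is open (hence contains a ball), the identity propagates from $D$ to all of $\mathbb R^2$ by the identity theorem for real-analytic functions. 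Thus it suffices to derive $c_\ell=0$ from the identity holding on all of $\mathbb R^2$.

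\medskip

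The key step is to kill one exponential by a differential operator. Fix the direction $\mathbf v:=\mathbf\xi_n$ and consider the first-order constant-coefficient operator $\mathcal P:=\mathbf v\cdot\nabla - \bsi\,(\mathbf v\cdot\mathbf\xi_n)$; more simply, pick any fixed vector $\mathbf w\in\mathbb R^2$ with $\mathbf w\cdot\mathbf\xi_n\neq\mathbf w\cdot\mathbf\xi_\ell$ for every $\ell<n$ (such $\mathbf w$ exists because each set $\{\mathbf w:\mathbf w\cdot(\mathbf\xi_n-\mathbf\xi_\ell)=0\}$ is a line, and a finite union of lines cannot cover $\mathbb R^2$), and apply the operator $\mathbf w\cdot\nabla - \bsi\,(\mathbf w\cdot\mathbf\xi_n)$ to the identity. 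Since $(\mathbf w\cdot\nabla)\mathrm e^{\bsi\mathbf\xi_\ell\cdot\mathbf x}=\bsi(\mathbf w\cdot\mathbf\xi_\ell)\mathrm e^{\bsi\mathbf\xi_\ell\cdot\mathbf x}$, the $\ell=n$ term is annihilated and one is left with
\begin{equation*}
\sum_{\ell=1}^{n-1} c_\ell\,\bsi\big(\mathbf w\cdot(\mathbf\xi_\ell-\mathbf\xi_n)\big)\,\mathrm e^{\bsi\mathbf\xi_\ell\cdot\mathbf x}=0\qquad\text{on }\mathbb R^2.
\end{equation*}
By the induction hypothesis applied to the $n-1$ distinct vectors $\mathbf\xi_1,\dots,\mathbf\xi_{n-1}$, every coefficient $c_\ell\,\bsi(\mathbf w\cdot(\mathbf\xi_\ell-\mathbf\xi_n))$ vanishes, and since $\mathbf w\cdot(\mathbf\xi_\ell-\mathbf\xi_n)\neq0$ by the choice of $\mathbf w$, we get $c_\ell=0$ for $\ell=1,\dots,n-1$. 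Plugging back into the original identity forces $c_n\,\mathrm e^{\bsi\mathbf\xi_n\cdot\mathbf x}\equiv0$, hence $c_n=0$. This completes the induction.

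\medskip

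I expect the only genuinely delicate point to be the passage from "the identity holds on the open set $D$" to "it holds on all of $\mathbb R^2$", which relies on real-analyticity and the identity theorem; everything else is elementary linear algebra and differentiation. An alternative to the analytic-continuation step, if one prefers to stay entirely within $D$, is to choose $\mathbf x_0\in D$ and evaluate at the $n$ points $\mathbf x_0+ t_j\mathbf w$, $j=1,\dots,n$, for suitable small scalars $t_j$ keeping the points in $D$: the resulting coefficient matrix $\big(\mathrm e^{\bsi t_j(\mathbf w\cdot\mathbf\xi_\ell)}\big)_{j,\ell}$ is, after factoring $\mathrm e^{\bsi t_j(\mathbf w\cdot\mathbf\xi_1)}$ from each row, a Vandermonde matrix in the distinct quantities $\mathrm e^{\bsi t\,\mathbf w\cdot(\mathbf\xi_\ell-\mathbf\xi_1)}$ once $\mathbf w$ is chosen so that $\mathbf w\cdot\mathbf\xi_\ell$ are pairwise distinct and the $t_j$ are chosen small and generic, giving a nonzero determinant and hence $c_\ell\equiv0$ directly. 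Either route is short; I would present the inductive/operator version as the cleanest.
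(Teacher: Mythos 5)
Your argument is correct. Note, however, that the paper offers no proof of this lemma at all --- it is quoted verbatim from \cite[Lemma 8.1]{CDLZ} --- so there is no in-paper argument to compare against; what follows is just a check of your reasoning. The induction combined with the annihilating operator $\mathbf w\cdot\nabla-\bsi(\mathbf w\cdot\boldsymbol\xi_n)$ for a $\mathbf w$ separating $\mathbf w\cdot\boldsymbol\xi_n$ from the other $\mathbf w\cdot\boldsymbol\xi_\ell$ is sound, and the existence of such $\mathbf w$ (complement of finitely many lines) is correctly justified. One simplification: the analytic-continuation step you single out as the ``only delicate point'' is actually unnecessary for this route, since differentiation is a local operation --- the identity $\sum_{\ell=1}^{n-1}c_\ell\,\bsi\bigl(\mathbf w\cdot(\boldsymbol\xi_\ell-\boldsymbol\xi_n)\bigr)\mathrm e^{\bsi\boldsymbol\xi_\ell\cdot\mathbf x}=0$ already holds on $D$ itself, and the induction hypothesis applies on $D$ directly, so the identity theorem can be dropped entirely. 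In the alternative Vandermonde argument, to literally obtain a Vandermonde matrix you should take $t_j=jt$ for a single small generic $t$ (so the matrix is $(z_\ell^{\,j})$ with $z_\ell=\mathrm e^{\bsi t\,\mathbf w\cdot\boldsymbol\xi_\ell}$ pairwise distinct); as written, with arbitrary ``small and generic'' $t_j$, the matrix is not Vandermonde and the nonvanishing of its determinant would need a separate argument. The operator version you present as primary is complete as it stands.
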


\begin{thm}\label{thm:uniqueness1g}
	Let $(\Lambda_f, \boldsymbol \eta)$ and $(\Lambda_{\widetilde f}, \widetilde{\boldsymbol \eta})$ be two admissible polygonal gratings.  
	Let $\omega \in\mathbb{R}_+$ be fixed and $\mathbf{d}_\ell$, $\ell=1, \ldots, 8$ be eight  distinct incident directions from $\mathbb{S}^1$, with 
	\begin{equation}\label{eq:8incident} \notag
	\mathbf{d}_\ell=(\sin \theta_\ell, -\cos \theta_\ell)^\top ,\quad \theta_\ell  \in \left( - \frac{\pi}{2} , \frac{\pi}{2} \right). 
	\end{equation}
	Let $\mathbf u(\mathbf  x;\omega ,\mathbf d_\ell )$ and $\widetilde {\mathbf u}(\mathbf x;\omega ,\mathbf  d_\ell )$ denote the total fields associated with $(\Lambda_f, \boldsymbol \eta)$ and $(\Lambda_{\widetilde f}, \widetilde{\boldsymbol \eta})$ respectively and let $\Gamma_b$ be a measurement boundary given by
		$$
		\Gamma_b:=\left\{ (x_1,b)\in \R^2 ;~ 0 \leq x_1 \leq 2 \pi, \, b>\max\left\{ \max_{x_1\in [0, 2\pi]}|f(x_1)|,\, \max_{x_1\in [0, 2\pi]}|\widetilde {f}(x_1)| \right\} \right \},
		$$ 
	If it holds that
	\begin{equation}\label{eq:836}
	\mathbf u(\mathbf  x;\omega ,\mathbf d_\ell )=\widetilde {\mathbf u} (\mathbf x; \omega ,\mathbf  d_\ell ),\quad \ell=1,\ldots,8, \quad  \mathbf x=(x_1, b) \in \Gamma_b, 
	\end{equation}
	then
$$
\Lambda_{f}=\Lambda_{\widetilde f} \mbox{ and } \boldsymbol \eta= \widetilde{\boldsymbol \eta}. 
$$	

	%
\end{thm}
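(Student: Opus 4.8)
The plan is to mirror the proof of Theorem~\ref{thm:uniqueness1}, replacing the exterior-domain Rellich lemma and topology by their $\alpha$-quasiperiodic counterparts. Argue by contradiction: suppose \eqref{eq:836} holds but $\Lambda_f\neq\Lambda_{\widetilde f}$. Let $\mathbf{G}$ be the unbounded connected component of $\Omega_f\cap\Omega_{\widetilde f}$. Since $\mathbf{u}(\cdot;\omega,\mathbf{d}_\ell)$ and $\widetilde{\mathbf{u}}(\cdot;\omega,\mathbf{d}_\ell)$ are both $\alpha$-quasiperiodic with the same $\alpha=k_p\sin\theta_\ell$ and agree on $\Gamma_b$, the uniqueness of the Rayleigh expansion \eqref{radiation} (a quasiperiodic analogue of the Rellich lemma, cf.~\cite{Arens,CGK}) gives $\mathbf{u}(\cdot;\omega,\mathbf{d}_\ell)=\widetilde{\mathbf{u}}(\cdot;\omega,\mathbf{d}_\ell)$ in $\mathbf{G}$ for $\ell=1,\ldots,8$. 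By a topological argument analogous to that in \cite{Liu-Zou} adapted to the periodic strip, there is a line segment $\Gamma_h\subset\partial\mathbf{G}$ lying on one of the two profiles but not on the other; without loss of generality $\Gamma_h\subset\Lambda_{\widetilde f}$ and $\Gamma_h\cap\Lambda_f=\emptyset$, so $\Gamma_h$ lies in the interior of $\Omega_f$, where each $\mathbf{u}(\cdot;\omega,\mathbf{d}_\ell)$ solves the Navier system with no constraint coming from $\Lambda_f$. On $\Gamma_h$, $\widetilde{\mathbf{u}}$ satisfies one of the six homogeneous conditions in \eqref{eq:66}, and by $\mathbf{u}=\widetilde{\mathbf{u}}$ in $\mathbf{G}$ the same condition $\mathscr{B}_i(\mathbf{u})+\widetilde{\boldsymbol\eta}\mathscr{B}_{i+1}(\mathbf{u})=\mathbf{0}$ holds for $\mathbf{u}$ on $\Gamma_h$ (with the conventions $\widetilde{\boldsymbol\eta}=0$ or $\infty$ included).

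Fix $\mathbf{x}_0\in\Gamma_h$, taken to be the origin, and expand each $\mathbf{u}(\cdot;\omega,\mathbf{d}_\ell)$ in radial waves \eqref{eq:u} around $\mathbf{x}_0$ with coefficients $a_{\ell,m},b_{\ell,m}$. The seven scalar quantities formed by $\mathbf{u}(\mathbf{0})$, $\boldsymbol{\nu}^\top\nabla\mathbf{u}\,\boldsymbol{\nu}|_{\mathbf{x}=\mathbf{0}}$, $\boldsymbol{\tau}^\top\nabla\mathbf{u}\,\boldsymbol{\nu}|_{\mathbf{x}=\mathbf{0}}$, $b_{\ell,0}$, $b_{\ell,1}$, $b_{\ell,2}$ make up a vector in $\mathbb{C}^7$; eight such vectors are linearly dependent, so there are constants $\alpha_1,\ldots,\alpha_8$, not all zero, such that $\mathbf{u}:=\sum_{\ell=1}^8\alpha_\ell\mathbf{u}(\cdot;\omega,\mathbf{d}_\ell)$ satisfies $\mathbf{u}(\mathbf{0})=\mathbf{0}$, $\boldsymbol{\nu}^\top\nabla\mathbf{u}\,\boldsymbol{\nu}|_{\mathbf{x}=\mathbf{0}}=\boldsymbol{\tau}^\top\nabla\mathbf{u}\,\boldsymbol{\nu}|_{\mathbf{x}=\mathbf{0}}=0$ and $b_0=b_1=b_2=0$ for its radial-wave coefficients. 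If $\mathbf{u}\equiv\mathbf{0}$ in $\mathbf{G}$, then since the $\mathbf{d}_\ell$ are distinct, a grating analogue of Lemma~\ref{lem:51} — obtained from Lemma~\ref{lem:83} applied to the propagating modes of the Rayleigh expansions (or to the incident plane waves near $\Gamma_b$) — forces all $\alpha_\ell=0$, a contradiction. Otherwise $\Gamma_\varepsilon:=B_\varepsilon(\mathbf{0})\cap\Gamma_h$ is a singular line of $\mathbf{u}$ of one of the six types (rigid, traction-free, impedance, soft-clamped, simply-supported, generalized-impedance), and by the admissibility assumption on the constant part of $\boldsymbol\eta$ in Definition~\ref{def:dd1} we are in the scope of the GHP results (\cite[Theorems~3.1, 3.2]{DLW} together with Theorems~\ref{thm:impedance line}, \ref{thm:third}, \ref{thm:fourth}, \ref{thm:g im}), which give $\mathbf{u}\equiv\mathbf{0}$ in a neighbourhood of $\mathbf{0}$; this again contradicts the linear independence of the eight total fields. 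Hence $\Lambda_f=\Lambda_{\widetilde f}$.

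It remains to upgrade to $\boldsymbol\eta=\widetilde{\boldsymbol\eta}$. Assume the contrary on some sub-arc $\Sigma$ of the common profile $\Lambda_f=\Lambda_{\widetilde f}$. If the active condition there is of the $\mathscr{B}_1+\boldsymbol\eta\mathscr{B}_2$ type, then $\mathscr{B}_1(\mathbf{u})+\boldsymbol\eta\mathscr{B}_2(\mathbf{u})=\mathscr{B}_1(\mathbf{u})+\widetilde{\boldsymbol\eta}\mathscr{B}_2(\mathbf{u})=\mathbf{0}$ with $\boldsymbol\eta\neq\widetilde{\boldsymbol\eta}$ forces $\mathbf{u}=T_\nu\mathbf{u}=\mathbf{0}$ on $\Sigma$; if it is of the $\mathscr{B}_3+\boldsymbol\eta\mathscr{B}_4$ type, one gets $\boldsymbol{\nu}\cdot\mathbf{u}=\boldsymbol{\tau}\cdot\mathbf{u}=\boldsymbol{\nu}\cdot T_\nu\mathbf{u}=\boldsymbol{\tau}\cdot T_\nu\mathbf{u}=0$ on $\Sigma$, hence again $\mathbf{u}=T_\nu\mathbf{u}=\mathbf{0}$ on $\Sigma$ by separating real and imaginary parts. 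The classical Holmgren uniqueness principle then yields $\mathbf{u}\equiv\mathbf{0}$ in $\Omega_f$, which is incompatible with the quasiperiodic radiation condition and the presence of the nontrivial incident wave \eqref{eq:p int}. This completes the proof.

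I expect the main obstacle to be the periodic-domain bookkeeping rather than the algebra: rigorously justifying the quasiperiodic Rellich/Rayleigh uniqueness step, proving the strip version of the topological lemma that produces the segment $\Gamma_h\subset\partial\mathbf{G}$, and — most delicately — formulating and proving the grating analogue of Lemma~\ref{lem:51} that guarantees linear independence of the eight total fields on an open subset of $\mathbf{G}$, since the incident fields here are quasiperiodic plane waves and the scattered parts are given by Rayleigh series rather than Kupradze-type expansions.
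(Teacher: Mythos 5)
Your overall architecture matches the paper's: the same contradiction setup with the unbounded component $\mathbf{G}$ of $\Omega_f\cap\Omega_{\widetilde f}$, the same transfer of the generalized-impedance condition to a segment $\Gamma_h\subset\partial\mathbf{G}$, the same construction of a linear combination of the eight total fields annihilating seven scalar conditions at a point of $\Gamma_h$, the same dichotomy between the combination vanishing identically and the GHP results being applicable, and a reasonable (and in the paper actually omitted) argument for recovering $\boldsymbol\eta$ once the profile is known. The one place where your proposal stops short is precisely the step you yourself flag as most delicate, and that step is the only genuinely new mathematics in this theorem relative to Theorem~\ref{thm:uniqueness1}: showing that $\sum_{\ell=1}^8\alpha_\ell\,\mathbf{u}(\cdot;\omega,\mathbf{d}_\ell)\equiv\mathbf{0}$ above the grating forces $\alpha_\ell=0$. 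You propose to get this from Lemma~\ref{lem:83} ``applied to the propagating modes of the Rayleigh expansions,'' but Lemma~\ref{lem:83} concerns finitely many exponentials with \emph{real} wave vectors, whereas the Rayleigh series contains infinitely many evanescent modes with complex $\xi_{p,n},\xi_{s,n}$; these can be neither discarded nor fed into the lemma, and there is no region near $\Gamma_b$ where only the incident waves survive, so the parenthetical alternative does not work either.

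The paper closes this gap by substituting the Rayleigh expansions into the vanishing identity, taking one component, and restricting to a ray $x_2=lx_1$: the incident terms together with the finitely many propagating modes form an almost periodic function $S_1$ of $x_1$, while the evanescent modes form a function $S_2$ decaying exponentially as $x_1\to+\infty$. Since $S_1=S_2$ on the ray and $\sup_{x_1\in\mathbb{R}_+}|S_1|=\limsup_{x_1\to+\infty}|S_1|=\limsup_{x_1\to+\infty}|S_2|=0$ by the cited property of almost periodic functions, one gets $S_1\equiv0$, and only then does Lemma~\ref{lem:83} apply; the conclusion $\alpha_\ell=0$ then uses that the downward vector $k_p\mathbf{d}_\ell$ is distinct from every upward mode $\xi_{p,n}(\theta_{\ell'})$, $\xi_{s,n}(\theta_{\ell'})$ and that $\cos\theta_\ell>0$. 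Without some such device separating the propagating part from the evanescent tail, your invocation of a ``grating analogue of Lemma~\ref{lem:51}'' remains an assertion rather than a proof; the rest of your argument is sound and coincides with the paper's route.
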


\begin{proof}
	The proof follows from a similar argument to that for Theorem~\ref{thm:uniqueness1}, and 
	we only sketch the necessary modifications in this new setup. 
	By contradiction and without loss of generality, we assume that there exists a line segment   $\Gamma_h$ of $\Lambda_{\widetilde f}$ which lies on $\partial\mathbf{G}\backslash\Lambda_{ f}$, where $\mathbf{G}$ be the unbounded connected component of $\Omega_f\cap \Omega_{\widetilde f}$. 
	
	{ First, by the well-posedness of the diffraction grating problem \eqref{eq:76}-\eqref{radiation} as well as the unique continuation, we show that $\mathbf u(\mathbf  x;\omega ,\mathbf d_\ell )=\widetilde {\mathbf u}(\mathbf x;\omega ,\mathbf  d_\ell )$ for $\mathbf{x}\in\mathbf{G}$. In fact, by introducing $\mathbf w(\mathbf{ x}; \omega , \mathbf d_\ell):=\mathbf u(\mathbf{ x}; \omega , \mathbf d_\ell)-\widetilde{\mathbf  u}(\mathbf x;\omega ,\mathbf  d_\ell)$, $\ell=1,\ldots, 8$,  
		{we see from \eqref{eq:836}} that $\mathbf w$ fulfils
		\[
		{\mathcal L}\mathbf  w+\omega ^2 \mathbf w=0\mbox{ in }\mathbf{U};  \ \mathbf w=\mathbf 0 \mbox{ on } \Gamma_b\ \ \mbox{and}\ \ \mathbf w\ \mbox{satisfies the Rayleigh expansion \eqref{radiation}, }
		\]
		where $ {\mathbf U}:=\mathbf{G} \cap  \{ \mathbf  x\in \R^2; x_2 > b, x_1 \in \R\}$ with $\partial \mathbf{U}=\Gamma_b$. Hence, by the uniqueness of the solution to the diffraction grating problem, we readily know $\mathbf w= \mathbf 0$ in $\mathbf{U}$. On the other hand, since $\mathbf u(\mathbf  x;\omega ,\mathbf d_\ell )$ and $\widetilde {\mathbf u}(\mathbf x;\omega ,\mathbf  d_\ell )$  are analytic in $\mathbf G$,
		{we know $\mathbf w(\mathbf x;\omega ,\mathbf  d_\ell )=0$ in $\mathbf{G}$ by means of the analytic continuation, that is,} 
		$\mathbf u(\mathbf  x;\omega ,\mathbf d_\ell )=\widetilde {\mathbf u}(\mathbf x;\omega ,\mathbf  d_\ell )$ for $\mathbf{x}\in\mathbf{G}$.}
	
	Next, using a similar argument to the proof of Theorem \ref{thm:uniqueness1}, we can prove that
	\begin{equation}\label{eq:542}
	\sum_{\ell=1}^8 \alpha_\ell  \mathbf u(\mathbf x; \omega ,\mathbf d_\ell) = \mathbf 0  \quad \mbox{for} \quad x_2 > \max_{x_1\in [0, 2\pi]} f(x_1), 
	\end{equation}
	where $\alpha_\ell$ are nonzero complex constant, $\ell=1,\ldots, 8$. 
	Next, when $x_2  > \max_{x_1\in [0, 2\pi]}|f(x_1)| $, $\mathbf u(\mathbf x;\omega , \mathbf d_\ell  )$ has the Rayleigh expansion (cf.\cite{Arens}): 
	\begin{align}\label{eq:839}
	\mathbf u(\mathbf x;\omega ,\mathbf d_\ell )&={\mathbf d}_\ell {\mathrm e}^{\bsi k_p \mathbf d_\ell  \cdot \mathbf x}+ \sum_{n\in \mathbb Z} u_{p,n}(\theta_\ell  ){\mathrm e}^{\bsi { \xi}_{p,n} (\theta_\ell  ) \cdot \mathbf  x } { \xi}_{p,n}(\theta_\ell )
\\
&\quad +\sum_{n\in \mathbb Z} u_{s,n}(\theta_\ell  ){\mathrm e}^{\bsi { \xi}_{s,n} (\theta_\ell  ) \cdot \mathbf  x }\begin{bmatrix}
	0& 1\cr-1&0 
\end{bmatrix} { \xi}_{s,n} (\theta_\ell ) \notag
	\end{align}
for $x_2 > \max_{x_1\in [0, 2\pi]} f(x_1)$, 	where $\xi_n(\theta_\ell  )$, $\alpha_n(\theta_\ell)$, $\beta_{p,n}(\theta_\ell )$ and $\beta_{s,n}(\theta_\ell )$ are defined in \eqref{eq:839n}. Using the definition of $\alpha_0(\theta_\ell)$ and  $\beta_0(\theta_\ell) $ in  \eqref{eq:839n}, we can readily show that 
	\begin{equation}\label{eq:840}
	k_p\mathbf d_{\ell}=(\alpha_0(\theta_\ell), -\beta_0(\theta_\ell) )^\top . 
	\end{equation}

%
%
	
	
	Substituting \eqref{eq:839} into \eqref{eq:542}, it holds that
	\begin{align}\label{eq:842}
\mathbf 0&=	\sum_{\ell=1}^8 \alpha_\ell \mathbf d_\ell  {\mathrm e}^{\bsi k \mathbf d_\ell  \cdot \mathbf x}+ \sum_{n\in \mathbb Z}  \sum_{\ell=1}^8 \alpha_\ell u_{p,n} (\theta_\ell )  {\mathrm e}^{\bsi { \xi}_{p,n} (\theta_\ell ) \cdot \mathbf  x }{ \xi}_{p,n} (\theta_\ell )\\
&+\quad  \quad  \sum_{n\in \mathbb Z} \sum_{\ell=1}^8 \alpha_\ell  u_{s,n}(\theta_\ell  ){\mathrm e}^{\bsi { \xi}_{s,n} (\theta_\ell  ) \cdot \mathbf  x }\begin{bmatrix}
	0& 1\cr-1&0 
\end{bmatrix} { \xi}_{s,n} (\theta_\ell ) \notag\quad \mbox{for} \quad  x_2 > \max_{x_1\in [0, 2\pi]} f(x_1), 
	\end{align}
	where $u_{p,n}(\theta_\ell), \, u_{s,n}(\theta_\ell) \in \C(n\in \mathbb Z)$ are the  Rayleigh coefficients of $\mathbf u^s(\mathbf x; \omega , \mathbf d_\ell)$ associated with the incident wave $\mathbf d_\ell  {\mathrm e}^{\bsi k_p \mathbf d_\ell \cdot \mathbf x}$. In \eqref{eq:842}, taking inner product with $\hat{\mathbf e}_2$,  then letting $x_2=l x_1$ ($l>0$), one has
	\begin{equation}\notag
	\begin{split}
		S_1(x_1,lx_1  )=S_2(x_1,lx_1  ), \quad \forall x_1 \in \mathbb R_+,
	\end{split}
	\end{equation}
	where
	\begin{equation}\notag
		\begin{split}
		S_1(x_1,lx_1  )=		&\sum_{\ell=1}^8 \alpha_\ell \cos(\theta_\ell)  {\mathrm e}^{\bsi k \mathbf d_\ell  \cdot (1,l) x_1}- \sum_{|\alpha_n (\theta_\ell)| \leq k_p}  \sum_{\ell=1}^8 \alpha_\ell u_{p,n} (\theta_\ell )  {\mathrm e}^{\bsi { \xi}_{p,n} (\theta_\ell ) \cdot (1,l) x_1 }{ \beta }_{p,n} (\theta_\ell )\\
		&\quad +\sum_{ |\alpha_n(\theta_\ell )| \leq k_s } \sum_{\ell=1}^8 \alpha_\ell  u_{s,n}(\theta_\ell  ){\mathrm e}^{\bsi { \xi}_{s,n} (\theta_\ell  ) \cdot (1,l)  x_1 } { \alpha }_{n} (\theta_\ell ),\\
		S_2(x_1,lx_1  )&= \sum_{|\alpha_n (\theta_\ell)| >k_p}  \sum_{\ell=1}^8 \alpha_\ell u_{p,n} (\theta_\ell )  {\mathrm e}^{\bsi { \xi}_{p,n} (\theta_\ell ) \cdot (1,l) x_1 }{ \beta }_{p,n} (\theta_\ell )\\
		&\quad-\sum_{ |\alpha_n(\theta_\ell )| >k_s } \sum_{\ell=1}^8 \alpha_\ell  u_{s,n}(\theta_\ell  ){\mathrm e}^{\bsi { \xi}_{s,n} (\theta_\ell  ) \cdot (1,l)  x_1 } { \alpha }_{n} (\theta_\ell ). 
		\end{split}
	\end{equation}
Noting that $S_1(x_1,lx_1  )$ is an almost periodic function on $\mathbb R_+$ and $S_2(x_1,lx_1  )$	is exponentially decaying functions as $x_1 \rightarrow +\infty$, from \cite[(d) in Page 493]{Bottcher} we can prove that 
\begin{equation}\notag
	\begin{split}
	\max_{ x_1 \in \mathbb R_+ }	 |S_1(x_1,lx_1  )|=\limsup_{x_1\rightarrow +\infty } |S_1(x_1,lx_1  )|=\limsup_{x_1\rightarrow +\infty } |S_2(x_1,lx_1  )| \leq
	 \varepsilon
	\end{split}
\end{equation}
for any $\varepsilon \in \mathbb R_+$. Therefore it yields that 
\begin{equation}\label{eq:548}
S_1(x_1,lx_1  ) \equiv 0,\quad \forall x_1 \in \mathbb R_+. 
\end{equation}

	Clearly, any two vectors of the set 
	$$
	 \bigcup_{\ell=1}^8  \{ k_p \mathbf d_\ell \}\bigcup_{\ell=1}^8 \bigcup   \{\xi_{p,n} (\theta_\ell )~|~n\in \mathbb Z\}\bigcup_{\ell=1}^8 \bigcup \{\xi_{s,n} (\theta_\ell )~|~n\in \mathbb Z\} 
	$$
	are distinct since $|\theta_\ell| < \pi/2$ and \eqref{eq:840}. 
	
		Using Lemma \ref{lem:83} and 
		\eqref{eq:548}, we can see $\alpha_\ell=0$ for $\ell=1,\ldots, 8$ by noting $\cos \theta_\ell \in \mathbb R_+$, which is a contradiction to the fact there exits an index $\ell_0\in \{ 1,\ldots, 8\}$ such that $\alpha_{\ell_0}  \neq 0$. 
\end{proof}

Similar to Theorem \ref{thm:uniqueness2},  we can establish a local uniqueness result in determining a generic class of admissible polygonal grating by using fewer scattering measurements, which we choose not to discuss the details in this paper.

\vspace*{-.3cm}

\section*{Acknowledgement}

The work  of H Diao was supported in part by the National Natural Science Foundation of China under grant 11001045 and the Fundamental Research Funds for the Central Universities under the grant 2412017FZ007.  The work of H Liu was supported by the startup fund from City University of Hong Kong and the Hong Kong RGC General Research Fund (projects 12301420, 12302919 and 12301218).

\section*{Appendix}

\begin{proof}[Proof of Lemma~\ref{lem:condition}]
We first prove \eqref{eq:gradient1}. Using \eqref{eq:u1} and \eqref{eq:u2}, it is directly shown that
\begin{eqnarray}
\frac{\partial u_1}{\partial r}
&=& \sum_{m=0} ^\infty \left\{ \frac{k_p^2}{4}a_m \left\{\mathrm{e}^{\bsi \left(m-1\right) \varphi} J_{m-2}\left(k_p r\right) - \left[\mathrm{e}^{\bsi \left(m-1\right) \varphi}+\mathrm{e}^{\bsi \left(m+1\right) \varphi}\right] J_m \left(k_p r\right) \right\}\right.\notag \\
&& \hspace*{-3mm} + \frac{\bsi k_s^2}{4}b_m \left\{\mathrm{e}^{\bsi \left(m-1\right) \varphi} J_{m-2}\left(k_s r\right) - \left[\mathrm{e}^{\bsi \left(m-1\right) \varphi}-\mathrm{e}^{\bsi \left(m+1\right) \varphi}\right] J_m \left(k_s r\right) \right\}\notag \\
&& \hspace*{-3mm} + \frac{k_p^2}{4}a_m \mathrm{e}^{\bsi \left(m+1\right) \varphi} J_{m+2} \left(k_p r\right)-\frac{\bsi k_s^2}{4}b_m \mathrm{e}^{\bsi \left(m+1\right) \varphi} J_{m+2} \left(k_s r\right)
 \bigg\},\notag \\
\frac{\partial u_1}{\partial \varphi}
 & =& \sum_{m=0} ^\infty  \left\{\frac{\bsi\left(m-1\right)}{2} k_p \mathrm{e}^{\bsi \left(m-1\right) \varphi} J_{m-1} \left(k_p r\right) a_m - \frac{\bsi\left(m+1\right)}{2} k_p \mathrm{e}^{\bsi \left(m+1\right) \varphi} J_{m+1} \left(k_p r\right) a_m \right. \notag \\
 && \hspace*{-3mm} - \frac{\left(m-1\right)}{2} k_s \mathrm{e}^{\bsi \left(m-1\right) \varphi} J_{m-1} \left(k_s r\right) b_m - \frac{\left(m+1\right)}{2} k_s \mathrm{e}^{\bsi \left(m+1\right) \varphi} J_{m+1} \left(k_s r\right) b_m\bigg\}, \label{eq:u1 par}
\end{eqnarray}
and
\begin{eqnarray}
\frac{\partial u_2}{\partial r}
&=& \sum_{m=0} ^\infty \left\{\frac{\bsi k_p^2}{4}a_m \left\{\mathrm{e}^{\bsi \left(m-1\right) \varphi} J_{m-2}\left(k_p r\right) - \left[\mathrm{e}^{\bsi \left(m-1\right) \varphi}-\mathrm{e}^{\bsi \left(m+1\right) \varphi}\right] J_m \left(k_p r\right)\right\}\right.\notag \\
&&\hspace*{-3mm}  +  \frac{k_s^2}{4}b_m \left\{-\mathrm{e}^{\bsi \left(m-1\right) \varphi} J_{m-2}\left(k_s r\right) + \left[\mathrm{e}^{\bsi \left(m-1\right) \varphi}+\mathrm{e}^{\bsi \left(m+1\right) \varphi}\right] J_m \left(k_s r\right) \right\}\notag \\
&&\hspace*{-3mm} -  \frac{\bsi k_p^2}{4}a_m \mathrm{e}^{\bsi \left(m+1\right) \varphi} J_{m+2} \left(k_p r\right) -\frac{k_s^2}{4} b_m \mathrm{e}^{\bsi \left(m+1\right) \varphi} J_{m+2} \left(k_s r\right)
\bigg\},\notag \\
 \frac{\partial u_2}{\partial \varphi}
&=& \sum_{m=0} ^\infty  \left\{-\frac{\left(m-1\right)}{2} k_p \mathrm{e}^{\bsi \left(m-1\right) \varphi} J_{m-1} \left(k_p r\right) a_m - \frac{\left(m+1\right)}{2} k_p \mathrm{e}^{\bsi \left(m+1\right) \varphi} J_{m+1} \left(k_p r\right) a_m \right.\notag \\
 && \hspace*{-3mm}- \frac{\bsi \left(m-1\right)}{2} k_s \mathrm{e}^{\bsi \left(m-1\right) \varphi} J_{m-1} \left(k_s r\right) b_m + \frac{\bsi \left(m+1\right)}{2} k_s \mathrm{e}^{\bsi \left(m+1\right) \varphi} J_{m+1} \left(k_s r\right) b_m\bigg\}. \label{eq:u2 par}
\end{eqnarray}
For $i=1,2$, one has
\begin{equation}\label{eq:u3 par}
	\begin{split}
		\frac{\partial u_i}{\partial x_1}&=\cos\varphi \cdot \frac{\partial u_i}{\partial r}- \frac{\sin \varphi}{r} \cdot \frac{\partial u_i}{\partial \varphi},\quad
		\frac{\partial u_i}{\partial x_2}=\sin \varphi \cdot \frac{\partial u_i}{\partial r}+ \frac{\cos \varphi}{r} \cdot \frac{\partial u_i}{\partial \varphi}.
	\end{split}
\end{equation}

Combining  \eqref{eq:u1 par}, \eqref{eq:u2 par} with \eqref{eq:u3 par}, after tedious but straightforward calculations, one can obtain that
\begin{eqnarray}
& &\partial_1 u_1 \cdot \left(-\sin \varphi_0\right)+\partial_1 u_2  \cdot \left(\cos \varphi_0\right)
\notag \\
&&  =  \sum_{m=0} ^\infty  \Bigg \{ \cos{\varphi}\Big[\frac{\bsi k_p^2}{4} a_m \mathrm{e}^{\bsi \left(m-1\right) \varphi} J_{m-2}(k_p r) \mathrm{e}^{\bsi \varphi_0}  - \frac{\bsi k_p^2}{4} a_m J_m(k_p r)\mathrm{e}^{\bsi (m-1)\varphi}(\mathrm{e}^{\bsi \varphi_0} -\mathrm{e}^{2 \bsi \varphi}  \mathrm{e}^{-\bsi \varphi_0} )\notag \\
&& \quad  - \frac{\bsi k_p^2}{4} a_m \mathrm{e}^{\bsi \left(m+1\right) \varphi} J_{m+2}(k_p r) \mathrm{e}^{-\bsi \varphi_0}  - \frac{ k_s^2}{4} b_m \mathrm{e}^{\bsi \left(m-1\right) \varphi} J_{m-2}(k_s r) \mathrm{e}^{\bsi \varphi_0} \notag \\
&& \quad  + \frac{ k_s^2}{4} b_m J_m(k_s r) \mathrm{e}^{\bsi (m-1) \varphi} (\mathrm{e}^{\bsi \varphi_0} +\mathrm{e}^{2 \bsi \varphi}  \mathrm{e}^{-\bsi \varphi_0} ) - \frac{ k_s^2}{4} b_m J_{m+2}(k_s r) \mathrm{e}^{\bsi (m+1) \varphi} \mathrm{e}^{-\bsi \varphi_0} \Big ] \notag \\
&&\quad  + \frac{ \sin{\varphi} }{r} \Big [\frac{(m-1)k_p}{2} a_m \mathrm{e}^{\bsi (m-1)\varphi} J_{m-1}(k_p r) \mathrm{e}^{\bsi \varphi_0}\notag \\
&& \quad + \frac{ (m+1)k_p}{2} a_m \mathrm{e}^{\bsi (m+1)\varphi} J_{m+1}(k_p r) \mathrm{e}^{-\bsi \varphi_0} + \frac{\bsi (m-1)k_s}{2} b_m \mathrm{e}^{\bsi (m-1)\varphi} J_{m-1}(k_s r) \mathrm{e}^{\bsi \varphi_0}\notag \\
 &&\quad - \frac{\bsi (m+1)k_s}{2} b_m \mathrm{e}^{\bsi (m+1)\varphi} J_{m+1}(k_s r) \mathrm{e}^{-\bsi \varphi_0}\Big] \Bigg  \}, \notag \\
&&  \partial_2 u_1 \cdot \left(-\sin \varphi_0\right)+\partial_2 u_2 \cdot \left(\cos \varphi_0\right)\notag \\
&& =  \sum_{m=0} ^\infty \Bigg \{ \sin{\varphi } \Big [ \frac{\bsi k_p^2}{4} a_m \mathrm{e}^{\bsi \left(m-1\right) \varphi} J_{m-2}(k_p r) \mathrm{e}^{\bsi \varphi_0}  - \frac{\bsi k_p^2}{4} a_m J_m(k_p r) \mathrm{e}^{\bsi (m-1)\varphi} (\mathrm{e}^{\bsi \varphi_0} -\mathrm{e}^{2 \bsi \varphi}  \mathrm{e}^{-\bsi \varphi_0} )  \notag \\
&&\quad  - \frac{\bsi k_p^2}{4} a_m \mathrm{e}^{\bsi \left(m+1\right) \varphi} J_{m+2}(k_p r) \mathrm{e}^{-\bsi \varphi_0} - \frac{ k_s^2}{4} b_m \mathrm{e}^{\bsi \left(m-1\right) \varphi} J_{m-2}(k_s r)\mathrm{e}^{\bsi \varphi_0}\notag \\
&&\quad + \frac{ k_s^2}{4} b_m J_m(k_s r) \mathrm{e}^{\bsi (m-1)\varphi}   (\mathrm{e}^{\bsi \varphi_0} +\mathrm{e}^{2 \bsi \varphi}  \mathrm{e}^{-\bsi \varphi_0} )  - \frac{ k_s^2}{4} b_m \mathrm{e}^{\bsi \left(m+1\right) \varphi} J_{m+2}(k_s r) \mathrm{e}^{-\bsi \varphi_0}  \Big] \notag \\
&& \quad + \frac{\cos{\varphi} }{r} \Big[ \frac{-(m-1)k_p}{2} a_m \mathrm{e}^{\bsi (m-1)\varphi} J_{m-1}(k_p r) \mathrm{e}^{\bsi \varphi_0}\notag \\
&& \quad - \frac{(m+1)k_p}{2} a_m \mathrm{e}^{\bsi (m+1)\varphi} J_{m+1}(k_p r) \mathrm{e}^{-\bsi \varphi_0}  - \frac{\bsi(m-1)k_s}{2} b_m \mathrm{e}^{\bsi (m-1)\varphi} J_{m-1}(k_s r) \mathrm{e}^{\bsi \varphi_0} \notag \\
 &&\quad + \frac{\bsi(m+1)k_s}{2} b_m \mathrm{e}^{\bsi (m+1)\varphi} J_{m+1}(k_s r) \mathrm{e}^{-\bsi \varphi_0}  \Big] \Bigg \}, \label{eq:u4a par}
\end{eqnarray}
and
\begin{eqnarray}
&& \partial_1 u_1 \cdot \left(-\cos \varphi_0\right)+\partial_1 u_2  \cdot \left(-\sin \varphi_0\right) \notag \\
&& \quad =  \sum_{m=0} ^\infty  \Bigg \{ \cos{\varphi}\Big[\frac{- k_p^2}{4} a_m \mathrm{e}^{\bsi \left(m-1\right) \varphi} J_{m-2}(k_p r) \mathrm{e}^{\bsi \varphi_0}  + \frac{ k_p^2}{4} a_m J_m(k_p r)\mathrm{e}^{\bsi (m-1)\varphi}(\mathrm{e}^{\bsi \varphi_0} +\mathrm{e}^{2 \bsi \varphi}  \mathrm{e}^{-\bsi \varphi_0} )  \notag \\
&&\quad  - \frac{ k_p^2}{4} a_m \mathrm{e}^{\bsi \left(m+1\right) \varphi} J_{m+2}(k_p r) \mathrm{e}^{-\bsi \varphi_0}  - \frac{\bsi k_s^2}{4} b_m \mathrm{e}^{\bsi \left(m-1\right) \varphi} J_{m-2}(k_s r) \mathrm{e}^{\bsi \varphi_0}  \notag  \\
&&\quad  + \frac{\bsi k_s^2}{4} b_m J_m(k_s r) \mathrm{e}^{\bsi (m-1) \varphi} (\mathrm{e}^{\bsi \varphi_0} -\mathrm{e}^{2 \bsi \varphi}  \mathrm{e}^{-\bsi \varphi_0} ) + \frac{\bsi k_s^2}{4} b_m J_{m+2}(k_s r) \mathrm{e}^{\bsi (m+1) \varphi}  \mathrm{e}^{-\bsi \varphi_0} \Big ]  \notag  \\
&&\quad  + \frac{ \sin{\varphi} }{r} \Big [\frac{(m-1)k_p}{2} a_m \mathrm{e}^{\bsi (m-1)\varphi} J_{m-1}(k_p r) \mathrm{e}^{\bsi \varphi_0}  \notag  \\
&& \quad - \frac{\bsi (m+1)k_p}{2} a_m \mathrm{e}^{\bsi (m+1)\varphi} J_{m+1}(k_p r) \mathrm{e}^{-\bsi \varphi_0} - \frac{(m-1)k_s}{2} b_m \mathrm{e}^{\bsi (m-1)\varphi} J_{m-1}(k_s r) \mathrm{e}^{\bsi \varphi_0} \notag  \\
 &&\quad - \frac{ (m+1)k_s}{2} b_m \mathrm{e}^{\bsi (m+1)\varphi} J_{m+1}(k_s r) \mathrm{e}^{-\bsi \varphi_0}\Big] \Bigg  \},  \notag \\
&& \partial_2 u_1 \cdot \left(-\cos \varphi_0\right)+\partial_2 u_2 \cdot \left(-\sin \varphi_0\right)  \notag
 \\
&&\quad =  \sum_{m=0} ^\infty \Bigg \{ \sin{\varphi } \Big [ \frac{- k_p^2}{4} a_m \mathrm{e}^{\bsi \left(m-1\right) \varphi} J_{m-2}(k_p r) \mathrm{e}^{\bsi \varphi_0}  + \frac{ k_p^2}{4} a_m J_m(k_p r) \mathrm{e}^{\bsi (m-1)\varphi} (\mathrm{e}^{\bsi \varphi_0} + \mathrm{e}^{2 \bsi \varphi}  \mathrm{e}^{-\bsi \varphi_0} )   \notag  \\
&&\quad  - \frac{ k_p^2}{4} a_m \mathrm{e}^{\bsi \left(m+1\right) \varphi} J_{m+2}(k_p r) \mathrm{e}^{-\bsi \varphi_0} - \frac{\bsi k_s^2}{4} b_m \mathrm{e}^{\bsi \left(m-1\right) \varphi} J_{m-2}(k_s r)\mathrm{e}^{\bsi \varphi_0} \notag   \\
&&\quad + \frac{\bsi k_s^2}{4} b_m J_m(k_s r) \mathrm{e}^{\bsi (m-1)\varphi}   (\mathrm{e}^{\bsi \varphi_0} - \mathrm{e}^{2 \bsi \varphi}  \mathrm{e}^{-\bsi \varphi_0} )  + \frac{ \bsi k_s^2}{4} b_m \mathrm{e}^{\bsi \left(m+1\right) \varphi} J_{m+2}(k_s r) \mathrm{e}^{-\bsi \varphi_0}  \Big]   \notag\\
& &\quad + \frac{\cos{\varphi} }{r} \Big[ \frac{- \bsi (m-1)k_p}{2} a_m \mathrm{e}^{\bsi (m-1)\varphi} J_{m-1}(k_p r) \mathrm{e}^{\bsi \varphi_0}  \notag \\
&& \quad + \frac{\bsi (m+1)k_p}{2} a_m \mathrm{e}^{\bsi (m+1)\varphi} J_{m+1}(k_p r) \mathrm{e}^{-\bsi \varphi_0}  + \frac{ (m-1)k_s}{2} b_m \mathrm{e}^{\bsi (m-1)\varphi} J_{m-1}(k_s r) \mathrm{e}^{\bsi \varphi_0} \notag \\
& &\quad + \frac{ (m+1)k_s}{2} b_m \mathrm{e}^{\bsi (m+1)\varphi} J_{m+1}(k_s r) \mathrm{e}^{-\bsi \varphi_0}  \Big] \Bigg \}.\label{eq:u4b par}
\end{eqnarray}

Furthermore, we have
\begin{eqnarray}
&& \partial_1 u_1 \cdot \left(-\sin \varphi_0\right)+\partial_2 u_1  \cdot \left(\cos \varphi_0\right) \notag
\\
&& =  \sum_{m=0} ^\infty \bigg\{\frac{\bsi k_p^2}{4} a_m \mathrm{e}^{\bsi (m-2)\varphi} J_{m-2}(k_p r) \mathrm{e}^{\bsi \varphi_0} + \frac{ k_p^2}{2} a_m \mathrm{e}^{\bsi m \varphi} J_{m}(k_p r) \sin \varphi_0 \notag\\
& &- \frac{\bsi k_p^2}{4} a_m \mathrm{e}^{\bsi (m+2)\varphi} J_{m+2}(k_p r) \mathrm{e}^{ - \bsi \varphi_0} - \frac{ k_s^2}{4} b_m \mathrm{e}^{\bsi (m-2)\varphi} J_{m-2}(k_s r) \mathrm{e}^{\bsi \varphi_0}\notag \\
& &-  \frac{ k_s^2}{2} b_m \mathrm{e}^{\bsi m \varphi} J_{m}(k_s r) \cos \varphi_0 - \frac{ k_s^2}{4} b_m \mathrm{e}^{\bsi (m+2) \varphi} J_{m+2}(k_s r) \mathrm{e}^{ - \bsi \varphi_0}  \bigg \}.\notag\\
&& \partial_1 u_2 \cdot \left(-\sin \varphi_0\right)+\partial_2 u_2  \cdot \left(\cos \varphi_0\right)\notag\\
& &=  \sum_{m=0} ^\infty \bigg\{- \frac{ k_p^2}{4} a_m \mathrm{e}^{\bsi (m-2)\varphi} J_{m-2}(k_p r) \mathrm{e}^{\bsi \varphi_0} - \frac{ k_p^2}{2} a_m \mathrm{e}^{\bsi m \varphi} J_{m}(k_p r) \cos \varphi_0 \notag\\
&& - \frac{ k_p^2}{4} a_m \mathrm{e}^{\bsi (m+2)\varphi} J_{m+2}(k_p r) \mathrm{e}^{ - \bsi \varphi_0} - \frac{ \bsi k_s^2}{4} b_m \mathrm{e}^{\bsi (m-2)\varphi} J_{m-2}(k_s r) \mathrm{e}^{\bsi \varphi_0}\notag \\
& &-  \frac{ k_s^2}{2} b_m \mathrm{e}^{\bsi m \varphi} J_{m}(k_s r) \sin \varphi_0 + \frac{ \bsi k_s^2}{4} b_m \mathrm{e}^{\bsi (m+2) \varphi} J_{m+2}(k_s r) \mathrm{e}^{ - \bsi \varphi_0}  \bigg \}.\label{eq:u4c par}
\end{eqnarray}

According to \eqref{eq:u4a par}, we can obtain that
\begin{eqnarray}
& &-\sin \varphi_0 (-\sin \varphi_0 \cdot \partial_1 u_1 +\cos \varphi_0 \cdot \partial_1 u_2 ) + \cos \varphi_0 ( -\sin \varphi_0 \cdot \partial_2 u_1 + \cos \varphi_0 \cdot \partial_2 u_2) \notag \\
&&  =  \sum_{m=0} ^\infty  \Bigg \{ \sin(\varphi_0-\varphi) \Big[\frac{-\bsi k_p^2}{4} a_m \mathrm{e}^{\bsi \left(m-1\right) \varphi} J_{m-2}(k_p r) \mathrm{e}^{\bsi \varphi_0}  \notag \\
& &\quad + \frac{\bsi k_p^2}{4} a_m J_m(k_p r)\mathrm{e}^{\bsi (m-1)\varphi}(\mathrm{e}^{\bsi \varphi_0} -\mathrm{e}^{2 \bsi \varphi}  \mathrm{e}^{-\bsi \varphi_0} )+ \frac{\bsi k_p^2}{4} a_m \mathrm{e}^{\bsi \left(m+1\right) \varphi} J_{m+2}(k_p r) \mathrm{e}^{-\bsi \varphi_0} \notag \\
 && \quad + \frac{ k_s^2}{4} b_m \mathrm{e}^{\bsi \left(m-1\right) \varphi} J_{m-2}(k_s r) \mathrm{e}^{\bsi \varphi_0}  - \frac{ k_s^2}{4} b_m J_m(k_s r) \mathrm{e}^{\bsi (m-1) \varphi} (\mathrm{e}^{\bsi \varphi_0} +\mathrm{e}^{2 \bsi \varphi}  \mathrm{e}^{-\bsi \varphi_0} )\notag \\
  && \quad + \frac{ k_s^2}{4} b_m J_{m+2}(k_s r) \mathrm{e}^{\bsi (m+1) \varphi} J_{m+2}(k_s r) \mathrm{e}^{-\bsi \varphi_0} \Big ] \notag \\
&&\quad  + \frac{ \cos (\varphi_0-\varphi) }{r} \Big [\frac{-(m-1)k_p}{2} a_m \mathrm{e}^{\bsi (m-1)\varphi} J_{m-1}(k_p r) \mathrm{e}^{\bsi \varphi_0}\notag \\
& &\quad - \frac{ (m+1)k_p}{2} a_m \mathrm{e}^{\bsi (m+1)\varphi} J_{m+1}(k_p r) \mathrm{e}^{-\bsi \varphi_0} - \frac{\bsi (m-1)k_s}{2} b_m \mathrm{e}^{\bsi (m-1)\varphi} J_{m-1}(k_s r) \mathrm{e}^{\bsi \varphi_0} \notag \\
 &&\quad + \frac{\bsi (m+1)k_s}{2} b_m \mathrm{e}^{\bsi (m+1)\varphi} J_{m+1}(k_s r) \mathrm{e}^{-\bsi \varphi_0}\Big] \Bigg  \}. \label{eq:u5 par}
\end{eqnarray}

By directly calculations, one has
\begin{equation}\label{eq:u6 par}
\begin{aligned}
 &{\nu}^\top \nabla \bmf{u} {\nu}|_{\bmf x \in \Gamma_h^+ }
   =\left(
  \begin{array}{cc}
  \nu_1 & \nu_2
  \end{array}
  \right)
  \left(
  \begin{array}{cc}
  \partial_1 u_1 & \partial_2 u_1\\
  \partial_1 u_2 & \partial_2 u_2\\
  \end{array}
  \right)
  \left(
  \begin{array}{c}
  \nu_1 \\
  \nu_2 \\
  \end{array}
  \right)\\
  & = -\sin \varphi_0 (-\sin \varphi_0 \cdot \partial_1 u_1 +\cos \varphi_0 \cdot \partial_1 u_2 ) + \cos \varphi_0 ( -\sin \varphi_0 \cdot \partial_2 u_1 + \cos \varphi_0 \cdot \partial_2 u_2).
\end{aligned}
\end{equation}
Substituting   \eqref{eq:u5 par} into \eqref{eq:u6 par},  letting $\varphi=\varphi_0$, according to the assumption in \eqref{eq:gradient1}, we can deduce that
\begin{equation}\label{eq:u7 par}
\begin{aligned}
0=& \frac{1}{r} \sum_{m=0} ^\infty \Bigg\{ \frac{-(m-1)k_p}{2} a_m \mathrm{e}^{\bsi m \varphi_0} J_{m-1}(k_p r)  - \frac{ (m+1)k_p}{2} a_m \mathrm{e}^{\bsi m \varphi_0} J_{m+1}(k_p r) \\
&  - \frac{\bsi (m-1)k_s}{2} b_m \mathrm{e}^{\bsi m \varphi_0} J_{m-1}(k_s r) + \frac{\bsi (m+1)k_s}{2} b_m \mathrm{e}^{\bsi m \varphi_0} J_{m+1}(k_s r) \Bigg\}\\
  = & \sum_{m=0} ^\infty \Bigg\{ \frac{-k_p^2}{4} a_m \mathrm{e}^{\bsi m \varphi_0} J_{m-2}(k_p r)  - \frac{k_p^2}{2} a_m \mathrm{e}^{\bsi m \varphi_0} J_m (k_p r) -  \frac{k_p^2}{4} a_m \mathrm{e}^{\bsi m \varphi_0} J_{m+2}(k_p r)\\
&  - \frac{\bsi  k_s^2}{4} b_m \mathrm{e}^{\bsi m \varphi_0} J_{m-2}(k_s r) + \frac{\bsi k_s^2}{4} b_m \mathrm{e}^{\bsi m \varphi_0} J_{m+2}(k_s r) \Bigg\},
\end{aligned}
\end{equation}
where we use the property (cf.\cite{Abr})
\begin{align}\label{eq:property}
	J_m\left(t\right)=\frac{t\left(J_{m-1}\left(t\right)+J_{m+1}\left(t\right)\right)}{2m}.
\end{align}
 Comparing the coefficients of $r^0$ on both sides of \eqref{eq:u7 par}, we can  obtain \eqref{eq:gradient1}.

Next we  shall prove \eqref{eq:gradient2}. According to \eqref{eq:u4b par}, we can obtain that
\begin{eqnarray}
\begin{aligned}
& -\sin \varphi_0 (-\cos \varphi_0 \cdot \partial_1 u_1 -\sin \varphi_0 \cdot \partial_1 u_2 ) + \cos \varphi_0 ( -\cos \varphi_0 \cdot \partial_2 u_1 - \sin \varphi_0 \cdot \partial_2 u_2)  \\
& =  \sum_{m=0} ^\infty  \Bigg \{ \sin(\varphi_0-\varphi) \Big[\frac{ k_p^2}{4} a_m \mathrm{e}^{\bsi \left(m-1\right) \varphi} J_{m-2}(k_p r) \mathrm{e}^{\bsi \varphi_0}   \\
& \quad - \frac{ k_p^2}{4} a_m J_m(k_p r)\mathrm{e}^{\bsi (m-1)\varphi}(\mathrm{e}^{\bsi \varphi_0} + \mathrm{e}^{2 \bsi \varphi}  \mathrm{e}^{-\bsi \varphi_0} )+ \frac{ k_p^2}{4} a_m \mathrm{e}^{\bsi \left(m+1\right) \varphi} J_{m+2}(k_p r) \mathrm{e}^{-\bsi \varphi_0}  \\
 & \quad + \frac{\bsi k_s^2}{4} b_m \mathrm{e}^{\bsi \left(m-1\right) \varphi} J_{m-2}(k_s r) \mathrm{e}^{\bsi \varphi_0}  - \frac{\bsi k_s^2}{4} b_m J_m(k_s r) \mathrm{e}^{\bsi (m-1) \varphi} (\mathrm{e}^{\bsi \varphi_0} - \mathrm{e}^{2 \bsi \varphi}  \mathrm{e}^{-\bsi \varphi_0} ) \\
  & \quad - \frac{\bsi k_s^2}{4} b_m J_{m+2}(k_s r) \mathrm{e}^{\bsi (m+1) \varphi} J_{m+2}(k_s r) \mathrm{e}^{-\bsi \varphi_0} \Big ]  \\
&\quad  + \frac{ \cos (\varphi_0-\varphi) }{r} \Big [\frac{-\bsi (m-1)k_p}{2} a_m \mathrm{e}^{\bsi (m-1)\varphi} J_{m-1}(k_p r) \mathrm{e}^{\bsi \varphi_0}  \\
& \quad + \frac{ \bsi (m+1)k_p}{2} a_m \mathrm{e}^{\bsi (m+1)\varphi} J_{m+1}(k_p r) \mathrm{e}^{-\bsi \varphi_0} + \frac{(m-1)k_s}{2} b_m \mathrm{e}^{\bsi (m-1)\varphi} J_{m-1}(k_s r) \mathrm{e}^{\bsi \varphi_0}  \\
 &\quad + \frac{ (m+1)k_s}{2} b_m \mathrm{e}^{\bsi (m+1)\varphi} J_{m+1}(k_s r) \mathrm{e}^{-\bsi \varphi_0}\Big] \Bigg  \}. \label{eq:u8 par}
\end{aligned}
\end{eqnarray}
One can readily deduce that
\begin{equation}\label{eq:u9 par}
\begin{aligned}
 &\boldsymbol{\tau}^\top \nabla \bmf{u} {\nu}|_{\mathbf x \in \Gamma_h^+ }
   =\left(
  \begin{array}{cc}
  \tau_1 & \tau_2
  \end{array}
  \right)
  \left(
  \begin{array}{cc}
  \partial_1 u_1 & \partial_2 u_1\\
  \partial_1 u_2 & \partial_2 u_2\\
  \end{array}
  \right)
  \left(
  \begin{array}{c}
  \nu_1 \\
  \nu_2 \\
  \end{array}
  \right)\\
  & = -\sin \varphi_0 (-\cos \varphi_0 \cdot \partial_1 u_1 - \sin \varphi_0 \cdot \partial_1 u_2 ) + \cos \varphi_0 ( -\cos \varphi_0 \cdot \partial_2 u_1 - \sin \varphi_0 \cdot \partial_2 u_2).
\end{aligned}
\end{equation}
Substituting \eqref{eq:u8 par} into \eqref{eq:u9 par}, letting $\varphi=\varphi_0$, from \eqref{eq:property}, according to the assumption in \eqref{eq:gradient2}, it yields that
\begin{equation}\label{eq:u10 par}
\begin{aligned}
 0=& \frac{1}{r} \sum_{m=0} ^\infty \Bigg\{ \frac{- \bsi (m-1)k_p}{2} a_m \mathrm{e}^{\bsi m \varphi_0} J_{m-1}(k_p r)  + \frac{ \bsi (m+1)k_p}{2} a_m \mathrm{e}^{\bsi m \varphi_0} J_{m+1}(k_p r) \\
&  + \frac{ (m-1)k_s}{2} b_m \mathrm{e}^{\bsi m \varphi_0} J_{m-1}(k_s r) + \frac{ (m+1)k_s}{2} b_m \mathrm{e}^{\bsi m \varphi_0} J_{m+1}(k_s r) \Bigg\}\\
  = & \sum_{m=0} ^\infty \Bigg\{ \frac{- \bsi k_p^2}{4} a_m \mathrm{e}^{\bsi m \varphi_0} J_{m-2}(k_p r)  +  \frac{ \bsi k_p^2}{4} a_m \mathrm{e}^{\bsi m \varphi_0} J_{m+2}(k_p r)\\
&  + \frac{ k_s^2}{4} b_m \mathrm{e}^{\bsi m \varphi_0} J_{m-2}(k_s r) +  \frac{ k_s^2}{2} b_m \mathrm{e}^{\bsi m \varphi_0} J_{m+2}(k_s r) + \frac{ k_s^2}{4} b_m \mathrm{e}^{\bsi m \varphi_0} J_{m+2}(k_s r) \Bigg\}.
\end{aligned}
\end{equation}
Comparing the coefficients of $r^0$ on both sides of \eqref{eq:u10 par}, we can  obtain  \eqref{eq:gradient2}.
\end{proof}

\end{document}